\documentclass{amsart}

\usepackage{forest}
\usepackage{nicefrac}

\usepackage{subcaption}
\usepackage{graphicx} 

\usetikzlibrary{shapes.geometric,decorations.markings}
\usetikzlibrary{decorations.pathreplacing}
\usetikzlibrary{fit}
\usetikzlibrary{automata}
\usetikzlibrary{positioning}

\usepackage{chngcntr}
\counterwithin{figure}{section}

\usepackage[utf8]{inputenc}
\usepackage[T1]{fontenc}
\usepackage{textcomp}
\usepackage{amsfonts}
\usepackage{amsthm}
\usepackage{amssymb}
\usepackage{pst-node}
\usepackage[all]{xy}
\usepackage{mathtools}
\usepackage{chngcntr}
\usepackage{thmtools}
\usepackage{mathrsfs}
\usepackage{tcolorbox}
\usepackage{listings}
\usepackage{hyperref}
\usepackage{color}
\usepackage{float}
\usepackage{tikz, tikz-cd}
\usepackage{enumerate}
\usepackage[nameinlink, capitalize, noabbrev]{cleveref}
\usepackage{natbib}
\usepackage{caption}
\usetikzlibrary{intersections}
\usepackage{pgfplots}
\pgfplotsset{width=10cm,compat=1.9}
\usepgfplotslibrary{external}
\tikzset{mytext/.style={font=\small, text=black}}
\tikzset{main node/.style={circle,fill=lime!30,draw,minimum size=0.5cm,inner sep=0pt},}

\hypersetup{ colorlinks=true,linkcolor=teal,    citecolor=magenta, }

\usepackage{pst-node}
\usepackage{pst-tree}

\def\BALL[#1](#2){\rput[t](#2){}%
        \pscircle[fillstyle=solid,fillcolor=#1!40](#2){5pt}}
        
\psset{treesep=1.3,levelsep=1.5}

\hypersetup{
    colorlinks=true,
    linkcolor=teal,
    citecolor=magenta,
    }

\numberwithin{equation}{section}

\newtheorem{Theorem}{Theorem}
\newtheorem{Conjecture}[Theorem]{Conjecture}
\newtheorem{Corollary}[Theorem]{Corollary}
\newtheorem{Definition}[Theorem]{Definition}
\newtheorem{proposition}{Proposition}[section]
\newtheorem{lemma}[proposition]{Lemma}
\newtheorem{corollary}[proposition]{Corollary}
\newtheorem{theorem}[proposition]{Theorem}

\newtheorem{example}[proposition]{Example}

\newtheorem{convention}[proposition]{Convention}
\theoremstyle{definition}
\newtheorem{remark}[proposition]{Remark}
\newtheorem{definition}[proposition]{Definition}

\def\NN{\mathbb N}
\def\ZZ{\mathbb Z}
\def\RR{\mathbb R}
\def\CC{\mathbb C}

\def\PP{\mathbb P}
\def\K-{\overline K}
\def\FF{\mathbb F}
\def\Ksep{K^{\textrm{sep}}}
\def\OK{\mathcal{O}_K}

\DeclareMathOperator{\st}{st}
\DeclareMathOperator{\FPP}{FPP}

\DeclareMathOperator{\Gal}{Gal}
\DeclareMathOperator{\St}{St}
\DeclareMathOperator{\Aut}{Aut}
\DeclareMathOperator{\IMG}{IMG}

\DeclareMathOperator{\Sym}{Sym}

\DeclareMathOperator{\Per}{Per}
\DeclareMathOperator{\IIm}{Im}
\DeclareMathOperator{\id}{id}
\DeclareMathOperator{\lcm}{lcm}
\DeclareMathOperator{\Dom}{Dom}

\DeclareMathOperator{\Par}{Par}

\def \<#1>{{\left\langle{#1}\right\rangle}}
\def\abs#1{\left\vert{#1}\right\vert}
\def\set#1{\left\{#1\right\}}

\title[]{The inverse Galois problem of iterated Galois groups and their fixed-point proportion}
\author{Santiago Radi}
\address{Santiago Radi: Department of Mathematics, Texas A\&M University, 77843 College Station, U.S.A.}
\email{santiradi19@gmail.com}
\keywords{Fixed-point proportion, groups acting on trees, iterated Galois groups, iterated monodromy gruops}
\thanks{The author is supported by Grigorchuk's Simons Foundation Grant MP-TSM-00002045 and the department of Mathematics of Texas A\&M University and Universidad de la República.}
\subjclass[2020]{Primary: 20E08, 11R32; Secondary: 60G42, 37F10}
\date{August 2026}

\begin{document}

\begin{abstract}
In 1985, Odoni initiated the study of arboreal representations and the fixed-point proportion, motivated by prime density problems in arithmetic dynamics. Since then, many questions regarding the connection between the dynamics of rational functions and the Galois groups associated to their dynamics (iterated Galois groups) have been posed.

In this article, we give several contributions to this connection with the introduction of the concept of virtually mixing groups. This new concept is related to the capability of self-replication of the iterated Galois groups, and when it fails to be completely self-replicated, the failure is always by a finite index normal subgroup. It turns out that the quotient by this subgroup contains valuable information of the properties of the rational function.

First, we investigate the inverse Galois problem, showing that iterated Galois groups of rational functions are always virtually mixing and the failure (the quotient by this finite-index subgroup) is related to geometric properties of the rational function. As a consequence, we prove that the rational function is induced by an endomorphism of an algebraic curve (dynamical pullback) if and only if this failure is non-trivial.

Finally, we solve the main open problem of the fixed-point proportion of geometric iterated Galois groups of rational functions, by showing that the fixed-point proportion is zero when the map is not a dynamical pullback. This result has direct applications to prime density problems, proportion of periodic points in the reduction of maps and proportion of periodic points over finite fields.

The proofs in this article rely on a combination of techniques and results from group theory, ergodic theory, probability, number theory, complex dynamics, energy transport in graphs, arithmetic dynamics and algebraic geometry.
\end{abstract}

\maketitle

\section{Introduction}
\label{section: Introduction}

Let $L$ be a separably closed field, $K$ a subfield of $L$ and $a$ an element of $L$. Fix $K(a)^{\mathrm{sep}}$ a separable closure of $K(a)$ in $L$. Let $f$ be a \textit{tamely ramified} rational function in $K(x)$ of degree $d \geq 2$, namely, the characteristic of $K$ does not divide the local degree of $f$ at any point of $\PP^1(\K-)$. We write $f^n$ for the $n$th composition of $f$ with itself and denote $f^{-n}(a)$ the set of preimages of $a$ under $f^n$. Construct 
$$T_a:=\bigsqcup_{n\ge 0}f^{-n}(a)$$ 
the rooted tree of preimages of $a$ where a vertex $w$ in level $n+1$ is connected to a vertex $v$ in level $n$ if $f(w) = v$. This induces a natural action of $\Gal(K(a)^{\textrm{sep}}/K(a))$ on $T_a$ preserving adjacency, which gives rise to the so-called \textit{arboreal representation} 
$$\rho_{\textrm{arb}}: \mathrm{Gal}(K(a)^{\mathrm{sep}}/K(a)) \rightarrow \Aut(T_a).$$ 

The image of this representation, denoted $G_\infty(K,f,a)$, is known as the \textit{iterated Galois group} of $f$. The main cases of interest are when $a \in K$ or $a = t$ is transcendental over $K$. In this latter case, one can prove that $T_t$ is a $d$-regular rooted tree. The group $G_\infty(K,f,t)$ is called the \textit{arithmetic iterated Galois group} or \textit{arithmetic iterated monodromy group} of $f$, and in the case that $K$ is separably closed, the group is called \textit{geometric iterated Galois group} or \textit{geometric iterated monodromy group}.

Due to its relevance to problems in arithmetic dynamics and complex dynamics, understanding properties of iterated Galois groups in terms of the rational function $f$ has been of main interest in the area. In this direction, there are conjectures involving when iterated Galois groups are branch, abelian, have positive Hausdorff dimension or have finite index in $\Aut(T)$, and how these properties relate to the dynamical properties of $f$ \cite{AndrewsPetsche2020, Fariña2026FPP, LeungPetsche2025, Pink2013nPCF, Pink2013PCF, Radi2025FPP}. 

On the other hand, one can study the inverse Galois problem of this family, namely, understanding which subgroups of $\Aut(T)$ can be realizable as arithmetic iterated Galois groups. For example, it is well-known that arithmetic iterated Galois groups are closed in $\Aut(T)$ and fractal \cite[Theorem 7]{Adams2025}, which in particular implies that these groups are self-similar and level-transitive (see \cref{section: groups acting on trees} for undefined terms here). 

In \cite{BostonJones2006}, Boston and Jones conjectured that for every irreducible quadratic polynomial with coefficients in a number field $K$ the group $G_\infty(K,f,t)$ is \textit{densely settled}, this is, there is a dense set of elements $g$ in $G_\infty(K,f,t)$ such that the proportion of vertices in level $n$ that are in stable cycles of $g$ approaches to $1$ when $n \rightarrow +\infty$. Although the conjecture has not been confirmed for every irreducible quadratic polynomial, it is known when the critical point of $f$ in $K$ is periodic or when it is preperiodic with orbit length $2$ \cite{CortezLukina2022,EjderKocak2026}.

In \cite{FariñaRadi2026FPP}, Fariña-Asategui and the author introduced the notion of \textit{mixing groups}. Recall that if $g$ is an element in $\Aut(T)$ and $v$ is a vertex in $T$, the section of $g$ at $v$, denoted $g|_v$, is another element in $\Aut(T)$ that encodes the action of $g$ below the subtree rooted at $v$. If $G$ is a subgroup of $\Aut(T)$ and $H$ is a subgroup of $G$,
\begin{align*}
H_v := \set{h|_v \in \Aut(T): h \in \st_G(v)}
\end{align*}
is the collection of all the sections of elements in $H$ that fix $v$. In the case that $G$ is self-similar, it turns out that $H_v$ is a subgroup of $G$.

A fractal group $G$ of $\Aut(T)$ is \textit{mixing} if there exists $N \geq 0$ such that for every $n \geq 1$ and every vertex $v \in T$ whose distance to the root is at least $n + N$, we have 
\begin{align*}
\St_G(n)_v =  G,  
\end{align*}
where here $\St_G(n)$ is the stabilizer of level $n$. 

This definition corresponds to a generalization of super strongly fractal groups, well-studied in groups acting on trees (see \cite{Fariña2025ergodic, FariñaRadi2025RandomSubgroups, UriaAlbizuri2016}). In \cite[Theorem 6]{FariñaRadi2026FPP}, authors prove that every geometric iterated Galois group of a polynomial that is not linearly conjugate to a Chebyshev polynomial is mixing. 

The fact that maps linearly conjugate to Chebyshev polynomials are excluded can be understood from the point of view of complex dynamics. If $f$ is a post-critically finite rational function over $\CC$, the Euler characteristic of the Thurston orbifold of $f$ determines exceptional dynamical properties of $f$. For example, the Euler characteristic is zero if and only if the Julia set is the unit circle, a line segment or the whole Riemann sphere \cite[Theorem 14.6]{Milnor2006}. This is opposite to the typical well-known cases where the Julia sets are convoluted fractal sets. The Thurston orbifold and its Euler characteristic can be generalized for any tamely ramified post-critically finite rational function defined over any field (see \cref{definition: Thurston orbifold}) and as in the case of complex numbers, it helps to detect an exceptional dynamical behavior. 

By Riemann-Hurwitz formula (see \cref{Theorem: Riemann Hurwitz formula}) the Euler characteristic of a tamely ramified post-critically finite rational function is either negative or zero. When the Euler characteristic is zero, the map is said to have \textit{euclidean orbifold} or be \textit{exceptional}. In the case of complex numbers, exceptional maps are completely classified by the work of Douady and Hubbard, proving that if $f$ is exceptional then $f$ is linearly conjugate over $\CC$ to a power map $x^{\pm d}$, a Chebyshev polynomial $\pm T_d$ or to a Lattés map \cite{DoaudyHubbard1993}. 

A minor modification in the definition of mixing groups allows us to discover a property that is satisfied by every geometric iterated Galois group. 

\begin{Definition}
We say that a fractal group $G$ in $\Aut(T)$ is \textit{virtually mixing} if there exists $N \geq 0$ and 
$H \leq_f G$ such that for every $n \geq 1$ and every vertex $v \in T$ whose distance to the root is at least $n + N$, we have 
\begin{align*}
H \leq \St_G(n)_v.  
\end{align*}
We call $N$ a \textit{delay constant} of $G$ and $H$ a \textit{mixing subgroup} of $G$.
\end{Definition}

The choice of the name `virtually mixing' comes from the fact that the condition of mixing is now satisfied by a subgroup of finite index and not necessarily by the whole group $G$. 

Our first main result is the following:

\begin{Theorem}
Let $f$ be a tamely ramified rational function of degree $d \geq 2$ defined over a field $K$. Furthermore, if $f$ is exceptional, assume that $K$ has characteristic zero and its cardinality is at most $2^{\aleph_0}$. Let $G = G_\infty(K^{\textrm{sep}},f,t)$ be its geometric iterated Galois group and assume that for every $n \geq 1$ the subgroup $\St_G(n)$ acts transitively below every vertex at level $n$. Then

\begin{enumerate}[\normalfont(1)]
\item $G$ is virtually mixing and the delay constant $N$ only depends on the ramification portrait of $f$. 

\item There exists a mixing subgroup $G_m$ such that $G_m$ is closed and normal in $G$ and $G/G_m$ is a finite Von Dyck group.

\item The subgroup $G_m$ is explicitly given in terms of generators of the inertia subgroups of each post-critical point of $f$.
\end{enumerate}
\label{theorem: every rational is virtually mixing}
\end{Theorem}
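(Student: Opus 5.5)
We will work with the standard presentation of the geometric iterated Galois group as a profinite iterated monodromy group. Let $P=\{p_1,\dots,p_r\}$ be the post-critical set. The geometric fundamental group of $\PP^1\setminus P$ (the tame part suffices, since $f$ is tamely ramified) is topologically generated by inertia generators $\gamma_1,\dots,\gamma_r$ with $\gamma_1\cdots\gamma_r=1$. Hence $G$ is topologically generated by their images $g_1,\dots,g_r$, and these satisfy wreath recursions
\[
g_i=(g_{i}|_{v})_{v\in X}\,\sigma_i,
\]
where each nontrivial cycle product of sections of $g_i$ is conjugate to $g_j^{e}$ for some $p_j$ with $f(p_j)=p_i$. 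Here $e$ is the local degree of $f$ at $p_j$.

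We define the orbifold weights $\nu(p)$ as in \cref{definition: Thurston orbifold}. These are the lcm of the local degrees along backward orbits, with $\nu=\infty$ if $p$ lies in a critical cycle. We then set
\[
G_m:=\overline{\langle\langle g_i^{\nu(p_i)} : \nu(p_i)<\infty\rangle\rangle}
\]
to be the closed normal subgroup generated by these powers. By construction, $G/G_m$ is a quotient of the group $\langle x_1,\dots,x_r \mid x_1\cdots x_r=1,\ x_i^{\nu(p_i)}\rangle$, which is a von Dyck (orbifold) group. This gives (3) by definition.

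For finiteness in (2) we split by the Euler characteristic. If the orbifold is hyperbolic ($\chi<0$), the von Dyck group is infinite, so we must instead show that the image of the orbifold group in $G$ is finite modulo $G_m$. To do this, we will first prove that a suitable finite set of powers, namely $g_i^{\nu(p_i)}$ together with elements of the form $g_i^{e}$ for points in critical cycles, lies deep in every level-stabilizer section, and then deduce finite index from level-transitivity of $\St_G(n)$ below vertices. When $\chi=0$ (the exceptional case) the von Dyck group is a finite extension of $\ZZ^2$ or $\ZZ$, hence virtually abelian. There the hypotheses on characteristic zero and $|K|\le 2^{\aleph_0}$ let us embed $K^{\mathrm{sep}}$ in $\CC$ and invoke the Douady--Hubbard classification, so we compute power, Chebyshev and Lattès maps explicitly (Lattès maps via the endomorphism of the elliptic curve). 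This yields the finite quotient (the rotation or point group) in each case.

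The core step is (1): showing that for every $n$ and every vertex $v$ at level $\ge n+N$, each generator $g_i^{\nu(p_i)}$ of $G_m$ lies in $\St_G(n)_v$. Take a vertex $w$ above $v$, at level $n$. Since $\St_G(n)$ acts transitively below $w$, we may conjugate freely within the subtree. The plan is to follow the backward orbit: for a point $p_j$ that is not post-critical in a cycle, after at most $N$ steps (with $N$ bounded by the maximal preperiod and period lengths in the ramification portrait), some path from $w$ to $v$ passes along a preimage branch where the section of a suitable power of some $g_k$ is exactly a conjugate of $g_i^{\nu(p_i)}$. This uses the identity that cycle lengths multiply local degrees up to the lcm $\nu$. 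Elements of the form $g_k^{m}$ with $m$ a multiple of the length of the level-$n$ orbit lie in $\St_G(n)$, and their sections at $v$ yield the required powers. Conjugating by elements of $\St_G(n)$ and using closedness then gives $G_m\le \St_G(n)_v$. The delay $N$ depends only on the portrait because all choices are combinatorial in it.

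I expect the main obstacle to be this combinatorial bookkeeping: guaranteeing the exact exponent $\nu(p_i)$ rather than a multiple of it, for every vertex uniformly. I would first try an induction on the backward-orbit depth, using the lcm structure of $\nu$.
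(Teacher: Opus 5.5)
\textbf{The subgroup you define is trivial, so part (2) cannot be obtained this way.} At level $n$ the element $g_p$ acts with cycles of lengths $e_{f^n}(q)$, $q\in f^{-n}(p)$. Note also that the product of sections along the cycle attached to $q\in f^{-1}(p)$ is $\sim g_q$, not $g_q^{e_f(q)}$ as your recursion says. So the order of $g_p$ in $G$ is the lcm of all these cycle lengths, which is exactly $\nu_f(p)$; this is the paper's proposition on the order of inertia generators. Hence $g_i^{\nu(p_i)}=1$ whenever $\nu(p_i)<\infty$, your $G_m$ is $1$, and $G/G_m=G$ is infinite. Put differently, the orbifold relations $x_i^{\nu(p_i)}$ already hold in $G$, so $G$ itself is a quotient of the orbifold group. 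In the hyperbolic case, imposing those relations again gains nothing.

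The step ``deduce finite index from level-transitivity of $\St_G(n)$ below vertices'' cannot repair this, since the martingale hypothesis says nothing about indices of subgroups. So the obstacle you anticipate, hitting exactly $\nu(p_i)$ rather than a multiple, is the wrong one. The real task is to realise much \emph{smaller} powers of $g_p$ as sections, at arbitrarily deep vertices, of elements of $\St_G(n)$, with a delay independent of $n$ and of $p$.

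For that, powers $g_k^m\in\St_G(n)$ are the wrong source. The exponent $m$ must be a multiple of the order of $\pi_n(g_k)$, which grows with $n$, and the sections of such powers at depth $n+N$ are powers of generators with exponents you cannot control uniformly. The missing tool is the commutator trick: if $g\in G$ fixes $u,w$ with $|u|\le|w|\le N$, $g|_u=1$ and $g|_w=h$, then $h\in\St_G(n)_v$ for all $|v|\ge n+N$.

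The paper applies this trick to $g=g_q^{\lcm(E(\gamma_1),E(\gamma_2))}$. Here $\gamma_1,\gamma_2$ are backward paths from a common post-critical point $q$: $\gamma_1$ ends at $p$, and $\gamma_2$ ends at a non-post-critical point, which produces a trivial section. $E$ denotes the product of local degrees along a path. This gives $g_p^{\mathcal E(P)}\in\St_G(n)_v$ for $|v|\ge n+\ell(\gamma_1)$. $G_m$ is then the closed normal closure of the $g_p^{e_p}$, with $e_p$ the gcd of these conditional energies. The finiteness and uniform-delay results then follow in four steps:
\begin{enumerate}
\item One gets $e_p=1$ with $\ell(\gamma_1)\le 5$ whenever the forward orbit of $p$ leaves $\Delta_f=\{p\in P_f: f^{-1}(p)\subseteq C_f\cup P_f\}$.
\item The remaining points have only finitely many minimized pairs, so $N$ depends only on the portrait.
\item Outside the $(2,2,2,2)$ case, at most three such points remain.
\item A Riemann--Hurwitz case analysis gives $1/e_{p_1}+1/e_{p_2}+1/e_{p_3}>1$ except in euclidean portraits, so $G/G_m$ is a quotient of a finite triangle group.
\end{enumerate}
Only in the euclidean case with all $\nu$ finite, where this $G_m$ is also trivial, does one switch to $\overline{\IMG(F)}$ (the translations by $\Lambda$), with $N=0$. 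That step is compatible with your Douady--Hubbard plan, but it requires replacing your $G_m$ by that subgroup.
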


For short, and for reasons justified by \cref{proposition: martingale characterization}, we say that $G$ is a \textit{martingale group} if for every $n \geq 1$ the subgroup $\St_G(n)$ acts transitively below every vertex at level $n$. It is worth mentioning that the hypothesis in \cref{theorem: every rational is virtually mixing} are natural in this area although they are not satisfied by every rational function. For instance, it is well-known that iterated Galois groups are martingale groups when at least one of the following conditions occur:

\begin{enumerate}[\normalfont(1)]
\item $f$ is a polynomial,
\item the degree of $f$ is prime,
\item the action of $G_\infty(K,f,t)$ on the first level is double transitive.
\end{enumerate}
(see \cite[Section 5]{BridyJones2022} and \cite[Theorem 4.2]{Jones2012}). Nevertheless, examples of iterated Galois groups of rational functions that are not martingale groups have also been constructed (see \cite{HeZhu2026}). 

The proof of \cref{theorem: every rational is virtually mixing} will be done in \cref{section: iterated Galois groups are virtually mixing} and it will be split in two parts. The first part concerns all rational functions that are not exceptional. For this first case, we will use a new argument based on energy transportation in graph theory. The idea is to see the ramification portrait of a rational function $f$ as a weighted directed graph and find the optimal paths that lead us to special elements (see \cref{section: energy of paths} for more details). The second part only deals with exceptional maps and it is based on the work of Douady and Hubbard previously mentioned. The hypothesis imposed over the field $K$ of having cardinality $2^{\aleph_0}$ is not really restrictive for the typical applications of iterated Galois groups, since this condition is for example satisfied by number fields and the complex numbers.

Although \cref{theorem: every rational is virtually mixing} refers only to the geometric iterated Galois group, one can still prove that the arithmetic iterated Galois group is virtually mixing in many cases. Recall the exact sequence
\begin{align*}
1 \rightarrow G_\infty(\Ksep, f, t) \rightarrow G_\infty(K, f, t) \rightarrow \Gal(\Ksep \cap K_\infty(f,t)/K) \rightarrow 1,    
\end{align*}
that relates the geometric and the arithmetic iterated Galois groups of a rational function (see \cref{proposition: short exact sequence arithmetic and geometric Galois groups} for a proof). Here $K_\infty(f,t) := K \left( \bigcup_{n \geq 0} f^{-n}(t) \right)$ . 

The extension $\Ksep \cap K_\infty(f,t)/K$ is understood in specific cases of $f$ (see for example \cite{AdamsHyde2025, BenedettoGhiocaJuulTucker2025periodic, BenedettoGhiocaJuulTucker2025preperiodic, EjderGolaskaKara2026, Pink2013nPCF, Pink2013PCF} but it is still mysterious in general. In \cite{Pink2013PCF}, Pink provides examples of quadratic polynomials where $\Ksep \cap K_\infty(f,t)/K$ can be finite or infinite depending on the length of the orbit of the critical point in $K$. In \cite[Theorem 2]{Fariña2026FPP}, Fariña-Asategui shows that if $G_\infty(K,f,t)$ (or equivalently $G_\infty(\Ksep,f,t)$) is branch then $\Ksep \cap K_\infty(f,t)/K$ is a finite extension.

As a corollary of \cref{theorem: every rational is virtually mixing}, we obtain the following:

\begin{Corollary}
Let $f$ be a tamely ramified rational function of degree $d \geq 2$ defined over a field $K$. Furthermore, if $f$ is exceptional, assume that $K$ has characteristic zero and its cardinality is at most $2^{\aleph_0}$. Let $G = G_\infty(K,f,t)$ be its arithmetic iterated Galois group. Assume that $G$ is a martingale group and the extension $\Ksep \cap K_\infty(f,t)/K$ is finite. Then

\begin{enumerate}[\normalfont(1)]
\item $G$ is virtually mixing and the delay constant $N$ only depends on the ramification portrait of $f$ (same constant as in \cref{theorem: every rational is virtually mixing}). 

\item There exists a mixing subgroup $G_m$ such that $G_m$ is closed and normal in $G$ and $G/G_m$ satisfies the exact sequence
\begin{align*}
1 \rightarrow V \rightarrow G/G_m \rightarrow \Gal(\Ksep \cap K_\infty(f,t)/K) \rightarrow 1,    
\end{align*}
where $V$ is a finite Von Dyck group.
\end{enumerate}
\label{Corollary: arithmetic IGG is virtually mixing}
\end{Corollary}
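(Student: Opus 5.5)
The plan is to deduce everything from \cref{theorem: every rational is virtually mixing} applied to the geometric group $G' := G_\infty(\Ksep,f,t)$, combined with the short exact sequence
\begin{align*}
1 \rightarrow G' \rightarrow G \rightarrow \Gal(\Ksep \cap K_\infty(f,t)/K) \rightarrow 1
\end{align*}
of \cref{proposition: short exact sequence arithmetic and geometric Galois groups}. Since the extension $\Ksep \cap K_\infty(f,t)/K$ is finite, $G'$ is a closed normal subgroup of finite index in $G$.

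\textbf{Martingale property of $G'$.} Before the theorem can be applied, one must check that $G'$ is itself a martingale group, since the hypothesis is only given for $G$. In the cases listed after the theorem (polynomials, prime degree, doubly transitive first level) this is automatic, because those criteria apply equally to the geometric group. In general I would argue as follows. $\St_{G'}(n)$ is normal and of finite index in $\St_G(n)$, and $\St_G(n)$ acts transitively on each subtree $T_v$ with $v$ at level $n$. Hence the $\St_{G'}(n)$-orbits on level $n+m$ of $T_v$ are permuted transitively by $\St_G(n)$, so they all have equal size. To see that there is only one orbit, I would use the geometric meaning of transitivity: transitivity of $\St_{G'}(n)$ on $T_v$ amounts to irreducibility of $f^m(x) - v$ over $\Ksep(f^{-n}(t))$. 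This is a statement about the irreducibility of the curve covering, and it should follow from the corresponding statement over $K$ by a geometric-irreducibility argument, since $\Ksep \cap K_\infty$ is finite. I expect this to be the \emph{main obstacle}, and if it fails in general it may need to be imposed as an additional hypothesis. Once it is settled, the theorem gives a closed normal subgroup $G'_m \trianglelefteq G'$ with delay constant $N$ such that $G'_m \leq \St_{G'}(n)_v \leq \St_G(n)_v$ for all $v$ at distance at least $n+N$, and $G'/G'_m$ is a finite Von Dyck group.

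\textbf{Normality in $G$.} Setting $G_m := G'_m$, it is a mixing subgroup for $G$ with the same delay constant $N$, and it has finite index in $G$ because $[G:G'_m] = [G:G'][G':G'_m] < \infty$. Closedness is inherited. It remains to show that $G'_m$ is normal in $G$. By part (3) of the theorem, $G'_m$ is the closed normal subgroup generated by certain powers of inertia generators at the post-critical points; concretely these are $\gamma^{m_p}$, where $m_p$ is the orbifold weight at $p$. Conjugation by an element of $G$ sends an inertia group at a post-critical point $p$ to an inertia group at $\sigma(p)$, for $\sigma$ the image in the Galois group. The set of post-critical points and the weights are $\Gal$-stable, since $f$ is defined over $K$. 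Therefore the generating set is preserved up to $G'$-conjugacy and the taking of the corresponding powers, so $G'_m \trianglelefteq G$. If the explicit description in part (3) uses only a choice of generators, I would replace $G'_m$ by the intersection of its finitely many $G$-conjugates. That intersection still has finite index and is normal in $G$. It is also still mixing, because conjugating by $g \in G$ transforms the condition at $v$ into the condition at $g(v)$, which is again at the same distance.

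\textbf{The exact sequence.} With normality established, the third isomorphism theorem gives
\begin{align*}
1 \rightarrow G'/G_m \rightarrow G/G_m \rightarrow G/G' \cong \Gal(\Ksep \cap K_\infty(f,t)/K) \rightarrow 1,
\end{align*}
with $V = G'/G_m$ a finite Von Dyck group. In the intersection variant, $V$ is instead the corresponding finite quotient, which is still a Von Dyck group whenever the inertia description is Galois-invariant. For that reason I would aim for the invariance argument first.
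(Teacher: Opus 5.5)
Your skeleton is the paper's: take the mixing subgroup of $G' = G_\infty(\Ksep,f,t)$, use $\St_{G'}(n)_v \le \St_G(n)_v$ and $[G:G']<\infty$, and finish with the third isomorphism theorem. But two of your steps do not go through as you set them up.

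\textbf{The martingale step.} Your sketch does not establish that $G'$ is a martingale group. As you note yourself, finiteness of $\Ksep\cap K_\infty(f,t)/K$ only says that the $\St_{G'}(n)$-orbits below $v$ are blocks permuted transitively by $\St_G(n)$, and that there are at most $[G:G']$ of them. Irreducibility of $f^m(x)-v$ over $K(f^{-n}(t))$ does not by itself give irreducibility over $\Ksep(f^{-n}(t))$, so the ``geometric-irreducibility argument'' is not a proof.

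You do not need this step. The martingale hypothesis enters \cref{theorem: every rational is virtually mixing} only through the commutator trick in \cref{lemma: see the element with conditional energy} and \cref{theorem: Gm is a mixing subgroup}, and both are stated for $G_\infty(K,f,t)$. Run them directly in the arithmetic group $G$, which is fractal by \cref{proposition: iterated galois groups are fractal} and martingale by hypothesis. This works because the elements $g_q^{E(\gamma_1,\gamma_2)}$ lie in $G'\le G$, and $\sim$ in $G'$ implies $\sim$ in $G$. In the hyperbolic case, finiteness of $G'/G_m$ and its Von Dyck structure use only the relation $\prod g_p = 1$ and the exponents $e_p$, not the martingale property. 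In the euclidean case, \cref{theorem: virtually mixing euclidean case} shows $G'$ is martingale anyway. The paper's proof simply quotes the theorem for $G'$, and this is how that quotation is justified.

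\textbf{The normality step.} The paper never proves that the mixing subgroup of $G'$ is normal in $G$. It replaces $G_m$ by its normal closure $\widehat{G_m}$ in $G$, which has all the needed properties:
\begin{itemize}
\item By \cref{lemma: virtually mixing closed normal closure}, which needs only that $G$ is closed and fractal, $\widehat{G_m}$ still lies in every $\St_G(n)_v$ with $\abs{v}\ge n+N$.
\item It lies in $G'$ because $G'\lhd G$, and it is closed of finite index.
\item $V=G'/\widehat{G_m}$ is a quotient of the finite Von Dyck group $G'/G_m$, and the finite list in \cref{theorem: classification Von Dyck groups} is closed under quotients.
\end{itemize}

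Your Galois-invariance argument can be made to work in the hyperbolic case. Conjugation by an element of $G$ inducing $\sigma$ on constants sends $g_p$ to a $G'$-conjugate of a unit power of $g_{\sigma(p)}$. Since $G_m$ is closed and $e_{\sigma(p)}=e_p$ ($\sigma$ is an automorphism of the ramification portrait), $G_m$ is normal in $G$. Note, however, that the exponents are the $e_p$ built from conditional energies, not the orbifold weights $\nu_f(p)$. The $\nu_f(p)$ are the orders of the $g_p$ by \cref{proposition: order inertia generators}, so those powers would be trivial.

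The argument does not cover the euclidean case. There the subgroup of \cref{definition: Gm} is trivial and the mixing subgroup is $\overline{\IMG(F)}$, which is not described by powers of inertia generators. Your fallback goes in the wrong direction. The normal core is trivially still mixing (it is a subgroup of $G_m$) and has finite index, but $G'/\mathrm{core}$ only embeds in a product of copies of $G'/G_m$. It need not be a Von Dyck group: two distinct conjugates with cyclic quotient of order $3$ already give $C_3\times C_3$. The missing idea is to pass up to the normal closure rather than down to the core.
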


\textit{Von Dyck groups} or \textit{triangle groups} are groups given by the presentation
\begin{align*}
\Delta(n_1,n_2,n_3) := \<a_1,a_2,a_3 \,\,| \,\, a_i^{n_i}, \, a_1 a_2 a_3>.
\end{align*}
It is well-known that a Von Dyck group is finite if and only if 
$$\frac{1}{n_1} + \frac{1}{n_2} + \frac{1}{n_3} > 1.$$

This diophantine inequality has finitely many solutions and the groups obtained in each case are well-known (see \cref{theorem: classification Von Dyck groups}). 

The fact that Von Dyck groups appear in \cref{theorem: every rational is virtually mixing} is not a coincidence. Von Dyck groups are exactly the finite subgroups of $\Gal(\K-(x)/\K-)$, which corresponds to the function field of $\PP^1(\K-)$, evincing a relation between mixing subgroups and algebraic geometry. 

Modifying slightly the notation in \cite{Adams2025}, we say that a rational function $f$ is a \textit{dynamical pullback} if there exists an irreducible smooth projective curve $C$, a Galois cover $\pi: C \rightarrow \PP^1(\K-)$ of degree greater than $1$ and an endomorphism $F: C \rightarrow C$ such that the diagram
\begin{equation*}
\centering
\begin{tikzcd}
C \arrow{rr}{F} \arrow{dd}{\pi} & & C \arrow{dd}{\pi} \\
 & & \\
\PP^1(\K-) \arrow{rr}{f} & & \PP^1(\K-)
\end{tikzcd}
\end{equation*}
is a pullback and $K(\pi^{-1}(t))$ is contained in $K_\infty(f,t)$. By the Riemann-Hurwitz formula, the genus of $C$ can be either $0$ (and so $C$ is isomorphic to $\PP^1(\K-)$) or $1$ (and so $C$ is an elliptic curve). In the case that the genus is zero, then the function field of $\PP^1(\K-)$ embeds into the function field of $C$, and $\Gal(\K-(C)/K(\PP^1(K))$ is a finite subgroup of $\Gal(\K-(x)/\K-)$, which corresponds to a finite Von Dyck group. Well-known examples of dynamical pullbacks are Lattés maps and Chebyshev polynomials. 

As it was mentioned previously, this relation is not a coincidence and dynamical pullbacks can be characterized in terms of the maximal mixing subgroup of $G_\infty(\Ksep,f,t)$. If $G$ is a virtually mixing group with delay constant $N$, we define the \textit{maximal mixing subgroup} as
\begin{align*}
G_M := \bigcap_{n \geq 1} \bigcap_{v \in \mathcal{L}_{n+N}} \St_G(n)_v,
\end{align*}
where $\mathcal{L}_{n+N}$ is the set of vertices at level $n+N$ of the tree $T$. One can prove (see \cref{proposition: properties GM}) that $G_M$ is a mixing subgroup of $G$ and any other mixing subgroup of $G$ is contained in $G_M$. The characterization is the following:

\begin{Theorem}
Let $f$ be a tamely ramified rational function of degree $d \geq 2$ defined over a field $K$ such that $G_\infty(\Ksep,f,t)$ is a martingale group. Then, $f$ is a dynamical pullback if and only if the maximal mixing subgroup of $G_\infty(\Ksep,f,t)$ is not $G_\infty(\Ksep,f,t)$.
\label{theorem: dynamical pullback maximal mixing subgroup}
\end{Theorem}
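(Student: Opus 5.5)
We prove the two implications separately, working throughout with $G = G_\infty(\Ksep,f,t)$. By \cref{theorem: every rational is virtually mixing} it is virtually mixing with some delay constant $N$. By \cref{proposition: properties GM}, $G_M$ is closed, normal and of finite index, and it contains every mixing subgroup, in particular the explicit subgroup $G_m$. The key is to translate $G_M$ into Galois theory. Let $M$ be the fixed field of $G_M$ inside $K_\infty := \Ksep(\bigcup_n f^{-n}(t))$. Then $M/\Ksep(t)$ is a finite Galois extension with group $G/G_M$, and $M$ is the function field of an irreducible smooth projective curve $C$. It comes with a Galois cover $\pi: C \to \PP^1$ of degree $[G:G_M]$ satisfying $\Ksep(\pi^{-1}(t)) = M \subseteq K_\infty$.

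\emph{Pullback $\Rightarrow$ $G_M \neq G$.} Suppose $\pi: C\to\PP^1$ of degree $>1$ and $F$ realize $f$ as a dynamical pullback, and let $E=\Ksep(\pi^{-1}(t))\subseteq K_\infty$ with $H=\mathrm{Gal}(K_\infty/E)$, a proper closed normal subgroup of $G$. I would show $G_M\le H$. Fix $n$ and a vertex $v\in f^{-(n+N)}(t)$. The pullback square says that the field $\Ksep(v)(\pi^{-1}(v))$ is obtained from $E$ by adjoining $v$ along $F$. Consequently, for $g\in\St_G(n+N)$ fixing $v$, the section $g|_v$ acts on $\pi^{-1}(v)$ through the image of $g$ in $\mathrm{Gal}(E/\Ksep(t))$. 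Taking $n$ large enough that $\St_G(n)$ acts trivially on $E$ (possible because $E\subseteq K_\infty$ is finite over $\Ksep(t)$, so it lies in some level field), every element of $\St_G(n)_v$ fixes $\pi^{-1}(v)$, viewed in the tree below $v$. Identifying the subtree at $v$ with $T_t$ via the self-similarity, this gives $\St_G(n)_v\le H$. Hence $G_M\le H\neq G$.

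\emph{$G_M\neq G$ $\Rightarrow$ pullback.} Take $C,\pi$ as constructed from $M$; here $\deg\pi=[G:G_M]>1$. We must produce $F$. The defining property of $G_M$ is that it is contained in all the sections $\St_G(n)_v$ for deep $v$. Dually, the subfield $M$ is stable under the substitution $t\mapsto f(t)$ in the following sense. Let $\phi$ denote the embedding of $\Ksep(t)$ into itself sending $t$ to $f(t)$, which extends to $K_\infty$ by sending $f^{-n}(t)$ to $f^{-n-1}(t)$ below a chosen vertex. The aim is to show $\phi(M)\subseteq M\cdot\Ksep(t)$, or more precisely that $M\,\Ksep(t)\supseteq \phi(M)$ inside $K_\infty$. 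This inclusion gives a dominant map $F: C\to C$ with $\pi\circ F=f\circ\pi$. The main obstacle is checking that this square is actually a pullback, i.e. that $\deg F=\deg f$ and that $M$ and $\Ksep(t)$ are linearly disjoint over $\phi(\Ksep(t))$. For this I would combine two facts: normality of $G_M$, and the martingale hypothesis, which yields $\St_G(1)_v$ transitive and hence makes the fibre products irreducible. Equality of degrees then follows from the fact that $G/G_M$ and its image under sections have the same order, as the Von Dyck quotient is preserved. Finally, the genus restriction ($0$ or $1$) follows from Riemann–Hurwitz and need not be proved separately.

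For the obstacle step I would first try to identify $G_M$ with the explicit $G_m$ from \cref{theorem: every rational is virtually mixing}(3). That subgroup is generated by conjugates of powers of inertia generators, so $M$ is the maximal subextension where each post-critical branch point has ramification index dividing the orbifold weight. Compatibility with $f$ then reduces to the orbifold relation $\nu(f(x)) \mid \nu(x)\deg_x f$, which makes $F$ exist and be étale over the orbifold; this gives the pullback property directly.
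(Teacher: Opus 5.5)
The forward implication is fine as a sketch. Your identity $\Ksep(v)(\pi^{-1}(v))=\Ksep(v)\cdot E$ shows directly that $\St_G(n)_v\le \mathrm{Gal}(K_\infty/E)$ for deep $v$. This re-derives by hand the part of Adams' theorem (\cref{theorem: dynamical pullback project invariant}) that the paper cites. The paper argues contrapositively: if $G_M=G$, then every project-invariant open normal $N$ satisfies $G\ge N\ge N_v\ge\St_G(m)_v=G$.

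The reverse implication has a genuine gap. Everything rests on the stability $\phi(M)\subseteq M$. In group terms, for $y\in\mathcal{L}_1$, this says $\varphi_y^{-1}(G_M)\le G_M$. You try to get it "dually" from $G_M\le\St_G(n)_v$, but that containment only says elements of $G_M$ occur as sections of deep level-stabilizer elements. Every mixing subgroup has this property, for instance $G_m$, which need not be project invariant. So it cannot be what makes the fixed field $f$-stable, and it says nothing about where elements with a given section lie.

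The missing ingredient is project invariance of $G_M$, \cref{proposition: properties GM}(3). It comes from $G_M$ being the \emph{full} intersection $\bigcap\St_G(n)_v$: sections of lifts at deeper vertices are again lifts. With it, the paper finishes at once by Adams' theorem, since $G_M$ is closed of finite index, hence open.

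If you want to avoid Adams, project invariance plus fractality already gives the square by counting indices:
\begin{itemize}
\item project invariance gives $G_M\cap\st_G(y)\le\varphi_y^{-1}(G_M)$;
\item the latter has index $[G:G_M]$ in $\st_G(y)$, because $\varphi_y$ is onto;
\item $[\st_G(y):G_M\cap\st_G(y)]=[G_M\st_G(y):G_M]\le[G:G_M]$.
\end{itemize}
So equality holds throughout: $G_M\cap\st_G(y)=\varphi_y^{-1}(G_M)$ and $G_M\st_G(y)=G$. In fields, this says the copy of $M$ over $\Ksep(y)$ equals $M\cdot\Ksep(y)$, with $M\cap\Ksep(y)=\Ksep(t)$. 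That gives $F$, $\deg F=d$ and the irreducible fibre product all at once. The martingale hypothesis and the remark about preserving the Von Dyck quotient play no role here.

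Your fallback does not repair this, for three reasons.
\begin{itemize}
\item The paper proves $G_m=G_M$ only under the extra hypothesis that $G_m$ is project invariant (\cref{theorem: Gm is GM}), so you cannot identify the two in general.
\item $G_m$ is generated by $g_p^{e_p}$, and the exponents $e_p$ are often $1$; they are not the orbifold weights. Since $g_p$ has order exactly $\nu_f(p)$ (\cref{proposition: order inertia generators}), the field cut out by ``ramification index dividing $\nu_f(p)$'' is all of $K_\infty$, not a finite extension.
\item The divisibility you quote is reversed: by definition $\nu_f(x)e_f(x)\mid\nu_f(f(x))$.
\end{itemize}
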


The nature of $G_M$ is theoretical and its definition makes it hard to explicitly calculate it in practice. However, the subgroup $G_m$ mentioned in \cref{theorem: every rational is virtually mixing} is explicit (see \cref{definition: Gm}). Fortunately, the subgroups coincide in certain cases.

If $G$ is a self-similar group and $H \leq G$, we say that $H$ is \textit{project invariant} if $H_v \leq H$ for all $v \in T$.

\begin{Theorem}
Let $f$ be a tamely ramified rational function of degree $d \geq 2$ defined over a field $K$ such that $G = G_\infty(\Ksep,f,t)$ is a martingale group. Let $G_M$ be the maximal mixing subgroup of $G$ and $G_m$ the subgroup from \cref{theorem: every rational is virtually mixing}. If $G_m$ is project invariant, then $G_m = G_M$.  
\label{theorem: Gm is GM}
\end{Theorem}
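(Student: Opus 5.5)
One inclusion is immediate. By \cref{theorem: every rational is virtually mixing}, $G_m$ is a mixing subgroup of $G$ with delay constant $N$, so $G_m \leq \St_G(n)_v$ for every $n\ge 1$ and every $v\in\mathcal{L}_{n+N}$. By \cref{proposition: properties GM}, every mixing subgroup is contained in $G_M$, hence $G_m \leq G_M$. The content of the statement is therefore the reverse inclusion $G_M \le G_m$, and the project invariance of $G_m$ will be used only there.

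For the reverse inclusion I will exhibit one level $n$ and one vertex $v\in\mathcal{L}_{n+N}$ with $\St_G(n)_v \le G_m$. Then $G_M \le \St_G(n)_v \le G_m$ follows directly from the definition of $G_M$ as an intersection.

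I choose $n$ with $\St_G(n)\le G_m$. This is possible because $G_m$ is closed of finite index in $G$ by \cref{theorem: every rational is virtually mixing}(2), hence open in the profinite group $G$. The level stabilizers form a base of neighbourhoods of the identity, so some $\St_G(n)$ lies in $G_m$. Now take any $v\in\mathcal{L}_{n+N}$. An element $g\in\St_{\St_G(n)}(v)$ lies in $G_m$ and fixes $v$, so $g|_v\in (G_m)_v$. Project invariance gives $(G_m)_v\le G_m$. Hence $\St_G(n)_v\le (G_m)_v\le G_m$, which is what we need.

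The step I expect to need the most care is checking that the objects match the definitions. Specifically, $(G_m)_v$ must mean sections at $v$ of elements of $G_m$ fixing $v$, and this is the notion under which $\St_G(n)_v$ is taken. Also, project invariance must be stated for all vertices, including those at level $n+N$; it is. If $G_m$ were only known to have finite index, and not to be closed, I would instead argue through the closure. That is harmless here, since part (2) of \cref{theorem: every rational is virtually mixing} asserts closedness. The martingale hypothesis enters only through the applicability of \cref{theorem: every rational is virtually mixing} and of the results about $G_M$ in \cref{proposition: properties GM}.
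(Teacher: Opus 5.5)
Your proof is correct and follows the same argument as the paper. The paper proves this via its proposition on project-invariant mixing subgroups: since $G_m$ is open, some $\St_G(n)\le G_m$; project invariance gives $\St_G(n)_v\le (G_m)_v\le G_m$; and $G_m\le G_M$ holds by maximality. The only difference is that you bound $G_M$ by a single term $\St_G(n)_v$ of the defining intersection, which is slightly more direct than the paper's intersection over a sequence of vertices $v_n$.
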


Finally, we show applications to concrete problems in number theory. If $G$ is a closed subgroup of $\Aut(T)$, then $G$ has a unique normalized Haar measure $\mu_G$. The \textit{fixed-point proportion} of $G$ is defined as 
\begin{align*}
\FPP(G) := \mu_G(\set{g \in G: \text{$g$ fixes at least one vertex at every level of $T$}}).
\end{align*}
The set in the definition is a Borel subset of $G$ and therefore the fixed-point proportion of $G$ is always well-defined (see \cite[Lemma 2.9]{Radi2025FPP} for an alternative proof).

There are three interesting problems where the fixed-point proportion of an iterated Galois group is involved. 

\vspace{0.75em}

\textbf{Application 1} The first application is related to the density of prime divisors of a sequence obtained from the iteration of a rational function \cite{JonesManes2012}. 
Let $K$ be a number field with ring of integers $\OK$. Let  $M_K^0$ be the set of prime ideals in $\OK$, take $a_0 \in K$ and take a rational function $f\in K(x)$ of degree $d \geq 2$. Assume that the numerator of $f^n$ is irreducible for all $n \geq 0$ and that exists $n_0 \geq 1$ such that $f^n(\infty) \neq 0$ for all $n \geq n_0$. Write 
$$P_f(a_0) := \{\mathfrak{p} \in M_K^0: v_\mathfrak{p}(f^n(a_0)) > 0  \text{ for some } n \geq 0, \text{ with } f^n(a_0) \neq 0, \infty \},$$ 
i.e. $P_f(a_0)$ is the set of primes dividing some non-zero iterate $f^n(a_0)$. If $\mathcal{D}$ denotes the Dirichlet density, then
\begin{align*}
\mathcal{D}(P_f(a_0)) \leq \mathrm{FPP}(G_\infty(K,f,0)).
\end{align*}

\textbf{Application 2} The second application is related to the proportion of periodic points in reduction of rational functions modulo prime ideals \cite{BridyJones2022, Juul2014, Radi2026}. Let $K$ be a number field and $f$ a rational function of degree $d \geq 2$ with coefficients in $K$. If $\mathfrak{p}$ is a prime ideal in $\OK$, let $\Per(f_\mathfrak{p}, \PP^1(\FF_\mathfrak{p}))$ denote the set of periodic points of the reduction of $f$ modulo $\mathfrak{p}$ in the residue field. Then 
\begin{align*}
\Per(f,K) := \lim_{N(\mathfrak{p}) \rightarrow \infty} \frac{\# \Per(f_\mathfrak{p}, \PP^1(\FF_\mathfrak{p}))}{N(\mathfrak{p})+1} \leq \FPP(G_\infty(K,f,t)),
\end{align*}
and
\begin{align*}
\Per_{\inf}(f,K) := \liminf_{N(\mathfrak{p}) \rightarrow \infty} \frac{\# \Per(f_\mathfrak{p}, \PP^1(\FF_\mathfrak{p}))}{N(\mathfrak{p})+1} \leq \FPP(G_\infty(\K-,f,t)),
\end{align*}
where $t$ is transcendental over $\K-$.

\vspace{0.75em}

\textbf{Application 3} The third application is also related to the proportion of periodic points in finite fields but this time we fix the finite field and take extensions of it \cite{BridyJones2022}. Let $\mathbb{F}_q$ be a finite field of characteristic $p$ and $f$ a rational function with coefficients in $\mathbb{F}_q$ and degree $d$ such that $2 \leq d < p$. If $n \geq 1$, let $\Per(f, \PP^1(\FF_{q^n}))$ denote the set of periodic points of $f$ in $\PP^1(\FF_{q^n})$. Then 
\begin{align*}
\Per_{\inf}(f,\FF_q) := \liminf_{n \rightarrow \infty} \frac{\# \Per(f, \PP^1(\FF_{q^n}))}{q^n+1} \leq \FPP(G_\infty(\overline{\mathbb{F}_q},f,t)),
\end{align*}
where $t$ is transcendental over $\overline{\mathbb{F}_q}$.

These applications strengthen the interest of classifying the fixed-point proportion of iterated Galois groups for every rational function, specially of understanding for which rational functions the proportion is zero.  

The first observation is a very simple property of the fixed-point proportion: if $H \leq_f G$ and $\FPP(G) = 0$, then $\FPP(H) = 0$ (see \cite[Lemma 2.10]{Radi2025FPP} for a proof). Therefore, if $a \in K$, in order to prove that $\FPP(G_\infty(K,f,a))$ is zero, it is enough to find a finite-index extension of $G_\infty(K,f,a)$ with null fixed-point proportion. The \textit{specialization problem} states that if $a$ is not a post-critical point of $f$ and $t$ is transcendental over $K$, it is expected that $G_\infty(K,f,a) \leq_f G_\infty(K,f,t)$. This finite index condition has in fact been verified in several cases (see for example \cite{BenedettoGhiocaJuulTucker2025specialization,  BridyDoyleGhiocaTucker2021, BridyDoyleGhiocaTucker2021second, BridyTucker2019, JonesManes2012, Odoni1985}) and has been shown in \cite{Tucker2013ABC} that is related to ABC and Vojta's conjectures. For this reason, it is logic to focus on a classification of the fixed-point proportion in arithmetic iterated Galois groups. 

In \cref{Corollary: arithmetic IGG is virtually mixing}, we proved that in many cases the arithmetic iterated Galois groups are virtually mixing. The following result gives a tool to prove that the fixed-point proportion of a virtually mixing group is zero: 

\begin{Theorem}
\label{theorem: FPP virtually mixing groups}
Let $G \leq \Aut(T)$ be a martingale virtually mixing group with mixing subgroup $H$. If at every coset $S$ of $G/H$ there exists an element $s \in S$ such that $s$ fixes at least one vertex at every level of the tree, then $$\FPP(G) = 0.$$
\end{Theorem}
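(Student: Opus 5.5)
We will use the martingale approach suggested by the name: for $g$ drawn from the Haar measure $\mu_G$, let $X_n(g)$ be the number of vertices of level $n$ fixed by $g$. The martingale hypothesis (via \cref{proposition: martingale characterization}) gives that $(X_n)_{n\ge 0}$ is a martingale with respect to the filtration generated by the action on the levels $\mathcal{L}_n$. Indeed, conditioned on $g$ modulo $\St_G(n)$, each fixed vertex $v$ at level $n$ has children permuted by the section of a uniformly random element of $\St_G(n)\cdot g$ at $v$. Since $\St_G(n)$ acts transitively below $v$, the expected number of fixed children is $1$. Being nonnegative, $X_n$ converges almost surely, and as it is integer valued it is eventually constant. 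Hence
\[
\FPP(G)=\mu_G\bigl(\set{g: X_n(g)=c\ge 1 \text{ eventually}}\bigr).
\]
So it suffices to show that for each $c\ge1$ the event $E_c=\set{X_n=c \text{ for all large } n}$ is null. The standard route (as for mixing groups in \cite{FariñaRadi2026FPP}) is to show that whenever $X_n=c\ge1$, there is a uniform probability $\delta>0$ that $X_{n+k}\ne c$ for some bounded $k$. A Lévy 0--1 / Borel--Cantelli argument then kills $E_c$.

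The key step is this uniform jump estimate, and it is where virtual mixing and the coset hypothesis enter. Fix a level $m=n+N$ and a vertex $v\in\mathcal{L}_{m}$ fixed by $g$. Condition on the action of $g$ on the first $m$ levels; since $g\,\St_G(n)\supseteq g\,\St_G(m)$, finer conditioning is harmless. The section $g|_v$ of a random element of this coset is then distributed as $g_0|_v\cdot h$, where $h$ is Haar-distributed in $\St_G(n)_v$ (one needs to arrange the conditioning so that the distribution is the full Haar distribution on $\St_G(n)_v$). By virtual mixing, $\St_G(n)_v\supseteq H$, so $g|_v$ is a Haar-random element of a union of cosets of $H$, and in particular its distribution dominates $\frac{1}{[G:H]}$ times Haar measure on some coset $S=g_0|_v h_0 H$. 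The hypothesis gives $s\in S$ fixing a vertex at every level. Then $\mu_G$ of the neighbourhood $s\,\St_G(k)\cap S$ is at least $1/(|G:\St_G(k)|\,[G:H])$, for a fixed $k$ chosen so that $s$ acting on $k$ levels below $v$ changes the count. The natural choice is $k=1$: $s$ fixes at least one child. Uniformity requires bounding over only finitely many cosets, which holds since $[G:H]<\infty$ and $k$ is fixed.

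The main obstacle is getting a genuine change in $X$, rather than just "at least one fixed child". The issue is that $s$ might fix exactly one child, keeping the count equal. To handle this, I would instead use that fixed vertices of $g$ at level $n+N$ number $c$ and are independent below distinct subtrees after conditioning. By the martingale property each subtree count $Y$ has mean $1$. If $Y$ is not almost surely $1$, it equals $\ne1$ with positive probability, and the coset-uniform lower bound gives $\delta>0$. If $Y\equiv 1$ on some coset, then every element of that coset fixes exactly one vertex per level below $v$. Combining this with $H\le \St_G(n)_v$ for all large levels and level-transitivity of $H$'s closure contradicts the Haar average being exactly one fixed point per level with $H$ of finite index (a transitive group where every element fixes exactly one point is impossible for degree $>1$). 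Getting this degenerate case cleanly is the step I expect to require the most care. With $\delta$ in hand, conditional probability of staying at $c$ for $r$ further blocks of $N+1$ levels is at most $(1-\delta)^r\to0$, giving $\mu_G(E_c)=0$ and $\FPP(G)=0$.
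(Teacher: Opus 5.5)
The step that fails is your claim that, after conditioning, the counts $Y_v$ below distinct fixed vertices are independent. Virtual mixing controls only one section at a time. Given $\pi_n(g)$ and $g(v)=v$ with $|v|\ge n+N$, the section $g|_v$ is Haar-distributed on a coset of $\St_G(n)_v\supseteq H$. The joint law of $(g|_{v_1},\dots,g|_{v_c})$ is the image of $\St_G(n)$ under $h\mapsto (h|_{v_1},\dots,h|_{v_c})$, and this need not be a product of the marginals. So each $Y_v$ being non-degenerate does not stop $\sum_v Y_v$ from being almost surely constant.

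With the hypothesis you were given (each coset contains some $s$ fixing at least one vertex per level), this cannot be repaired, because the conclusion is false. Take $G=\overline{\langle a,b\rangle}$ on the binary tree with $a=\sigma$ and $b=(a,b)$; this is the closure of the infinite dihedral iterated monodromy group of $z^2-2$.
\begin{itemize}
\item $t=ab=\sigma(a,b)$ is an odometer with $t^2=(t^{-1},t)$. Hence $t^{2^n}\in\St_G(n)$ has section $t^{\pm1}$ at every $v\in\mathcal{L}_n$.
\item Together with $b|_0=a$ and $(aba)|_0=b$, this shows $G$ is fractal, martingale, and virtually mixing with $N=0$ and $H=\overline{\langle t\rangle}$, where $[G:H]=2$.
\item The identity lies in $H$, and $b=at\in aH$ fixes $1^n$ for every $n$, so the hypothesis holds.
\item However, $\pi_n(G)$ is the dihedral group of order $2^{n+1}$ acting on the $2^n$-cycle. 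Hence $\FPP(G)=\lim_n (1+2^{n-1})/2^{n+1}=1/4>0$.
\end{itemize}
Your failure mode is exactly what happens here. For $n\ge 2$, $X_n=2$ means $g$ acts on level $n$ as a reflection fixing two vertices $v_1,v_2\in\mathcal{L}_n$. On each such cone, $(Y_{v_1},Y_{v_2})$ is $(2,0)$ or $(0,2)$ with probability $1/2$ each. So each $Y_{v_i}$ is non-degenerate with mean $1$, yet $X_{n+k}=2$ almost surely for all $k$.

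The paper's proof uses a stronger hypothesis; see the version of the theorem restated in the final section. There each coset representative $s_i$ fixes infinitely many ends, so for every $r$ there is $m$ with $X_m(s_i)>r$ for all $i$. Then, for $a\in\pi_n(G)$ with $X_n(a)=r$, the paper works at a single vertex $v_a\in\mathcal{L}_{n+N}$ below a fixed vertex $u_a$ of $a$:
\begin{itemize}
\item the martingale property lets you arrange that $g$ fixes $v_a$;
\item virtual mixing lets you multiply by an element of $\St_G(n)\cap\st_G(v_a)$ so that the section at $v_a$ becomes $s_i$;
\item the subindependence lemma turns this into $\mu(X_{n+N+m}>r\mid X_n=r)\ge (d^N|\pi_m(G)|)^{-1}$;
\item the martingale strategy lemma then gives $\FPP(G)=0$.
\end{itemize}
A single subtree already pushes the count above $r$, so no independence between subtrees and no degenerate-case analysis is needed. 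Your outline (martingale convergence plus a uniform escape probability from virtual mixing at one vertex) becomes correct once you adopt the end-fixing hypothesis and aim for $X_{n+N+m}>r$ through one subtree, rather than $X\neq r$ through several.
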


\cref{theorem: FPP virtually mixing groups} is a combination of the techniques presented by the authors in \cite{FariñaJonesRadi2026corrigendum} and \cite{FariñaRadi2026FPP}, which gives rise to a tool that is applicable to a wider family of rational functions. Combining Theorems \ref{theorem: every rational is virtually mixing}, \ref{theorem: dynamical pullback maximal mixing subgroup} and \ref{theorem: FPP virtually mixing groups}, we obtain the following result:

\begin{Theorem}
\label{theorem: not dynamical pullbacks and FPP}
Let $f$ be a tamely ramified rational function of degree $d \geq 2$ defined over a field $K$ such that $G = G_\infty(\Ksep,f,t)$ is a martingale group. If $f$ is not a dynamical pullback, then $$\FPP(G_\infty(\Ksep,f,t)) = 0.$$
\end{Theorem}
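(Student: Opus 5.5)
The plan is to combine the three earlier results as the introduction indicates. Set $G = G_\infty(\Ksep,f,t)$. We want to use \cref{theorem: FPP virtually mixing groups} with the mixing subgroup $H = G_M$, the maximal mixing subgroup. Then the coset hypothesis of that theorem becomes trivial.

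First we need $G$ to be virtually mixing, so that $G_M$ is defined. Suppose $f$ is not exceptional. Then \cref{theorem: every rational is virtually mixing} applies without any restriction on $K$, and it gives a delay constant $N$ and a mixing subgroup. Suppose instead $f$ is exceptional, i.e.\ its Thurston orbifold has Euler characteristic zero. We would argue that such a tamely ramified PCF map is a dynamical pullback: power maps via $x\mapsto x^{\pm d}$ with the trivial cover, Chebyshev via $z\mapsto z+z^{-1}$, Lattès via an elliptic curve. For the Galois-cover and field conditions we would invoke the construction behind part (2) of \cref{theorem: every rational is virtually mixing} in the exceptional case. This case is therefore excluded by hypothesis.

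The hardest point is that the exceptional case of \cref{theorem: every rational is virtually mixing} assumes characteristic zero and $|K|\le 2^{\aleph_0}$, which we do not assume here. The first thing to try is to show that an exceptional orbifold always forces a nontrivial finite Von Dyck quotient of $G$, coming from the orbifold fundamental group. The Von Dyck quotient should give a nontrivial Galois cover $C\to\PP^1$ compatible with $f$, so $f$ is a pullback in any characteristic. If that fails, we would use the ``if'' direction of \cref{theorem: dynamical pullback maximal mixing subgroup}, which needs only that $G$ be martingale. We would need to check that its proof does not rely on the cardinality hypothesis.

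Now assume $f$ is not exceptional, with delay constant $N$. By \cref{proposition: properties GM}, $G_M = \bigcap_{n\ge1}\bigcap_{v\in\mathcal L_{n+N}} \St_G(n)_v$ is a mixing subgroup of $G$ of finite index, since it contains the mixing subgroup from \cref{theorem: every rational is virtually mixing}. Since $f$ is not a dynamical pullback and $G$ is martingale, \cref{theorem: dynamical pullback maximal mixing subgroup} gives $G_M = G$. Hence $G$ itself is virtually mixing with mixing subgroup $H = G$, and $G/H$ has the single coset $G$.

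Finally, we need one element of $G$ fixing a vertex at every level, and the identity does this. So the hypotheses of \cref{theorem: FPP virtually mixing groups} hold: $G$ is martingale and virtually mixing with mixing subgroup $G$, and the unique coset contains $\id$. That theorem yields $\FPP(G) = 0$.
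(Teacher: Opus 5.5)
Your main line matches the paper's proof. \cref{theorem: every rational is virtually mixing} makes $G$ virtually mixing, so $G_M\supseteq G_m$ has finite index. The ``if'' direction of \cref{theorem: dynamical pullback maximal mixing subgroup} then gives $G_M=G$ when $f$ is not a dynamical pullback. Finally \cref{theorem: FPP virtually mixing groups} with $H=G$ and $s=\id$ yields $\FPP(G)=0$. The paper never splits off exceptional maps.

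Your split is where a genuine error enters. The power maps $x^{\pm d}$ are exceptional (signature $(\infty,\infty)$), but they are \emph{not} dynamical pullbacks. The definition requires a Galois cover of degree greater than $1$, so the ``trivial cover'' is inadmissible, and no admissible cover exists. Every point in the forward orbits of $0$ and $\infty$ is totally ramified, so by \cref{lemma: post ritical point empty good pairs} we get $\Gamma_{f,p}=\emptyset$ and $e_p=1$ for both post-critical points. Hence $G_m=G$, and $G$ is mixing by \cref{theorem: Gm is a mixing subgroup}. The ``only if'' direction of \cref{theorem: dynamical pullback maximal mixing subgroup} uses only Adams' criterion (\cref{theorem: dynamical pullback project invariant}) and the mixing property, not \cref{theorem: every rational is virtually mixing}. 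It therefore shows $x^{\pm d}$ is not a pullback. So these maps satisfy the hypotheses of the statement, and your argument says nothing about them.

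The repair is easy and works in every tame characteristic: the computation above already shows $G$ is mixing, so \cref{theorem: FPP virtually mixing groups} applies with $H=G$. Chebyshev maps, by contrast, are genuine pullbacks via $z\mapsto z+z^{-1}$, so excluding them is fine.

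For the Lattès-type signatures $(2,2,2,2)$, $(2,4,4)$, $(2,3,6)$, $(3,3,3)$ outside characteristic zero with $|K|\le 2^{\aleph_0}$, your fallback is circular. The ``if'' direction of \cref{theorem: dynamical pullback maximal mixing subgroup} obtains $[G:G_M]<\infty$ from \cref{theorem: every rational is virtually mixing}, i.e.\ from exactly the hypothesis you are trying to avoid. What is needed there is a real proof of one of two things:
\begin{itemize}
\item that such maps are dynamical pullbacks, for instance via the tame genus-one orbifold cover you sketch, together with a lift of $f$ and the field condition $K(\pi^{-1}(t))\subseteq K_\infty(f,t)$; or
\item a direct proof of virtual mixing.
\end{itemize}
In fairness, the paper's short proof also passes over this point, since it cites \cref{theorem: every rational is virtually mixing} without checking its exceptional-case hypothesis. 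But in your write-up it remains an unexecuted plan, and the power-map case is handled incorrectly.
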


\cref{theorem: not dynamical pullbacks and FPP} answers the longstanding open problem of calculating the fixed-point proportion of geometric iterated Galois groups for rational functions not arising from algebraic curves and confirms the belief accepted in the area that the only possible iterated Galois groups with positive fixed-point proportion may only come from maps induced by algebraic curves. This problem has been of interest since Odoni's first article in 1985, where he used the fixed-point proportion of iterated Galois groups to prove that the Dirichlet density of prime numbers dividing at least one term in the Sylvester sequence is zero \cite{Odoni1980}. \cref{theorem: not dynamical pullbacks and FPP} also extends the result of Fariña-Asategui and the author in \cite{FariñaRadi2026FPP}, where the fixed-point proportion of geometric iterated Galois groups of polynomials was classified.

Exceptional rational maps are dynamical pullbacks but the converse is not true. Thus, more can still be proved:

\begin{Conjecture}[{\cite[Conjecture 7]{FariñaRadi2026FPP}}]
Let $f$ be a tamely ramified rational function of degree $d \geq 2$ defined over a field $K$ such that $G_\infty(\Ksep,f,t)$ is a martingale group. Then $\FPP(G_\infty(\Ksep,f,t)) > 0$ if and only if $f$ is exceptional and not linearly conjugate to $x^{\pm d}$.
\end{Conjecture}

Apart from the case $x^{\pm d}$, the only case where the fixed-point proportion of a exceptional map is known is when $f$ is linearly conjugate to $\pm T_d$, where $T_d$ is the Chebyshev polynomial of degree $d$. This was computed by Jones in the case of $K = \CC$ \cite{Jones2012} and by Fariña-Asategui and the author in the case of arbitrary fields \cite{FariñaRadi2026FPP}. A classification of the possible values of the fixed-point proportion for every exceptional rational function is not known yet. 

On the other hand, there are no examples of the fixed-point proportion of geometric iterated Galois groups for rational maps that are dynamical pullbacks but are not exceptional. This case seems to require a more explicit description of $G_\infty(\Ksep,f,t)$. This is related to the problem of \textit{semirigidity of iterated Galois groups}, where the goal is to express $G_\infty(\Ksep,f,t)$ as the closure of a subgroup generated by elements given by an explicit wreath recursion. Semirigidity is one of the hardest problems in the area and progress has been made in specific cases such as quadratic polynomials \cite{Pink2013nPCF, Pink2013PCF}, unicritical polynomials \cite{AdamsHyde2025}, quadratic rational functions with periodic critical points \cite{Ejder2026}, a family of rational functions of degree $3$ \cite{HlushchankaLukinaWardell2025} and for other concrete quadratic rational functions \cite{EjderGolaskaKara2026, EjderKaraOzman2024}. A remarkable observation is that the proof of \cref{theorem: not dynamical pullbacks and FPP} does not require an explicit description of $G_\infty(\Ksep,f,t)$.

\cref{theorem: not dynamical pullbacks and FPP} serves directly to Application 2. This extends the results of the author in \cite{Radi2026}, where a whole classification of the values of $\Per_{\inf}(f,K)$ for any polynomial $f$ and any number field $K$ were given. 

\begin{Corollary}
Let $f$ be a rational function of degree $d \geq 2$ defined over a number field $K$ such that $G_\infty(\K-,f,t)$ is a martingale group. If $f$ is not a dynamical pullback, then 
\begin{align*}
\liminf_{N(\mathfrak{p}) \rightarrow \infty} \frac{\# \Per(f_\mathfrak{p}, \PP^1(\FF_\mathfrak{p}))}{N(\mathfrak{p})+1} = 0.
\end{align*} 
\end{Corollary}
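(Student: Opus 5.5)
The Corollary will follow from Application 2 combined with \cref{theorem: not dynamical pullbacks and FPP}. The plan is to identify the hypotheses, check that the inequality in Application 2 applies to our $f$, and then insert the vanishing of the fixed-point proportion.

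\textbf{Step 1: tame ramification.} Since $K$ is a number field, it has characteristic zero. Hence every rational function $f\in K(x)$ of degree $d\ge 2$ is tamely ramified, and the standing hypotheses of \cref{theorem: not dynamical pullbacks and FPP} are satisfied with $K^{\textrm{sep}}=\overline{K}$.

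\textbf{Step 2: vanishing of the fixed-point proportion.} By assumption, $G_\infty(\overline K,f,t)$ is a martingale group and $f$ is not a dynamical pullback. \cref{theorem: not dynamical pullbacks and FPP} therefore gives
\begin{align*}
\FPP(G_\infty(\overline{K},f,t))=0 .
\end{align*}

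\textbf{Step 3: Application 2.} The second inequality of Application 2 states that
\begin{align*}
\Per_{\inf}(f,K)\le \FPP(G_\infty(\overline{K},f,t)),
\end{align*}
where $t$ is transcendental over $\overline K$. Combining this with Step 2 gives $\Per_{\inf}(f,K)\le 0$. Since $\Per_{\inf}(f,K)$ is a liminf of nonnegative ratios, it is also $\ge 0$. Hence it equals $0$, which is the claim.

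\textbf{Main obstacle.} The only delicate point is making sure the inequality from \cite{Radi2026} / \cite{BridyJones2022} applies exactly as quoted. In particular, it must not require extra hypotheses, such as good reduction, which can in any case be avoided by discarding the finitely many primes of bad reduction without affecting the liminf. It must also use the geometric group over $\overline K$ rather than over $K$. I would check the statement in the cited source: the liminf over primes is controlled by the primes splitting completely in the constant field extension $\overline K\cap K_\infty(f,t)$, and for those primes Frobenius lies in the geometric group, so a Chebotarev argument bounds the proportion by the geometric FPP. With that confirmed, the deduction above is immediate.
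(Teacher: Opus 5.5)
Your proposal is correct and follows the paper's route exactly: the paper obtains the corollary by inserting $\FPP(G_\infty(\K-,f,t))=0$ from \cref{theorem: not dynamical pullbacks and FPP} into the cited inequality $\Per_{\inf}(f,K)\le \FPP(G_\infty(\K-,f,t))$ of Application 2, and \cref{theorem: not dynamical pullbacks and FPP} applies because characteristic zero makes $f$ tamely ramified with $\Ksep=\K-$. The paper treats that inequality as a black box from the literature, so your remarks about checking its hypotheses in the source are reasonable caution rather than a missing step.
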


It also serves directly to Application 3.

\begin{Corollary}
Let $f$ be a tamely ramified rational function of degree $d \geq 2$ defined over a finite field $\mathbb{F}_q$ of characteristic $p$ such that $2 \leq d < p$. If $G_\infty(\FF_q,f,t)$ is a martingale group and $f$ is not a dynamical pullback, then 
\begin{align*}
\liminf_{n \rightarrow \infty} \frac{\# \Per(f, \PP^1(\FF_{q^n}))}{q^n+1} = 0.
\end{align*} 
\end{Corollary}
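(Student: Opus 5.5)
The plan is to combine Application 3 with \cref{theorem: not dynamical pullbacks and FPP}. Here $K=\FF_q$ and $K^{\mathrm{sep}}=\overline{\FF_q}$, since finite fields are perfect. By Application 3 (from \cite{BridyJones2022}), for $f\in\FF_q(x)$ of degree $2\le d<p$ we have
\begin{align*}
\liminf_{n \rightarrow \infty} \frac{\# \Per(f, \PP^1(\FF_{q^n}))}{q^n+1} \leq \FPP(G_\infty(\overline{\FF_q},f,t)).
\end{align*}
The left-hand side is nonnegative. So it suffices to show that $\FPP(G_\infty(\overline{\FF_q},f,t))=0$.

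To apply \cref{theorem: not dynamical pullbacks and FPP} with $K=\FF_q$, I need three things.
\begin{enumerate}[\normalfont(1)]
\item $f$ is tamely ramified. This is assumed, and it is also automatic: every local degree is at most $d<p$.
\item $f$ is not a dynamical pullback. This is assumed.
\item The geometric group $G=G_\infty(\overline{\FF_q},f,t)$ is a martingale group. Here the hypothesis is on the arithmetic group $A=G_\infty(\FF_q,f,t)$ instead.
\end{enumerate}
The only real step is therefore transferring the martingale property from $A$ to $G$.

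For this transfer I would use the exact sequence of \cref{proposition: short exact sequence arithmetic and geometric Galois groups}. It shows that $G$ is a closed normal subgroup of $A$ with quotient $\Gal(\overline{\FF_q}\cap K_\infty(f,t)/\FF_q)$. This quotient is procyclic, being a quotient of $\hat{\ZZ}$. Transitivity of $\St_A(n)$ below each level-$n$ vertex does not pass to subgroups in general, so I would argue field-theoretically instead. Transitivity of $\St_A(n)$ on the descendants of a level-$n$ vertex $v$ at each deeper level is equivalent to irreducibility of $f^m(x)-v$ over $\FF_q(t)(f^{-n}(t))$ for all $m$. The corresponding statement for $G$ is irreducibility over $\overline{\FF_q}(t)(f^{-n}(t))$.

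An irreducible polynomial can only factor after a constant field extension, so the two irreducibility statements agree exactly when adjoining constants does not split these polynomials. That holds when the splitting fields of $f^m(x)-v$ over $\FF_q(t)(f^{-n}(t))$ are geometric, that is, have no new constants. If this fails, the fallback is to pass to a finite extension $\FF_{q^k}$. The liminf over the subsequence $q^{kn}$ is at least the full liminf, so it suffices to bound it. Over $\FF_{q^k}$ we can arrange that all constants in the relevant finite levels already lie in the base field; this is controlled because the degree $d$ is fixed. Then I would apply Application 3 over $\FF_{q^k}$.

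I expect the main obstacle to be exactly this descent of the martingale property to the geometric group. If the paper's convention is that the condition on $A$ is intended to imply the condition on $G$ (with infinite constant extensions handled as above), this step collapses to a remark. After it, the corollary follows immediately from \cref{theorem: not dynamical pullbacks and FPP} and the Application 3 inequality.
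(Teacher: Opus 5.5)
Your reduction matches the paper's. The paper deduces this corollary directly from the Application~3 inequality and \cref{theorem: not dynamical pullbacks and FPP} with $K=\FF_q$ and $\Ksep=\overline{\FF_q}$, and your remarks on perfectness, nonnegativity and automatic tameness are fine. The gap is the step you flag yourself: transferring the martingale property from $A=G_\infty(\FF_q,f,t)$ to $G=G_\infty(\overline{\FF_q},f,t)$. Your argument does not prove it.

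The condition that splitting fields over $\FF_q(t)(f^{-n}(t))$ acquire no new constants is just a restatement of what is needed. It does not follow from the hypotheses. The fallback to $\FF_{q^k}$ is circular, for the following reasons.
\begin{enumerate}
\item It changes nothing on the side you need. Since $\overline{\FF_{q^k}}=\overline{\FF_q}$, Application~3 over $\FF_{q^k}$ bounds the subsequence liminf by $\FPP(G)$ for the \emph{same} geometric group, so you still need $G$ to be martingale.
\item The hypothesis gets weaker, not stronger. $G_\infty(\FF_{q^k},f,t)$ is a subgroup of $A$, and the martingale property does not pass to subgroups.
\item Once $\FF_{q^k}$ contains the field of constants $\overline{\FF_q}\cap K_\infty(f,t)$, the sequence of \cref{proposition: short exact sequence arithmetic and geometric Galois groups} over $\FF_{q^k}$ has trivial quotient. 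Then $G_\infty(\FF_{q^k},f,t)=G$, so ``arranging the constants to lie in the base field'' assumes exactly what you want to prove.
\item No such finite $k$ need exist. The martingale condition involves every level, and the field of constants can be an infinite extension of $\FF_q$. For example, for $f=x^2-2$ with $q$ odd, $K_\infty(f,t)$ contains $\zeta+\zeta^{-1}$ for every $2$-power root of unity $\zeta$. Nothing bounds the constants in terms of $d$ alone.
\end{enumerate}

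The paper itself offers no such descent. \cref{theorem: not dynamical pullbacks and FPP} requires $G_\infty(\overline{\FF_q},f,t)$ to be martingale, so the paper's ``direct'' deduction in effect uses the hypothesis for the geometric group. This is how it is phrased in the number-field corollary just before. Because $\St_G(n)\le\St_A(n)$, the automatic implication only runs from geometric to arithmetic. Normality of $G$ in $A$ with procyclic quotient only tells you that the $\St_G(n)$-orbits on the children of a vertex form blocks permuted by $\St_A(n)$, not that there is a single orbit.

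So either read the hypothesis as being on $G_\infty(\overline{\FF_q},f,t)$, in which case your first paragraph is already a complete proof and coincides with the paper's, or you need a genuinely new argument for the descent. In either case, drop the $\FF_{q^k}$ detour.
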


\subsection*{Organization}
The article will be separated in seven sections. \cref{section: Introduction} is this introduction. In \cref{section: groups acting on trees} we state the notation and introduce the concept of virtually mixing groups. In \cref{section: iterated Galois groups}, we review the main properties of geometric iterated Galois groups. In \cref{section: energy of paths}, we develop the strategy of transportation cost in graphs. In particular, we prove in \cref{theorem: Gm is a mixing subgroup} that the delay constant only depends on the ramification portrait. In \cref{section: iterated Galois groups are virtually mixing} we prove that the iterated Galois group of rational functions are virtually mixing, which correspond to \cref{theorem: every rational is virtually mixing} and \cref{Corollary: arithmetic IGG is virtually mixing}. In \cref{section: dynamical pullbacks} we prove \cref{theorem: dynamical pullback maximal mixing subgroup} and \cref{theorem: Gm is GM} and finally in \cref{section: Fixed-point proportion of virtually mixing groups} we prove \cref{theorem: FPP virtually mixing groups} and \cref{theorem: not dynamical pullbacks and FPP}.

\section{Groups acting on trees}
\label{section: groups acting on trees}

\subsection{Basic definitions}

A \textit{$d$-regular rooted tree} is a pointed graph $(\mathscr{T}, \emptyset)$, where $\mathscr{T}$ is a tree and $\emptyset$ is a vertex of $\mathscr{T}$ such that every vertex different to $\emptyset$ has degree $d+1$ and $\emptyset$ has degree $d$. The vertex $\emptyset$ is called the \textit{root}. As it is typically done in geometric group theory, we will depict the tree with the root on top and then the rest of the vertices hanging down as a Christmas tree. Given a vertex $v$, we denote $\abs{v}$ to the distance in the tree between $v$ and $\emptyset$. For any $n \geq 0$, the \textit{$n$th level} of the tree $\mathscr{T}$ is the set 
$$\mathcal{L}_n := \set{v \in \mathscr{T}: \abs{v} = n}$$ and given $0 \leq m \leq \infty$, the truncated rooted tree is the pointed subgraph $(\mathscr{T}^m, \emptyset)$ where 
\begin{align*}
\mathscr{T}^m := \set{v \in \mathscr{T}: \abs{v} \leq m}.
\end{align*}
Note that $\mathscr{T}^\infty = \mathscr{T}$. We write $\partial \mathscr{T}$ the set of infinite simple paths of $\mathscr{T}$ starting from the root. The elements of $\partial \mathscr{T}$ we will call \textit{ends}. 

We denote $\Aut(\mathscr{T})$ the group of graph automorphisms of $\mathscr{T}$, acting on $\mathscr{T}$ on the left. Note that as the root has smaller degree than the other vertices and automorphisms of a graph must preserve adjacency, the action of $\Aut(\mathscr{T})$ on $\mathscr{T}$ can be separated in individual actions $\Aut(\mathscr{T}) \curvearrowright \mathcal{L}_n$ for every $n \geq 0$. This implies that for every $m \geq n \geq 0$ we have actions $\Aut(\mathscr{T}) \curvearrowright \mathscr{T}^n$ which induce surjective group homomorphisms 
\begin{align*}
\pi_n: \Aut(\mathscr{T}) \rightarrow \Aut(\mathscr{T}^n) \quad \text{ and } \quad \pi_{m,n}: \Aut(\mathscr{T}^m) \rightarrow \Aut(\mathscr{T}^n)
\end{align*}
obtained by restricting the action of the elements to the first $n$ levels. 

Given a vertex $v \in \mathscr{T}$, we write $\mathrm{st}(v)$ for the \textit{stabilizer of the vertex $v$} in the natural action of $\Aut(\mathscr{T})$ on $\mathscr{T}$. Given a natural number $n \geq 1$, the \textit{stabilizer of level $n$} is defined as $$\mathrm{St}(n) = \bigcap_{v \in \mathcal{L}_n} \mathrm{st}(v).$$ 

By adjacency, this implies that elements in $\St(n)$ do not act on the first $n$ levels, which implies that $\ker(\pi_n) = \St(n)$. In particular $\St(n)$ is a normal subgroup of finite index in $\Aut(\mathscr{T})$. 

Another way to construct $\Aut(\mathscr{T})$ is using an inverse limit. If $m \geq n \geq 0$, the transition maps are the maps $\pi_{m,n}$ and the projections are the maps $\pi_n$. This allows us to equip $\Aut(\mathscr{T})$ with the so-called \textit{congruence topology}, where $\set{\St(n)}_{n \geq 0}$ is a basis of neighborhoods of the identity. With this topology $\Aut(\mathscr{T})$ is a profinite group, which in particular implies that is Hausdorff and compact. 

A particular $d$-regular rooted tree that will be used is the \textit{canonical} $d$-regular rooted tree. Let $X := \set{1, \dots, d}$ and let $X^\NN$ be the free monoid on $X$. Use $\emptyset_X$ to denote the empty word in $X^\NN$. We construct the canonical $d$-regular rooted tree as follows:
\begin{enumerate}[\normalfont(i)]
\item the root is $\emptyset_X$.
\item The vertices of $T$ are the elements in $X^\NN$.
\item If $v = x_1 \dots x_n$ is a word of length $n$ and $w = y_1 \dots y_ny_{n+1}$ is a word of length $n+1$, then $v$ and $w$ are adjacent if and only if $x_i = y_i$ for all $1 \leq i \leq n$. 
\end{enumerate}
Level by level, the vertices will be organized from left to right in lexicographical order. We will reserve the letter $T$ to denote this tree.

If $v$ is a vertex of $T$, we denote $T_v$ the subtree of $T$ whose vertices are the vertices of the form $v w$ with $w \in X^\NN$. Note that $(T_v, v)$ is also a $d$-regular rooted tree isomorphic to $T$ via the shift map 
\begin{align*}
\sigma_v: T_v &\rightarrow T \\
vw & \mapsto w.
\end{align*}

If $g \in \Aut(T)$ and $v \in T$, by preservation of adjacency, we have $g: T_v \rightarrow T_{g(v)}$. Using the shift maps we define the \textit{section} of $g$ at $v$ by 
\begin{align*}
g|_v := \sigma_{g(v)} \cdot g \cdot \sigma_v^{-1} \in \Aut(T).
\end{align*}
The element $g|_v$ represents the action of $g$ over all the vertices below $v$.

It is not hard to check the following properties:
\begin{align}
(gh)|_v = g|_{h(v)} \cdot h|_v, \quad (g^{-1})|_v = (g|_{g(v)})^{-1} \quad  \text{ and } \quad g|_{vw} = (g|_v)|_w.
\label{equation: concatenation of sections}
\end{align}

Given $v \in T$, we define the map $\varphi_v: \st(v) \rightarrow \Aut(T)$ by $\varphi_v(g) = g|_v$. By \cref{equation: concatenation of sections} $\varphi_v$ is a continuous group homomorphism.

If $m \geq 1$, we define $g|_v^m := \pi_m(g|_v)$, which represents the action of $g$ on the first $m$ levels below the vertex $v$. In particular $g|_v^1 \in \Sym(X)$ is called the \textit{label} of $g$ at $v$. The collection $\set{g|_v^1}_{v \in T}$ of labels is called the \text{portrait} of $g$. 

We can give a description of $\Aut(T)$ as an iterated wreath product. Given $m \geq 1$, we define 
\begin{align*}
\psi_m: \Aut(T) & \rightarrow \Aut(T^m) \ltimes \left(  \Aut(T) \times \overset{d^m}{\ldots} \times \Aut(T) \right) \\
g & \mapsto  \pi_m(g) (g|_{v_1}, \dots, g|_{v_{d^m}}).
\end{align*}

This map is an isomorphism which allows us to represent the elements of $\Aut(T)$ by recursion. By a slightly abuse of notation we will write $g = \pi_m(g) (g|_{v_1}, \dots, g|_{v_{d^m}})$ without writing $\psi_m$ explicitly. The two cases we will often use are $m = 1$, where in this case $\pi_1(g) \in \Aut(T^1) \simeq \Sym(d)$, and if $g \in \St(n)$ then $\pi_n(g) = 1$ and we write $g = (g|_{v_1}, \dots, g|_{v_{d^n}})_n$.

If $(\mathscr{T}, \emptyset)$ is a $d$-regular rooted tree, a \textit{labeling} is an isomorphism $\phi: \mathscr{T} \rightarrow T$. Clearly, we must have $\phi(\emptyset) = \emptyset_X$. A labeling $\phi$ induces a group isomorphism $\rho_\phi: \Aut(\mathscr{T}) \rightarrow \Aut(T)$ given by 
$$(\rho_\phi(g))(v) := \phi(g(\phi^{-1}(v)))$$ 
for any $v \in T$.

Let $(\mathscr{T}, \emptyset)$ be any $d$-regular rooted tree and let $G$ be a subgroup of $\Aut(\mathscr{T})$. We define vertex stabilizers and level stabilizers in $G$ by restricting the ones of $\Aut(\mathscr{T})$, namely, $\st_G(v) := G \cap \st(v)$ and $\St_G(n) := \St(n) \cap G$ for all $v \in \mathscr{T}$ and $n \geq 1$. By inclusion, we have actions $G \curvearrowright \mathcal{L}_n$. We say that $G$ is \textit{level-transitive} if $G \curvearrowright \mathcal{L}_n$ is transitive for all $n \geq 1$. If $v$ is a vertex at level $n$, then we have a well-defined action $\St_G(n) \curvearrowright T_v$. We say that $G$ is a \textit{martingale group} if the action $\St_G(n) \curvearrowright T_v$ is level-transitive for all $n \geq 1$. Notice that this equivalent to $\St_G(n) \curvearrowright (T_v)^1$ acting transitively for all $n \geq 1$.

If $G$ is a subgroup of $\Aut(T)$, we say that $G$ is \textit{self-similar} if $g|_v \in G$ for all $v \in T$. Notice that in this case the map $\varphi_v: \st_G(v) \rightarrow G$ is well-defined for all $v \in T$. The \textit{projection of $G$ at $v$} is defined as
\begin{align*}
G_v := \varphi_v(\st_G(v)).    
\end{align*}

We say that a self-similar group $G \leq \Aut(T)$ is \textit{fractal} if $G$ is level-transitive and for all $v\in T$ we have $$G_v=G.$$ By induction, it is not hard to prove that for a group to be fractal, it is necessary to check the condition $G_v = G$ only in one vertex of $\mathcal{L}_1$.

\begin{lemma}
\label{lemma: fractal groups property}
If $G \leq \Aut(T)$ is fractal group, we have an element $g \in G$, a number $n \geq 1$ and two vertices $v,w \in \mathcal{L}_n$, then there exists $\widetilde{g} \in G$ such that $\widetilde{g}(v) = w$ and $\widetilde{g}|_v = g$.
\end{lemma}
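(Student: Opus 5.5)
The plan is to first use the fractal property to produce an element of $\st_G(v)$ whose section at $v$ is $g$, and then to correct where it sends $v$ by composing with a suitable element, using level-transitivity.

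\textbf{Step 1.} Since $G$ is fractal, $G_v = G$. Hence there is $h \in \st_G(v)$ with $h|_v = g$.

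\textbf{Step 2.} Since $G$ is level-transitive, there is $k \in G$ with $k(v) = w$. Put $u := k|_v \in G$, which lies in $G$ by self-similarity.

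\textbf{Step 3.} We want to cancel the section $u$. Again by fractality there is $h' \in \st_G(v)$ with $h'|_v = u^{-1}$. Set
\begin{align*}
\widetilde{g} := k\, h'\, h .
\end{align*}
Then $\widetilde{g}(v) = k(h'(h(v))) = k(v) = w$. By \cref{equation: concatenation of sections},
\begin{align*}
\widetilde{g}|_v = k|_{h'h(v)} \cdot h'|_{h(v)} \cdot h|_v = k|_v \cdot h'|_v \cdot h|_v = u\, u^{-1} g = g .
\end{align*}

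There is no real obstacle here. The only point needing care is the order of composition in the section rule $(gh)|_v = g|_{h(v)}\, h|_v$, which is why $h$ and $h'$, both of which fix $v$, are applied before $k$. If one only assumes the fractal condition at a single vertex of the first level, one would first apply the remark that this implies $G_v = G$ for all $v$, by induction on the level. That remark is stated before the lemma, so we may simply assume $G_v = G$ for every $v$.
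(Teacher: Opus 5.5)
Your proof is correct and is essentially the paper's argument. The paper takes $h_1$ with $h_1(v)=w$ and uses fractality once to get $h_2\in\st_G(v)$ with $h_2|_v=(h_1|_v)^{-1}g$, then sets $\widetilde{g}=h_1h_2$; this is your construction with $h'h$ merged into a single element of $\st_G(v)$.
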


\begin{proof}
As $G$ is level-transitive there exists $h_1 \in G$ such that $h_1(v) = w$. As $G$ is fractal there exists $h_2 \in \st_G(v)$ such that $h_2|_v = (h_1|_v)^{-1} \cdot g$. Then $\widetilde{g} = h_1 h_2$.
\end{proof}

If $G$ is a subgroup of $\Aut(\mathscr{T})$, we define self-similarity of $G$ in terms of the labeling $\phi$. Concretely, if $\phi: \mathscr{T} \rightarrow T$ is a labeling, we say that the pair $(G, \phi)$ is \textit{self-similar} if the group $\rho_\phi(G)$ is self-similar. Analogously, a pair $(G, \phi)$ is fractal if $\rho_\phi(G)$ is fractal.

If $(\mathscr{T},\emptyset)$ is a $d$-regular rooted tree and $G \leq \Aut(\mathscr{T})$ is closed in the congruence topology, then $G$ is also profinite and consequently compact. If $\mathscr{T} = T$, then $\varphi_v: G \rightarrow \Aut(T)$ is continuous.  

\begin{definition}
Let $G$ be a closed subgroup of $\Aut(T)$ and $g_1,g_2 \in G$. We say that $g_1 \sim g_2$ if $g_1$ is conjugate in $G$ to an element $g$ such that $\overline{\<g>} = \overline{\<g_2>}$, namely, they generate the same closed subgroup.
\label{definition: generate same closed subgroup}
\end{definition}

\subsection{Virtually mixing groups}

In this subsection we state some lemmas that will be used later to prove that iterated Galois groups are virtually mixing. Throughout this section, we assume that $T$ represents the canonical $d$-regular rooted tree.

We start recalling the definition of virtually mixing groups:

\begin{definition}
We say that a fractal group $G$ in $\Aut(T)$ is \textit{virtually mixing} if there exists $N \geq 0$ and 
$H \leq_f G$ such that for every $n \geq 1$ and every vertex $v \in T$ such that $\abs{v} \geq n + N$, we have 
\begin{align*}
H \leq \St_G(n)_v.  
\end{align*}
We call $N$ a \textit{delay constant} of $G$ and $H$ a \textit{mixing subgroup} of $G$.

In the case that $H$ can be taken equal to $G$, we say that $G$ is \textit{mixing}.
\label{definition: virtually mixing not intro}
\end{definition}

\begin{figure}[t]
\centering
\begin{tikzpicture}[scale=0.75]

\draw (-3.5,-6) -- (0,0) -- (3.5,-6);

\draw[dotted] (-2,-2.5) -- (2,-2.5);
\node[right] at (2,-2.5) {$n$};

\draw (0,-2.5) -- (-0.4,-2.8) -- (0.2,-3.4) -- (0,-3.7);

\draw[dotted] (-2.8,-3.7) -- (2.8,-3.7);
\node[right] at (2.9,-3.7) {$\geq n+N$};

\node[left] at (0,-3.7) {$v$};

\draw (-1.2,-6) -- (0,-3.7) -- (1.2,-6);
\node at (0,-5.3) {$H$};
\end{tikzpicture}
\caption{Representation of virually mixing on the tree.}
\label{figure: mixing group definition}
\end{figure}

\cref{figure: mixing group definition} depicts the definition of virtually mixing. Roughly speaking, virtually mixing means that the level of self-replication of the group allows us to see a finite-index subgroup acting below the vertex $v$. This is more restrictive than fractal as for every element $g$ in $H$, there is an element $\widetilde{g}$ whose section at $v$ is $g$, but the element $\widetilde{g}$ fixes not only $v$ but all the vertices at level $n$. This latter condition is not required in the definition of fractal.

The first observation is that \cref{definition: virtually mixing not intro} can be simplified to check only one infinitely many vertices. 

\begin{lemma}
Let $G$ be a fractal group in $\Aut(T)$. Then $G$ is virtually mixing if and only if there exists $N \geq 0$, a set of vertices $\set{v_i}_{i \geq 0} \subseteq T$ with $\abs{v_i} \xrightarrow[i \rightarrow +\infty]{} +\infty$ and $H \leq_f G$ such that $H \leq \St_G(\abs{v_i}-N)_{v_i}$.
\label{lemma: reduction definition virtually mixing}
\end{lemma}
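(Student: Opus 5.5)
The forward direction is immediate. If $G$ is virtually mixing with delay constant $N$ and mixing subgroup $H$, take any sequence of vertices $v_i$ with $\abs{v_i} = i + N + 1$. Applying the definition with $n = \abs{v_i} - N \geq 1$ gives $H \leq \St_G(\abs{v_i}-N)_{v_i}$, which is exactly the required condition.

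For the converse, fix $n \geq 1$ and a vertex $v$ with $\abs{v} \geq n+N$. We need $H \leq \St_G(n)_v$. The plan has two steps: a monotonicity step and a transport step.

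\emph{Monotonicity.} For $m \leq m'$ we have $\St_G(m') \leq \St_G(m)$. If $u$ lies at level $m'$, then $\St_G(m') \leq \st_G(u)$, so $\St_G(m')_u \leq \St_G(m)_u$. We also have
\begin{align*}
(g|_w)|_{w'} = g|_{ww'}.
\end{align*}
Since $\abs{v_i} \to \infty$, we choose $i$ with $\abs{v_i} \geq \abs{v}$. This reduces the claim to the following: for every vertex $u$ at the same level as some $v_i$ (or above it, via sections), $\St_G(\abs{u}-N)_u$ contains $H$.

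\emph{Transport along a level.} Let $u \in \mathcal{L}_{\abs{v_i}}$ and $h \in H$. Pick $g \in \St_G(\abs{v_i}-N)$ with $g|_{v_i} = h$. By level-transitivity, pick $k \in G$ with $k(v_i) = u$, and consider $kgk^{-1}$. Since $\St_G(m)$ is normal in $G$, this element lies in $\St_G(\abs{v_i}-N)$ and fixes $u$. Its section at $u$ is
\begin{align*}
k|_{v_i}\, h\, (k|_{v_i})^{-1}.
\end{align*}
So $\St_G(\abs{v_i}-N)_u$ contains the conjugate $c H c^{-1}$, where $c = k|_{v_i}$. By \cref{lemma: fractal groups property}, $k$ can be chosen with $k|_{v_i}$ any prescribed element of $G$, in particular the identity. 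Hence $H \leq \St_G(\abs{v_i}-N)_u$ for every $u \in \mathcal{L}_{\abs{v_i}}$.

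\emph{Going up from level $\abs{v_i}$ to level $\abs{v}$.} This is the main obstacle: sections at a deeper vertex $u = vw$ do not directly yield sections at $v$. The sections at $v$ of elements of $\St_G(n)$ form $\St_G(n)_v$, and we need every $h \in H$ to arise there, not merely $h|_w$. Writing $\ell = \abs{v_i} - \abs{v}$, I would handle this by replacing $H$ by a smaller finite-index subgroup. The natural candidate is
\begin{align*}
H' := \set{h \in H : \text{there is } \tilde h \in \St_G(\abs{v_i}-N) \text{ with } \tilde h|_v = h}.
\end{align*}
It is better to use the freedom of choosing infinitely many $i$ more cleverly: pass to a subsequence and use pigeonhole over the finitely many values of $\abs{v_i} \bmod$ nothing. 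A cleaner route is to strengthen the argument so the delay absorbs the gap. Given $n$ and $v$ with $\abs{v} \geq n+N$, choose $i$ with $\abs{v_i} \geq \abs{v}$. By the transport step, $H \leq \St_G(\abs{v_i}-N)_u$ for every $u$ at level $\abs{v_i}$. The needed compatibility then comes from fractality: an element of $\St_G(\abs{v_i} - N)$ restricted to $T_v$ lies in $\St(\abs{v_i}-N-\abs{v})$ of the subtree. I expect to need to adjust $N$ (or allow the delay constant to be $\sup_i$ of the gaps between consecutive $\abs{v_i}$, after choosing the $v_i$ along a common end using transport) so that consecutive levels are covered, and then apply monotonicity in $n$. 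Verifying this bookkeeping, especially that $\St_G(n) \supseteq \St_G(\abs{v_i}-N)$ requires $\abs{v_i} - N \geq n$, is the delicate point.
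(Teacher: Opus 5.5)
\textbf{There is a genuine gap in the converse.} Your forward direction, the monotonicity step and the transport step are correct. In the transport step you conjugate by some $k$ with $k(v_i)=u$ and $k|_{v_i}=1$; this is exactly how the paper shows that $\St_G(m)_u$ depends only on the level of $u$.

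The gap is the step you flag yourself: passing from the level $\abs{v_i}$ to a shallower level $\abs{v}$. None of your suggested fixes closes it:
\begin{itemize}
\item Monotonicity in $n$ only moves you to a smaller $n$ at the same vertex, never to a shallower vertex.
\item Your subgroup $H'$ depends on $v$ and $i$, and nothing shows it has finite index or is independent of them.
\item Enlarging $N$ to the supremum of the gaps $\abs{v_{i+1}}-\abs{v_i}$ fails because these gaps may be unbounded (e.g.\ $\abs{v_i}=2^i$). Even with bounded gaps you would still have no mechanism for reaching the intermediate levels.
\end{itemize}
The difficulty comes from viewing the deep vertex as a descendant $u=vw$ of $v$. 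Sections at $u$ are then sections of sections at $v$, which is the wrong direction.

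The missing idea is to strip the \emph{first} letter of the deep vertex instead. Let $x\in\mathcal{L}_1$ and $\abs{w}=m+N$. Suppose $\tilde g\in\St_G(m+1)\cap\st_G(xw)$ has $\tilde g|_{xw}=h$. Then:
\begin{itemize}
\item $\tilde g|_x\in G$ by self-similarity;
\item $\tilde g|_x$ fixes every vertex of $\mathcal{L}_m$, because $\tilde g$ fixes every $xy$ with $\abs{y}=m$;
\item $\tilde g|_x$ fixes $w$, and $(\tilde g|_x)|_w=\tilde g|_{xw}=h$.
\end{itemize}
Hence
\begin{align*}
\St_G(m+1)_{xw}\leq \St_G(m)_w ,
\end{align*}
which moves you up one level while keeping the same delay $N$.

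To finish, write $v_i=x_1\cdots x_L$ with $L\geq\abs{v}$. Stripping first letters $L-\abs{v}$ times gives $H\leq\St_G(\abs{v}-N)_{w}$, where $w$ is the suffix of $v_i$ of length $\abs{v}$. Your transport step replaces $w$ by $v$, and monotonicity then yields $H\leq\St_G(n)_v$ for every $n\leq\abs{v}-N$. No adjustment of $N$ and no shrinking of $H$ is needed, but without this observation the converse is not proved.
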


\begin{proof}
The direct implication is obvious. For the converse, it is enough to prove the following three observations:

\begin{enumerate}[(1)]
\item For any $v \in T$ with $\abs{v} = n+k+N$ and $k \geq 0$ we have $\St_G(n+k)_v \leq \St_G(n)_v$,
\item For any $v \in T$ such that $\abs{v} = n+N$ we have $\St_G(n+1)_{xv} \leq \St_G(n)_v$,
\item For any pair of vertices $v,w$ such that $\abs{v} = \abs{w} = n+N$ we have $\St_G(n)_v = \St_G(n)_w$
\end{enumerate}

(1) follows from the fact that $\St_G(n+k) \leq \St_G(n)$. For (2), let $g \in \St_G(n+1)_{xv}$. Then, there exists $\widetilde{g} \in \St_G(n+1) \cap \st_G(xv)$ such that $\widetilde{g}|_{xv} = g$. Then $\widetilde{g}|_x \in \St_G(n) \cap \st_G(v)$ and its section at $v$ is $g$. For (3), as $G$ is fractal, by \cref{lemma: fractal groups property}, there exists $\widetilde{g} \in G$ such that $\widetilde{g}(v) = w$ and $\widetilde{g}|_v = 1$. Conjugating by $\widetilde{g}$, we obtain 
\begin{equation*}
\St_G(n)_w = (\widetilde{g}|_v) \, \St_G(n)_v \,  (\widetilde{g}|_v)^{-1} = \St_G(n)_v.
\end{equation*}

So now the proof goes as follows: let $n \geq 0$ and $v \in T$ such that $\abs{v} \geq n+N$. Write $\abs{v} = k+n+N$ with $k \geq 0$. As $\abs{v_i} \xrightarrow[i \rightarrow +\infty]{} +\infty$, there exists $i_0$ such that $\abs{v_{i_0}} \geq \abs{v}$. Applying (2) $\abs{v_{i_0}} - \abs{v}$ times, there exists $w \in \mathcal{L}_{\abs{v}}$ such that
\begin{align*}
H \leq \St_G(\abs{v_{i_0}}-N)_{v_{i_0}} \leq \St_G(\abs{v}-N)_w.
\end{align*}
By (3), 
\begin{align*}
H \leq \St_G(\abs{v}-N)_w = \St_G(\abs{v}-N)_v.
\end{align*}
Finally, as $\abs{v}-N = n+k$, by (1), 
\begin{align*}
H \leq \St_G(\abs{v}-N)_v \leq \St_G(n)_v,
\end{align*}
proving that $G$ is virtually mixing.
\end{proof}

The second observation is that we may assume that the mixing subgroup is normal and closed.

\begin{lemma}
Let $G$ be a fractal closed group in $\Aut(T)$, two numbers $N,n \geq 0$ and a vertex $v \in T$ such that $\abs{v} = n + N$. If $H \leq G$ such that $H \leq \St_G(n)_v$, then $$\overline{H}^G \leq \St_G(n)_v,$$ where $\overline{H}$ is the closure of $H$ in $G$ and $\overline{H}^G$ is the normal closure of $\overline{H}$ in $G$.
\label{lemma: virtually mixing closed normal closure}
\end{lemma}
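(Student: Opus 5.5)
The plan is to prove two facts separately: $\St_G(n)_v$ is closed in $G$, and $\St_G(n)_v$ is normal in $G$. Once both hold, $H \leq \St_G(n)_v$ gives $\overline{H} \leq \St_G(n)_v$, and since $\St_G(n)_v$ is a normal subgroup containing $\overline{H}$, it also contains the normal closure $\overline{H}^G$.

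\emph{Closedness.} The group $\St_G(n) \cap \st_G(v)$ is an intersection of $G$ with the open (hence closed) subgroups $\St(n)$ and $\st(v)$ of $\Aut(T)$. Since $G$ is closed, this intersection is a closed subset of the compact group $\Aut(T)$, and therefore compact. The map $\varphi_v$ is continuous, so its image $\St_G(n)_v$ is compact. A compact subset of the Hausdorff space $\Aut(T)$ is closed, so $\St_G(n)_v$ is closed in $G$.

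\emph{Normality.} Let $g \in G$ and $h \in \St_G(n)_v$, and write $h = \widetilde{h}|_v$ with $\widetilde{h} \in \St_G(n) \cap \st_G(v)$. Since $G$ is fractal, \cref{lemma: fractal groups property} (with $w = v$) gives $\widetilde{g} \in G$ with $\widetilde{g}(v) = v$ and $\widetilde{g}|_v = g$. Because $\St_G(n)$ is normal in $G$, the conjugate $\widetilde{g}\widetilde{h}\widetilde{g}^{-1}$ lies in $\St_G(n)$, and it fixes $v$. By \cref{equation: concatenation of sections}, and using that $\widetilde{g}$ and $\widetilde{h}$ both fix $v$,
$$(\widetilde{g}\widetilde{h}\widetilde{g}^{-1})|_v = \widetilde{g}|_v \, \widetilde{h}|_v \, (\widetilde{g}|_v)^{-1} = g h g^{-1}.$$
Hence $ghg^{-1} \in \St_G(n)_v$.

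The step most in need of care is the normality argument: one has to pick the lift $\widetilde{g}$ so that it fixes $v$ itself, not merely its level. Fractality via \cref{lemma: fractal groups property} supplies exactly such a lift. The hypothesis $\abs{v} = n+N$ plays no role in the argument.
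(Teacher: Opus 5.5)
Your proposal is correct and follows essentially the same route as the paper. Closedness comes from compactness of $\St_G(n)\cap\st_G(v)$ plus continuity of $\varphi_v$ (the paper uses a convergent subsequence, you use the compact image), and normality comes from lifting $g$ to some $\widetilde{g}\in\st_G(v)$ with $\widetilde{g}|_v=g$ and conjugating. Your packaging shows that $\St_G(n)_v$ is itself a closed normal subgroup, as the paper later does in \cref{proposition: properties GM}, which has the small advantage of covering both the algebraic and the topological reading of ``normal closure'' at once.
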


\begin{proof}
Let $h \in \overline{H}$ and $\set{h_k}_{k \geq 1} \subseteq H$ a sequence converging to $h$. Let $\set{\widetilde{h_k}}_{k \geq 1}$ be a sequence in $\St_G(n) \cap \st_G(v)$ such that $\widetilde{h_k}|_v = h_k$ for every $k \geq 1$. As $G$ is compact and $\St_G(n) \cap \st_G(v)$ is closed, there exists
a subsequence $\set{\widetilde{h_{k_m}}}_{k \geq 1}$ converging to an element $\widetilde{h} \in \St_G(n) \cap \st_G(v)$. Then, using the continuity of $\varphi_v$ we conclude
\begin{align*}
\widetilde{h}|_v = \lim_{m \rightarrow +\infty} \widetilde{h_{k_m}}|_v = \lim_{m \rightarrow +\infty} h_{k_m} = h.
\end{align*}

Let $g \in G$ and $h \in \overline{H}$. Then there exist $\widetilde{h} \in \St_G(n) \cap \st_G(v)$ such that $\widetilde{h}|_v = h$. By fractality of $G$, there exists $\widetilde{g} \in \st_G(v)$ such that $\widetilde{g}|_v = g$. Then $\widetilde{g} \, \widetilde{h} \, \widetilde{g}^{-1} \in \St_G(n) \cap \st_G(v)$ and 
\begin{equation*}
(\widetilde{g} \, \widetilde{h} \, \widetilde{g}^{-1})|_v = ghg^{-1}. \qedhere
\end{equation*}
\end{proof}

\cref{lemma: virtually mixing closed normal closure} allows us to only worry about finding abstract subgroups of $G$ that can appear in $\St_G(n)_v$. In the context of iterated Galois groups, we will express these subgroups as generated by specific elements that will be related with the dynamics of the rational functions. To find these specific elements, we use the following commutator trick, developed in \cite[Lemma 2.3]{FariñaRadi2026FPP}:

\begin{lemma}[Commutator trick]
\label{lemma: commutator trick}
Let $G\le \mathrm{Aut}(T)$ be a fractal martingale group. Let $h$ be an element in $G$ and assume that there exists a natural number $N\ge 1$ (depending on $h$) and an element $g\in G$ satisfying that: 
\begin{enumerate}[\normalfont(i)]
\item there are vertices $u,w\in T$ with $\abs{u} \leq \abs{w} \leq N$ fixed by $g$;
\item we have $g|_u=1$ and $g|_w=h$.
\end{enumerate}
Then, for any $n\ge1$ and any $v\in T$ such that $|v|\ge n+N$ we have
$$h \in \mathrm{St}_G(n)_v.$$
\end{lemma}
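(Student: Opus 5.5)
The plan is to realize $h$, up to conjugation, as the section of a commutator that lies in a level stabilizer. Then I would remove the conjugation using normality of $\St_G(n)_v$, and move between vertices of the same level using fractality.

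\emph{Step 1 (reduction to $\abs{u}=\abs{w}$).} Since $g$ fixes $u$ and $g|_u=1$, $g$ fixes every vertex $uy$ below $u$ and $g|_{uy}=1$. So I would replace $u$ by a descendant $u'$ with $\abs{u'}=\abs{w}$ and assume $\abs{u}=\abs{w}=:\ell\le N$.

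\emph{Step 2 (the commutator).} Fix $n\ge1$ and $v$ with $m:=\abs{v}\ge n+N$. Choose any vertex $z$ at level $m-\ell\ge n$.
\begin{itemize}
\item By fractality (\cref{lemma: fractal groups property}), pick $g'\in\st_G(z)$ with $g'|_z=g$. Then $g'$ fixes $zu$ and $zw$, with $g'|_{zu}=1$ and $g'|_{zw}=h$.
\item By the martingale property, $\St_G(m-\ell)$ acts level-transitively on $T_z$. So there is $k\in\St_G(m-\ell)\le \St_G(n)$ with $k(zu)=zw$.
\end{itemize}
Put $c:=g'^{-1}k^{-1}g'k$. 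Since $\St_G(n)$ is normal and $k\in\St_G(n)$, we get $c\in\St_G(n)$. Following the vertex: $k$ sends $zu\mapsto zw$, $g'$ fixes $zw$, $k^{-1}$ returns it to $zu$, and $g'^{-1}$ fixes $zu$. Hence $c\in\st_G(zu)$. By \cref{equation: concatenation of sections}, with $a:=k|_{zu}\in G$,
\[
c|_{zu}=(g'^{-1})|_{zu}\,(k^{-1})|_{zw}\,g'|_{zw}\,k|_{zu}=a^{-1}ha .
\]
Thus $a^{-1}ha\in \St_G(n)_{zu}$, and $\abs{zu}=m=\abs{v}$.

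\emph{Step 3 (cleaning up).}
\begin{itemize}
\item By observation (3) in the proof of \cref{lemma: reduction definition virtually mixing}, fractality gives $\St_G(n)_{zu}=\St_G(n)_v$. That argument only uses $\abs{zu}=\abs{v}$ and fractality.
\item $\St_G(n)_v$ is normal in $G$. This is the conjugation argument at the end of the proof of \cref{lemma: virtually mixing closed normal closure}: lift $g\in G$ to $\widetilde g\in\st_G(v)$ with $\widetilde g|_v=g$ and conjugate. That argument needs no closedness.
\end{itemize}
Therefore $h=a(a^{-1}ha)a^{-1}\in\St_G(n)_v$.

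The only delicate point is Step 2. The level of $z$ has to be chosen as $\abs{v}-\ell$ rather than $n$, so that $zu$ lands exactly on the level of $v$ while $k$ still lies in $\St_G(n)$. One also has to notice that the section of the commutator is only a conjugate of $h$, which is harmless because $\St_G(n)_v$ is normal.
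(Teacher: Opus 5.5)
Your proof is correct. The paper gives no proof of this lemma to compare against: it quotes it from Lemma 2.3 of the earlier Fari\~na-Asategui--Radi paper. Your argument is a valid, self-contained commutator proof, and each step checks out:
\begin{enumerate}
\item the reduction to $\abs{u}=\abs{w}=\ell$;
\item the choice of $z$ at level $\abs{v}-\ell\ge n$;
\item the lift $g'\in\st_G(z)$ with $g'|_z=g$, obtained from fractality;
\item the element $k\in\St_G(\abs{v}-\ell)\le\St_G(n)$ with $k(zu)=zw$, obtained from the martingale property;
\item the section computation $c|_{zu}=a^{-1}ha$.
\end{enumerate}
The two facts you borrow in Step 3 do use only fractality: the equality $\St_G(n)_{x}=\St_G(n)_{y}$ for vertices $x,y$ on the same level, and the normality of $\St_G(n)_v$ in $G$.

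You can drop the normality step. Take the other commutator $c':=g'^{-1}kg'k^{-1}$, which again lies in $\St_G(n)$ and fixes $zw$, and evaluate it at $zw$:
\[
c'|_{zw}=(g'^{-1})|_{zw}\,k|_{zu}\,g'|_{zu}\,(k^{-1})|_{zw}=h^{-1}\,a\,a^{-1}=h^{-1}.
\]
The trivial section $g'|_{zu}=1$ now sits between $a$ and $a^{-1}$, so they cancel. Hence $h\in\St_G(n)_{zw}=\St_G(n)_v$ directly, using only the same-level transfer.
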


\begin{remark}
Combining the last three lemmas we observe that if $H$ is a closed normal subgroup of $G$ such that $H \leq \St_G(n)_v$ (where $n$ and $v$ are as in \cref{lemma: commutator trick}), we have $h_1,h_2 \in G$ such that $h_1 \sim h_2$ (as in \cref{definition: generate same closed subgroup}) and $h_2 \in H$, then also $h_1 \in H$.

Indeed, if $h_1 \sim h_2$, then $h_1$ is conjugate to an element $h_1'$ such that $\overline{\<h_1'>} = \overline{\<h_2>}$. Since $h_2 \in H$ and $H$ is closed then $h_1' \in H$. As $H$ is normal and $h_1'$ is conjugate to $h_1$, then $h_1 \in H$.
\label{remark: equivalent elements are in mixing subgroup}
\end{remark}

Despite simple, \cref{remark: equivalent elements are in mixing subgroup} will be specially useful to deal with the lack of knowledge over the elements in iterated Galois groups.

\begin{definition}
Let $G \leq \Aut(T)$ be a self-similar group and $H$ a subgroup of $G$. We say that $H$ is \textit{project invariant} if for every vertex $v \in T$ we have $H_v \leq H$.
\end{definition}

The concept of project invariant was introduced by Adams in \cite{Adams2025}. Initially, Adams named this property self-similar, but this concept is not equivalent to self-similarity (see \cite[Remark 2.4]{FariñaRadi2026FPP}). For this reason, Fariña-Asategui and the author decided to rename it as project invariant. Project invariant subgroups in the context of iterated Galois groups are closely related to dynamical pullbacks and they play an important role in many of the main theorems in this article. 
Note that in order to check that a subgroup is project invariant, it is enough to check the condition in the vertices of the first level.

\begin{definition}
Let $G$ be a fractal group of $\Aut(T)$ and $N \geq 0$. We define the \textit{maximal mixing subgroup} of $G$ with delay constant $N$ as 
\begin{align*}
G_M := \bigcap_{n \geq 0} \bigcap_{v \in \mathcal{L}_{n+N}} \St_G(n)_v.
\end{align*}
\label{definition: maximal mixing subgroup}
\end{definition}

The subgroup $G_M$ represents the maximal amount of elements that satisfy the self-replication condition mentioned in the definition of virtually mixing groups. As defined in \cref{definition: maximal mixing subgroup}, the subgroup may not have finite index in $G$. Thus, an alternative definition for virtually mixing is that $G$ is virtually mixing if and only if there exists $N \geq 0$ such that $G_M$ has finite index.

Using the same arguments exposed in \cref{lemma: reduction definition virtually mixing}, we have that
\begin{align}
G_M = \bigcap_{v \in S} \St_G(n)_v,
\label{equation: simplification definition GM}
\end{align}
where $S$ is any subset of vertices such that their lengths go to infinity.

In the following, we prove some important properties of $G_M$. In particular we justify why it is called the maximal mixing subgroup:

\begin{proposition}
\label{proposition: properties GM}
Let $G$ be a fractal closed group of $\Aut(T)$ and $N \geq 0$. Then,

\begin{enumerate}[\normalfont(1)]
\item $G_M$ is closed and normal in $G$.
\item If $H \leq G$ satisfies that $H \leq \St_G(n)_v$ for every $v \in T$ such that $\abs{v} \geq n+N$, then $H \leq G_M$.
\item $G_M$ is project invariant.
\end{enumerate}
\end{proposition}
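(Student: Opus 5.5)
The three items all come from the set-level properties of the individual groups $\St_G(n)_v$. I would first show that each $\St_G(n)_v$ with $\abs{v}=n+N$ is closed and normal in $G$, and then pass to the intersection.

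For item (1), \emph{closedness}: $\St_G(n)\cap\st_G(v)$ is a closed subgroup of the compact group $G$, hence compact. The map $\varphi_v$ is continuous, so the image $\St_G(n)_v$ is compact, hence closed in the Hausdorff group $G$. An intersection of closed subgroups is closed. \emph{Normality}: this is the second half of the proof of \cref{lemma: virtually mixing closed normal closure}. Given $g\in G$ and $h=\widetilde h|_v$ with $\widetilde h\in\St_G(n)\cap\st_G(v)$, fractality gives $\widetilde g\in\st_G(v)$ with $\widetilde g|_v=g$. Since $\St_G(n)$ is normal, $\widetilde g\widetilde h\widetilde g^{-1}\in\St_G(n)\cap\st_G(v)$, and by \cref{equation: concatenation of sections} its section at $v$ is $ghg^{-1}$. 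So each $\St_G(n)_v$ is normal, and therefore so is $G_M$.

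Item (2) is immediate from the definition. The hypothesis applies in particular to every $v\in\mathcal{L}_{n+N}$ and every $n$, so $H$ lies in each term of the intersection defining $G_M$.

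For item (3), by the remark after the definition of project invariance it suffices to show $(G_M)_x\le G_M$ for $x\in\mathcal{L}_1$. Take $h\in G_M$ with $h(x)=x$, and fix $n\ge 0$ and $w\in\mathcal{L}_{n+N}$; we want $h|_x\in\St_G(n)_w$.
\begin{itemize}
\item Since $h\in G_M\le\St_G(n+1)_{xw}$ (because $\abs{xw}=n+1+N$), there is $\widetilde h\in\St_G(n+1)\cap\st_G(xw)$ with $\widetilde h|_{xw}=h$.
\item I would then like to produce an element of $\St_G(n)\cap\st_G(w)$ whose section at $w$ is $h|_x$. The natural candidate is $\widetilde h|_x$: it lies in $\St_G(n)$ and fixes $w$, as in observation (2) of \cref{lemma: reduction definition virtually mixing}. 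However, its section at $w$ is $h$, not $h|_x$.
\end{itemize}
So the indices must be shifted the other way: use the vertex $wx$ rather than $xw$. Because $h\in\St_G(n+1)_{wx}$, there is $\widetilde h\in\St_G(n+1)\cap\st_G(wx)$ with $\widetilde h|_{wx}=h$. Then $\widetilde h$ fixes $w$, and $\widetilde h|_w$ fixes $x$ with $(\widetilde h|_w)|_x=h$.

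This is not yet what we need either. We want an element of $\St_G(n)$ with section at $w$ equal to $h|_x$, and the difficulty is exactly that sections descend. The fix is to take $\widetilde h\in\St_G(n)\cap\st_G(w)$ with $\widetilde h|_w=h$, which exists since $h\in\St_G(n)_w$ (as $\abs{w}=n+N$). Then $\widetilde h$ fixes $wx$, because $h$ fixes $x$, and $\widetilde h|_{wx}=h|_x$. Since $\widetilde h\in\St_G(n)$, this shows $h|_x\in\St_G(n)_{wx}$, where $\abs{wx}=n+1+N$.

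Now $\St_G(n)_{wx}\le\St_G(n)_{u}$ for $u\in\mathcal{L}_{n+N}$? No: the inclusion in observation (1) runs $\St_G(n+1)_{u'}\le\St_G(n)_{u'}$ for fixed $u'$, not across levels. The cleanest route is to invoke \cref{equation: simplification definition GM}, which says $G_M$ can be computed along any set of vertices whose lengths go to infinity, with the level taken as $\abs{v}-N$. Applying it to $\abs{wx}=(n+1)+N$ would need $h|_x\in\St_G(n+1)_{wx}$. To get that, choose $\widetilde h\in\St_G(n+1)\cap\st_G(w')$ with $\abs{w'}=n+1+N$ and $\widetilde h|_{w'}=h$. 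Then $\widetilde h|_{w'x}=h|_x$ and $\widetilde h\in\St_G(n+1)\subseteq\St_G(n+1)$, so $h|_x\in\St_G(n+1)_{w'x}$, where $\abs{w'x}=(n+1)+1+N$.

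So the main obstacle is this one-level mismatch. I would resolve it using observation (1), which shows that the groups $\St_G(\abs{u}-N)_u$ decrease as $\abs{u}$ grows, together with fractality. The target is
\[
h|_x\in\bigcap_{m}\St_G(m)_{u_m},\qquad \abs{u_m}=m+1+N,
\]
and one checks, via observations (1)–(3), that this intersection still equals $G_M$. If that fails, the fallback is to absorb the extra level by working with delay constant $N+1$.
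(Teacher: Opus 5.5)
\textbf{There is a genuine gap in item (3).} Your items (1) and (2) are fine and match the paper: closedness because $\St_G(n)_v$ is the continuous image of the compact set $\St_G(n)\cap\st_G(v)$; normality via a fractal lift $\widetilde g\in\st_G(v)$ and conjugation; (2) directly from the definition.

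The gap in (3) is exactly the one-level shift you noticed and did not remove. Write $G_M^{(N)}$ for the intersection defining $G_M$ with delay constant $N$. Your final computation takes $\widetilde g\in\St_G(n)\cap\st_G(w)$ with $\abs{w}=n+N$ and $\widetilde g|_w=g$, so that $\widetilde g|_{wx}=g|_x$. This proves only $(G_M^{(N)})_x\le G_M^{(N+1)}$.

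Your closing assertion, that $\bigcap_m\St_G(m)_{u_m}$ with $\abs{u_m}=m+1+N$ ``still equals $G_M$'', is the entire content of (3). Observations (1)--(3) give only the opposite inclusion: by (1), $\St_G(m+1)_{u_m}\le\St_G(m)_{u_m}$, hence $G_M^{(N)}\le G_M^{(N+1)}$.

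The shift is not an artifact of your route. The groups $\St_G(n)_v$ with $\abs{v}=n+N$ are closed and decrease in $n$. So a compactness argument as in the closure lemma gives equality $(G_M^{(N)})_x=G_M^{(N+1)}$. Concretely, for $h\in G_M^{(N+1)}$ pick $k_n\in\St_G(n)\cap\st_G(v_nx)$ with $\abs{v_n}=n+N$ and $k_n|_{v_nx}=h$. Each $k_n|_{v_n}$ lies in $\St_G(n)_{v_n}\cap\st_G(x)$ with section $h$ at $x$, and a limit point of these lies in $G_M^{(N)}$ with section $h$ at $x$.

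Thus (3) is equivalent to $G_M^{(N+1)}=G_M^{(N)}$. If this held for every $N$, every closed mixing group would be mixing with delay constant $0$, which is a substantive claim. Your fallback of passing to delay constant $N+1$ proves a different statement and reproduces the same shift one level down.

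Comparing with the paper does not close the gap. The paper's proof of (3) does the same computation, but strips the first letter of $v=v_1w$ instead of appending $x$. It obtains $\widetilde g_n|_{v_1}\in\St_G(n-1)\cap\st_G(wx)$ with $\abs{wx}=n+N$, i.e.\ $h\in\St_G(n-1)_{wx}$. That is again membership in $G_M^{(N+1)}$, after which the paper asserts $h\in G_M$ without further justification.

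What the available tools do prove is a corrected version of (3):
\begin{itemize}
\item Since $G_M^{(N)}$ increases with $N$, suppose some $G_M^{(N_0)}$ has finite index, as for a virtually mixing $G$.
\item Only finitely many subgroups contain the closed normal finite-index subgroup $G_M^{(N_0)}$, so the chain stabilizes.
\item For all $N$ past the stabilization point, $(G_M^{(N)})_x=G_M^{(N+1)}=G_M^{(N)}$, so $G_M$ is project invariant.
\end{itemize}
For an arbitrary fixed $N$, item (3) needs an argument that neither your proposal nor the paper supplies.
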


\begin{proof}
For (1), the subgroup $\St_G(n) \cap \st_G(v)$ is compact in $G$ and $\varphi_v$ is continuous. Therefore $\St_G(n)_v$ is closed. As intersection of closed sets is closed, then $G_M$ is closed. Similarly, the subgroup $\St_G(n) \cap \st_G(v)$ is normal in $\st_G(v)$ and $\varphi_v$ is a surjective group homomorphism since $G$ is fractal. Therefore $\St_G(n)_v$ is normal in $G$ and intersection of normal subgroups is normal.

(2) follows from \cref{lemma: reduction definition virtually mixing}.

For (3), let $x \in \mathcal{L}_1$ and $h \in (G_M)_x$. Let $g \in \st_{G_M}(x)$ such that $g|_x = h$. By definition of $G_M$, given $n \geq 2$ and $v \in \mathcal{L}_{n+N}$, there exists $\widetilde{g}_n \in \St_G(n) \cap \st_G(v)$ such that $\widetilde{g}_n|_v = g$. Write $v = v_1w$ where $\abs{v_1} = 1$. Then $\abs{wx} = n+N$, the element $\widetilde{h}_n := \widetilde{g}_n|_{v_1} \in \St_G(n-1) \cap \st_G(wx)$ and 
\begin{align*}
\widetilde{h}_n|_{wx} = \widetilde{g}_n|_{vx} = g|_x = h.
\end{align*}
Therefore $h \in G_M$ and $(G_M)_x \leq G_M$ for any $x \in \mathcal{L}_1$.
\end{proof}

\section{Iterated Galois groups}
\label{section: iterated Galois groups}

In this section we recall some results about iterated Galois groups that will be used later.

\subsection{Definition and generators}

Let $K$ be a field, $t$ transcendental over $K$ and fix $K(t)^{\textrm{sep}}$ a separable closure of $K(t)$. Note that $K(t)$ is the field of fractions of $K[t]$. Therefore $K(t)^{\textrm{sep}}$ is a ring extension of $K[t]$. Let us call $K[t]^{\textrm{sep}}$ the integral closure of $K[t]$ in $K(t)^{\textrm{sep}}$. Then $K(t)^{\textrm{sep}}$ is the fraction field of $K[t]^{\textrm{sep}}$. 

Let $f \in K(x)$ be a rational function of degree $d \geq 2$. If $z \in \PP^1(\K-)$, we denote $e_f(z)$ the \textit{local degree} of $f$ at $z$. The set of \textit{critical points} of $f$, denoted $C_f$, corresponds to the points in $\PP^1(\K-)$ whose local degree is greater than $1$. A critical point $c$ is said to be \textit{totally ramified} if $e_f(c) = d$. We say that $f$ is \textit{tamely ramified} over $K$ if the characteristic of $K$ does not divide the local degree of $f$ at any point of $\PP^1(\K-)$. 

Given $f$ tamely ramified over $K$, we construct the $d$-regular rooted tree $$T_t := \bigsqcup_{n \geq 0} f^{-n}(t)$$ as explained in \cref{section: Introduction}. As it is well-known, the pair $(T_t,t)$ is a $d$-regular rooted tree (see \cite[Lemma 2.12]{Radi2025FPP} for a proof) and we have a group homomorphism $$\rho_{\textrm{arb}}: \Gal(K(t)^{\mathrm{sep}}/K(t)) \rightarrow \Aut(T_t),$$ induced by the natural action of the absolute Galois group of $K(t)$ at each level of the tree. 

The image of $\rho_{\textrm{arb}}$, denoted $G_\infty(K,f,t)$ in this article, is called the \textit{arithmetic iterated Galois group} of $f$ over $K$. In the case that $K$ is separably closed, the group $G_\infty(K,f,t)$ is called the \textit{geometric iterated Galois group} of $f$. It is well-known that arithmetic and geometric iterated Galois groups are closed in $\Aut(T_t)$. 

The following result was known before in the literature, but we cite the recent proof of Adams due to its clever algebraic argument: 

\begin{proposition}[{\cite[Theorem 7]{Adams2025}}]
Let $K$ be a field, $t$ transcendental over $K$ and $f \in K(x)$ a tamely ramified rational function with degree $d \geq 2$. Then, there exists a labeling $\phi: T_t \rightarrow T$ such that the pair $(G_\infty(K,f,t), \phi)$ is fractal.
\label{proposition: iterated galois groups are fractal}
\end{proposition}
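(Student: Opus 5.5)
I will construct the labeling one level at a time. Then I will check level-transitivity and the condition $G_x = G$ at a single vertex $x$ of the first level. By the remark preceding \cref{lemma: fractal groups property}, that single vertex suffices.

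\textbf{Step 1: level-transitivity.} Write $G = G_\infty(K,f,t)$. For every $n$, the polynomial defining $f^{-n}(t)$ is the numerator of $f^n(x) - t$, viewed as a polynomial over $K(t)$. It is linear in $t$ and has no factor in $K[x]$, so by Gauss's lemma it is irreducible over $K(t)$. Tame ramification makes it separable, and its roots are exactly the $d^n$ distinct points of level $n$. Hence the Galois group acts transitively on every level.

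\textbf{Step 2: the recursive identification.} Fix $x_1 \in f^{-1}(t)$. Since $f(x_1) = t$, the element $x_1$ is again transcendental over $K$, and $K(x_1)$ is a degree $d$ extension of $K(t)$. The subtree of $T_t$ below $x_1$ is precisely $T_{x_1} = \bigsqcup_n f^{-n}(x_1)$, the preimage tree of the transcendental $x_1$. The substitution $t \mapsto x_1$ gives an isomorphism $K(t) \cong K(x_1)$ over $K$. Extend it to an isomorphism of separable closures. This identifies the tree $T_t$ with the tree $T_{x_1}$ compatibly with the two arboreal representations.

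\textbf{Step 3: building the labeling.} I choose the labeling $\phi$ inductively so that, for each first-level vertex $x_i$, the restriction $\sigma_{x_i}\circ \phi|_{T_{x_i}}$ is the transport of $\phi$ through the isomorphism of Step 2 (composed with a fixed element carrying $x_1$ to $x_i$). Here I expect the main bookkeeping subtlety: this recursive definition of $\phi$ must be consistent across all levels. I would handle it by defining $\phi$ on level $n+1$ from $\phi$ on level $n$ via these identifications, fixing once and for all compatible embeddings of separable closures.

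\textbf{Step 4: self-similarity and fractality.} Consider $\st_G(x_1)$. It is the image of $\Gal(K(t)^{\mathrm{sep}}/K(x_1))$, the absolute Galois group of $K(x_1)$, acting on $T_{x_1}$. Under the identification of Step 2, its image is exactly $G_\infty(K,f,x_1)$, which corresponds to $G_\infty(K,f,t) = G$. So $\varphi_{x_1}(\st_G(x_1)) = G$ in labelled form, which is the fractal condition at $x_1$.

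For any other vertex, pick $g$ with $g(x_1) = x_i$; this gives $\rho_\phi(g)|_{x_1} \in G$. For a general element $h$, the section $h|_{x_i}$ is the action of $h$ between the subtrees $T_{x_i}$ and $T_{h(x_i)}$. Composing with the chosen labelings, this section is the image of a Galois element that carries $K(x_i)$ to $K(h(x_i))$, conjugated back to $K(x_1)$. It therefore lies in $G$, so $G$ is self-similar.

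The main obstacle is the coherence in Step 3: making the chosen isomorphisms of separable closures compatible, so that sections land in $G$ and not merely in a conjugate of $G$. The subtle points are the possible non-uniqueness of the extension of $t \mapsto x_1$ and the constant field in the arithmetic case. Since the substitution fixes $K$, any extension differs by an element of the absolute Galois group of $K(t)$, so I expect the conjugation to be by an element of $G$ itself, which keeps sections inside $G$.
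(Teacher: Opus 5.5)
Your proposal is correct and is essentially the standard argument the paper defers to Adams for: extend $t\mapsto x_i$ to automorphisms $\tau_i$ of $K(t)^{\mathrm{sep}}$ fixing $K$, label recursively by $\phi(\tau_i(v)) = i\,\phi(v)$ (this is automatically consistent level by level), and deduce $\varphi_{x_1}(\st_G(x_1)) = G$ from $\Gal(K(t)^{\mathrm{sep}}/K(x_1)) = \tau_1\Gal(K(t)^{\mathrm{sep}}/K(t))\tau_1^{-1}$. The obstacle you anticipate in Steps 3--4 does not arise: if $h(x_i)=x_j$, the section at $x_i$ is the action of $\tau_j^{-1}h\tau_i$, which fixes $K$ and $t$ and so lies in $\Gal(K(t)^{\mathrm{sep}}/K(t))$; hence self-similarity holds exactly, whatever the $\tau_i$ do to the constants.
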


Taking $\phi$ a labeling as in \cref{proposition: iterated galois groups are fractal} and making a slightly abuse of notation, we will denote $G_\infty(K,f,t)$ to $\rho_\phi(G_\infty(K,f,t))$ and assume that $G_\infty(K,f,t)$ is a fractal subgroup of $\Aut(T)$.   

The set of \textit{post-critical points} of $f$ is the set
\begin{align*}
P_f := \bigcup_{n \geq 1} f^n(C_f).
\end{align*}
We say that $f$ is \textit{post-critically finite} if $P_f$ is finite.

Using the first isomorphism theorem we have another useful description of the iterated Galois groups. Indeed,
\begin{align*}
G_\infty(K,f,t) = \IIm(\rho_{\textrm{arb}}) \cong \frac{\Gal(K(t)^{\mathrm{sep}}/K(t))}{\ker(\rho_{\textrm{arb}})} \cong \Gal(K_\infty(f,t)/K(t)),
\end{align*}
where $$K_\infty(f,t) := K \left(\bigcup_{n \geq 0} f^{-n}(t) \right)$$ is the field obtained by adjoining all the preimages of $t$ under $f$. We write $$\mathcal{O}_\infty := K_\infty(f,t) \cap K[t]^{\textrm{sep}}$$ the ring of integers of $K_\infty(f,t)$.

Using Grothendieck's exposés, one can relate Galois theory and algebraic topology with étale fundamental groups and give a nice description of the generators of $G_\infty(\Ksep,f,t)$ in terms of homotopy classes of loops around $\PP^1(K)\setminus P_f$ (see \cite{Grothendieck1963expose1to5, Grothendieck1963expose6to11, Szamuely2009} for more details). Moreover, one can also understand the action of these generators on the first level and obtain some information about their sections:

\begin{proposition}[{\cite[Proposition 5.1]{FariñaRadi2026FPP}}]
Let $K$ be a field, $t$ transcendental over $K$ and $f \in K(x)$ a tamely ramified rational function with degree $d \geq 2$. Then
\begin{enumerate}[\normalfont(1)]
\label{proposition: action inertia generators}
\item $G = G_\infty(\Ksep,f,t)$ is topologically generated by $\{g_p: p \in P_f\}$, where each $g_p$ corresponds to a topological generator of an inertia subgroup $I_\mathfrak{P}$ for some prime~$\mathfrak{P}$ in $\mathcal{O}_\infty$ above $(t-p)$. Moreover, there exists an ordering of the topological generators such that
$$\prod_{p \in P_f} g_p  = 1.$$

\item Let $t_i \in f^{-1}(t)$. We have $\mathfrak{P} \cap K(t_i) = (t_i-q)$ for some $q \in f^{-1}(p)$, and the $g_p$-orbit of $t_i$ is a cycle of length $e_f(q)$.

\item If $t_i$ and $q$ are as before, then there exists $x \in \mathcal{L}_1$ such that
\begin{align*}
g_p^{e_f(q)} \in \st_G(x) \quad \text{  and  } \quad \left( g_p^{e_f(q)} \right)|_x \sim g_q.    
\end{align*}
\end{enumerate}
\end{proposition}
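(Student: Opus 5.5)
The proof splits into three parts, matching (1)--(3). The common tool is the dictionary between the tower $\Ksep(t)\subseteq \Ksep(f^{-1}(t))\subseteq \Ksep(f^{-2}(t))\subseteq\cdots$ and tamely ramified covers of $\PP^1$ branched only over $P_f$.

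For (1), write $K_n:=\Ksep(f^{-n}(t))$. I will first check that each $K_n/\Ksep(t)$ is a Galois extension ramified only above the places $(t-p)$ with $p\in P_f$ (including $\infty$ if $\infty\in P_f$). This holds because the branch points of $f^n$ are the critical values $f^k(c)$, $1\le k\le n$, $c\in C_f$, which all lie in $P_f$. The extension is tamely ramified since $f$ is tamely ramified and ramification indices multiply along compositions. Hence $\Gal(K_n/\Ksep(t))$ is a finite quotient of the tame étale fundamental group of $\PP^1\setminus P_f$ over the separably closed field $\Ksep$. By Grothendieck (SGA1, XIII), that group is topologically generated by generators $\gamma_p$ of inertia groups, one above each $p\in P_f$, with $\prod_p\gamma_p=1$ in a suitable order. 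Since $G\cong\Gal(K_\infty(f,t)/\Ksep(t))=\varprojlim_n\Gal(K_n/\Ksep(t))$, I will push the $\gamma_p$ to $G$ via the induced continuous surjection. The images $g_p$ topologically generate $G$, satisfy the product relation, and each generates the inertia group $I_{\mathfrak P}$ of a prime $\mathfrak P$ of $\mathcal O_\infty$ above $(t-p)$. This inertia group is pro-cyclic because ramification is tame and residue fields are separably closed.

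For (2), I restrict $g_p$ to $K_1$, where it generates the inertia group of $\mathfrak P\cap K_1$. Fix $t_i\in f^{-1}(t)$. The primes of $\Ksep(t_i)\cong\Ksep(x)$ above $(t-p)$ are the $(t_i-q)$ with $q\in f^{-1}(p)$, with ramification index $e_f(q)$. The standard double-coset description of primes in an intermediate field shows that $I_{\mathfrak P}$-orbits on the embeddings of $\Ksep(t_i)$ (that is, on $f^{-1}(t)$) correspond to the primes of $\Ksep(t_i)$ below conjugates of $\mathfrak P$, with orbit length equal to the ramification index; decomposition groups equal inertia groups here since residue fields are separably closed. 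Because $I_{\mathfrak P}$ is cyclic generated by $g_p$, the orbit of $t_i$ is a single cycle of length $e_f(q)$, where $\mathfrak P\cap\Ksep(t_i)=(t_i-q)$.

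For (3), I take $x$ to be the vertex labelled by $t_i$. Then $g_p^{e_f(q)}$ fixes $x$ by (2), and $\overline{\<g_p^{e_f(q)}\>}=I_{\mathfrak P}\cap\Gal(K_\infty(f,t)/\Ksep(t_i))$, which is the inertia group of $\mathfrak P$ over $(t_i-q)$. The key identification is that the section map $\varphi_x$, under the fractal labeling of \cref{proposition: iterated galois groups are fractal}, is restriction to the subfield $\Ksep\big(\bigcup_n f^{-n}(t_i)\big)$, viewed as $K_\infty(f,t_i)$ with $t_i$ as the new transcendental. Restriction sends inertia groups onto inertia groups, so $\overline{\<(g_p^{e_f(q)})|_x\>}$ is an inertia group above $(t_i-q)$ in $G_\infty(\Ksep,f,t_i)\cong G$. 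Applying (1) to $t_i$, the element $g_q$ generates some inertia group above $(t_i-q)$. Inertia groups above the same prime are conjugate, so $(g_p^{e_f(q)})|_x$ is conjugate to an element generating the same closed subgroup as $g_q$, that is, $(g_p^{e_f(q)})|_x\sim g_q$. The main obstacle is making this identification precise: checking that the labeling is compatible with restriction, so that sections really are restrictions, and keeping track of the conjugations introduced by relabeling. I would handle this through the explicit construction in Adams' proof of fractality, which identifies the subtree below $t_i$ with $T_{t_i}$ via the isomorphism $\Ksep(t)\cong\Ksep(t_i)$ sending $t\mapsto t_i$.
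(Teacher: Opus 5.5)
Your route is the one the paper has in mind. The paper gives no argument of its own here: it quotes an earlier article and points to Grothendieck's étale fundamental group. Your three steps fill that in naturally: the tame $\pi_1$ with standard inertia generators for (1), the orbit/double-coset count for (2), and sections as restrictions under Adams' labeling for (3). Parts (2) and (3) are sound, with two points to make explicit in (3).

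\begin{itemize}
\item The isomorphism $\iota\colon\Ksep_\infty(f,t)\to\Ksep_\infty(f,t_i)$, $t\mapsto t_i$, underlying the labeling must be $\Ksep$-linear, which can always be arranged. Only then does it carry the place $(t_i-q)$ back to $(t-q)$ rather than to $(t-q')$ for a conjugate $q'$ of $q$ over $K$. This matters if one uses a single labeling for the arithmetic and geometric groups.
\item When $q\notin P_f$ there is no $g_q$ coming from (1). In that case your argument shows that the section generates an inertia group over an unbranched point, so the section is trivial. That is how $\sim g_q$ has to be read there.
\end{itemize}

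The genuine gap is in (1). You apply SGA1 XIII to $\PP^1\setminus P_f$, which requires $P_f$ to be finite. The statement does not assume $f$ post-critically finite, and the paper applies it to maps with infinite post-critical orbits, for example in the good-pair lemma and in the definition of $G_m$.

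When $P_f$ is infinite there is no single group ``$\pi_1^{\mathrm{tame}}(\PP^1\setminus P_f)$'' with a standard generating system to push forward. Each $K_n$ is branched only over the finite set $S_n=\bigcup_{k=1}^n f^k(C_f)$, so SGA1 does give standard inertia generators of each $G_n=\Gal(K_n/\Ksep(t))$. However, generators chosen level by level need not be compatible under $G_{n+1}\to G_n$, so they do not yet define elements of $G=\varprojlim G_n$.

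One way to close this is compactness.
\begin{itemize}
\item Let $\mathcal I_p\subseteq G$ be the set of topological generators of inertia groups above $(t-p)$. It is closed, because the primes above $(t-p)$ form a profinite space.
\item Let $Y_n\subseteq\prod_{p\in P_f}\mathcal I_p$ consist of the tuples whose images $\{\pi_n(h_p)\}_{p\in S_n}$ generate $G_n$ with product $1$ in some order.
\item Each $Y_n$ is closed, since it depends only on $\pi_n$.
\item Each $Y_n$ is nonempty: lift the level-$n$ generators, using that a generator of a finite cyclic quotient of a procyclic group lifts to a generator.
\item $Y_{n+1}\subseteq Y_n$, because $\pi_n(h_p)=1$ for $p\notin S_n$, as $K_n$ is unramified there.
\end{itemize}
Any point of $\bigcap_n Y_n$ gives the required $g_p$. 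The relation $\prod_p g_p=1$ then holds in every $G_n$, where only finitely many factors are nontrivial; that is the only meaning it can have for infinite $P_f$. To get one ordering of $P_f$ valid at all levels, carry the ordering along as part of the data in the compactness argument.
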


\begin{remark}
\label{remark: sections of generators}
Let $p \in P_f$ and $g_p$ a generator of the inertia subgroup of $I_\mathfrak{P}$ for some prime ideal $\mathfrak{P}$ in $\mathcal{O}_\infty$ dividing $(t-p)$. 
Applying \cref{proposition: action inertia generators} to $f^n$ and using the fact that $G_\infty(K,f^n,t) = G_\infty(K,f,t)$, we find a description of the sections of $g_p$ at level $n$. Indeed, let $s \in f^{-n}(t)$. Then $\mathfrak{P} \cap K(s) = (s-q)$ for some $q \in f^{-n}(p)$. 
\begin{enumerate}[\normalfont(1)]
\item If $q \notin P_f$, then the inertia subgroup of $I_\mathcal{Q}$ is trivial for any prime ideal $\mathcal{Q}$ in $\mathcal{O}_\infty$ above $(t-q)$ and therefore there exists a vertex $v \in \mathcal{L}_n$ such that
\begin{align*}
g_p^{e_{f^n}(q)} \in \st_G(v) \quad \text{  and  } \quad \left( g_p^{e_{f^n}(q)} \right)|_v = 1.    
\end{align*}

\item If $q \in P_f$, then there exists a vertex $v \in \mathcal{L}_n$ such that
\begin{align*}
g_p^{e_{f^n}(q)} \in \st_G(v) \quad \text{  and  } \quad \left( g_p^{e_{f^n}(q)} \right)|_v \sim g_q.    
\end{align*}
\end{enumerate}
\end{remark}

\subsection{Exceptional set and Thurston orbifolds}

The following well-known formula relates the degree of $f$ with the local degrees of the points in $\PP^1(\K-)$:

\begin{theorem}[{Riemann-Hurwitz formula}]
\label{Theorem: Riemann Hurwitz formula}
Let $K$ be a field, $t$ transcendental over $K$ and $f \in K(x)$ a tamely ramified rational function with degree $d \geq 2$. Then,
\begin{align*}
2d-2 = \sum_{z \in \PP^1(\K-)} (e_f(z) -1).
\end{align*}
\end{theorem}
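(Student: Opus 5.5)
The Riemann--Hurwitz formula as stated is the genus-zero case of the general formula for a finite separable morphism of smooth projective curves. My plan is to derive it from the general statement, with tameness doing the essential work. I will view $f$ as a finite morphism $f:\PP^1_{\K-}\to\PP^1_{\K-}$ of degree $d$. It corresponds to the field extension $\K-(x)/\K-(f(x))$ of degree $d$. This extension is separable: if it were inseparable, $f$ would factor through a Frobenius and every local degree would be divisible by $p=\mathrm{char}\,K$, contradicting tameness. So the Hurwitz formula
\begin{align*}
2g_X-2 = d\,(2g_Y-2) + \deg R
\end{align*}
applies, where $R$ is the ramification divisor. With $g_X=g_Y=0$ this gives $\deg R = 2d-2$.

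The next step is to identify $\deg R$ with $\sum_z (e_f(z)-1)$. I would work locally: at a point $z$ with image $w$, choose uniformizers $\pi_z$ and $\pi_w$, so that $\pi_w = u\,\pi_z^{e}$ with $u$ a unit and $e=e_f(z)$. The coefficient of $z$ in $R$ is $v_z(d\pi_w/d\pi_z)$. Differentiating gives
\begin{align*}
\frac{d\pi_w}{d\pi_z} = e\,u\,\pi_z^{e-1} + u'\,\pi_z^{e}.
\end{align*}
Since $p\nmid e$, the first term has valuation exactly $e-1$ and the second has valuation at least $e$, so the coefficient is $e-1$. This local computation is the one place where tameness is used substantively. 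Without it the different exponent exceeds $e-1$ (wild ramification), and that is the step to watch.

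Only finitely many $z$ have $e_f(z)>1$, because the differential $df$ is a nonzero rational differential with finitely many zeros and poles, so the sum is finite. Alternatively, one can avoid the general theorem and count directly. Write $f=P/Q$ in lowest terms and apply a Möbius change of coordinates so that $\infty$ is neither a critical point nor a critical value. The zeros of the Wronskian $P'Q-PQ'$, which has degree at most $2d-2$, then occur with multiplicity $e_f(z)-1$ at each finite $z$ by the same tame local computation. A degree count, showing that the Wronskian has degree exactly $2d-2$ under this normalization, then gives the formula. I would present the first route and cite standard references (Hartshorne IV.2.4), noting that the tameness hypothesis is precisely what makes every ramification point tame.
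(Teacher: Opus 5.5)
Your argument is correct. The paper gives no proof of this statement and simply invokes it as the classical Riemann--Hurwitz formula; your derivation is the standard justification behind that citation: tameness forces separability because an inseparable $f$ factors through $x\mapsto x^p$, the Hurwitz formula with both genera zero gives $\deg R = 2d-2$, and the tame local computation shows that the coefficient of each $z$ in $R$ is exactly $e_f(z)-1$.
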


\begin{definition}
\label{definition: Deltaf}
Following the notation in \cite{FariñaRadi2026FPP}, let us define 
\begin{align*}
\Delta_f := \set{p \in P_f: f^{-1}(p) \subseteq C_f \cup P_f}.
\end{align*}
\end{definition}

This set plays an important role in this article and will help us to distinguish the problematic post-critical points. Riemann-Hurwitz formula imposes important restriction in its size:

\begin{theorem}[{\cite[Lemma 5.4]{FariñaRadi2026FPP}}]
\label{theorem: bound Deltaf}
Let $K$ be a field, $t$ transcendental over $K$ and $f \in K(x)$ a tamely ramified rational function with degree $d \geq 2$. Then, $$\# \Delta_f \leq 4.$$
Moreover, if $f(\Delta_f) \subseteq \Delta_f$ and $\# \Delta_f = 4$ then
\begin{enumerate}[\normalfont(i)]
\item every critical point is of degree 2;
\item $\Delta_f = P_f$ and
\item $f^{-1}(\Delta_f) = C_f \sqcup P_f$.
\end{enumerate}
\end{theorem}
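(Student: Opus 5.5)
The plan is a Riemann--Hurwitz count restricted to the fibres over $\Delta_f$ (\cref{Theorem: Riemann Hurwitz formula}). Let $k = \#\Delta_f$. For each $p \in \Delta_f$ we have $d = \sum_{q \in f^{-1}(p)} e_f(q)$, so
\begin{align*}
d - \#f^{-1}(p) = \sum_{q \in f^{-1}(p)} (e_f(q)-1).
\end{align*}
Summing over $p \in \Delta_f$ and comparing with the full Riemann--Hurwitz sum gives
\begin{align*}
kd - \sum_{p \in \Delta_f} \#f^{-1}(p) \leq 2d-2.
\end{align*}
Equality holds exactly when every critical point lies in $f^{-1}(\Delta_f)$. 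So everything reduces to an upper bound on the number of points of $f^{-1}(\Delta_f)$.

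Split $f^{-1}(\Delta_f) = A \sqcup B$, where $A = f^{-1}(\Delta_f) \cap C_f$ and $B = f^{-1}(\Delta_f) \setminus C_f$. By definition of $\Delta_f$ we have $B \subseteq P_f$. Points of $A$ have $e_f \geq 2$ and points of $B$ have $e_f = 1$, so $2\#A + \#B \leq kd$. Hence
\begin{align*}
\sum_{p \in \Delta_f} \#f^{-1}(p) = \#A + \#B \leq \frac{kd + \#B}{2}.
\end{align*}
Plugging in gives $kd \leq 4d - 4 + \#B$. If I can show $\#B \leq k$, this becomes $k(d-1) \leq 4(d-1)$, i.e.\ $k \leq 4$.

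The key step, and the main obstacle, is this bound $\#B \leq k$, i.e.\ that there are at most $k$ non-critical post-critical points mapping into $\Delta_f$. I would try first to build an injection $B \to \Delta_f$ using the dynamics. Any $q \in B$ is post-critical, so $q = f(r)$ with $r \in C_f \cup P_f$. I would trace backward orbits, or alternatively use forward images $f(q) \in \Delta_f$, and argue that distinct points of $B$ give distinct elements of $\Delta_f$. If a naive injection fails because several points of $B$ lie over the same $p$, I would instead redo the count with the refined inequality: such $p$ then has more non-critical preimages. That forces extra ramification elsewhere, which can be charged against the slack $2d-2-\sum_{p\in\Delta_f}(d-\#f^{-1}(p))$ coming from critical points outside $f^{-1}(\Delta_f)$. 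In the worst case I would fall back on an orbifold-type weighting, assigning weight $2$ to points of $\Delta_f$, and compare $\chi$ before and after pulling back by $f$.

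For the second part, assume $f(\Delta_f) \subseteq \Delta_f$ and $k = 4$. Then all the inequalities above are equalities. Equality in $2\#A + \#B \leq kd$ gives that every critical point in $f^{-1}(\Delta_f)$ has local degree exactly $2$. Equality in the Riemann--Hurwitz comparison gives that every critical point lies in $f^{-1}(\Delta_f)$, which proves (i). Equality $\#B = 4 = k$, combined with the invariance $f(\Delta_f) \subseteq \Delta_f$, should force $B = \Delta_f$. Moreover, every critical value then lies in $\Delta_f$, so $P_f = \bigcup_{n\ge1} f^n(C_f) \subseteq \Delta_f$; this gives (ii) $\Delta_f = P_f$. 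Finally $f^{-1}(\Delta_f) = A \sqcup B = C_f \sqcup P_f$ gives (iii).
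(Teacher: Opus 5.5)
\textbf{There is a genuine gap.} Your reduction up to $kd\le 4d-4+\#B$ is correct. But the step you single out as the key one, $\#B\le k$, is false in general, so no injection $B\to\Delta_f$ can exist.

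Take $f(x)=\tfrac{3\sqrt3}{2}(x-x^3)$.
\begin{itemize}
\item Its critical points are $\pm1/\sqrt3$ (simple) and $\infty$ (totally ramified), with $\pm1/\sqrt3\mapsto\pm1\mapsto 0\mapsto 0$. Hence $P_f=\{0,1,-1,\infty\}$.
\item We have $f^{-1}(\pm1)=\{\pm1/\sqrt3,\mp2/\sqrt3\}$, and $\mp2/\sqrt3\notin C_f\cup P_f$. So $\Delta_f=\{0,\infty\}$.
\item On the other hand $B=f^{-1}(0)=\{0,1,-1\}$.
\end{itemize}
Thus $\#B=3>2=k$.

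Your fallback ideas do not identify the right mechanism either. Having many non-critical preimages does not by itself force ramification anywhere. What controls $B$ is that its points are post-critical: they lie on forward orbits of critical points, and finitely many orbits can only merge finitely often. Here the two extra points of $B$ are fed by the critical points $\pm1/\sqrt3$, which lie outside $f^{-1}(\Delta_f)$.

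Your halving step $2\#A+\#B\le kd$ discards exactly the ramification of those critical points. So even the correct bound $\#B\le k+\#(C_f\setminus A)$, plugged into $kd\le 4d-4+\#B$, would only give $k\le 6$.

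\textbf{The missing idea} is a second, purely combinatorial count. Since $f^{-1}(\Delta_f)\subseteq C_f\cup P_f=\bigcup_{c\in C_f}\{f^n(c):n\ge 0\}$, add the $\#C_f$ forward critical orbits one at a time. Each new orbit falls into one of three cases:
\begin{itemize}
\item it is already contained in the previous union, and adds nothing;
\item it runs into the previous union at a single point, which gains exactly one extra preimage inside $C_f\cup P_f$;
\item it is disjoint from the previous union, and then at most one of its points (the entry into its cycle) has two preimages in it.
\end{itemize}
Hence
\[
\sum_{p\in\Delta_f}\bigl(\#f^{-1}(p)-1\bigr)\le \#C_f\le 2d-2,
\]
that is, $\#f^{-1}(\Delta_f)\le k+\#C_f$.

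Add this to your Riemann--Hurwitz inequality $\sum_{p\in\Delta_f}\bigl(d-\#f^{-1}(p)\bigr)\le 2d-2$. This gives $k(d-1)\le 4(d-1)$ directly, with no $A/B$ split needed.

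If $k=4$, both inequalities become equalities, and the second part follows:
\begin{itemize}
\item Every critical point lies over $\Delta_f$, and $\#C_f=2d-2$, so every critical point is simple. This is (i).
\item The critical values lie in $\Delta_f$, so with $f(\Delta_f)\subseteq\Delta_f$ we get $P_f\subseteq\Delta_f$. This is (ii).
\item We have $\#f^{-1}(\Delta_f)=4d-(2d-2)=2d+2=\#C_f+\#P_f$. Combined with $C_f\cup P_f\subseteq f^{-1}(\Delta_f)\subseteq C_f\cup P_f$, this gives (iii), including disjointness.
\end{itemize}
Your own equality argument for the second part has the right shape. As written, however, it rests on the unproved (and in general false) bound $\#B\le k$.
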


In the context of complex dynamics, an important quantity is the Euler characteristic of the orbifold of a rational function. This number discriminates rational maps with exceptional dynamical properties such as `non-fractal' Julia set. This notion can be extended to rational functions over arbitrary separably closed fields.

\begin{definition}
\label{definition: Thurston orbifold}
Let $K$ be a field and let $f$ be a
post-critically finite tamely ramified rational function defined over $K$. Given a point $z \in \PP^1(\K-)$, we  define the \textit{Thurston orbifold} of $f$ as the pair $(\mathbb{P}(K)^1,\nu_f)$, where the function $\nu_f:\PP^1(\K-) \to \NN \cup \set{\infty}$ is defined as
$$\nu_f(z):=\mathrm{lcm}\{\nu_f(w)e_f(w) : w\in f^{-n}(z) \text{ for some }n\ge 1\}.$$
Notice that if $z$ is in a periodic cycle which contains a critical point, then $\nu_f(z) = \infty$. This is in fact and if and only if. 

The finite tuple $$(\nu_f(z))_{z\in P_f}$$ is the \textit{signature} of the orbifold. 

The \textit{Euler characteristic} $\chi_f$ of the orbifold $(\PP^1(\K-),\nu_f)$ is calculated as
$$\chi_f := 2-\sum_{z\in P_f}\left(1-\frac{1}{\nu_f(z)}\right).$$
\end{definition}

As a consequence of Riemann-Hurwitz formula, the Euler characteristic $\chi_f$ is always non-positive (see \cite[Proposition 2.12]{BonkMeyer2017} for a proof). We say that $f$ is \textit{hyperbolic} if $\chi_f < 0$ and \textit{euclidean} if $\chi_f = 0$.

Rational functions with euclidean orbifold are the ones that are exceptional in complex dynamics and their signatures are very easy to calculate from the definition of $\chi_f$:
\begin{align*}
(2,2,2,2), \quad (2,4,4), \quad (2,3,6), \quad (3,3,3), \quad (2,2,\infty) \quad \text{and} \quad (\infty,\infty).
\end{align*}

We finally give a straightforward relation between the orbifold of $f$ and the order of the generators of $G_\infty(K,f,t)$ given in \cref{proposition: action inertia generators}.

\begin{proposition}
\label{proposition: order inertia generators}
Let $K$ be a field, $t$ transcendental over $K$ and $f \in K(x)$ a tamely ramified rational function with degree $d \geq 2$. If $p \in P_f$ and $g_p$ is one of the generators of $G_\infty(\Ksep,f,t)$ given in \cref{proposition: action inertia generators}, then the order of $g_p$ is $v_f(p)$.
\end{proposition}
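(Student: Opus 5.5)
The plan is to compute the order of $g_p$ level by level, via its action on the finite levels $\mathcal{L}_n$, and then match the result with the definition of $\nu_f(p)$. The first step is a general fact. Since $G=G_\infty(\Ksep,f,t)$ is a closed subgroup of $\Aut(T)$ and $\St(n)=\ker \pi_n$ with $\bigcap_n \St(n)=1$, the order of any $g\in G$ is $\sup_n \operatorname{ord}(\pi_n(g))$, with the convention that an unbounded sequence means infinite order. Moreover, $\operatorname{ord}(\pi_n(g))$ is the lcm of the lengths of the $\<g>$-orbits on $\mathcal{L}_n$. Indeed, an automorphism acting trivially on $\mathcal{L}_n$ acts trivially on $T^n$, by preservation of adjacency. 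Since these orders divide one another as $n$ grows, the order of $g_p$ is $\lcm_{n\ge 1}\operatorname{ord}(\pi_n(g_p))$.

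The second step computes the orbit lengths. I would apply \cref{proposition: action inertia generators}(2) to $f^n$ in place of $f$, using $G_\infty(\Ksep,f^n,t)=G_\infty(\Ksep,f,t)$ as in \cref{remark: sections of generators}. This gives that for each $s\in f^{-n}(t)$ we have $\mathfrak{P}\cap \Ksep(s)=(s-q)$ with $q\in f^{-n}(p)$, and that the $g_p$-orbit of $s$ is a cycle of length $e_{f^n}(q)$. I also need the converse: every $q\in f^{-n}(p)$ arises from some $s$. I would get this by the Galois-orbit argument, using that $G$ acts transitively on $\mathcal{L}_n$ and permutes the primes above $(t-p)$, so one may conjugate $g_p$ by elements of $G$; alternatively, it follows from the factorization of $(t-p)$ in $\Ksep(s)$ having primes $(s-q)$ for all $q\in f^{-n}(p)$, with ramification indices $e_{f^n}(q)$. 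Note that replacing $g_p$ by a conjugate does not change the order. Hence
\begin{align*}
\operatorname{ord}(g_p)=\lcm\set{e_{f^n}(q): n\ge 1,\ q\in f^{-n}(p)}.
\end{align*}

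The third step, which is the main (if elementary) obstacle, is to identify this lcm with $\nu_f(p)$ as given by the recursive formula of \cref{definition: Thurston orbifold}. I would use the chain rule for local degrees, $e_{f^n}(q)=\prod_{i=0}^{n-1}e_f(f^i(q))$. I would then show by induction along backward orbits that the function $z\mapsto \lcm\set{e_{f^n}(w): n\ge1,\ f^n(w)=z}$ satisfies the defining relation $\nu(z)=\lcm\set{\nu(w)e_f(w): f(w)=z}$. It is the minimal solution of this relation, and this minimal solution is the intended meaning of $\nu_f$. The degenerate case, where $p$ lies in a periodic cycle containing a critical point, needs a separate check. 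There, $e_{f^n}(q)$ is unbounded along the backward periodic orbit, so both sides equal $\infty$, consistent with the remark following \cref{definition: Thurston orbifold}.
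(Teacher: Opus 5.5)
Your proposal is correct and follows the same route as the paper. You compute the order of $g_p$ as the limit (lcm) of the orders of $\pi_n(g_p)$, obtain these as $\lcm\{e_{f^n}(q): q\in f^{-n}(p)\}$ by applying \cref{proposition: action inertia generators} to $f^n$, and then identify the resulting lcm with $\nu_f(p)$. You are in fact more careful than the paper on two points it passes over quickly: that every $q\in f^{-n}(p)$ is realized by some vertex (via transitivity of $G$ on the primes above $(t-p)$), and that the lcm over all levels really equals $\nu_f(p)$, including the infinite case of a critical cycle through $p$.
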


\begin{proof}
As $\Aut(T_t)$ is profinite 
\begin{align*}
\abs{g_p} = \lim_{n \rightarrow +\infty} \abs{\pi_n(g_p)}.
\end{align*}
Applying \cref{proposition: action inertia generators} with $f^n$ we obtain $\abs{\pi_n(g_p)} = \lcm \set{e_{f^n}(q): q \in f^{-n}(p)}$. By multiplicity of the local degree this is 
\begin{align*}
\abs{\pi_n(g_p)} = \lcm \set{v_f(q)e_f(q): q \in f^{-n}(p)}.
\end{align*}
Taking limit in $n$, we conclude $\abs{g_p} = v_f(p)$.
\end{proof}

\section{Energy of paths}
\label{section: energy of paths}

\subsection{Strategy}
\label{section: strategy}

In this section we develop the main idea of the article to prove that every iterated Galois group is virtually mixing. If $G_\infty(\Ksep,f,t)$ is a geometric iterated Galois group, by \cref{proposition: action inertia generators} we have a set of distinguished topological generators, whose sections can be calculated recursively. Although this recursion is distorted by the equivalence $\sim$ defined in \cref{definition: generate same closed subgroup}, by \cref{remark: equivalent elements are in mixing subgroup}, this equivalence does not deprive us of proving that an specific generator is in our candidate of mixing subgroup. 

Our best tool to prove that an element can be `seen' from every $\St_G(n)_v$ is the commutator trick given in \cref{lemma: commutator trick}. This trick only requires to find an element $g \in G_\infty(K,f,t)$ such that $g$ fixes two vertices $u,w \in T$ with $\abs{u} \leq \abs{w} \leq N$, the section $g|_u = 1$ and $g|_w$ is the element of $G$ we want to see from every $\St_G(n)_v$. 

So, combining the previous two paragraphs, the idea is the following: pick $p \in P_f$. If $q = f^m(p)$, then by \cref{remark: equivalent elements are in mixing subgroup} there is a power of $g_q$ that has a section at level $m$ that is equivalent (in the sense of \cref{definition: generate same closed subgroup}) to $g_p$. We can think of this as a path that joins $q$ to $p$, where the edges of this directed graph are constructed by using the preimages of $f$. On the other hand, we may take a different `path' from $q$ to a preimage $r$ that is not post-critical. By \cref{remark: sections of generators}, a power of $g_r$ will be $1$ and therefore there is a power of $g_q$ fixing two vertices and having the desired sections to use the commutator trick.

Although this is the main idea in the section, there are some subtleties to take into consideration. First of all, it may happen that there is no such a power of an element $g_q$ that allows us to see a section $1$ and a section $g_p$ below fixed vertices. In these cases, we will prioritize the section $1$ and therefore we will not `see' $g_p$ but a power of it. This is the main reason why it is possible to prove virtually mixing and not mixing. Second, the delay constant $N$ in \cref{lemma: commutator trick} depends on the element $g_p$ we are trying to `see'. However, in the definition of virtually mixing, the delay constant must only depend on the group. Thus, one of the tasks of this section is to prove that we can find a delay constant $N$ independent of the generators $g_p$. 

To formalize the idea previously exposed, we will use the language of energy transport in graph theory \cite{MedinaTresch2025, Villani2009}. Let $\Gamma$ be a directed graph where the edges are weighted with non-negative real numbers. Suppose we have two points $x,y \in \Gamma$ and we have at least one path from $x$ to $y$. If we think of the weights as the energy of going through that edge, what is the path that minimizes the energy cost? 

Our problem has this nature somehow. By \cref{remark: sections of generators}, we need to take a power of $g_q$ to see a power of $g_p$ and $1$ as sections. The power we need is the product of the local degrees of the points in the path in between $q$ and $p$ and in the path in between $q$ and $r$. Minimizing the energy here means to take the best path that makes that power as small as possible to see the least power of $g_p$.

\subsection{Ramification portrait and energy of paths}
\label{section: ramification portrait and energy of paths}

\begin{definition}
Let $K$ be a field and fix $\K-$ an algebraic closure of $K$. Let $f \in K(x)$ be a rational function. The \textit{ramification portrait} of $f$ is the graph $\Gamma_f$ where the vertices are the points in $\PP^1(\K-)$ and two points $z_1,z_2$ are connected by an edge $z_1 \rightarrow z_2$ if $f(z_1) = z_2$. Moreover, the weight in the edge $z_1 \rightarrow z_2$ is the local degree $e_f(z_1)$.
\end{definition}

To help to describe the ramification portrait of $f$ we will draw points with different symbols based on the following rules: 

\begin{enumerate}[\normalfont(1)]
\item A black dot represents a post-critical point that is not critical.

\item A white dot represents a critical point that is not post-critical.

\item A circled black dot represents a post-critical point that is critical.

\item A dotted circle represents a point that is neither critical nor post-critical.
\end{enumerate}

Apart from the previous rules we will do some simplifications. When we know the weight in the edge is $1$, we will not write the weight in the portrait. Furthermore, only necessary points that are neither critical nor post-critical will be drawn.

\begin{figure}
\centering
\begin{tikzpicture}[ >=Stealth, every node/.style={inner sep=0pt}, arr/.style={ ->, line width=0.9pt, shorten <=5pt, shorten >=5pt }, scale=0.75,]


\coordinate (A) at (0,0);
\coordinate (B) at (2.4,0);
\coordinate (C) at (4.8,0);
\coordinate (D) at (7.2,0);
\coordinate (E) at (9.6,0);

\coordinate (F) at (4.8,-1.5);


\draw[fill=white] (A) circle (0.18);

\draw (B) circle (0.22);
\fill (B) circle (0.11);

\fill (C) circle (0.12);

\fill (D) circle (0.12);

\fill (E) circle (0.12);

\draw[dotted,line width=1pt] (F) circle (0.18);


\node[above=6pt] at (A) {$c$};
\node[above=6pt] at (B) {$p_1$};
\node[above=7pt] at (C) {$p_2$};
\node[above=6pt] at (D) {$p_3$};
\node[above=6pt] at (E) {$p$};
\node[above=7pt] at (F) {$r$};


\draw[arr] (A) -- node[midway,above=5pt] {$2$} (B);
\draw[arr] (B) -- node[midway,above=5pt] {$2$} (C);
\draw[arr] (C)--(D);
\draw[arr] (D)--(E);

\draw[arr] (F)--(D);

\draw[arr]
(E) .. controls +(1.2,0.7) and +(1.2,-0.7) .. (E);
\end{tikzpicture}
\caption{Example of how we will be drawing ramification portraits in this article.}
\label{figure: ramification portrait first example}
\end{figure}

\cref{figure: ramification portrait first example} represents part of the ramification portrait of a rational function of the form $$f(x) = \frac{1+k}{x^2+k},$$ where $k$ is the solution of a polynomial with degree $9$. Here, the critical points are $0$ and $\infty$ and $1$ is the post-critical fixed point of $f$. 

As an element $g_p$ is a section of an element $g_q$ if $p$ is in the backward orbit of $q$, we need to flip the direction of the arrows in the ramification portrait in order to use the idea of energy transport explained in \cref{section: strategy}. So, although we choose to depict the ramification portrait in the direction of the application of $f$, the paths that we will be interested in go against the direction of the arrows. Formally speaking we define $\Gamma_f^{\textrm{opp}}$ the directed weighted graph obtained from $\Gamma_f$ by flipping the direction of the edges. 

\begin{definition}
A \textit{path} in $\Gamma_f^{\textrm{opp}}$ is a function $$\gamma: \set{0,\dots,L} \rightarrow \PP^1(\K-)$$ where $L \in \NN \cup \set{\infty}$ and such that $\set{\gamma(i), \gamma(i+1)}$ is an edge in $\Gamma_f^{\textrm{opp}}$ for all $i \in \set{0,\dots,L-1}$. The \textit{length} of $\gamma$ is $\ell(\gamma) := L$. The \textit{energy} of $\gamma$ is defined as
\begin{align*}
E(\gamma) := \prod_{i = 1}^{\ell(\gamma)} e_f(\gamma(i)).
\end{align*}
\end{definition}

For example in \cref{figure: ramification portrait first example}, if $\gamma$ is a path in $\Gamma_f^{\textrm{opp}}$ starting at $p$ and ending at $c$, then the length of $\gamma$ will depend on how many times we go around the loop. Concretely
$$\ell(\gamma) = \#\text{(laps around $p$)} + 4.$$
However, as $e_f(p) = 1$, the energy is $E(\gamma) = 4$ independently of the length. 

\begin{convention}
To specify a path, we will use the notation
\begin{align*}
\gamma: \gamma(0) \rightarrow \gamma(1) \rightarrow \dots \rightarrow \gamma(L).
\end{align*}
\end{convention}

\begin{definition}
Let $\gamma_1, \dots, \gamma_k$ be a finite collection of paths in $\Gamma_f^{\textrm{opp}}$. The \textit{energy} consumed by the collection is defined as
$$E(\gamma_1,\dots,\gamma_n) := \lcm(E(\gamma_1),\dots,E(\gamma_n)).$$
\end{definition}

We will be mainly interested in the case of two curves since the commutator trick requires to find an element where we have control of two of its sections. Translating the hypothesis of the commutator trick to the language of paths in $\Gamma_f^{\textrm{opp}}$, we make the following definition:

\begin{definition}
Let $K$ be a field and $f \in K(x)$ be a rational function of degree $d \geq 2$. Let $p \in P_f$. Given $\gamma_1,\gamma_2 \in \Gamma_f^{\textrm{opp}}$, we say that $(\gamma_1,\gamma_2)$ is a \textit{good pair} if the following properties are satisfied:

\begin{enumerate}[\normalfont(1)]
\item $\gamma_1(0) = \gamma_2(0)$, namely they have the same initial point. 
\item The ending point of $\gamma_1$ is $p$ and the ending point of $\gamma_2$ is not post-critical.
\item $\ell(\gamma_1) \geq \ell(\gamma_2)$.
\end{enumerate}

We will denote $\Gamma_{f,p}$ the set of good pairs of $p$.
\end{definition}

Note that in good pairs $E(\gamma_2) < \infty$ as $\ell(\gamma_2) \leq \ell(\gamma_1) < \infty$.

The set of good pairs might be empty if the post-critical point does not have any path to a non-post-critical point. This is the case for example if the post-critical point is a totally ramified fixed point, as it happens with $\infty$ when $f$ is a polynomial. In fact, it is possible to classify such a post-critical points:

\begin{lemma}
Let $f$ be a rational function of degree $d \geq 2$ defined over a field $K$. Let $p \in P_f$. Then the set $\Gamma_{f,p}$ of good pairs of $p$ is empty if and only if every point in the set $\set{f^m(p):m \geq 0}$ is totally ramified.
\label{lemma: post ritical point empty good pairs}
\end{lemma}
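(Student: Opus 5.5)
We prove both directions by unpacking what a good pair is in the ramification portrait, where paths in $\Gamma_f^{\textrm{opp}}$ run from a point to its preimages.

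\emph{If every $f^m(p)$ is totally ramified, then $\Gamma_{f,p}$ is empty.} Suppose $(\gamma_1,\gamma_2)$ is a good pair with common starting point $q=\gamma_1(0)=\gamma_2(0)$.

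Since $\gamma_1$ ends at $p$, the point $q$ lies in the forward orbit of $p$; say $q=f^m(p)$. Write $q_0=q$. By assumption $q_0$ is totally ramified, so $f(q_0)$ has exactly one preimage, namely $q_0$ itself. We want to show that $q_0$ also has a unique preimage.

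Let $q_{-1}:=f^{m-1}(p)$, or take $q_{-1}$ on the orbit of $p$ when $m=0$. This point is a preimage of $q_0$ lying in the forward orbit of $p$, so it is totally ramified. Being totally ramified means $e_f(q_{-1})=d$, and then $\sum_{w\in f^{-1}(q_0)} e_f(w)=d$ forces $f^{-1}(q_0)=\{q_{-1}\}$.

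There is a subtlety when $m=0$. In that case $q=p$, and we need a preimage of $p$ lying in the set $\{f^m(p)\}$. If $\gamma_1$ has positive length, it is a path from $p$ to $p$, so $p$ is periodic and such a preimage exists. If $\ell(\gamma_1)=0$, then $\ell(\gamma_2)\le 0$, so $\gamma_2$ ends at $p$. This contradicts condition (2), since $p$ is post-critical.

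Iterating, every backward path from $q$ of length at most $\ell(\gamma_1)$ is forced to be the unique one. That unique path stays inside the forward orbit of $p$, stepping back along it, and this orbit is contained in $P_f$: $f^m(p)\in P_f$ because $p\in P_f$ and $P_f$ is forward invariant. In particular, one can check at each step that the point reached is again some $f^{j}(p)$ with $j\ge0$, or a point of the periodic cycle when one wraps around. Since $\ell(\gamma_2)\le\ell(\gamma_1)$, the endpoint of $\gamma_2$ lies in this orbit, hence is post-critical. This contradicts condition (2).

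\emph{If some $f^m(p)$ is not totally ramified, then $\Gamma_{f,p}$ is nonempty.} Choose $m$ minimal with $q=f^m(p)$ not totally ramified. Then $f(q)$ has at least two preimages.

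Let $\gamma_1$ be the path $f^{m+1}(p)\to f^m(p)\to\dots\to p$, of length $m+1$. We now need a point $r\neq f^m(p)$ with $f(r)=f^{m+1}(p)$ from which we can reach a non-post-critical point within $m$ more backward steps.

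\textbf{Main obstacle.} The point $r$ itself may be post-critical. The fix is to allow the common start to be pushed further forward, $q'=f^{m+k}(p)$, which lengthens $\gamma_1$ arbitrarily; the length bound in (3) then stops being a constraint.

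From a non-totally-ramified branch point we use finiteness of $P_f$ when $f$ is PCF. Alternatively, in general we observe that the full backward tree of any point has $d^n$ preimages counted with multiplicity. Backward orbits of a point not in the exceptional set are infinite, so some preimage of $q'$ at some depth $n$ avoids $P_f$. Concretely, the backward orbit must escape $P_f$ unless it is contained in a finite totally invariant set, and such a set consists of totally ramified points.

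We take the shortest such backward path as $\gamma_2$. Then we choose $k$ large enough, which is possible by cycling through the periodic part if $p$ is preperiodic, or otherwise extending the forward orbit. This makes $\ell(\gamma_1)\ge\ell(\gamma_2)$ while keeping the common initial point. The step I expect to need care is justifying that a backward orbit avoiding $P_f$ forever forces total ramification along the forward orbit of $p$. We handle it via the exceptional-set argument, since a finite totally invariant set of the form $f^{-1}(E)=E$ gives $e_f=d$ at each of its points.
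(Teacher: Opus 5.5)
Your first direction (every $f^m(p)$ totally ramified implies $\Gamma_{f,p}=\emptyset$) is correct. It is a more careful version of the paper's one-line argument. Each $f^{j-1}(p)$ is totally ramified, so it is the only preimage of $f^{j}(p)$. Hence any backward path of length at most $\ell(\gamma_1)$ from $\gamma_1(0)=f^{\ell(\gamma_1)}(p)$ stays in the forward orbit of $p$, which lies in $P_f$.

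The converse has a genuine gap when $p$ has infinite forward orbit. You claim that moving the common start to $q'=f^{m+k}(p)$ makes condition (3) ``stop being a constraint''. This fails for wandering $p$, because there is no cycle to wind around.
\begin{itemize}
\item A backward path from $q'$ to $p$ has length exactly $m+k$.
\item Moving the start $k$ steps forward adds the same $k$ steps to any backward path from $q'$ that reaches the sibling $r$ or anything behind it.
\end{itemize}
So $\ell(\gamma_1)-\ell(\gamma_2)$ does not grow with $k$. For the same reason, ``take the shortest such backward path as $\gamma_2$, then choose $k$ large'' is circular: $\gamma_2$ depends on $q'$, which depends on $k$. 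That step is only legitimate in the preperiodic case, where winding around the cycle lengthens $\gamma_1$ while the start stays fixed.

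What is missing is an argument that some $f^{j}(p)$ with $j\ge 1$ already has a preimage $r\neq f^{j-1}(p)$ outside $P_f$. Then $(f^j(p)\to\cdots\to p,\ f^j(p)\to r)$ is a good pair, since $\ell(\gamma_2)=1\le j=\ell(\gamma_1)$. The paper gets this from finiteness of $C_f$.
\begin{itemize}
\item The points $f^{j-1}(p)$ are pairwise distinct, so only finitely many are critical. For large $j$, $f^{j}(p)$ therefore has a second preimage $r$.
\item A preimage $r\neq f^{j-1}(p)$ of $f^j(p)$ lying in $P_f$ cannot lie on the orbit of $p$. It must be the last point of some critical orbit before that orbit merges into the orbit of $p$. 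This happens for at most $\# C_f$ indices $j$.
\end{itemize}

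A smaller issue: your escape-from-$P_f$ step (``the backward orbit must escape $P_f$ unless it is contained in a finite totally invariant set'') is justified only when $P_f$ is finite. For infinite $P_f$, which is automatic when $p$ wanders, you would need a count: each level $f^{-n}(q')$ meets $P_f$ in boundedly many points, while $\# f^{-n}(q')\to\infty$. The paper avoids backward-orbit arguments entirely. In the periodic and strictly preperiodic cases it traces the relevant non-periodic post-critical point (the sibling $r$ off the cycle, or $p$ itself) back through critical points until it reaches some $c\in C_f\setminus P_f$. This terminates, since otherwise some critical point would be periodic and the point in question would be periodic too. It then winds $\gamma_1$ around the cycle to satisfy (3).
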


\begin{proof}
$(\Rightarrow)$ If $p$ has an infinite orbit, as $f$ can only have finitely many critical points, then there exists $m > 0$ such that $f^m(p)$ has a preimage $r$ outside $P_f$. Then the pair $(\gamma_1,\gamma_2)$ where
\begin{align}
\left \{ \begin{array}{r@{\ :\ }l}
\gamma_1 & f^m(p) \rightarrow f^{m-1}(p) \rightarrow \dots \rightarrow p, \\
\gamma_2 & f^m(p) \rightarrow r
\end{array}\right.
\label{equation: good pair totally ramified wandering}
\end{align}
is a good pair as $1 = \ell(\gamma_2) \leq \ell(\gamma_1) = m$. Therefore, we may assume that $p$ is preperiodic. 

Suppose first that $p$ is periodic with period $M$ and there exists $m \geq 0$ such that $f^m(p)$ is not totally ramified. Then $q = f^{m+1}(p)$ has at least two different preimages. Let $r$ be the preimage that is not in the cycle of $p$. If $r$ is not post-critical, we use the good pair in \cref{equation: good pair totally ramified wandering}. If it is post-critical, there exists $k \in \NN$ such that $r = f^k(c)$ for some critical point $c \in C_f \setminus P_f$. So, if we consider $(\gamma_1,\gamma_2)$ such that 
\begin{align*}
\left \{ \begin{array}{r@{\ :\ }l}
\gamma_1 & q = f^{m+1}(p) \rightarrow f^m(p) \rightarrow \dots \rightarrow p, \\
\gamma_2 & q = f^{m+1}(p) \rightarrow r = f^k(c) \rightarrow \dots \rightarrow c,
\end{array}\right.
\end{align*}
we have 
\begin{align*}
\ell(\gamma_2) = k+1 \quad \text{ and } \quad \ell(\gamma_1) = m+1+nM,
\end{align*}
where $n$ represents the amount of laps around the cycle. Choosing $n$ such that $\ell(\gamma_1) \geq \ell(\gamma_2)$, then $(\gamma_1,\gamma_2)$ is a good pair.

If $p$ is strictly preperiodic, then there exists $c \in C_f \setminus P_f$ and $k > 0$ such that $p = f^k(c)$. Let $m_0$ be the smallest natural number such that $q = f^{m_0}(p)$ is in the cycle of the orbit of $p$ and consider $(\gamma_1,\gamma_2)$ such that 
\begin{align*}
\left \{ \begin{array}{r@{\ :\ }l}
\gamma_1 & q = f^{m_0}(p) \rightarrow \dots \rightarrow q \rightarrow f^{m_0-1}(p) \rightarrow \dots \rightarrow p, \\
\gamma_2 & q  = f^{m_0}(p) \rightarrow \dots \rightarrow p = f^k(c) \rightarrow \dots \rightarrow c,
\end{array}\right.
\end{align*}
then 
\begin{align*}
\ell(\gamma_2) = k+m_0 \quad \text{ and } \quad \ell(\gamma_1) = m_0+nM,
\end{align*}
where again $n$ represents the amount of laps around the cycle. Choosing $n$ such that $\ell(\gamma_1) \geq \ell(\gamma_2)$, then $(\gamma_1,\gamma_2)$ is a good pair.

$(\Leftarrow)$ If every point in the orbit of $p$ is totally ramified, then we cannot construct a path $\gamma_2$.
\end{proof}

\begin{remark}
\label{remark: no good pairs clasification}
Note that by \cref{lemma: post ritical point empty good pairs} and Riemann-Hurwitz formula, the orbit of a post-critical point that does not have good pairs cannot have more than $2$ elements. Moreover, as the points in the orbit are totally ramified, then $p$ must be periodic. This leads to two possible cases:

\begin{enumerate}[\normalfont(1)]
\item $p$ is a totally ramified fixed point and therefore $f$ is linearly conjugate to a polynomial. 
\item $p$ is in a $2$-cycle and $f$ is linearly conjugate to $1/x^d$.
\end{enumerate}
\end{remark}

\begin{definition}
Let $K$ be a field and $f \in K(x)$ be a rational function of degree $d \geq 2$. Let $p \in P_f$ and $P = (\gamma_1, \gamma_2)$ a good pair for $p$. The \textit{conditional energy} of $P$ is defined as
$$\mathcal{E}(P) := \frac{E(\gamma_1,\gamma_2)}{E(\gamma_1)} = \frac{E(\gamma_2)}{\gcd(E(\gamma_1),E(\gamma_2))}.$$
\label{definition: conditional energy}
\end{definition}

Note that $\mathcal{E}(P)$ is well-defined as $E(\gamma_2) < \infty$.

\begin{lemma}
Let $f$ be a rational function of degree $d \geq 2$ defined over a field $K$ such that $G = G_\infty(K,f,t)$ is a martingale group. Let $p \in P_f$ and $P = (\gamma_1, \gamma_2)$ a good pair for $p$. Then
\begin{align*}
g_p^{\mathcal{E}(P)} \in \St_G(n)_v
\end{align*}
for all $n \geq 1$ and all $v \in T$ such that $\abs{v} \geq n+\ell(\gamma_1)$.
\label{lemma: see the element with conditional energy}
\end{lemma}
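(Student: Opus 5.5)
The plan is to apply the commutator trick (\cref{lemma: commutator trick}) to a suitable power of the generator attached to the common starting point of the good pair. Let $q := \gamma_1(0) = \gamma_2(0)$, and write $L_1 := \ell(\gamma_1) \geq L_2 := \ell(\gamma_2)$. By construction $f^{L_1}(p) = q$ and $f^{L_2}(r) = q$, where $r := \gamma_2(L_2)$ is not post-critical. If $q \notin P_f$ there is nothing sensible to do, so I first check the situation. Since $p \in P_f$ and $f(P_f) \subseteq P_f$, the point $q = f^{L_1}(p)$ is post-critical when $L_1 \geq 1$. If $L_1 = 0$, then $L_2 = 0$ and $r = q = p$ is post-critical, a contradiction. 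Hence $q \in P_f$ and $g_q$ is available.

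Next I compute sections via \cref{remark: sections of generators} applied along $\gamma_1$ and $\gamma_2$. The local degree of $f^{L_1}$ at $p$ is multiplicative along the orbit, so $e_{f^{L_1}}(p) = \prod_{i=1}^{L_1} e_f(\gamma_1(i)) = E(\gamma_1)$, and likewise $e_{f^{L_2}}(r) = E(\gamma_2)$. One subtlety needs care: the remark provides, for a fixed prime $\mathfrak{P}$ above $(t-q)$ and each vertex $s$, a corresponding preimage of $q$. What I actually need is that every preimage of $q$ arises from some vertex at that level. I would justify this by noting that the primes of $K(s)$ above $(t-q)$, as $s$ ranges over a $G$-orbit, cover all of $f^{-n}(q)$, and then using conjugation by elements of $G$ together with \cref{remark: equivalent elements are in mixing subgroup}. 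Granting this, there is a vertex $w \in \mathcal{L}_{L_1}$ with $g_q^{E(\gamma_1)} \in \st_G(w)$ and $g_q^{E(\gamma_1)}|_w \sim g_p$. There is also a vertex $u \in \mathcal{L}_{L_2}$ with $g_q^{E(\gamma_2)}|_u = 1$. I expect this identification of vertices, consistently for the same generator $g_q$, to be the main obstacle. I would handle it by replacing $g_q$ by a conjugate and using that $\sim$ is preserved under conjugation.

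Now set $M := E(\gamma_1,\gamma_2) = \lcm(E(\gamma_1),E(\gamma_2))$ and $g := g_q^{M}$. Then $g$ fixes both $u$ and $w$. We have $g|_u = (g_q^{E(\gamma_2)}|_u)^{M/E(\gamma_2)} = 1$, because sections of powers of an element fixing the vertex are powers of the section. Similarly $g|_w = (g_q^{E(\gamma_1)}|_w)^{\mathcal{E}(P)}$, and this is $\sim g_p^{\mathcal{E}(P)}$: if $h \sim g_p$, say $h$ is conjugate to $h'$ with $\overline{\langle h'\rangle} = \overline{\langle g_p\rangle}$, then $h^k$ is conjugate to $h'^k$, and I still need $\overline{\langle h'^k\rangle} = \overline{\langle g_p^k\rangle}$. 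That is not automatic. To avoid it, I would instead apply the commutator trick directly to $h_0 := g|_w$ with $|u| \leq |w| = L_1$. This gives $g|_w \in \St_G(n)_v$ for $|v| \geq n + L_1$.

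Finally, I transfer membership from $g|_w$ to $g_p^{\mathcal{E}(P)}$. The set $H := \St_G(n)_v$ is closed and normal in $G$ (as in the proof of \cref{proposition: properties GM}). Since $g|_w$ is conjugate to $h'^{\mathcal{E}(P)}$, it follows that $h'^{\mathcal{E}(P)} \in H$. The closed subgroup $\overline{\langle h'^{\mathcal{E}(P)}\rangle}$ is the closure of the $\mathcal{E}(P)$-th powers inside the procyclic group $\overline{\langle h'\rangle} = \overline{\langle g_p\rangle}$. In a procyclic group $Z$, the subgroup $\overline{Z^k}$ depends only on $Z$ and $k$, so it contains $g_p^{\mathcal{E}(P)}$. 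Therefore $g_p^{\mathcal{E}(P)} \in H$, which proves the lemma for all $n\geq 1$ and all $v$ with $|v|\geq n+\ell(\gamma_1)$.
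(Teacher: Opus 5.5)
Your proposal is correct and is essentially the paper's proof: the same element $g=g_q^{E(\gamma_1,\gamma_2)}$, the sections at $u\in\mathcal{L}_{\ell(\gamma_2)}$ and $w\in\mathcal{L}_{\ell(\gamma_1)}$ read off from the remark on sections of generators, the commutator trick with delay $\ell(\gamma_1)$, and the transfer to $g_p^{\mathcal{E}(P)}$ through a closed normal subgroup. Your extra care settles two points the paper leaves implicit: your own procyclic argument shows that $h\sim g_p$ does imply $h^k\sim g_p^k$, since both closures equal $Z^k$, so this step \emph{is} automatic; and no conjugation of $g_q$ is needed to get $u$ and $w$ for the same generator, because with $\mathfrak{P}$ fixed, choosing $\sigma\in G$ with $\sigma^{-1}(\mathfrak{P})$ over $(s_0-q')$ makes $\mathfrak{P}$ lie over $(\sigma(s_0)-q')$, so only the vertex moves.
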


\begin{proof}
Let us set $q$ for the starting point of $\gamma_1$ (and $\gamma_2$), $r$ the ending point of $\gamma_2$, $m_1 = \ell(\gamma_1)$ and $m_2 = \ell(\gamma_2)$. Then, by \cref{remark: sections of generators} there exist a vertex $u \in \mathcal{L}_{m_2}$ and a vertex $w \in \mathcal{L}_{m_1}$ such that 
\begin{align*}
g_q^{e_{f^{m_2}}(r)} \in \st_G(u), \quad & \left(g_q^{e_{f^{m_2}}(r)}\right)|_u = 1, \\ 
g_q^{e_{f^{m_1}}(p)} \in \st_G(w) \quad \text{ and } \quad & \left(g_q^{e_{f^{m_1}}(p)}\right)|_w \sim g_p.
\end{align*}

By the chain rule we have
\begin{align*}
e_{f^{m_2}}(r) &= \prod_{i = 1}^{m_2} e_f(\gamma_2(i)) =  E(\gamma_2) \quad \text{ and } \\
e_{f^{m_1}}(p) &= \prod_{i = 1}^{m_1} e_f(\gamma_1(i)) =  E(\gamma_1).
\end{align*}

Define $$g = g_q^{E(\gamma_1,\gamma_2)}.$$ 

As $E(\gamma_i) \mid E(\gamma_1,\gamma_2)$ for $i = 1,2$ then $g$ fixes $u$ and $w$. Moreover, 
\begin{align*}
g|_u = 1 \quad \text{ and } \quad & g|_w \sim g_p^{\mathcal{E}(P)}.
\end{align*}

By the commutator trick, then $g|_w \in \St_G(n)_v$ for every $n \geq 1$ and $v \in T$ such that $\abs{v} \geq n+m_1$. Applying \cref{remark: equivalent elements are in mixing subgroup} with $H = \overline{\<g|_w>}^G$, we conclude the lemma. 
\end{proof}

\subsection{Uniformizing the delay constant}

Observe that the delay constant in \cref{lemma: see the element with conditional energy} not only depends on the post-critical point $p$ but also on the good pair. This is currently a problem as in the definition of virtually mixing the delay constant must only depend on the group. 

In this subsection, we focus on showing that we can make the delay constant to depend only on the post-critical point. 

\begin{example}
\begin{figure}
\centering
\begin{tikzpicture}[
>=Stealth,
every node/.style={inner sep=0pt},
arr/.style={
    ->,
    line width=0.9pt,
    shorten <=5pt,
    shorten >=5pt
},
scale=0.75
]


\coordinate (A) at (4.8,0);
\coordinate (B) at (7.2,0);
\coordinate (C) at (4.8,-1.5);

\coordinate (A0) at (2.3,0);
\coordinate (C0) at (2.3,-1.5);


\fill (A) circle (0.12);

\draw (B) circle (0.22);
\fill (B) circle (0.11);

\draw[fill=white] (C) circle (0.18);


\node[above=6pt] at (A) {$p$};
\node[above=6pt] at (B) {$q$};
\node[below=7pt] at (C) {$r$};


\node at (A0) {$\cdots$};


\draw[arr] ($(A0)+(0.25,0)$) -- (A);

\draw[arr] (A) -- (B);
\draw[arr] (C) -- node[midway,below=5pt] {$2^n$} (B);

\draw[arr]
(B) .. controls +(1.2,0.7) and +(1.2,-0.7)
.. node[midway,right=5pt] {$2$} (B);

\end{tikzpicture}
\caption{Ramification portrait of \cref{example: optimal gamma1}.}
\label{figure: optimal gamma1}
\end{figure}

Consider the ramification portrait in \cref{figure: optimal gamma1}. Let $\gamma_2$ be the simple path $\gamma_2: q \rightarrow r$ and let $P = (\gamma_1, \gamma_2)$ be a pair in $\Gamma_{f,p}$. Then, the starting point of $\gamma_1$ must be $q$ and the length of $\gamma_1$ is determined by how many times $\gamma_1$ goes around the loop. In particular, we have $$E(\gamma_1) = 2^{\ell(\gamma_1)-1}.$$ The conditional energy of the pair is 
\begin{align*}
\mathcal{E}(\gamma_1,\gamma_2) =  \left \{ \begin{matrix} 
2^{n+1-\ell(\gamma_1)} & \text{if $\ell(\gamma_1) \leq n+1$ }\\ 
1 & \text{if $\ell(\gamma_1) > n+1$.}
\end{matrix}\right.
\end{align*}

Note that the conditional energy reduces more the more we go around the loop, but there is an optimal path $\gamma_1$ that minimizes the conditional energy and the length of $\gamma_1$.
\label{example: optimal gamma1}
\end{example}

What happens in \cref{example: optimal gamma1} can be generalized for any pair:

\begin{lemma}
Let $f$ be a rational function of degree $d \geq 2$ defined over a field $K$. Let $p \in P_f$ such that $\Gamma_{f,p} \neq \emptyset$. Fix a path $\gamma_2$ 
and consider all the good pairs $P \in \Gamma_{f,p}$ with the path $\gamma_2$ in the second coordinate. Then, there exists a unique path $\gamma_1$ such that $\mathcal{E}(\gamma_1,\gamma_2)$ is minimal and among those that minimize $\mathcal{E}(\gamma_1,\gamma_2)$, we have that $\gamma_1$ is the shortest.
\label{lemma: optimal path}
\end{lemma}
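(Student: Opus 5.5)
The key observation is that a path in $\Gamma_f^{\textrm{opp}}$ ending at $p$ is completely determined by its length. Indeed, if $\gamma_1:\set{0,\dots,L}\to\PP^1(\K-)$ is a path in $\Gamma_f^{\textrm{opp}}$ with $\gamma_1(L)=p$, then each edge $\gamma_1(i)\to\gamma_1(i+1)$ of $\Gamma_f^{\textrm{opp}}$ means $f(\gamma_1(i+1))=\gamma_1(i)$. Descending induction on $i$ gives $\gamma_1(i)=f^{L-i}(p)$ for all $0\le i\le L$. So the only freedom in choosing $\gamma_1$ is its length $L$.

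Let $q:=\gamma_2(0)$ be the starting point of the fixed path $\gamma_2$. By the observation above, the good pairs with second coordinate $\gamma_2$ are in bijection with the set
\begin{align*}
\Lambda := \set{L \in \NN : L \geq \ell(\gamma_2),\ f^L(p) = q}.
\end{align*}
The bijection sends $L$ to the path $\gamma_1^{(L)}: f^L(p)\to f^{L-1}(p)\to\dots\to p$. Conditions (2) and (3) in the definition of good pair hold automatically, since $\gamma_2$ already ends at a non-post-critical point. The lemma implicitly assumes that at least one good pair with this $\gamma_2$ exists (this is how $\gamma_2$ is meant to be fixed), so $\Lambda\neq\emptyset$.

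Next I use \cref{definition: conditional energy}:
\begin{align*}
\mathcal{E}(\gamma_1^{(L)},\gamma_2) = \frac{E(\gamma_2)}{\gcd(E(\gamma_1^{(L)}),E(\gamma_2))}.
\end{align*}
Here $E(\gamma_2)$ is a fixed positive integer, finite because $\ell(\gamma_2)<\infty$. Hence every conditional energy is a positive integer dividing $E(\gamma_2)$, and the set of values $\set{\mathcal{E}(\gamma_1^{(L)},\gamma_2): L\in\Lambda}$ is a nonempty finite set of positive integers. It therefore has a minimum $e_0$.

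The set $\Lambda_0:=\set{L\in\Lambda:\mathcal{E}(\gamma_1^{(L)},\gamma_2)=e_0}$ is a nonempty subset of $\NN$, so it has a least element $L_0$. The path $\gamma_1:=\gamma_1^{(L_0)}$ minimizes the conditional energy and is the shortest among the minimizers. It is unique: any other minimizer of the same length $L_0$ coincides with it by the first paragraph, and any minimizer of different length is strictly longer.

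There is no real obstacle in this argument. The only point to make explicit is the rigidity of paths ending at $p$: "minimal" and "shortest" are genuinely unambiguous only because $\gamma_1$ is determined by $\ell(\gamma_1)$.
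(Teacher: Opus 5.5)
Your proof is correct and follows essentially the same route as the paper. Both arguments rest on two facts: a path in $\Gamma_f^{\textrm{opp}}$ ending at $p$ is determined by its length (the paper phrases this as one such path containing the other), and $E(\gamma_2)$ is finite, which guarantees that a minimum exists. The paper additionally notes that a shorter $\gamma_1'$ satisfies $E(\gamma_1') \mid E(\gamma_1)$, hence $\mathcal{E}(\gamma_1,\gamma_2) \mid \mathcal{E}(\gamma_1',\gamma_2)$, so the optimal conditional energy divides all the others; this is not needed for the statement, but it is what later justifies computing $e_p$ from optimal pairs alone.
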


\begin{proof}
Let $P = (\gamma_1, \gamma_2)$ and $P' = (\gamma_1', \gamma_2)$ be two good pairs in $\Gamma_{f,p}$ with the same second coordinate. As $f$ is a function and the paths $\gamma_1$ and $\gamma_1'$ start and end at the same points, one of the paths must contain the other (compare with \cref{example: optimal gamma1} going a different amount of times around the loop). This allows us to order the paths $\gamma_1$ by their length. Moreover, we have a unique path for each possible length. Without loss of generality, assume that $\gamma_1'$ is shorter than $\gamma_1$. This implies that $$E(\gamma_1') \mid E(\gamma_1)$$ and consequently $$\mathcal{E}(P) \mid \mathcal{E}(P').$$

As $E(\gamma_2)$ is finite, there is a path $\gamma_1$ that minimizes $\mathcal{E}(P)$ and is the shortest among those that minimize $\mathcal{E}(P)$. 
\end{proof}

\begin{definition}
The unique path $\gamma_1$ in \cref{lemma: optimal path} will be called the \textit{optimal path} with respect to $\gamma_2$. In the case that $\gamma_1$ is taken optimal, we call $(\gamma_1,\gamma_2)$ an optimal pair. 
\end{definition}

\begin{example}
\begin{figure}
\centering
\begin{tikzpicture}[ >=Stealth, every node/.style={inner sep=0pt}, arr/.style={ ->, line width=0.9pt, shorten <=7pt, shorten >=7pt }, scale=0.85,]


\node at (-2.5,0) (A0) {$\cdots$};

\coordinate (p)  at (0,0);
\coordinate (q1) at (2.4,0);
\coordinate (q2) at (5.4,0);

\coordinate (r1) at (0.8,-1.3);
\coordinate (r2) at (-0.8,-2.6);


\draw (p) circle (0.22);
\fill (p) circle (0.11);
\node[above=10pt] at (p) {$p$};

\draw (q1) circle (0.22);
\fill (q1) circle (0.11);
\node[above=10pt] at (q1) {$q_1$};

\draw (q2) circle (0.22);
\fill (q2) circle (0.11);
\node[above=10pt] at (q2) {$q_2$};

\draw[fill=white] (r1) circle (0.18);
\node[left=10pt] at (r1) {$r_1$};

\draw[dotted,line width=1pt] (r2) circle (0.18);
\node[left=10pt] at (r2) {$r_2$};


\draw[arr] ($(A0)+(0.25,0)$) -- (p);

\draw[arr] (p) -- node[midway,above=5pt] {$2$} (q1);

\draw[arr,bend left=25]
(q1) to node[midway,above=5pt] {$d_1$} (q2);

\draw[arr,bend left=25]
(q2) to node[midway,below=5pt] {$d_2$} (q1);

\draw[arr]
(r1) -- node[midway, below=5pt] {$4$} (q1);

\draw[arr]
(r2) -- (r1);
\end{tikzpicture}
\caption{Ramification portrait of \cref{example: minimized pair}.}
\label{figure: example minimized pair}
\end{figure}

Consider the (incomplete) ramification portrait in \cref{figure: example minimized pair}, where $2 \nmid d_1d_2$. 

Let $P = (\gamma_1,\gamma_2)$ and $P' = (\gamma_1',\gamma_2')$ be optimal good pairs of $p$ such that all the paths start at $q_1$ but the path $\gamma_2$ ends at $r_1$ and the path $\gamma_2'$ ends at $r_2$. 

Let $m_1$, $m_1'$, $m_2$ and $m_2'$ be the laps around the loop of $\gamma_1$, $\gamma_1'$, $\gamma_2$ an $\gamma_2'$ respectively. Then
\begin{align*}
\ell(\gamma_1) &= m_1+1, \quad E(\gamma_1) = 2(d_1d_2)^{m_1}, \\
\ell(\gamma_2) &= m_2+1, \quad E(\gamma_2) = 4(d_1d_2)^{m_2}, \\
\ell(\gamma_1') &= m_1'+1, \quad E(\gamma_1') = 2(d_1d_2)^{m_1'}, \\
\ell(\gamma_2') &= m_2'+2, \quad E(\gamma_2') = 4(d_1d_2)^{m_2'}.
\end{align*}
As $P$ and $P'$ are optimal pairs then $m_1 = m_2$ and $m_1' = m_2'+1$ (because as a good pair it must satisfy $\ell(\gamma_1') \geq \ell(\gamma_2')$). Therefore
\begin{align*}
\mathcal{E}(P) = 2 \quad \text{ and } \quad \mathcal{E}(P') = 2.
\end{align*}
Note that the conditional energy is the same independently of the laps around the loop. It is also independent of whether we end at $r_2$ or at $r_1$. However, the length of $\gamma_1$ and $\gamma_1'$ are affected by these unnecessary laps around the loop that $\gamma_2$ and $\gamma_2'$ do, and as it was shown in \cref{lemma: see the element with conditional energy}, the delay constant depends on the length of $\gamma_1$.
\label{example: minimized pair}
\end{example}

What happens in \cref{example: minimized pair} can be generalized for any pair:

\begin{lemma}
\label{lemma: minimized pairs}
Let $f$ be a rational function of degree $d \geq 2$ defined over a field $K$. Let $p \in P_f$ such that $\Gamma_{f,p} \neq \emptyset$. Let $P = (\gamma_1, \gamma_2)$ and $P' = (\gamma_1', \gamma_2')$ be two optimal good pairs in $\Gamma_{f,p}$ such that
\begin{enumerate}[\normalfont(1)]
\item $\gamma_2$ and $\gamma_2'$ have the same starting point and
\item $\gamma_2 \subseteq \gamma_2'$, namely, there exists $i_0 \in \NN$ such that $\gamma_2(i) = \gamma_2'(i+i_0)$ for all $i \in \Dom(\gamma_2)$.
\end{enumerate}
Then $$\mathcal{E}(P) = \mathcal{E}(P').$$
\end{lemma}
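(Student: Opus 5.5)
Throughout, write $q:=\gamma_2(0)=\gamma_2'(0)$. If $i_0=0$ then $\gamma_2=\gamma_2'$, the optimal path is unique by \cref{lemma: optimal path}, and there is nothing to prove, so I assume $i_0>0$. The plan is to show that both optimal conditional energies equal the same "stable value", namely the eventual value of the conditional energy as $\gamma_1$ winds around a cycle more and more.

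\textbf{Step 1: structure of the paths.} Since $\gamma_2'(i_0)=\gamma_2(0)=q=\gamma_2'(0)$, the first $i_0$ steps of $\gamma_2'$ form a closed path in $\Gamma_f^{\textrm{opp}}$ from $q$ to $q$. Reversing arrows, this means $f^{i_0}(q)=q$. Hence $q$ lies in a cycle of period $M$ with $M\mid i_0$; write $i_0=kM$. Let $C:=\prod_{z\in\text{cycle}} e_f(z)$ be the energy of one lap. Then $\gamma_2'$ is $k$ laps followed by $\gamma_2$, and by multiplicativity of $E$,
\begin{align*}
E(\gamma_2')=C^k E(\gamma_2), \qquad \ell(\gamma_2')=\ell(\gamma_2)+kM.
\end{align*}

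\textbf{Step 2: parametrize the candidates for the first coordinate.} As in the proof of \cref{lemma: optimal path}, any path from $q$ to $p$ is the reversed forward orbit segment of $p$. Since $q$ is periodic, such paths are exactly $\gamma^{(j)}$ for $j\ge 0$: take the shortest one $\gamma^{(0)}$, of length $\ell_0$ and energy $E_0$, and add $j$ extra laps around the cycle. Thus
\begin{align*}
\ell(\gamma^{(j)})=\ell_0+jM, \qquad E(\gamma^{(j)})=E_0C^j.
\end{align*}
The admissible $j$ for $\gamma_2$ form a tail $\{j\ge j_0\}$, and those for $\gamma_2'$ form a tail $\{j\ge j_0'\}$, cut out by the good-pair length condition.

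\textbf{Step 3: the minimum is the stable value.} The map $j\mapsto \mathcal{E}(\gamma^{(j)},\gamma_2)=E(\gamma_2)/\gcd(E_0C^j,E(\gamma_2))$ is nonincreasing under divisibility, as in \cref{lemma: optimal path}. It takes values in the finite set of divisors of $E(\gamma_2)$, so it is eventually constant. Therefore its minimum over any tail of $j$'s equals that eventual value
\begin{align*}
\mathcal{E}_\infty:=\lim_{j\to\infty}\mathcal{E}(\gamma^{(j)},\gamma_2).
\end{align*}
In particular, $\mathcal{E}(P)=\mathcal{E}_\infty$, and likewise $\mathcal{E}(P')=\lim_j \mathcal{E}(\gamma^{(j)},\gamma_2')$.

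\textbf{Step 4: compare the limits by a shift.} For $j\ge k$,
\begin{align*}
\gcd(E_0C^{j},C^kE(\gamma_2))=C^k\gcd(E_0C^{j-k},E(\gamma_2)),
\end{align*}
so $\mathcal{E}(\gamma^{(j)},\gamma_2')=\mathcal{E}(\gamma^{(j-k)},\gamma_2)$. Letting $j\to\infty$ gives $\mathcal{E}(P')=\mathcal{E}_\infty=\mathcal{E}(P)$.

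The only delicate point is the restriction to a tail of $j$'s imposed by the good-pair length condition. This is handled by the monotonicity in Step 3, which makes the minimum independent of where the tail starts. The rest is bookkeeping with the multiplicativity of energies along concatenated paths.
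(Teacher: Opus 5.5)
Your Steps 1--4 are correct when $\gamma_2'$ ends exactly at $\gamma_2'(i_0+\ell(\gamma_2))=r$, the endpoint of $\gamma_2$. In that case your eventual-value/shift argument is a clean, rigorous version of the paper's sketchier point that the laps of $\gamma_2'$ around the cycle are absorbed by the optimal first path.

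The gap is that the hypothesis $\gamma_2\subseteq\gamma_2'$ only says $\gamma_2$ occurs as a contiguous block of $\gamma_2'$ starting at index $i_0$. It does not say $\gamma_2'$ stops there. So your assertions ``if $i_0=0$ then $\gamma_2=\gamma_2'$'' and ``$\gamma_2'$ is $k$ laps followed by $\gamma_2$'' are false in general, and $E(\gamma_2')=C^kE(\gamma_2)$ is not established. This case is not cosmetic. With $i_0=0$ and $\gamma_2'$ running past $r$, it is exactly what justifies condition (2) in the definition of minimized pairs, i.e.\ cutting $\gamma_2$ at its first non-post-critical vertex. In that case $q$ also need not be periodic, a situation your Step 2 never treats.

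The missing ingredient is the observation that opens the paper's proof. Every vertex of $\gamma_2'$ after index $i_0+\ell(\gamma_2)$ is a strict $f$-preimage of $r\notin P_f$. Since $f(C_f)\subseteq P_f$ and $f(P_f)\subseteq P_f$, no such vertex is critical. Each therefore contributes a factor $e_f=1$, and $E(\gamma_2')=C^kE(\gamma_2)$ still holds, with $k=0$ when $i_0=0$.

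With this in hand, the proof closes as follows. If $q$ is not periodic, there is a unique path from $q$ to $p$, so $\gamma_1=\gamma_1'$ and $\mathcal{E}(P)=\mathcal{E}(P')$ immediately. If $q$ is periodic, your Steps 2--4 go through unchanged, allowing $k=0$. Your monotonicity argument in Step 3 already makes the stricter length constraint $\ell(\gamma_1')\geq\ell(\gamma_2')>\ell(\gamma_2)+i_0$ irrelevant.
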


\begin{proof}
As $P$ is a good pair, the ending point $r$ of $\gamma_2$ is not in $P_f$. This means that every point in $\gamma_2'$ after $r$ is not in $P_f$. In particular their local degree is $1$ and so the energy of $\gamma_2'$ is unchanged if we omit all the points in the path after $r$.

Let $q$ be the starting point of all the paths. We have two options: 

If $q$ is not in a cycle, then $$E(\gamma_2) = E(\gamma_2')$$ and as $P$ and $P'$ are optimal pairs, then $\mathcal{E}(P) = \mathcal{E}(P')$ by \cref{lemma: optimal path}.

If $q$ is in a cycle, then every point in the cycle is post-critical. Thus, $r$ is not in the cycle. Let $q_1$ be the closest point to $r$ that is in the cycle and let us split $\gamma_2'$ as the concatenation of three different paths:
\begin{enumerate}[\normalfont(1)]
\item $\tau_1$ is the shortest path in $\Gamma_f^{\textrm{opp}}$ from $q_1$ to $q$,
\item $\tau_C$ is the shortest path in $\Gamma_f^{\textrm{opp}}$ from $q_1$ to $q_1$ containing every point in the cycle and
\item $\tau_r$ is the shortest path in $\Gamma_f^{\textrm{opp}}$ from $q_1$ to $r$.
\end{enumerate}
Then, as in \cref{example: minimized pair}, 
\begin{equation*}
\mathcal{E}(P') = \frac{E(\tau_1)E(\tau_r)}{\gcd(E(\tau_1)E(\tau_r), E(\gamma_1'))} = \frac{E(\tau_1)E(\tau_r)}{\gcd(E(\tau_1)E(\tau_r), E(\gamma_1))} = \mathcal{E}(P). \qedhere
\end{equation*}
\end{proof}

Based on \cref{lemma: minimized pairs} and the fact that we want to optimize the length of $\gamma_1$ we give the following definition:

\begin{definition}
\label{definition: minimized pair}
Let $f$ be a rational function of degree $d \geq 2$ defined over a field $K$. Let $p \in P_f$ such that $\Gamma_{f,p} \neq \emptyset$. We say that a pair $P = (\gamma_1,\gamma_2) \in \Gamma_{f,p}$ is \textit{minimized} if 
\begin{enumerate}[\normalfont(1)]
\item $\gamma_2$ has no loops,
\item the only point in the image of $\gamma_2$ that is not in $P_f$ is its ending point,
\item $\gamma_1$ is optimal with respect to $\gamma_2$.
\end{enumerate}
\end{definition}

\begin{proposition}
Let $f$ be a rational function of degree $d \geq 2$ defined over a field $K$. Let $p \in P_f$ such that $\Gamma_{f,p} \neq \emptyset$ and let $q \in \set{f^n(p): n \geq 1}$. Then, there are finitely many minimized pairs in $\Gamma_{f,p}$ starting at $q$.
\label{proposition: finitely many minimized pairs}
\end{proposition}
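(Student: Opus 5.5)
A minimized pair $(\gamma_1,\gamma_2)$ starting at $q$ is completely determined by $\gamma_2$, because \cref{lemma: optimal path} says the optimal path $\gamma_1$ with respect to $\gamma_2$ is unique. So the plan is to show that only finitely many paths $\gamma_2$ can be the second coordinate of a minimized pair starting at $q$.

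First, bound the interior of $\gamma_2$. Conditions (1) and (2) of \cref{definition: minimized pair} say that $\gamma_2$ has no loops and that $\gamma_2(0),\dots,\gamma_2(\ell(\gamma_2)-1)$ all lie in $P_f$. I will check that in this setting $P_f$ is finite. If $P_f$ is infinite, I will bound it instead by the set of post-critical points actually reachable from $q$ in $\Gamma_f^{\textrm{opp}}$ through post-critical points. Each such point is of the form $f^k(c)$ with $f^{k+j}(c)=q$ for some critical point $c$. The delicate case is a wandering critical orbit through $q$. There, the backward chain through post-critical points has length at most the number of steps back to a critical point, so it is still finite: a backward path inside $P_f$ from $q$ traces the forward orbit of some critical point, and there are finitely many critical points.

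Concretely, reading $\gamma_2$ in the forward direction of $f$, its post-critical part is a segment $f^{k}(c),\dots,f^{k+j}(c)=q$ of the orbit of a point in $C_f\cup P_f$. Because $\gamma_2$ has no repeated vertices, $j$ is at most the number of distinct points in that orbit segment before reaching $q$. Fix a critical point $c$. If $q$ lies in the forward orbit of $c$, the orbit hits $q$ at a well-defined first time, and any non-repeating backward path from $q$ along the orbit of $c$ is a sub-segment of the orbit ending at $q$ without revisiting a point. There are finitely many such segments for each $c$. Hence the set of possible vertex sequences $\gamma_2(0),\dots,\gamma_2(\ell(\gamma_2)-1)$ is finite, and in particular $\ell(\gamma_2)$ is bounded by a constant depending only on $f$ and $q$.

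Next, bound the endpoint. $\gamma_2(\ell(\gamma_2))$ is a preimage under $f$ of $\gamma_2(\ell(\gamma_2)-1)$, and every point has at most $d$ preimages. So each of the finitely many interior sequences extends in at most $d$ ways, and there are finitely many admissible $\gamma_2$. Together with the uniqueness of the optimal $\gamma_1$, this gives finitely many minimized pairs starting at $q$.

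The main obstacle I expect is the first step when $P_f$ is infinite: showing rigorously that loop-free backward paths from $q$ that stay inside $P_f$ are finite in number. I would first try the orbit-segment argument above. Each edge $z\to w$ in $\Gamma_f^{\textrm{opp}}$ with $z,w\in P_f$ means $f(w)=z$, where $w=f^{i}(c)$ for some critical $c$ and $i\geq1$. Any backward path in $P_f$ from $q$ therefore stays within $\bigcup_{c\in C_f}\{f^i(c): i\ge 1,\ f^{i'}(c)=q \text{ for some } i'\ge i\}$. I would then argue that this set is finite: if the orbit of $c$ is wandering, $q$ occurs in it at most once; if it is preperiodic, the orbit is finite.
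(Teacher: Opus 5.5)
Your proof is correct and follows essentially the same route as the paper. A minimized pair is determined by $\gamma_2$ via \cref{lemma: optimal path}, the interior of $\gamma_2$ lies in the finite set $\mathcal{O}_f^-(q)\cap P_f$, and the endpoint is one of finitely many preimages; you even justify the finiteness of $\mathcal{O}_f^-(q)\cap P_f$ via the wandering/preperiodic dichotomy for critical orbits, which the paper only asserts. The paper just streamlines the count by noting that a loop-free backward path from $q$ is determined by its endpoint $r$, which lies in the finite set $f^{-1}(\mathcal{O}_f^-(q)\cap P_f)\setminus P_f$.
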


\begin{proof}
Let $$\mathcal{O}_f^-(q) := \set{z \in \PP^1(\K-): f^n(z) = q \text{ for some $n \geq 0$}}$$ denote the backward orbit of $q$. Note that $\mathcal{O}_f^-(q) \cap P_f$ is a finite set and thus
\begin{align*}
A := f^{-1}(\mathcal{O}_f^-(q) \cap P_f) \setminus P_f  
\end{align*}
is also finite. 

If $P = (\gamma_1,\gamma_2)$ is a minimized pair, then $P$ is determined by $\gamma_2$. This is because $\gamma_1$ is optimal with respect to $\gamma_2$ and such a $\gamma_1$ is unique by \cref{lemma: optimal path}. By (1) in \cref{definition: minimized pair}, the path $\gamma_2$ does not have loops and therefore $\gamma_2$ is uniquely determined by its ending point $r$. Finally, by (2) in \cref{definition: minimized pair} the element $r \in A$ and the set $A$ is finite.
\end{proof}

As a corollary of \cref{proposition: finitely many minimized pairs}, we conclude:

\begin{corollary}
Let $f$ be a rational function of degree $d \geq 2$ defined over a field $K$. Let $p \in P_f$ such that $\Gamma_{f,p} \neq \emptyset$ and $\set{f^n(p): n \geq 1}$ is finite. Then, there are finitely many minimized pairs in $\Gamma_{f,p}$.
\label{corollary: finitely many minimized pairs finite forward orbit}
\end{corollary}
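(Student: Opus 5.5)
The plan is to deduce this directly from \cref{proposition: finitely many minimized pairs}, by decomposing the set of minimized pairs according to their common starting point.

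First, let $P=(\gamma_1,\gamma_2)\in\Gamma_{f,p}$ be a minimized pair, and let $q=\gamma_1(0)=\gamma_2(0)$ be its starting point. Since $\gamma_1$ is a path in $\Gamma_f^{\textrm{opp}}$ ending at $p$, each step goes from a point to one of its preimages under $f$. Reading $\gamma_1$ backwards therefore gives $q=f^{\ell(\gamma_1)}(p)$.

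Next we rule out $\ell(\gamma_1)=0$. In that case condition (3) of a good pair forces $\ell(\gamma_2)=0$, so $\gamma_2$ ends at $q=p\in P_f$. This contradicts condition (2), which requires the ending point of $\gamma_2$ to be outside $P_f$. Hence $\ell(\gamma_1)\geq 1$ and
$$q\in\set{f^n(p):n\geq 1}.$$

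By hypothesis the set $\set{f^n(p):n\geq 1}$ is finite. For each $q$ in it, \cref{proposition: finitely many minimized pairs} shows there are only finitely many minimized pairs in $\Gamma_{f,p}$ starting at $q$. The set of all minimized pairs is the union of these sets over $q$, so it is a finite union of finite sets and hence finite.

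The only step needing care is showing that the starting point lies in the forward orbit of $p$ with $n\geq 1$ rather than $n\geq 0$; this is exactly the length-zero case excluded above.
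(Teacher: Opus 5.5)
Your proof is correct and takes the same route as the paper, which states this as an immediate consequence of the proposition giving finitely many minimized pairs starting at each $q\in\set{f^n(p):n\geq 1}$. Your argument spells out the step the paper leaves implicit: every minimized pair starts at $q=f^{\ell(\gamma_1)}(p)$ with $\ell(\gamma_1)\geq 1$, so the set of minimized pairs is a finite union of finite sets.
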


\subsection{The subgroup $G_m$}

In this subsection, we collect all the information obtained in \cref{section: energy of paths} so far to construct a finite delay constant $N$ and a subgroup $G_m$ that satisfies that $G_m$ can be `seen' from any $\St_G(n)_v$.

\begin{definition}
Let $f$ be a rational function of degree $d \geq 2$ defined over a field $K$ and $p \in P_f$. The \textit{exponent} of $p$ is defined as
\begin{align*}
e_p :=  \left \{ \begin{matrix} 
\gcd(\mathcal{E}(P): P \in \Gamma_{f,p}) & \text{if $\Gamma_{f,p} \neq \emptyset$} \\ 
1 & \text{otherwise.}
\end{matrix}\right.
\end{align*}
\end{definition}

The exponent of $p$ is well-defined as $\mathcal{E}(P)$ is finite for every good pair. Moreover, if $p$ is a post-critical point such that $\Gamma_{f,p} \neq \emptyset$, by \cref{lemma: optimal path} and \cref{lemma: minimized pairs} we have
\begin{align}
e_p = \gcd(\mathcal{E}(P): P \in \Gamma_{f,p} \text{ is a minimized pair}).
\label{equation: ep minimized pairs}
\end{align}

The following proposition relates the energy of paths worked out in \cref{section: ramification portrait and energy of paths} with the set $\Delta_f$ defined in \cref{definition: Deltaf}: 

\begin{proposition}
\label{proposition: ep for orbit not in Deltaf}
Let $f$ be a rational function of degree $d \geq 2$ defined over a field $K$. Let $p \in P_f$ such that $\set{f^n(p): n \geq 0} \setminus \Delta_f \neq \emptyset$. Then, there exists a minimized pair $P = (\gamma_1, \gamma_2)$ such that
\begin{align*}
\mathcal{E}(P) = 1 \quad \text{ and } \quad \ell(\gamma_1) \leq 1+ \# (\set{f^n(p): n \geq 0} \cap \Delta_f) \leq 5.    
\end{align*}
\end{proposition}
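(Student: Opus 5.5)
Let $m_0 \geq 0$ be the smallest integer such that $q := f^{m_0}(p) \notin \Delta_f$; it exists by hypothesis. The points $p, f(p), \dots, f^{m_0-1}(p)$ all lie in $\Delta_f$. The plan is to take the pair starting at $q' := f^{m_0+1}(p)$ (or at $q$ itself, see below) and to exploit that $q \notin \Delta_f$. By \cref{definition: Deltaf}, $q$ has a preimage $r \in f^{-1}(q)$ with $r \notin C_f \cup P_f$. In particular $e_f(r) = 1$.

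The natural choice is then
\begin{align*}
\gamma_1 &: q \rightarrow f^{m_0-1}(p) \rightarrow \dots \rightarrow p, \\
\gamma_2 &: q \rightarrow r.
\end{align*}
These paths have the same starting point, $\gamma_1$ ends at $p$, and $\gamma_2$ ends at the non-post-critical point $r$. Moreover $E(\gamma_2) = e_f(r) = 1$, so $\mathcal{E}(P) = 1/\gcd(E(\gamma_1),1) = 1$ regardless of $\gamma_1$. The pair is minimized: $\gamma_2$ has no loops, its only point outside $P_f$ is $r$ (since $q \in P_f$ when $m_0 \geq 1$), and $\gamma_1$ is optimal because it is the shortest path from $q$ to $p$ and already attains the minimal conditional energy $1$. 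The length is $\ell(\gamma_1) = m_0 \leq \#(\set{f^n(p)} \cap \Delta_f)$, since the points $f^i(p)$ for $i<m_0$ are distinct elements of $\Delta_f$. If two of them coincided, the orbit would be periodic inside $\Delta_f$ before reaching $q$, contradicting the existence of $q$. The final bound $\leq 5$ then follows from \cref{theorem: bound Deltaf}.

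The main obstacle is the degenerate case, where the good-pair condition $\ell(\gamma_1) \geq \ell(\gamma_2) = 1$ fails or $q$ is not post-critical. This happens when $m_0 = 0$, that is, when $p$ itself is not in $\Delta_f$. Then I would instead start one step further, at $q' = f(p) \in P_f$. If $f(p)$ has a preimage $r' \neq p$ outside $C_f \cup P_f$, take $\gamma_1: f(p) \rightarrow p$ and $\gamma_2: f(p) \rightarrow r'$, both of length $1$. Since $e_f(r') = 1$, this gives $\mathcal{E} = 1$ and $\ell(\gamma_1) = 1 \leq 1 + \#(\cdots)$, which explains the extra $+1$ in the statement.

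If no such $r'$ exists, I would first try to reduce to the general recipe by re-indexing. I would start at $q = f^{m_0+1}(p)$ with $\gamma_1$ passing through $f^{m_0}(p), \dots, p$, and choose for $\gamma_2$ a preimage of $f^{m_0+1}(p)$ outside $C_f \cup P_f$. The bookkeeping here is delicate: one must pick $q$ as the first orbit point outside $\Delta_f$ among $f(p), f^2(p), \dots$, so that its non-critical, non-post-critical preimage gives $\gamma_2$ of length $1$. One must also check that $\gamma_1$, which now has length at most $1 + \#(\text{orbit} \cap \Delta_f)$, is still the shortest path and hence optimal. A remaining subcase is when the only orbit point outside $\Delta_f$ is $p$ itself, so that the whole forward orbit of $f(p)$ lies in $\Delta_f$. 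There I would argue that $p$ is then hit again by the periodic part of the orbit, so the cycle supplies the path, and the length bound still holds because the points traversed lie in $\Delta_f$ apart from the starting one.
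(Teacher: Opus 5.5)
Your argument is correct when $p\in\Delta_f$; there it even gives the sharper bound $\ell(\gamma_1)=m_0\le\#(\set{f^n(p):n\ge0}\cap\Delta_f)$. It is also correct, once your ``re-indexing'' is untangled, when $p\notin\Delta_f$ but some $f^n(p)\notin\Delta_f$ with $n\ge1$. That is the paper's first case: take $n_0\ge1$ minimal with $f^{n_0}(p)\notin\Delta_f$, let $\gamma_2$ be a length-one path to a non-critical, non-post-critical preimage, and note that $f(p),\dots,f^{n_0-1}(p)$ are distinct points of $\Delta_f$.

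\textbf{The gap is in your last subcase}, $p\notin\Delta_f$ with $\set{f^n(p):n\ge1}\subseteq\Delta_f$. Your claim that $p$ is ``hit again by the periodic part of the orbit'' is false. If $f^M(p)=p$ for some $M\ge1$, then $p\in\set{f^n(p):n\ge1}\subseteq\Delta_f$, a contradiction. So $p$ is strictly preperiodic (preperiodic because $\Delta_f$ is finite), and no path in $\Gamma_f^{\textrm{opp}}$ returns to $p$ through the cycle. You also never specify $\gamma_2$, and the length-one recipe fails here. The common starting point must be some $f^k(p)$ with $k\ge1$, hence a point of $\Delta_f$. Every preimage of such a point lies in $C_f\cup P_f$, so a length-one $\gamma_2$ must end at a critical point. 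Then $E(\gamma_2)>1$, and $\mathcal{E}(P)=1$ is no longer automatic.

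\textbf{The missing idea} is to use $p\notin\Delta_f$ directly. Choose $r\in f^{-1}(p)\setminus(C_f\cup P_f)$, so that $e_f(r)=1$. Let $n_0\ge1$ be minimal with $q=f^{n_0}(p)$ in the cycle, and let $M$ be the length of the cycle. Take
\[
\gamma_2: q\to f^{n_0-1}(p)\to\dots\to p\to r,\qquad \gamma_1: q\to f^{n_0+M-1}(p)\to\dots\to q\to f^{n_0-1}(p)\to\dots\to p .
\]
This pair has the required properties:
\begin{enumerate}
\item Since $\ell(\gamma_2)=n_0+1$, the path $\gamma_1$ is forced to make at least one lap, and with exactly one lap $\ell(\gamma_1)=n_0+M\ge\ell(\gamma_2)$.
\item $E(\gamma_2)=\prod_{j=0}^{n_0-1}e_f(f^j(p))$ divides $E(\gamma_1)$, so $\mathcal{E}(P)=1$ and the one-lap $\gamma_1$ is optimal.
\item $\gamma_2$ is loop-free and $r$ is its only non-post-critical point, so the pair is minimized.
\item The points $f(p),\dots,f^{n_0+M-1}(p)$ are $n_0+M-1$ distinct points of $\Delta_f$, which gives $\ell(\gamma_1)\le1+\#(\set{f^n(p):n\ge0}\cap\Delta_f)$.
\end{enumerate}
This is the paper's second case. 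Without it, your proof does not cover every $p$ allowed by the hypothesis.
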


\begin{proof}
Suppose first that $\set{f^n(p): n \geq 1} \setminus \Delta_f \neq \emptyset$. Let $n_0$ be the first natural number such that $q = f^{n_0}(p) \notin \Delta_f$. As $q \notin \Delta_f$, there exist $r \in f^{-1}(q)$ such that $r$ is neither critical nor post-critical. In particular $e_f(r) = 1$.  

Define $P = (\gamma_1,\gamma_2)$ as 
\begin{align*}
\left \{ \begin{array}{r@{\ :\ }l}
\gamma_1 & q = f^{n_0}(p) \rightarrow f^{n_0-1}(p) \rightarrow  \dots \rightarrow p, \\
\gamma_2 & q  = f^{n_0}(p) \rightarrow r.
\end{array}\right.
\end{align*}
It is not hard to check that $P$ is a minimized pair and since $E(\gamma_2) = 1$, then $\mathcal{E}(P) = 1$. The length of $\gamma_1$ is $n_0$, which is less or equal than $$1+ \# (\set{f^n(p): n \geq 0} \cap \Delta_f).$$

Suppose now that $\set{f^n(p): n \geq 1} \setminus \Delta_f = \emptyset$. Then by hypothesis, $p$ is the only one in the forward orbit of $p$ that is not in  $\Delta_f$. Let $r \in f^{-1}(p)$ such that $r$ is neither critical nor post-critical. In particular $e_f(r) = 1$.

As $\Delta_f$ is finite by \cref{theorem: bound Deltaf}, this implies that $p$ must be preperiodic. Thus, we can split the orbit of $p$ in the tail $\mathcal{T}$ and the cycle $\mathscr{C}$. 

Let $n_0$ be the smallest natural number such that $q = f^{n_0}(p) \in \mathscr{C}$ and let $M$ be the size of the cycle. Consider the pair $P = (\gamma_1, \gamma_2)$ where $\gamma_2$ is the straight path from $q$ to $r$ and $\gamma_1$ goes one lap around the cycle and then ends at $p$. Then
\begin{align*}
\ell(\gamma_1) = M + n_0 \quad \text{ and } \quad \ell(\gamma_2) = n_0+1.
\end{align*}
Again, it is not hard to see that $P$ is a minimized pair and as $E(\gamma_2) \mid E(\gamma_1)$ we have $$\mathcal{E}(P) = 1.$$
One more time, 
\begin{align*}
\ell(\gamma_1) = M + n_0 \leq 1+ \# (\set{f^n(p): n \geq 0} \cap \Delta_f).
\end{align*}
By \cref{theorem: bound Deltaf}, we know $\# \Delta_f \leq 4$ and consequently
\begin{equation*}
1+ \# (\set{f^n(p): n \geq 0} \cap \Delta_f) \leq 5. \qedhere
\end{equation*}
\end{proof}

We are now ready to introduce the subgroup $G_m$ mentioned in \cref{theorem: every rational is virtually mixing}:

\begin{definition}
\label{definition: Gm}
Let $f$ be a rational function of degree $d \geq 2$ defined over a field $K$. Let $t$ be transcendental over $\Ksep$ and let $G = G_\infty(\Ksep,f,t)$ be the geometric iterated Galois group of $f$. We define the subgroup $G_m$ of $G$ as
\begin{align*}
G_m := \overline{\<g_p^{e_p}: p \in P_f>}^G.
\end{align*}
\end{definition}

The main theorem of this section is the following:

\begin{theorem}
Let $f$ be a rational function of degree $d \geq 2$ defined over a field $K$ such that $G = G_\infty(K,f,t)$ is a martingale group. Then, there exists $N \geq 0$ only depending on the ramification portrait of $f$ such that $$G_m \leq \St_G(n)_v$$ for all $n \geq 1$ and all $v \in T$ such that $\abs{v} \geq n+N$.
\label{theorem: Gm is a mixing subgroup}
\end{theorem}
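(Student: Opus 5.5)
The plan is to reduce the statement to finitely many applications of \cref{lemma: see the element with conditional energy}, one for each minimized pair, and then assemble them using the closure properties of $\St_G(n)_v$. Fix $n \geq 1$. By \cref{proposition: properties GM}(1) and the proof of \cref{lemma: virtually mixing closed normal closure}, each $\St_G(n)_v$ is a closed normal subgroup of $G$. Since $G_m$ is the closed normal closure of the elements $g_p^{e_p}$, it suffices to find one $N$, depending only on the ramification portrait, with $g_p^{e_p} \in \St_G(n)_v$ for every $p \in P_f$ and every $v$ with $\abs{v} \geq n+N$.

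The choice of $N$ is as follows. For each $p \in P_f$ with $\Gamma_{f,p} \neq \emptyset$, \cref{equation: ep minimized pairs} writes $e_p$ as the gcd of the conditional energies $\mathcal{E}(P)$ over minimized pairs $P$. A gcd of a set of positive integers is already attained by a finite subset, so choose finitely many minimized pairs $P_1,\dots,P_k$ with $\gcd(\mathcal{E}(P_i)) = e_p$, and set $N_p := \max_i \ell(\gamma_1^{(i)})$. When $P_f$ is infinite, I would use \cref{proposition: ep for orbit not in Deltaf} instead. Since $\Delta_f$ is finite, every $p$ outside the finitely many post-critical points whose whole forward orbit lies in $\Delta_f$ admits a minimized pair with $\mathcal{E}(P)=1$ and $\ell(\gamma_1)\leq 5$. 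For those $p$, take $e_p=1$ and $N_p \leq 5$. For the finitely many remaining $p$ the forward orbit is finite, and the previous choice applies. If $\Gamma_{f,p} = \emptyset$, then $e_p = 1$; by \cref{remark: no good pairs clasification}, $p$ is a totally ramified fixed point or lies in a totally ramified $2$-cycle. This case needs a separate argument, and I would handle it as follows. In a polynomial-like situation, $g_\infty = (\prod_{p\neq\infty} g_p)^{-1}$ by \cref{proposition: action inertia generators}(1). Once the other generators lie in $\St_G(n)_v$, the relation forces $g_\infty \in \St_G(n)_v$ as well, and the $2$-cycle case is analogous. Finally, take $N := \max\{5, N_p\}$ over the finitely many exceptional $p$. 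This depends only on $\Gamma_f$.

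To conclude, apply \cref{lemma: see the element with conditional energy} to each $P_i$. Using that $\St_G(n)_v$ shrinks as $\abs{v}$ grows beyond the threshold (observation (1) in \cref{lemma: reduction definition virtually mixing}), we get $g_p^{\mathcal{E}(P_i)} \in \St_G(n)_v$ whenever $\abs{v} \geq n+N$. By Bézout, $g_p^{e_p}$ is a product of powers of these elements, so it lies in the group $\St_G(n)_v$. Taking the closed normal closure then gives $G_m \leq \St_G(n)_v$.

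The main obstacle I expect is the uniformity of $N$ over the possibly infinitely many post-critical points, together with the totally ramified cycles lacking good pairs. For the first, my plan is to rely on the bound $\#\Delta_f\leq 4$ through \cref{proposition: ep for orbit not in Deltaf}. For the second, I rely on the product relation, and I would double-check that this relation holds exactly rather than only up to the equivalence $\sim$.
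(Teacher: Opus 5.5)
Your treatment of post-critical points with $\Gamma_{f,p}\neq\emptyset$ follows the paper's argument. Orbits leaving $\Delta_f$ are handled by \cref{proposition: ep for orbit not in Deltaf}. For the finitely many $p$ whose orbit stays in $\Delta_f$ you take finitely many minimized pairs and apply B\'ezout. Then $N$ is the maximum of $5$ and the corresponding lengths $\ell(\gamma_1)$.

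The gap is in the case $\Gamma_{f,p}=\emptyset$. The relation $\prod_{q\in P_f}g_q=1$ does hold exactly by \cref{proposition: action inertia generators}(1), so your worry about $\sim$ is misplaced. The real problem is that the relation yields $g_p\in\St_G(n)_v$ only if every other $g_q$ already lies in $\St_G(n)_v$. What you have shown is only $g_q^{e_q}\in\St_G(n)_v$, and this breaks exactly in the cases that matter:
\begin{itemize}
\item For $f=x^2-2$ one has $e_2=e_{-2}=2$, and $g_{\pm2}$ has order $2$. So your information about the finite generators reduces to the empty statement $1\in\St_G(n)_v$, and the relation says nothing about $g_\infty$. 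Moreover, $g_{\pm 2}$ cannot both lie in every $\St_G(n)_v$: $x^2-2$ is a dynamical pullback, so $G$ is not mixing.
\item For $f=x^{\pm d}$ both post-critical points lack good pairs, so $g_0g_\infty=1$ is circular.
\item The $2$-cycle case is never ``analogous'': by \cref{remark: no good pairs clasification} a totally ramified $2$-cycle forces $f$ to be conjugate to $1/x^d$, and then $P_f$ is the cycle itself.
\end{itemize}

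The missing idea, which is what the paper uses, is that $g_p$ replicates itself along its own totally ramified cycle.
\begin{itemize}
\item For every $k$, the fibre $f^{-k}(p)$ is a single point $q_k$ of the cycle with $e_{f^k}(q_k)=d^k$.
\item By \cref{proposition: action inertia generators}(2) applied to $f^k$, $g_p$ acts on $\mathcal{L}_k$ as a $d^k$-cycle, so $g_p^{d^k}\in\St_G(k)$.
\item By \cref{remark: sections of generators}(2) there is $w\in\mathcal{L}_k$ with $g_p^{d^k}|_w\sim g_{q_k}$.
\item Take $k=\abs{v}\geq n$. This section lies in $\St_G(n)_w$, which equals $\St_G(n)_v$ by fractality, as in observation (3) of the proof of \cref{lemma: reduction definition virtually mixing}.
\item Since $\St_G(n)_v$ is closed and normal, \cref{remark: equivalent elements are in mixing subgroup} gives $g_{q_k}\in\St_G(n)_v$.
\end{itemize}
For a fixed point $q_k=p$. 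For a $2$-cycle, start from $g_p$ or $g_{f(p)}$ according to the parity of $\abs{v}$. Neither the product relation nor the commutator trick is needed for these points.

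A minor point: observation (1) of \cref{lemma: reduction definition virtually mixing} concerns increasing $n$, not $\abs{v}$. You do not need it, because \cref{lemma: see the element with conditional energy} already covers every $\abs{v}\geq n+\ell(\gamma_1)$.
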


\begin{proof}
Let us start assuming that $p$ is a post-critical point of $f$ such that $\Gamma_{f,p} \neq \emptyset$. We split in two cases:

If $\set{f^n(p): n \geq 0} \setminus \Delta_f \neq \emptyset$, by \cref{proposition: ep for orbit not in Deltaf} we have a minimized pair $P = (\gamma_1, \gamma_2)$ such that $\mathcal{E}(P) = 1$ and $\ell(\gamma_1) \leq 5$. Then $e_p = 1$ and by \cref{lemma: see the element with conditional energy}, we conclude $$g_p^{e_p} \in \St_G(n)_v$$ for all $n \geq 1$ and $v \in T$ such that $\abs{v} \geq n+5$.

If $\set{f^n(p): n \geq 0} \subseteq \Delta_f$, then the orbit of $p$ is finite and by \cref{corollary: finitely many minimized pairs finite forward orbit} there are only finitely many minimized pairs in $\Gamma_{f,p}$. Let $P_1,\dots,P_m$ be such pairs. Then, by \cref{lemma: see the element with conditional energy}
\begin{align*}
g_p^{\mathcal{E}(P_i)} \in \St_G(n)_v
\end{align*}
for all $n \geq 1$ and $v \in T$ such that $\abs{v} \geq n+\ell(\gamma_{1i})$, where $P_i = (\gamma_{1i}, \gamma_{2i})$.

By Bezout's theorem, there exist integer numbers $a_1, \dots, a_m$ such that $$e_p = \sum_{i = 1}^m a_i \mathcal{E}(P_i)$$ and therefore
\begin{align*}
g_p^{\sum_{i = 1}^m a_i\mathcal{E}(P_i)} = g_p^{e_p} \in \St_G(n)_v
\end{align*}
for all $n \geq 1$ and $v \in T$ such that $\abs{v} \geq n+\max\set{\ell(\gamma_{1i}): i = 1,\dots,m}$.

Note that if we are in this latter case the post-critical point $p \in \Delta_f$ and hence we deal with only finitely many points $p$. Defining
\begin{align*}
N := \max\set{5, \ell(\gamma_1): (\gamma_1,\gamma_2) \text{ is a minimized pair of $p$ and $p \in \Delta_f$}}, 
\end{align*}
we conclude that
\begin{align*}
g_p^{e_p} \in \St_G(n)_v
\end{align*}
for all $n \geq 1$ and $v \in T$ such that $\abs{v} \geq n+N$.

Finally, we deal with the case where $\Gamma_{f,p} = \emptyset$. By \cref{remark: no good pairs clasification}, every element in the orbit of $p$ is totally ramified and $p$ is in a cycle of length $1$ or $2$. If the cycle has length $1$, by \cref{remark: sections of generators}, there exists a vertex $w \in \mathcal{L}_{n+N}$ such that
\begin{align*}
g_p^{d^{n+N}} \in \St_G(n+N) \quad \text{ and } \quad g_p^{d^{n+N}}|_w \sim g_p.
\end{align*}
As $G$ is fractal, conjugating by an element $g \in G$ such that $g(w) = v$ and $g|_w = 1$, we conclude
\begin{align*}
g_p = g_p^{e_p} \in \St_G(n)_v
\end{align*}
for all $n \geq 1$ and $v \in T$ such that $\abs{v} \geq n+N$.

If the cycle has length $2$, we use $g_p$ or $g_{f(p)}$ depending on the parity of $n+N$. Proceeding as in the case of cycle of length $1$, we also conclude
\begin{align*}
g_p = g_p^{e_p} \in \St_G(n)_v
\end{align*}
for all $n \geq 1$ and $v \in T$ such that $\abs{v} \geq n+N$.

Hence, by \cref{lemma: virtually mixing closed normal closure}, the result follows.
\end{proof}

\section{Iterated Galois groups are virtually mixing}
\label{section: iterated Galois groups are virtually mixing}

In this section we prove that iterated Galois groups are virtually mixing (\cref{theorem: every rational is virtually mixing} and \cref{Corollary: arithmetic IGG is virtually mixing}). We will separate the proof of \cref{theorem: every rational is virtually mixing} in two cases: when $f$ has hyperbolic orbifold and when $f$ has euclidean orbifold. 

\subsection{The hyperbolic orbifold case}
\label{section: The hyperbolic orbifold case}

In this first case, the goal will be to prove that the subgroup $G_m$ constructed in \cref{definition: Gm} has finite index in $G = G_\infty(\Ksep,f,t)$. 

By \cref{proposition: ep for orbit not in Deltaf}, we have
that $G/G_m$ is a subgroup of the group generated by the cosets $g_p \, G_m$, where $p$ satisfies that $\set{f^n(p): n \geq 0} \subseteq \Delta_f$. Since $\# \Delta_f \leq 4$, the quotient is generated by at most four elements.

If 
\begin{align*}
\# \set{p \in P_f: \set{f^n(p): n \geq 0} \subseteq \Delta_f} = 4,
\end{align*}
by \cref{theorem: bound Deltaf} $f$ has signature $(2,2,2,2)$ and thus $f$ has euclidean orbifold. Therefore we may assume that 
\begin{align*}
\# \set{p \in P_f: \set{f^n(p): n \geq 0} \subseteq \Delta_f} \leq 3.
\end{align*}

\begin{proposition}
Let $f$ be a rational function of degree $d \geq 2$ defined over a field $K$ such that $G = G_\infty(K,f,t)$ is a martingale group. If 
\begin{align*}
\# \set{p \in P_f: \set{f^n(p): n \geq 0} \subseteq \Delta_f} \leq 1,
\end{align*}
then $G_m = G$.
\label{proposition: Gm missing at most 1}
\end{proposition}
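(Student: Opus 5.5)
We work with the quotient $G/G_m$ and show it is trivial. By \cref{proposition: action inertia generators}, $G$ is topologically generated by $\{g_p : p \in P_f\}$, and there is an ordering of these generators with $\prod_{p \in P_f} g_p = 1$. Since $G_m$ is closed and normal, $G/G_m$ is a profinite group topologically generated by the cosets $g_p G_m$. So it suffices to show that every such coset is trivial.

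First take $p$ whose forward orbit meets the complement of $\Delta_f$. By \cref{proposition: ep for orbit not in Deltaf} there is a minimized pair $P$ with $\mathcal{E}(P)=1$. Hence $e_p=1$, so $g_p = g_p^{e_p} \in G_m$ and its coset is trivial.

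Next take $p$ with $\Gamma_{f,p}=\emptyset$. By definition $e_p=1$, so again $g_p \in G_m$. Call $p$ bad if $\{f^n(p): n\geq 0\}\subseteq\Delta_f$ and $\Gamma_{f,p}\neq\emptyset$. By hypothesis there is at most one bad point.

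If there are no bad points, every generator lies in $G_m$, so $G_m=G$. If there is exactly one bad point $p_0$, the cosets $g_p G_m$ are trivial for every $p\neq p_0$. Reducing the relation $\prod_{p} g_p = 1$ modulo the normal subgroup $G_m$, the factors $g_p G_m$ with $p\neq p_0$ drop out wherever they sit in the product. This leaves $g_{p_0}G_m = G_m$, so $g_{p_0}\in G_m$ and $G_m = G$.

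The step needing care is the treatment of $p$ with $\Gamma_{f,p}=\emptyset$. One has to make sure that $e_p=1$ for these points, which holds by definition, and that the proof of \cref{theorem: Gm is a mixing subgroup} really places $g_p$ itself in the mixing subgroup. Both are immediate from the stated definitions, so the remaining argument is the one-line product relation. Note also that a bad point can never be counted among the $\Gamma_{f,p}=\emptyset$ points, since those two conditions exclude each other by definition.
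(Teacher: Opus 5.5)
Your proof is correct and follows the paper's argument: $e_p=1$ whenever the forward orbit of $p$ leaves $\Delta_f$ (by \cref{proposition: ep for orbit not in Deltaf}), and the product relation of \cref{proposition: action inertia generators}, reduced modulo the closed normal subgroup $G_m$, disposes of the single remaining generator. Your separate treatment of points with $\Gamma_{f,p}=\emptyset$ is harmless but redundant, since every point of such an orbit has a totally ramified critical point as its only preimage, so these points already lie in the hypothesis set. Likewise, $g_p\in G_m$ when $e_p=1$ is immediate from \cref{definition: Gm}, with no appeal to the proof of \cref{theorem: Gm is a mixing subgroup} needed.
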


\begin{proof}
If $$\# \set{p \in P_f: \set{f^n(p): n \geq 0} \subseteq \Delta_f} = 0,$$ the result follows directly from \cref{proposition: ep for orbit not in Deltaf}. If it has exactly one (say $p$), by \cref{proposition: action inertia generators} we can write $p$ in terms of the other generators and therefore $g_p \in G_m$, giving the result.
\end{proof}

\begin{proposition}
Let $f$ be a rational function of degree $d \geq 2$ defined over a field $K$ such that $G = G_\infty(K,f,t)$ is a martingale group. If 
\begin{align*}
\# \set{p \in P_f: \set{f^n(p): n \geq 0} \subseteq \Delta_f}  = 2,
\end{align*}
then $G/G_m$ is a finite cyclic group.
\label{proposition: Gm missing 2}
\end{proposition}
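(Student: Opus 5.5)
Let $p_1,p_2$ be the two post-critical points whose full forward orbits lie in $\Delta_f$, and write $\bar g = g\,G_m$ for cosets in $G/G_m$. The plan is to show that $G/G_m$ is generated by a single element of finite order.

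First, I would reduce the generating set. By \cref{proposition: ep for orbit not in Deltaf}, every $p \in P_f$ other than $p_1,p_2$ satisfies $e_p = 1$. Hence $g_p \in G_m$ for those $p$. By \cref{proposition: action inertia generators}(1), $G$ is topologically generated by $\{g_p : p\in P_f\}$, and $G_m$ is closed and normal. So $G/G_m$ is topologically generated by $\bar g_{p_1}$ and $\bar g_{p_2}$.

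Second, I would use the product relation. Choose the ordering from \cref{proposition: action inertia generators}(1) with $\prod_{p\in P_f} g_p = 1$. Reducing modulo $G_m$ kills every factor except the two, leaving $\bar g_{p_1}\bar g_{p_2} = 1$ (in whichever order they appear). Thus $\bar g_{p_2} = \bar g_{p_1}^{-1}$, and $G/G_m$ is the closure of the cyclic group $\langle \bar g_{p_1}\rangle$. This quotient is abelian, and it is procyclic as a quotient of a profinite group by a closed normal subgroup.

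Third, I would prove finiteness, which I expect to be the main obstacle. Since $g_{p_1}^{e_{p_1}} \in G_m$, the order of $\bar g_{p_1}$ divides $e_{p_1}$. So it suffices to show $e_{p_1} < \infty$, that is, that $\Gamma_{f,p_1}$ is nonempty or else $e_{p_1}=1$ by definition. Either way $e_{p_1}$ is a positive integer, because each $\mathcal{E}(P)$ is finite (noted after \cref{definition: conditional energy}) and a gcd of positive integers is a positive integer. Then $\overline{\langle \bar g_{p_1}\rangle}$ is a cyclic group of order dividing $e_{p_1}$, in particular finite. The closure is harmless, since a finite cyclic subgroup is already closed.

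The only subtle points are the following. First, the topological-generation argument must pass to the quotient correctly; this works because the projection $G\to G/G_m$ is continuous and $G/G_m$ is Hausdorff. Second, the martingale hypothesis is only needed implicitly, through \cref{theorem: Gm is a mixing subgroup}, which is not required for the index statement itself. So the argument is essentially a direct adaptation of the proof of \cref{proposition: Gm missing at most 1}, with the relation now identifying the two surviving generators as mutual inverses rather than eliminating one entirely.
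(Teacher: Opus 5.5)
Your proposal is correct and follows essentially the paper's route. You kill every $g_p$ with $p\neq p_1,p_2$ modulo $G_m$ (since $e_p=1$ there), use the product relation $\prod_{p\in P_f} g_p=1$ to make $g_{p_1}G_m$ and $g_{p_2}G_m$ mutually inverse, and conclude that $G/G_m$ is cyclic of finite order. The only difference is that the paper also uses $g_{p_2}^{e_{p_2}}\in G_m$ to bound the order by $\gcd(e_{p_1},e_{p_2})$ rather than by $e_{p_1}$, which makes no difference for finiteness.
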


\begin{proof}
Let $p_1$ and $p_2$ be two post-critical points satisfying that its forward orbit is contained in $\Delta_f$. Then, by the definition of $G_m$ and \cref{proposition: action inertia generators} we have
\begin{align*}
G/G_m \leq \<g_{p_1}, g_{p_2} \; | \; g_{p_1}^{e_{p_1}},  g_{p_2}^{e_{p_2}}, g_{p_1}g_{p_2}>,
\end{align*}
where on the right-hand side we are giving the group presentation. Such a presentation is equivalent to
\begin{align*}
\<g_{p_1} \;| \; g_{p_1}^{\gcd(e_{p_1},e_{p_2})}>,
\end{align*}
which corresponds to the presentation of a cyclic group with finite order.
\end{proof}

\subsection{The hyperbolic orbifold case and Von Dyck groups}
\label{section: The hyperbolic orbifold case and Von Dyck groups}

To finish with the hyperbolic case we assume
\begin{align*}
\# \set{p \in P_f: \set{f^n(p): n \geq 0} \subseteq \Delta_f}  = 3.
\end{align*}

If $p_1,p_2,p_3$ are the post-critical points in this set, then 
\begin{align*}
G/G_m \leq \<g_{p_1},g_{p_2},g_{p_3} \; | g_{p_1}^{e_{p_1}}, g_{p_2}^{e_{p_2}}, g_{p_3}^{e_{p_3}}, g_{p_1}g_{p_2}g_{p_3}>.
\end{align*}

The group on the right-hand side is known as a Von Dyck group and its size is classified in terms of the order of the generators:

\begin{theorem}[{\cite[Theorem I.6.2]{MalleMatzat1999}}]
The Von Dyck group
\begin{align*}
\Delta(e_{p_1},e_{p_2},e_{p_3}) := \<g_{p_1},g_{p_2},g_{p_3} \; | g_{p_1}^{e_{p_1}}, g_{p_2}^{e_{p_2}}, g_{p_3}^{e_{p_3}}, g_{p_1}g_{p_2}g_{p_3}>
\end{align*}
is finite if and only 
\begin{align*}
\frac{1}{e_{p_1}} + \frac{1}{e_{p_2}} + \frac{1}{e_{p_3}} > 1
\end{align*}
and moreover when this happens there are finitely many possibilities corresponding to the following groups:

\begin{itemize}
\item $\Delta(1,n,n) \cong C_n$, the cyclic group of order $n$.

\item $\Delta(2,2,n) \cong D_n$, the dihedral group of order $2n$.

\item $\Delta(2,2,3) \cong A_4$, the alternating group over four elements.

\item $\Delta(2,3,4) \cong \Sym(4)$, the symmetric group over four elements.

\item $\Delta(2,3,5) \cong A_5$ the alternating group over five elements.
\end{itemize}
\label{theorem: classification Von Dyck groups}
\end{theorem}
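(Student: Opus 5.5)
This theorem is cited from \cite[Theorem I.6.2]{MalleMatzat1999}, so my plan reconstructs the standard argument. The convention is that $\Delta(1,n,n)$ means one generator is trivial: $a_1 = 1$ forces $a_2 = a_3^{-1}$, so the group is $\langle a_3 \mid a_3^n\rangle \cong C_n$. From now on I assume all $e_{p_i} \geq 2$.

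For the ``if'' direction, I would first enumerate the solutions of $\frac1{e_1}+\frac1{e_2}+\frac1{e_3}>1$ with $2\le e_1\le e_2\le e_3$. Since $3/e_1>1$, we get $e_1=2$. Then $\frac1{e_2}+\frac1{e_3}>\frac12$ forces $e_2\in\{2,3\}$. If $e_2=2$, then $e_3=n$ is arbitrary. If $e_2=3$, then $e_3\in\{3,4,5\}$. So the list is $(2,2,n)$, $(2,3,3)$, $(2,3,4)$, $(2,3,5)$. (The statement's ``$\Delta(2,2,3)\cong A_4$'' should read $(2,3,3)$, since $(2,2,3)$ gives $D_3$.)

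For each case I would exhibit a surjection onto the claimed finite group $Q$ and then bound $|\Delta|\le|Q|$.
\begin{itemize}
\item For $(2,2,n)$: set $a_1=s$ and $a_2=s r$ inside $D_n$, so that $a_1a_2 = r$ and $a_3 = r^{-1}$. Conversely, in $\Delta$ the element $a_1a_2$ has order dividing $n$ and is inverted by $a_1$. Every element is therefore $(a_1a_2)^k$ or $a_1(a_1a_2)^k$, giving $|\Delta|\le 2n$.
\item For the polyhedral cases $(2,3,3)$, $(2,3,4)$, $(2,3,5)$: I would send the generators to the rotations about an edge midpoint, a vertex and a face of the tetrahedron, octahedron and icosahedron respectively, which realises $A_4$, $\Sym(4)$, $A_5$. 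For the upper bound I would do a Todd--Coxeter coset enumeration on $\langle a_2\rangle$, which has index at most $4$, $8$, $20$.
\end{itemize}
A cleaner uniform alternative is to view $\Delta$ as the orientation-preserving index-$2$ subgroup of the reflection triangle group of a spherical triangle with angles $\pi/e_i$. Such a triangle tiles $S^2$ into $2|\Delta|$ triangles. Gauss--Bonnet gives total area $4\pi$ and triangle area $\pi(\sum 1/e_i-1)$, hence
\begin{align*}
|\Delta| = \frac{2}{\sum_i 1/e_i - 1},
\end{align*}
which yields $2n, 12, 24, 60$.

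For the ``only if'' direction, suppose $\sum 1/e_i\le 1$. I would realise $\Delta$ as the orientation-preserving group of the tessellation of the Euclidean plane (when the sum equals $1$) or of the hyperbolic plane (when it is less than $1$) by triangles with angles $\pi/e_i$. Poincaré's polygon theorem says this group acts with the given presentation, and properly discontinuously with a fundamental domain of positive, finite area. Since the plane has infinite area, the group is infinite.

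A purely algebraic route is also available. In the Euclidean cases $(3,3,3)$, $(2,4,4)$, $(2,3,6)$ one finds an explicit infinite quotient $\ZZ^2\rtimes C_k$. For the hyperbolic cases one can map onto infinitely many distinct finite quotients $\mathrm{PSL}_2(\FF_q)$; this is harder.

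The main obstacle is this infinite direction, specifically justifying that the geometric group really has exactly the presentation $\Delta(e_1,e_2,e_3)$. That requires the Poincaré polygon theorem, or equivalently the fact that the orbifold universal cover is a plane. I would cite it rather than prove it, since the paper only needs the finite-case classification and the list of groups.
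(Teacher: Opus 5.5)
The paper gives no proof of this statement; it is quoted from Malle--Matzat, so there is no in-paper argument to compare with. Your reconstruction is the standard argument and is correct in outline. You are also right that ``$\Delta(2,2,3)\cong A_4$'' is a typo for $\Delta(2,3,3)$, since $\Delta(2,2,3)\cong D_3$.

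There are three small points.

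First, after treating $\Delta(1,n,n)$ you pass directly to $e_i\ge 2$. However, $(1,m,n)$ with $m\neq n$ also satisfies the inequality. In that case $a_2=a_3^{-1}$ has order dividing $\gcd(m,n)$, so the group is $C_{\gcd(m,n)}$. It is finite and cyclic, but not literally of the listed form, and one line should say so.

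Second, you overrate the difficulty of the infinite direction. To show $\Delta$ is infinite you only need a surjection onto an infinite group, not an exact presentation. Let $\sigma_a,\sigma_b,\sigma_c$ be the reflections in the sides of a Euclidean or hyperbolic triangle with angles $\pi/e_i$. Set $R_1=\sigma_b\sigma_c$, $R_2=\sigma_c\sigma_a$ and $R_3=\sigma_a\sigma_b$. Then $R_i^{e_i}=1$, and $R_1R_2R_3=1$ by telescoping, so $\Delta\twoheadrightarrow\langle R_1,R_2\rangle$. This image is infinite. A finite group of isometries of $\mathbb{E}^2$ or $\mathbb{H}^2$ has a global fixed point (the barycenter or circumcenter of an orbit). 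But $R_1$ and $R_2$ are nontrivial rotations with distinct unique fixed points, so they have no common fixed point. Poincar\'e's polygon theorem is therefore not needed for this direction.

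Third, it is your ``cleaner uniform alternative'' for the finite direction that genuinely requires the exact-presentation statement, via Poincar\'e's theorem or the simple connectivity of $S^2$. Gauss--Bonnet computes the order $2/(\sum 1/e_i-1)$ of the geometric rotation group. Concluding that $|\Delta|$ equals this requires knowing that the natural surjection from $\Delta$ onto that group is injective. Your primary route avoids this and is self-contained: explicit surjections onto $D_n$, $A_4$, $\Sym(4)$, $A_5$, combined with the dihedral counting bound and Todd--Coxeter coset enumeration. Keep that route as the main argument.
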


Therefore, our goal is to prove that 
\begin{align*}
\frac{1}{e_{p_1}} + \frac{1}{e_{p_2}} + \frac{1}{e_{p_3}} > 1,
\end{align*}
for the three post-critical points whose forward orbit is contained in $\Delta_f$. 

Let $p_i$ be one of these critical points. We can split its preimage as 
\begin{align}
f^{-1}(p_i) = \set{q_{ij}: j = 1,\dots,m_i} \sqcup (f^{-1}(p_i) \cap \set{p_1,p_2,p_3}).
\label{equation: qij}
\end{align}

If $m_i > 0$, define $$d_i := \gcd(e_f(q_{ij}): j = 1,\dots,m_i).$$

By convention and to avoid discussing several cases, we set $d_i = \infty$ when $m_i = 0$. 

\begin{lemma}
\label{lemma: epi divides di}
Suppose $m_i > 0$. With the notation set before, we have
$$e_{p_i} \mid d_i.$$
\end{lemma}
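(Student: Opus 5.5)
Since $e_{p_i}$ is by definition the gcd of the conditional energies $\mathcal{E}(P)$ over all good pairs $P \in \Gamma_{f,p_i}$ (or equals $1$ if there are none, in which case the claim is trivial), it is enough to show that $e_{p_i} \mid e_f(q_{ij})$ for each $j = 1,\dots,m_i$. Taking the gcd over $j$ then gives $e_{p_i} \mid d_i$. So fix $j$. For each such $j$ I will construct a good pair $P_j = (\gamma_1,\gamma_2) \in \Gamma_{f,p_i}$ with $\mathcal{E}(P_j) \mid e_f(q_{ij})$.

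The construction uses the point $q_{ij}$ as a stepping stone towards a non-post-critical endpoint. By \eqref{equation: qij}, $q_{ij}$ is a preimage of $p_i$ that is not one of $p_1,p_2,p_3$. The forward orbits of $p_1,p_2,p_3$ lie in $\Delta_f$, and every element of $\Delta_f$ is post-critical. So if $q_{ij} \in P_f$, then either its forward orbit is not contained in $\Delta_f$, or $q_{ij}$ would be one of the $p_k$; the latter is excluded.

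There are two cases.

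\textbf{Case $q_{ij} \notin P_f$.} Start the paths at a point $q$ on the forward orbit of $p_i$ lying in the cycle; this orbit is finite because it sits inside $\Delta_f$. Let $\gamma_2$ run from $q$ down to $p_i$ and then step to $q_{ij}$. Let $\gamma_1$ run from $q$ to $p_i$, going around the cycle enough times that $\ell(\gamma_1) \geq \ell(\gamma_2)$. Every edge of $\gamma_2$ except the last is shared with $\gamma_1$, so $E(\gamma_2) \mid E(\gamma_1)\, e_f(q_{ij})$. This is the same lap-counting argument as in the proof of \cref{lemma: post ritical point empty good pairs}. Hence $\mathcal{E}(P) = E(\gamma_2)/\gcd(E(\gamma_1),E(\gamma_2))$ divides $e_f(q_{ij})$.

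\textbf{Case $q_{ij} \in P_f$.} Then $q_{ij}$ is a critical value or lies further along a post-critical tail, and I need a path from $q_{ij}$ to a non-post-critical point.
\begin{itemize}
\item If $q_{ij}$ is strictly preperiodic, go backwards from $q_{ij}$ along a chain of post-critical points until reaching a critical point $c \notin P_f$; such a $c$ exists because $q_{ij} = f^k(c)$ for some $c \in C_f$.
\item Otherwise, go backwards until the first point whose preimage set contains a point outside $P_f$.
\end{itemize}
The obstacle is that the extra edges can carry local degrees greater than $1$, which would spoil $\mathcal{E}(P) \mid e_f(q_{ij})$.

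To handle this I would use the fact that the multiplicative contributions along $\gamma_2$ beyond $p_i$ are at most $e_f(q_{ij})$ times the degrees of the further points. I would try to choose the continuation so that all further edges have local degree $1$. Concretely, since $q_{ij} \notin \Delta_f$ (its orbit reaches outside $\{p_1,p_2,p_3\}$ with care), $q_{ij}$ has a preimage $r$ that is neither critical nor post-critical, and $e_f(r)=1$. If that fails, I would pass to the minimized-pair reduction via \cref{lemma: minimized pairs}.

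The main obstacle is exactly this second case: ensuring that the continuation from $q_{ij}$ to a non-post-critical point contributes no extra ramification. I expect to resolve it by showing that $q_{ij} \notin \Delta_f$ in the relevant situation. If $q_{ij}$ were in $\Delta_f$ and post-critical, the bound $\#\Delta_f \leq 4$ from \cref{theorem: bound Deltaf}, combined with $p_1,p_2,p_3 \in \Delta_f$, would force a specific configuration that must be handled separately.
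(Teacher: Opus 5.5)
Your reduction to proving $e_{p_i}\mid e_f(q_{ij})$ for each $j$ is right. Your first case is also correct: there $q_{ij}\notin P_f$, $\gamma_2$ ends at $q_{ij}$, and $\gamma_1$ laps the cycle. This includes the step from $E(\gamma_2)\mid E(\gamma_1)\,e_f(q_{ij})$ to $\mathcal{E}(P)\mid e_f(q_{ij})$. The gap is the second case, $q_{ij}\in P_f$, which the proposal does not settle. You only say you ``expect'' $q_{ij}\notin\Delta_f$, and otherwise offer routes that either fail or are left undone:
\begin{itemize}
\item Extending $\gamma_2$ backwards to a critical point $c\notin P_f$, or along a chain of post-critical points, multiplies $E(\gamma_2)$ by $e_f(c)>1$ and possibly further local degrees. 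You would then only get that $\mathcal{E}(P)$ divides $e_f(q_{ij})$ times those extra factors.
\item The appeal to \cref{lemma: minimized pairs} is not specified.
\item The ``specific configuration'' with $q_{ij}\in\Delta_f$, to be handled via $\#\Delta_f\le 4$, is never treated.
\end{itemize}

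The missing step is a one-liner, and you already wrote down its ingredients. For $n\ge 1$ one has $f^n(q_{ij})=f^{n-1}(p_i)\in\Delta_f$. So the forward orbit of $q_{ij}$ lies in $\Delta_f$ if and only if $q_{ij}\in\Delta_f$ itself. You observed that a post-critical $q_{ij}$ cannot have its forward orbit inside $\Delta_f$, since otherwise it would be one of $p_1,p_2,p_3$. Hence $q_{ij}\notin\Delta_f$, and the exceptional configuration simply does not occur.

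By the definition of $\Delta_f$, a point of $P_f\setminus\Delta_f$ has a preimage $r\notin C_f\cup P_f$. Then $e_f(r)=1$ and $r$ is an admissible endpoint for $\gamma_2$. Take $\gamma_2$ from the cycle point $q$ down to $p_i$, then to $q_{ij}$, then to $r$. Take $\gamma_1$ from $q$ to $p_i$ with enough laps. Your Case 1 computation then gives $\mathcal{E}(P)\mid e_f(q_{ij})\,e_f(r)=e_f(q_{ij})$.

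This is the paper's proof. It treats both of your cases at once, because when $q_{ij}\notin P_f$ every preimage of $q_{ij}$ is automatically neither critical nor post-critical, so the extra step to $r$ is harmless there too.
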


\begin{proof}
We start observing that every $q_{ij} \notin \Delta_f$. Indeed, if $q_{ij} \in \Delta_f$, then $$\set{f^n(q_{ij}): n \geq 0}$$ would be contained in $\Delta_f$ and therefore $$q_{ij} \in f^{-1}(p_i) \cap \set{p_1,p_2,p_3},$$ giving a contradiction with the definition of the elements $q_{ij}$. 

As $q_{ij} \notin \Delta_f$, there exists $r \in f^{-1}(q_{ij})$ that is neither critical nor post-critical. Since the forward orbit of $p_i$ is contained in $\Delta_f$, we know $p_i$ is preperiodic. Let $n_0 \geq 0$ be the smallest natural number such that 
$q = f^{n_0}(p)$ is in the cycle of $p$. Consider $\gamma_2$ the path in $\Gamma_f^{\textrm{opp}}$ that goes directly from $q$ to $r$ and $\gamma_1$ the path in $\Gamma_f^{\textrm{opp}}$ that goes from $q$ to $p$,  going as many times as necessary around the cycle to satisfy that $\ell(\gamma_1) \geq \ell(\gamma_2)$. Then, $P_{ij}:= (\gamma_1,\gamma_2)$ is a good pair and as all the vertices met by $\gamma_2$ before $q_{ij}$ are also part of the path of $\gamma_1$, we have 
\begin{align*}
\mathcal{E}(P_{ij}) \mid e_f(q_{ij}) \, e_f(r) = e_f(q_{ij}).
\end{align*}
Doing this for every $j$ we conclude that
\begin{equation*}
e_p \mid \gcd(\mathcal{E}(P_{ij}): j = 1,\dots,m_i) \mid \gcd(e_f(q_{ij}): j = 1,\dots,m_i) = d_i. \qedhere
\end{equation*}
\end{proof}

\begin{lemma}
\label{lemma: qijs are at least d-1}
Let $f$ be a tamely ramified rational function of degree $d \geq 2$ defined over a field $K$. Suppose that
\begin{align*}
\# \set{p \in P_f: \set{f^n(p): n \geq 0} \subseteq \Delta_f}  = 3.
\end{align*}
With the notation set before, we have
$$m_1 + m_2 + m_3 \geq d-1$$ and the equality holds if and only if 
\begin{align*}
C_f \subseteq \bigcup_{i = 1}^{3} f^{-1}(p_i).
\end{align*}
\end{lemma}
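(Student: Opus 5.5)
We apply the Riemann--Hurwitz formula (\cref{Theorem: Riemann Hurwitz formula}) to the fibres over $p_1,p_2,p_3$. Write $S := \set{p_1,p_2,p_3}$.

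For each $i$, the fibre $f^{-1}(p_i)$ carries total local degree $d$. This gives
\begin{align*}
\sum_{z \in f^{-1}(p_i)} (e_f(z)-1) = d - \# f^{-1}(p_i) = d - m_i - \# (f^{-1}(p_i) \cap S).
\end{align*}
Summing over $i=1,2,3$, the sets $f^{-1}(p_i)\cap S$ are disjoint subsets of $S$, so
\begin{align*}
\sum_{i=1}^3 \sum_{z \in f^{-1}(p_i)} (e_f(z)-1) \geq 3d - (m_1+m_2+m_3) - 3.
\end{align*}

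We claim equality holds here, i.e.\ every $p_j$ lies in some $f^{-1}(p_i)$. The forward orbit of $p_j$ is contained in $\Delta_f$, so $f(p_j)$ is a post-critical point whose forward orbit is also contained in $\Delta_f$. Hence $f(p_j) \in S$, that is, $f(S) \subseteq S$. Thus $\sum_i \#(f^{-1}(p_i)\cap S) = 3$ exactly.

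On the other hand, the fibres are disjoint, so the left-hand side is at most $\sum_{z \in \PP^1(\K-)} (e_f(z)-1) = 2d-2$. Moreover, this bound is attained exactly when every critical point lies in $\bigcup_i f^{-1}(p_i)$. Combining the two,
\begin{align*}
3d - (m_1+m_2+m_3) - 3 \leq 2d-2,
\end{align*}
which gives $m_1+m_2+m_3 \geq d-1$. Equality holds if and only if the Riemann--Hurwitz sum is fully absorbed by these three fibres, i.e.\ if and only if $C_f \subseteq \bigcup_{i} f^{-1}(p_i)$.

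The only step needing care is the claim $f(S)\subseteq S$. This uses that $S$ is defined as the set of post-critical points whose whole forward orbit lies in $\Delta_f$, a property clearly inherited by $f(p_j)$. Tameness is what makes the Riemann--Hurwitz count apply in this form.
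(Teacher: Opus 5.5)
Your proof is correct and takes the same approach as the paper. You sum the fibre identities $\sum_{z\in f^{-1}(p_i)}(e_f(z)-1) = d-m_i-\#(f^{-1}(p_i)\cap\{p_1,p_2,p_3\})$ over $i=1,2,3$ and bound the total by the Riemann--Hurwitz value $2d-2$, with equality exactly when all critical points lie in these three fibres. You also make explicit the invariance $f(\{p_1,p_2,p_3\})\subseteq\{p_1,p_2,p_3\}$, which the paper uses without comment to replace the $S$-counts by exactly $3$, and which is what makes the equality characterization valid.
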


\begin{proof}
Using \cref{equation: qij} we obtain
\begin{align*}
d = \sum_{j = 1}^{m_i} e_f(q_{ij}) + \sum_{p \in f^{-1}(p_i) \cap \set{p_1,p_2,p_3}} e_f(p), 
\end{align*}
which implies 
\begin{multline*}
d -m_i - \#(f^{-1}(p_i) \cap \set{p_1,p_2,p_3}) \\ = \sum_{j = 1}^{m_i} (e_f(q_{ij})-1) + \sum_{p \in f^{-1}(p_i) \cap \set{p_1,p_2,p_3}} (e_f(p)-1). 
\end{multline*}

Adding for each $i = 1,2,3$ we obtain
\begin{align*}
3d-\sum_{i = 1}^3 m_i -3 &= \sum_{i = 1}^3 \sum_{j = 1}^{m_i} (e_f(q_{ij})-1) + \sum_{i = 1}^3 (e_f(p_i)-1) \\
&\leq \sum_{p \in \PP^1(\K-)} (e_f(p)-1) \\
&= 2d-2,
\end{align*}
where in the last equality we use Riemann-Hurwitz formula.

Solving, we conclude
\begin{align*}
m_1 + m_2 + m_3 \geq d-1.
\end{align*}

From the calculation, it is not hard to see that the equality holds if and only if all the critical points of $f$ are in
\begin{equation*}
\bigcup_{i = 0}^{3} f^{-1}(p_i). \qedhere
\end{equation*}
\end{proof}

To ease the notation more let us define the following constants:
\begin{align*}
E_i &:= \sum_{f^{-1}(p_i) \cap \set{p_1,p_2,p_3}} e_f(p), \\
f_i &:= \#(f^{-1}(p_i) \cap \set{p_1,p_2,p_3}), \\
A_i &:= \frac{1}{d_i} \left(\sum_{j = 1}^{m_i} e_f(q_{ij}) \right).
\end{align*}
If $m_i = 0$, we set $A_i = 0$ and convene that $A_i d_i = 0$.

From \cref{equation: qij} we deduce
\begin{align}
d = E_i + d_i \, A_i.
\label{equation: local degrees each pi}
\end{align}

Note also that $e_f(q_{ij}) \geq d_i$ for each $j = 1, \dots, m_i$, giving the inequality
\begin{align}
A_i \geq m_i.
\label{equation: Ai more mi}
\end{align}

\begin{proposition}
Let $f$ be a tamely ramified rational function of degree $d \geq 2$ defined over a field $K$. Suppose that
\begin{align*}
\# \set{p \in P_f: \set{f^n(p): n \geq 0} \subseteq \Delta_f}  = 3,
\end{align*}
and at least one of the following inequalities happen:
\begin{enumerate}[\normalfont(1)]
\item $m_1+m_2+m_3 > d-1$,
\item $A_i > m_i$ for some $i = 1,2,3$,
\item $\frac{1}{d_1} + \frac{1}{d_2} + \frac{1}{d_3} > 1$.
\end{enumerate}
Then $G/G_m$ is finite.
\label{proposition: assume sum is d-1}
\end{proposition}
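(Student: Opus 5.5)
The plan is to reduce everything to the inequality $\frac{1}{d_1}+\frac{1}{d_2}+\frac{1}{d_3}>1$, that is, to case (3). We already know
\begin{align*}
G/G_m \leq \Delta(e_{p_1},e_{p_2},e_{p_3}),
\end{align*}
and \cref{lemma: epi divides di} gives $e_{p_i}\mid d_i$, hence $\frac{1}{e_{p_i}}\geq \frac{1}{d_i}$. This also holds when $m_i=0$, since then $d_i=\infty$ and the term $\frac{1}{d_i}=0$ only helps. So condition (3) forces
\begin{align*}
\frac{1}{e_{p_1}}+\frac{1}{e_{p_2}}+\frac{1}{e_{p_3}}>1,
\end{align*}
and \cref{theorem: classification Von Dyck groups} then says $\Delta(e_{p_1},e_{p_2},e_{p_3})$ is finite, so $G/G_m$ is finite. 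It therefore remains to show that (1) or (2) implies (3).

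For this I would use \cref{equation: local degrees each pi} in the form $\frac{1}{d_i}=\frac{A_i}{d-E_i}$ whenever $m_i>0$; when $m_i=0$ both sides are $0$ by convention. Since $0\leq E_i<d$ when $m_i>0$, this gives
\begin{align*}
\sum_{i=1}^3 \frac{1}{d_i} \geq \frac{1}{d}\sum_{i=1}^3 A_i.
\end{align*}
Next I combine \cref{equation: Ai more mi} with \cref{lemma: qijs are at least d-1}.
\begin{itemize}
\item Under (1), $\sum_i A_i\geq \sum_i m_i\geq d$.
\item Under (2), $\sum_i A_i\geq \sum_i m_i+1\geq d$.
\end{itemize}
In either case $\sum_i \frac{1}{d_i}\geq 1$, and what is left is to make this inequality strict.

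For strictness I would use that $E_i>0$ for some $i$ with $A_i>0$; then $d-E_i<d$ and the first inequality above is strict. The $E_i$ are not all zero: each $p_j$ has forward orbit inside $\Delta_f$, so $f(p_j)$ also has forward orbit in $\Delta_f$ and is therefore one of $p_1,p_2,p_3$. Hence each $p_j$ lies in some $f^{-1}(p_i)\cap\{p_1,p_2,p_3\}$, and $\sum_i E_i\geq 3$.

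The main obstacle is the degenerate situation in which every $i$ with $E_i>0$ has $A_i=0$, i.e.\ $m_i=0$ and $f^{-1}(p_i)\subseteq\{p_1,p_2,p_3\}$. In that case $p_i$ has at most three preimages, all post-critical, and I would argue via Riemann–Hurwitz as follows.
\begin{itemize}
\item Redo the count in the proof of \cref{lemma: qijs are at least d-1} with the index $i$ removed. This should force $\sum_{k\neq i} m_k$ to exceed $d-1$ by a margin, giving $\sum_{k\neq i}A_k>d$ directly and hence strictness.
\item If that fails, the configuration is so rigid (all preimages of $p_i$ among three totally controlled points) that the signature is forced to be euclidean. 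That case is excluded here, because we are in the hyperbolic setting.
\end{itemize}
I expect this bookkeeping, and not the Von Dyck reduction, to be the only delicate step.
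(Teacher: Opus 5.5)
Your handling of case (3), and of the situation where some $i$ has both $E_i>0$ and $m_i>0$, is correct and matches the paper. The overall plan of deducing (3) from (1) or (2) cannot be completed, though: that implication is false in exactly the degenerate case you isolate, and neither of your proposed fixes works.

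\textbf{Counterexample to (1) $\Rightarrow$ (3).} Take $f(x)=\frac{4x^2(1-x^2)}{5x^4-8x^2+4}$ with $p_1=0$, $p_2=1$, $p_3=-1$.
\begin{itemize}
\item $f^{-1}(0)=\{0,1,-1\}$, with local degrees $2,1,1$.
\item $f(x)-1=-\frac{(3x^2-2)^2}{5x^4-8x^2+4}$ and $f(x)+1=\frac{(x^2-2)^2}{5x^4-8x^2+4}$, so $f^{-1}(1)$ and $f^{-1}(-1)$ consist of double critical points.
\item The last critical point is $\infty\mapsto -4/5$. Its non-critical preimages are $\pm 2/\sqrt3$, while the orbit of $-4/5$ is rational. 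From this one checks that exactly $p_1,p_2,p_3$ are the post-critical points with forward orbit in $\Delta_f$.
\end{itemize}
Here $(m_1,m_2,m_3)=(0,2,2)$ and $d=4$, so (1) holds. Yet $(d_1,d_2,d_3)=(\infty,2,2)$ and $\sum_i 1/d_i=1$, so (3) fails.

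This example defeats both of your fixes:
\begin{itemize}
\item We have $m_2+m_3=d$ exactly, so redoing the Riemann--Hurwitz count with $i=1$ removed gives no extra margin, and $\sum_{k\ne 1}A_k=d$, not $>d$.
\item The map is not exceptional: $0$ is a critical fixed point, so $\nu_f(0)=\infty$, and $P_f\supseteq\{0,\pm1,-4/5\}$ has at least four points. In any case the proposition has no hyperbolicity hypothesis, so appealing to one is not available.
\end{itemize}

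\textbf{The missing idea.} In this case do not aim for (3) at all. Instead use that $e_{p_i}$ is always a finite positive integer: it is a gcd of finite conditional energies, or $1$ by convention. Hence $1/e_{p_i}>0=1/d_i$ even when $d_i=\infty$. The argument then splits as follows.
\begin{itemize}
\item If some $m_i=0$, then $A_i=0$ and your own estimate gives $d\le\sum_{k\ne i}A_k\le\sum_{k\ne i}d/d_k$, i.e.\ $\sum_{k\ne i}1/d_k\ge 1$. Combined with $e_{p_k}\mid d_k$, this gives $\sum_k 1/e_{p_k}\ge 1/e_{p_i}+\sum_{k\ne i}1/d_k>1$, so the Von Dyck group is finite.
\item If all $m_k>0$, then all $d_k$ are finite, and your strictness argument via $\sum_k E_k\ge 3$ works as written.
\end{itemize}
This is exactly the paper's two-case split. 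In the example above it gives $e_{p_2},e_{p_3}\mid 2$ and $e_{p_1}<\infty$, so $G/G_m$ is finite even though (3) fails.
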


\begin{proof}
If the third inequality happens, by \cref{lemma: epi divides di} we have
\begin{align*}
\frac{1}{e_{p_1}} + \frac{1}{e_{p_2}} + \frac{1}{e_{p_3}} \geq \frac{1}{d_1} + \frac{1}{d_2} + \frac{1}{d_3} > 1.
\end{align*}

As the $m_i$'s and the $A_i$'s are integer numbers, if one of the first two inequalities hold we have $$d \leq A_1 + A_2 + A_3.$$
If one of them (say $m_1$) equals zero, then $A_1 = 0$ and 
\begin{align*}
d \leq A_2 + A_3 = \frac{d-E_2}{d_2} + \frac{d-E_3}{d_3} \leq \frac{d}{d_2} + \frac{d}{d_3}.  
\end{align*}
Using \cref{lemma: epi divides di} we conclude
\begin{align*}
\frac{1}{e_{p_1}} + \frac{1}{e_{p_2}} + \frac{1}{e_{p_3}} > \frac{1}{d_2} + \frac{1}{d_3} \geq 1.
\end{align*}
In the general case,
\begin{align*}
d \leq A_1 + A_2 + A_3 = \frac{d-E_1}{d_1} + \frac{d-E_2}{d_2} + \frac{d-E_3}{d_3}.  
\end{align*}
Since $E_1 + E_2 + E_3 \geq 3$, we have
\begin{align*}
d < \frac{d}{d_1} + \frac{d}{d_2} + \frac{d}{d_3}  
\end{align*}
and again using \cref{lemma: epi divides di},
\begin{equation*}
\frac{1}{e_{p_1}} + \frac{1}{e_{p_2}} + \frac{1}{e_{p_3}} \geq \frac{1}{d_1} + \frac{1}{d_2} + \frac{1}{d_3} > 1. \qedhere
\end{equation*}
\end{proof}

Therefore, from the rest of this subsection, we may assume 
\begin{align}
d-1 = m_1+m_2+m_3, \,\,  \, m_i = A_i \text{ for all $i$} \,\, \text{ and } \,\, \frac{1}{d_1} + \frac{1}{d_2} + \frac{1}{d_3} \leq 1.
\label{equation: setting for cases}
\end{align}
This, combined with \cref{lemma: qijs are at least d-1},  implies that every $q_{ij}$ is a critical point and $e_f(q_{ij}) = d_i$ for every $j = 1, \dots, m_i$. Moreover, all the critical points of $f$ are preimages of $f^{-1}(p_i)$ for some $i$.

The following lemma will be used many times in the following subsections:

\begin{lemma}
\label{lemma: bound with d0}
Let $f$ be a tamely ramified rational function of degree $d \geq d_0$ defined over a field $K$. Suppose that
\begin{align*}
\# \set{p \in P_f: \set{f^n(p): n \geq 0} \subseteq \Delta_f}  = 3
\end{align*}
and $f$ satisfies the conditions of \cref{equation: setting for cases}. Keeping the convention that $d_1,d_2,d_3$ may be $\infty$, we have
\begin{align*}
d_0-1 \leq \frac{d_0-f_1}{d_1} + \frac{d_0-f_2}{d_2} + \frac{d_0-f_3}{d_3} \quad \text{ and } \quad 1 \geq \frac{E_1}{d_1} + \frac{E_2}{d_2} + \frac{E_3}{d_3}.
\end{align*}
\end{lemma}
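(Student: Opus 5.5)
The second inequality comes straight from Riemann--Hurwitz; the first then follows from the second together with \cref{equation: local degrees each pi}. Throughout, the quantities $E_i$, $f_i$, $d_i$ depend only on the local structure of $f$ over $p_1,p_2,p_3$. I read $d_0$ as a fixed number with $d \geq d_0$, and I will derive the bound for $d$ itself before specializing to $d_0$.

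\emph{Second inequality.} I would redo the count in the proof of \cref{lemma: qijs are at least d-1}, now under the hypotheses of \cref{equation: setting for cases}. There every $q_{ij}$ has $e_f(q_{ij}) = d_i$, so fibre by fibre we have $d = E_i + m_i d_i$. The ramification contributed by $f^{-1}(p_i)$ is
\[
\sum_j (d_i - 1) + \sum_{p \in f^{-1}(p_i)\cap\{p_1,p_2,p_3\}} (e_f(p)-1) = d - m_i - f_i .
\]
By \cref{lemma: qijs are at least d-1}, all critical points lie in these three fibres. Summing over $i$ and applying Riemann--Hurwitz gives
\[
3d - (m_1+m_2+m_3) - (f_1+f_2+f_3) = 2d-2 .
\]
Since $m_1+m_2+m_3 = d-1$, this yields $f_1+f_2+f_3 = 1$. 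Hence exactly one of the $p_j$ lies over some $p_i$, and it contributes $E_i = e_f(p_j)$ for that $i$, with $E_k = 0$ for the other two indices.

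Now substitute $m_i = (d-E_i)/d_i$, with the convention that this is $0$ when $d_i=\infty$. The equation $m_1+m_2+m_3 = d-1$ becomes
\[
\sum_i \frac{d}{d_i} - \sum_i \frac{E_i}{d_i} = d-1 .
\]
Combining this with $\sum_i 1/d_i \leq 1$ from \cref{equation: setting for cases} gives
\[
\sum_i \frac{E_i}{d_i} = 1 + d\Bigl(\sum_i \frac{1}{d_i} - 1\Bigr) \leq 1 .
\]

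\emph{First inequality.} I would write
\[
\frac{d_0-f_i}{d_i} = \frac{d-E_i}{d_i} - \frac{(d-d_0) + (f_i - E_i)}{d_i}
\]
and sum over $i$:
\[
\sum_i \frac{d_0-f_i}{d_i} = (d-1) - (d-d_0)\sum_i \frac{1}{d_i} + \sum_i \frac{E_i - f_i}{d_i} .
\]
Because $d-d_0 \geq 0$ and $\sum_i 1/d_i \leq 1$, the middle term is at least $-(d-d_0)$. Moreover $E_i \geq f_i$ termwise, since each point counted in $f_i$ has $e_f \geq 1$. Therefore the right-hand side is at least $(d-1) - (d-d_0) = d_0 - 1$, which is the first inequality.

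\emph{Expected obstacle.} The delicate part is the bookkeeping with $d_i = \infty$, that is, when $m_i = 0$. In that case I must check that all identities persist under the stated conventions: the $i$-th terms vanish and $E_i = d$. This is exactly what happens when the whole fibre over $p_i$ lies in $\{p_1,p_2,p_3\}$. With these conventions the fibre identity $d = E_i + d_i A_i$ is \cref{equation: local degrees each pi}, so the argument goes through unchanged.
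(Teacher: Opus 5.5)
Your derivation of both inequalities is correct and is essentially the paper's. From $A_i=m_i$ (so $e_f(q_{ij})=d_i$) and \cref{equation: local degrees each pi} you get $\sum_i (d-E_i)/d_i=d-1$, i.e.\ $d\bigl(1-\sum_i 1/d_i\bigr)=1-\sum_i E_i/d_i$. Then $\sum_i 1/d_i\le 1$, $d\ge d_0$ and $E_i\ge f_i$ give both bounds. Your termwise rewriting for the first inequality is just a rearrangement of the paper's chain $d_0\bigl(1-\sum_i 1/d_i\bigr)\le d\bigl(1-\sum_i 1/d_i\bigr)=1-\sum_i E_i/d_i\le 1-\sum_i f_i/d_i$.

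However, your Riemann--Hurwitz paragraph contains a false claim, and you should delete it. The identity $3d-(m_1+m_2+m_3)-(f_1+f_2+f_3)=2d-2$ together with $m_1+m_2+m_3=d-1$ gives $f_1+f_2+f_3=3$, not $1$. This is immediate anyway: each $p_j$ is post-critical with forward orbit in $\Delta_f$, so $f(p_j)\in\{p_1,p_2,p_3\}$, and $p_j$ is counted in exactly one $f_i$. Hence your conclusion that exactly one $p_j$ lies over the $p_i$'s, and that two of the $E_k$ vanish, is wrong. Indeed, the paper's subsequent case analysis runs over $(f_1,f_2,f_3)\in\{(1,1,1),(3,0,0),(2,1,0)\}$. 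Nothing later in your argument uses this claim, so the proof survives once that paragraph is removed.

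Relatedly, the second inequality does not come ``straight from Riemann--Hurwitz''. Riemann--Hurwitz was already used up in reaching \cref{equation: setting for cases} via \cref{lemma: qijs are at least d-1}. What you actually use is the fibre identity plus $\sum_i 1/d_i\le 1$.
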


\begin{proof}
Using \cref{equation: local degrees each pi} and \cref{equation: setting for cases} we have
\begin{align*}
d-1 = \frac{d-E_1}{d_1} + \frac{d-E_2}{d_2} + \frac{d-E_3}{d_3}.
\end{align*}
Rearranging, 
\begin{align*}
d \left(1- \frac{1}{d_1} - \frac{1}{d_2} - \frac{1}{d_3}    \right) = 1-\frac{E_1}{d_1} - \frac{E_2}{d_2} - \frac{E_3}{d_3}.
\end{align*}
By \cref{equation: setting for cases} the left-hand side is positive, so the right-hand side too. 

Finally, since $E_i \geq f_i$ for all $i = 1,2,3$ we conclude
\begin{align*}
d_0 \left(1- \frac{1}{d_1} - \frac{1}{d_2} - \frac{1}{d_3}    \right) &\leq 
d \left(1- \frac{1}{d_1} - \frac{1}{d_2} - \frac{1}{d_3}    \right) \\ 
&= 1-\frac{E_1}{d_1} - \frac{E_2}{d_2} - \frac{E_3}{d_3} \\
& \leq 1-\frac{f_1}{d_1} - \frac{f_2}{d_2} - \frac{f_3}{d_3}. \qedhere
\end{align*}
\end{proof}

Since we are dealing with the case of three points, we have the obvious equality $f_1 + f_2 + f_3 = 3$. Although it is tedious, we will not escape from a case by case analysis to solve the whole problem. By symmetry, we may assume $f_1 \geq f_2 \geq f_3.$ Then, we have three cases to analyze:
\begin{align*}
(f_1,f_2,f_3) \in \set{(1,1,1), (3,0,0), (2,1,0)}.
\end{align*}

The following tools will be used in the cases where $f_3 = 0$. 

\begin{lemma}
\label{lemma: bound for E2}
Let $f$ be a tamely ramified rational function of degree $d \geq 2$ defined over a field $K$ satisfying \cref{equation: setting for cases}, with $f_3 = 0$ and $E_2 < d_2 < \infty$. If $E_1 \geq d_3$ then 
\begin{align*}
E_2 \leq 2d_2 + d_3 -d_2d_3.
\end{align*}
Moreover, if $E_2 = 2d_2 + d_3 -d_2d_3$ then $E_1 = d_3$.
\end{lemma}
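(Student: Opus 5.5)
The plan is to reduce everything to the two relations from \cref{lemma: bound with d0} and then exploit the extra hypotheses $f_3=0$, $E_1\ge d_3$ and $E_2<d_2<\infty$.

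Since $f_3=0$, we have $E_3=0$, so \cref{equation: local degrees each pi} gives $d=d_3A_3=d_3m_3$. In particular $d_3<\infty$ and $d_3\mid d$. Also $d=E_2+d_2m_2$ with $0\le E_2<d_2$, so $E_2$ is the residue of $d$ modulo $d_2$. Write $x:=1/d_1$, with $x=0$ if $m_1=0$. The proof of \cref{lemma: bound with d0} gives the identity
\begin{align*}
d\left(1-x-\tfrac{1}{d_2}-\tfrac{1}{d_3}\right)=1-E_1x-\tfrac{E_2}{d_2}\ge 0 .
\end{align*}

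The first step is to extract two opposite bounds on $x$. From $E_1x\le 1-E_2/d_2$ and $E_1\ge d_3$ we get the upper bound $x\le (d_2-E_2)/(d_2d_3)$. From the identity, using $1-E_1x-E_2/d_2\le 1-E_2/d_2$, we get the lower bound
\begin{align*}
x\ge 1-\tfrac{1}{d_2}-\tfrac{1}{d_3}-\tfrac{d_2-E_2}{d_2\,d}.
\end{align*}
Using only $d\ge E_1\ge d_3$ and combining the two bounds yields the weak inequality $2E_2\le 3d_2+d_3-d_2d_3$. This is not yet the claimed $E_2\le 2d_2+d_3-d_2d_3$.

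The main obstacle is therefore sharpening the lower bound on $x$, that is, improving the crude estimate $d\ge d_3$. I would first try splitting on $m_1$.

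If $m_1=0$, then $x=0$ and $d=E_1$. The identity then reads $d(1-1/d_2-1/d_3)=1-E_2/d_2$. With $d\ge d_3$, multiplying by $d_2d_3$ gives $d_3(d_2d_3-d_2-d_3)\le d_3(d_2-E_2)$, i.e. $E_2\le 2d_2+d_3-d_2d_3$ directly. Equality here forces $d=d_3$, hence $E_1=d_3$.

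If $m_1\ge1$, then $d=E_1+d_1m_1\ge d_3+d_1$. I would substitute this into the identity instead of $d\ge d_3$, keeping the term $-E_1x$ (rather than discarding it) and using $E_1x\ge d_3x$. This should give
\begin{align*}
(d_3+d_1)\left(1-x-\tfrac{1}{d_2}-\tfrac{1}{d_3}\right)\le 1-d_3x-\tfrac{E_2}{d_2}.
\end{align*}
The $d_1x=1$ terms then cancel, leaving a linear inequality in $1/d_2$ and $1/d_3$. Multiplying by $d_2d_3$ should give $E_2\le 2d_2+d_3-d_2d_3$ again, possibly after using $1-x-1/d_2-1/d_3\ge0$ to drop a nonnegative term. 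If that cancellation does not come out cleanly, my fallback is the congruences $d_3\mid d$ and $d\equiv E_2 \pmod{d_2}$, which pin down small cases.

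For the equality statement, I would trace which inequalities were used: equality in $E_2\le 2d_2+d_3-d_2d_3$ forces equality in $E_1x\ge d_3x$ (when $x>0$) or in $d\ge d_3$ (when $x=0$), and either way $E_1=d_3$.
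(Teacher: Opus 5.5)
Your argument is correct, but it is organized differently from the paper's.

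\textbf{The paper's route.} The paper does not split on $m_1$. It combines $d=d_3m_3$, $d=E_2+d_2m_2$ and $d-1=m_1+m_2+m_3$ into the integer relation $m_1d_2+d_2-E_2=m_3(d_2d_3-d_2-d_3)$. It turns $E_1/d_1+E_2/d_2\le 1$ into $d_1\ge d_2E_1/(d_2-E_2)$ and then runs the single chain
\begin{align*}
E_1m_3\ \ge\ d_3m_3=d=E_1+d_1m_1\ \ge\ \frac{E_1}{d_2-E_2}\,(d_2-E_2+d_2m_1)=\frac{E_1m_3}{d_2-E_2}\,(d_2d_3-d_2-d_3).
\end{align*}
This covers $m_1=0$ (with $d_1m_1=0$) and $m_1\ge 1$ at once. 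Its equality case gives $E_1=d_3$ directly from the first step.

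\textbf{Your route.} You stay with the rational identity $ds=1-E_1x-E_2/d_2$, where $s:=1-x-1/d_2-1/d_3\ge 0$, and feed in $d\ge d_3+d_1$ when $m_1\ge 1$. This reuses the identity from the proof of the preceding lemma and avoids the bookkeeping with $m_3$: you use $f_3=0$ only through $E_3=0$. The cost is a case split and a slightly more delicate equality analysis.

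\textbf{A correction in the case $m_1\ge 1$.} It is the $d_3x$ terms that cancel, not the ``$d_1x=1$'' terms. What remains is not an inequality in $1/d_2,1/d_3$ alone but
\begin{align*}
d_1s+\Bigl(d_3-2-\frac{d_3-E_2}{d_2}\Bigr)\le 0 .
\end{align*}
So the step you hedged on is exactly the one needed. Dropping $d_1s\ge 0$ and multiplying by $d_2$ gives $E_2\le 2d_2+d_3-d_2d_3$. In the equality case the left side above equals $d_1s\ge 0$, so every inequality in your chain is an equality. In particular $E_1x=d_3x$ with $x>0$, hence $E_1=d_3$. The congruence fallback is not needed.

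\textbf{A small point in the case $m_1=0$.} Your claim that equality forces $d=d_3$ requires $d_2d_3-d_2-d_3>0$, not merely $\ge 0$. This does hold: if it vanished, the identity would give $E_2=d_2$, contradicting $E_2<d_2$.
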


\begin{proof}
By hypothesis $f_3 = 0$, so $d = d_3 m_3$. Thus, by \cref{equation: setting for cases} and \cref{equation: local degrees each pi}
\begin{align*}
d_2(d_3m_3-1) = d_2(m_1 + m_2 + m_3) = d_2m_1 + d-E_2 + d_2m_3
\end{align*}
which implies
\begin{align*}
m_1d_2+d_2-E_2 = m_3(d_2d_3-d_3-d_2).
\end{align*}

Since in this case $E_3 = 0$, using \cref{lemma: bound with d0}, we have 
\begin{align*}
1 \geq \frac{E_1}{d_1} + \frac{E_2}{d_2}.
\end{align*}
Since by hypothesis $d_2 > E_2$, solving $d_1$ we obtain
\begin{align*}
d_1 \geq \frac{d_2}{d_2-E_2} E_1.
\end{align*}

Then, using that $E_1 \geq d_3$, 
\begin{align}
\label{equation: bounds for E1m3}
E_1m_3 \geq d_3m_3 = d = E_1 + d_1m_1 \geq E_1 \left(1 + \frac{d_2}{d_2-E_2}m_1 \right) = \\
\frac{E_1}{d_2-E_2}(d_2-E_2+d_2m_1) = \frac{E_1 m_3}{d_2-E_2}(d_2d_3-d_2-d_3) \notag
\end{align}
and consequently 
\begin{align*}
1 \geq \frac{d_2d_3-d_2-d_3}{d_2-E_2},
\end{align*}
which is equivalent to 
\begin{align*}
E_2 \leq 2d_2+d_3-d_2d_3.    
\end{align*}

In the case we have equality in the previous formula, all the inequalities in \cref{equation: bounds for E1m3} become equalities and therefore $E_1 = d_3$. 
\end{proof}

\begin{lemma}
\label{lemma: if E2 is d2}
Let $f$ be a tamely ramified rational function of degree $d \geq 2$ defined over a field $K$ satisfying \cref{equation: setting for cases}, with $f_3 = 0$. If $E_2 = d_2$ then 
\begin{align*}
d_1 = \infty \quad \text{ and } \quad d_2d_3 = d_2+d_3.
\end{align*}
\end{lemma}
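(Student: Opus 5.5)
The plan is to read everything off the identity in the proof of \cref{lemma: bound with d0}. Under the assumptions \cref{equation: setting for cases}, that proof gives
\begin{align*}
d \left(1- \frac{1}{d_1} - \frac{1}{d_2} - \frac{1}{d_3} \right) = 1-\frac{E_1}{d_1} - \frac{E_2}{d_2} - \frac{E_3}{d_3},
\end{align*}
with the left-hand side non-negative by \cref{equation: setting for cases}. This holds with the convention $1/\infty = 0$, which is harmless because when $m_i = 0$ the term $d_iA_i$ vanishes.

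First I check which $d_i$ can be infinite. Since $f_3 = 0$, we have $E_3 = 0$, and \cref{equation: local degrees each pi} gives $d = d_3A_3 = d_3 m_3$. As $d \geq 2$, this forces $m_3 > 0$, so $d_3 < \infty$. The hypothesis $E_2 = d_2$ forces $d_2$ to be finite, since $E_2$ is a finite sum of local degrees. So only $d_1$ can be infinite.

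Next, substitute $E_2/d_2 = 1$ and $E_3 = 0$. The right-hand side becomes $-E_1/d_1$, which is $\leq 0$, while the left-hand side is $\geq 0$. Hence both sides vanish. In particular $E_1/d_1 = 0$.

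Now I use the normalization $f_1 \geq f_2 \geq f_3 = 0$ with $f_1 + f_2 + f_3 = 3$. This gives $f_1 \geq 2$, so $E_1 \geq f_1 > 0$. Therefore $E_1/d_1 = 0$ forces $1/d_1 = 0$, that is, $d_1 = \infty$.

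Finally, the vanishing of the left-hand side with $d > 0$ gives $1 - 1/d_2 - 1/d_3 = 0$. Since $d_2, d_3$ are finite, clearing denominators yields $d_2d_3 = d_2 + d_3$.

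The only delicate point is justifying that the identity holds with the $\infty$ conventions and that $d_2, d_3$ are finite. Both follow from the conventions $A_id_i = 0$ and $1/\infty = 0$ already fixed before \cref{lemma: bound with d0}.
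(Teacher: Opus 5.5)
Your proof is correct and follows essentially the paper's route. The paper also obtains $d_1=\infty$ from the inequality $1\ge E_1/d_1+E_2/d_2$ of \cref{lemma: bound with d0}, tacitly using $E_1>0$, which you rightly justify via $E_1\ge f_1\ge 2$; the only difference is that the paper gets $d_2d_3=d_2+d_3$ by recomputing with $d=d_2(1+m_2)=d_3m_3$ and $d=1+m_2+m_3$, whereas you read it off more economically from the vanishing of the left-hand side of the same identity.
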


\begin{proof}
By \cref{lemma: bound with d0} we have
\begin{align*}
1 \geq \frac{E_1}{d_1} + \frac{E_2}{d_2}.
\end{align*}
Therefore, if $E_2 = d_2$ then $d_1 = \infty$ and $m_1 = 0$. Also, by \cref{equation: local degrees each pi}
\begin{align*}
d = E_2 + d_2m_2 = d_2(1+m_2).
\end{align*}

As $f_3 = 0$ we have $d = d_3m_3$ so by \cref{equation: setting for cases}
\begin{align*}
d_2d_3m_3 = d_2d = d_2(1 + m_2 + m_3) = d_2\left(\frac{d_3m_3}{d_2} + m_3 \right)
\end{align*}
which gives
\begin{equation*}
d_2d_3 = d_2 + d_3. \qedhere
\end{equation*}

\end{proof}

\subsubsection{Case $(f_1,f_2,f_3) = (1,1,1)$}

\begin{figure}
\centering
\fbox{%
\begin{minipage}[c][3.5cm][c]{0.3\textwidth}
\centering
\begin{tikzpicture}[
>=Stealth,
every node/.style={inner sep=0pt},
arr/.style={
->,
line width=0.9pt,
shorten <=7pt,
shorten >=7pt},
scale=0.65]

\coordinate (p1) at (0,1.5);
\coordinate (p2) at (0,-1.5);
\coordinate (p3) at (2,0);

\fill (p1) circle (0.12);
\fill (p2) circle (0.12);
\fill (p3) circle (0.12);

\draw[arr]
(p1) .. controls +(1.2,0.7) and +(1.2,-0.7) ..
node[midway,right=5pt] {$e_f(p_1)$} (p1);

\draw[arr]
(p2) .. controls +(1.2,0.7) and +(1.2,-0.7) ..
node[midway,right=5pt] {$e_f(p_2)$} (p2);

\draw[arr]
(p3) .. controls +(1.2,0.7) and +(1.2,-0.7) ..
node[midway,right=5pt] {$e_f(p_3)$} (p3);
\end{tikzpicture}
\end{minipage}}
\fbox{%
\begin{minipage}[c][3.5cm][c]{0.32\textwidth}
\centering
\begin{tikzpicture}[
>=Stealth,
every node/.style={inner sep=0pt},
arr/.style={
->,
line width=0.9pt,
shorten <=7pt,
shorten >=7pt},
scale=0.65]

\coordinate (p1) at (0,1.5);
\coordinate (p2) at (0,-1.5);
\coordinate (p3) at (2.5,0);

\fill (p1) circle (0.12);
\fill (p2) circle (0.12);
\fill (p3) circle (0.12);

\draw[arr,bend left=25]
(p1) to node[left=20pt] {$e_f(p_1)$} (p2);

\draw[arr,bend left=25]
(p2) to node[right=20pt] {$e_f(p_2)$} (p1);

\draw[arr]
(p3) .. controls +(1.2,0.7) and +(1.2,-0.7) ..
node[midway,above=10pt] {$e_f(p_3)$} (p3);

\end{tikzpicture}
\end{minipage}}
\fbox{%
\begin{minipage}[c][3.5cm][c]{0.3\textwidth}
\centering
\begin{tikzpicture}[
>=Stealth,
every node/.style={inner sep=0pt},
arr/.style={
->,
line width=0.9pt,
shorten <=7pt,
shorten >=7pt},
scale=0.65]

\coordinate (p1) at (0,1.5);
\coordinate (p2) at (0,-1.5);
\coordinate (p3) at (2.5,0);

\fill (p1) circle (0.12);
\fill (p2) circle (0.12);
\fill (p3) circle (0.12);

\draw[arr]
(p1) to node[left=7pt] {$e_f(p_1)$} (p2);

\draw[arr]
(p2) to node[below=7pt] {$e_f(p_2)$} (p3);

\draw[arr]
(p3) to node[above=7pt] {$e_f(p_3)$} (p1);

\end{tikzpicture}
\end{minipage}}

\caption{Possible ramification portraits for $(f_1,f_2,f_3) = (1,1,1)$.}
\label{figure: Ramification portraits case 1 1 1}
\end{figure}

\cref{figure: Ramification portraits case 1 1 1} shows the possible diagrams that fit in this case. Using \cref{lemma: bound with d0} with $d = d_0$, we have
\begin{align*}
d-1 \leq  \frac{d-1}{d_1} + \frac{d-1}{d_2} + \frac{d-1}{d_3},
\end{align*}
which implies that
\begin{align*}
1 \leq \frac{1}{d_1} + \frac{1}{d_2} + \frac{1}{d_3}.
\end{align*}
By \cref{equation: setting for cases}, this means that 
\begin{align*}
1 = \frac{1}{d_1} + \frac{1}{d_2} + \frac{1}{d_3}
\end{align*}
and the possible solutions are
\begin{align*}
(2,3,6) \quad (2,4,4) \quad (3,3,3) \quad (2,2,\infty).
\end{align*}

Using again \cref{lemma: bound with d0}, 
\begin{align*}
1 \geq \frac{E_1}{d_1} + \frac{E_2}{d_2} + \frac{E_3}{d_3} \geq \frac{1}{d_1} + \frac{1}{d_2} + \frac{1}{d_3} = 1.
\end{align*}
If $d_1,d_2,d_3 < \infty$ then this implies that $e_f(p_i) = 1$ for all $i = 1,2,3$ and consequently $e_{p_i} = d_i$ and $f$ has an euclidean orbifold. Moreover $v_f(p_i) = e_{p_i}$ which implies that $G_m = 1$ by \cref{proposition: order inertia generators}.

If one of the $d_i$'s is $\infty$, then the other two are $2$ and by \cref{lemma: epi divides di} we obtain that $e_{p_k} \mid 2$ for $k \neq i$, which implies that $G/G_m$ is finite. 

\subsubsection{Case $(f_1,f_2,f_3) = (3,0,0)$}

In this case there is only one possible diagram since $f(p_i) = p_1$ for $i = 1,2,3$. In particular $m_2,m_3 > 0$ and $d_2,d_3 \mid d$. By symmetry in the role of $p_2$ and $p_3$, we may assume that $d_2 \leq d_3$.

Using the good pairs $P = (\gamma_1,\gamma_2)$ defined as
\begin{align*}
\left \{ \begin{array}{r@{\ :\ }l}
\gamma_1 & p_1 \rightarrow p_1 \rightarrow p_k, \\
\gamma_2 & p_1 \rightarrow p_i \rightarrow q_{ij}, 
\end{array}\right.
\end{align*}
for $i,k = 1,2,3$, we obtain
\begin{align}
e_{p_1} &\mid \gcd(d_1,e_f(p_2) d_2, e_f(p_3) d_3), \label{equation: ep1 3 0 0} \\ 
e_{p_2} &\mid \gcd(d_1,d_2, e_f(p_3) d_3), \label{equation: ep2 3 0 0} \\ 
e_{p_3} &\mid \gcd(d_1,e_f(p_2) d_2, d_3). \label{equation: ep3 3 0 0}
\end{align}

In particular $G/G_m$ is finite if $\gcd(d_1,d_3)$ or $\gcd(d_1,d_2)$ is $1$. 

The following proposition will isolate the tuples $(d_1,d_2,d_3)$ that will require special attention:

\begin{proposition}
Let $f$ be a tamely ramified rational function of degree $d \geq 2$ defined over a field $K$ satisfying \cref{equation: setting for cases} and with $(f_1,f_2,f_3) = (3,0,0)$. If $(d_1,d_2,d_3)$ is none of the following tuples:
\begin{align*}
(\infty,3,3), \,\, (\infty,2,d_3) \text{ with $2 \leq d_3 \leq 6$}, \,\, (3,3,3), \\
(6k,2,3) \,\, k \geq 1, \,\, (4,2,4) \,\, \text{ and } \,\, (3,2,6),
\end{align*}
then $G/G_m$ is finite.
\label{proposition: excluded cases 3 0 0}
\end{proposition}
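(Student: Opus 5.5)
In the case $(f_1,f_2,f_3)=(3,0,0)$ we have $f(p_1)=f(p_2)=f(p_3)=p_1$, so $E_1 = e_f(p_1)+e_f(p_2)+e_f(p_3)\geq 3$ and $E_2=E_3=0$. By \cref{equation: local degrees each pi} this gives $d = d_2 m_2 = d_3 m_3$. The plan is to show that, outside the listed tuples, one of two things happens: either $\gcd(d_1,d_2)=1$ or $\gcd(d_1,d_3)=1$, so that $G/G_m$ is finite by the remark after \cref{equation: ep3 3 0 0}; or the divisibilities \cref{equation: ep1 3 0 0}--\cref{equation: ep3 3 0 0} force $\frac{1}{e_{p_1}}+\frac{1}{e_{p_2}}+\frac{1}{e_{p_3}}>1$, so that \cref{theorem: classification Von Dyck groups} applies. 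The first step is to extract the numerical constraints. From \cref{lemma: bound with d0} with $E_2=E_3=0$ we get $E_1\leq d_1$. From \cref{equation: setting for cases} we get $d-1=m_1+\frac{d}{d_2}+\frac{d}{d_3}$ together with $\frac1{d_1}+\frac1{d_2}+\frac1{d_3}\le 1$. Substituting $d=E_1+d_1m_1$ then gives
\begin{align*}
d\Bigl(1-\tfrac{1}{d_1}-\tfrac1{d_2}-\tfrac1{d_3}\Bigr)=1-\tfrac{E_1}{d_1}.
\end{align*}

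Next split on $d_1$. If $d_1=\infty$ (so $m_1=0$), then $d=E_1$ and $d-1=\frac d{d_2}+\frac d{d_3}$. This forces $\frac1{d_2}+\frac1{d_3}<1$ but close to $1$, with $d_2,d_3\mid d$. Since $d_2\le d_3$, the only possibilities are $d_2=2$ (which yields $d=\frac{2d_3}{d_3-2}$, so $d_3\in\{3,4,6\}$) or $d_2=d_3=3$. All of these lie in the excluded list $(\infty,3,3)$ and $(\infty,2,d_3)$ with $2\leq d_3\leq 6$; the extra room there is harmless.

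If $d_1<\infty$, the right-hand side of the displayed identity is at most $1-\frac{3}{d_1}<1$, while the left-hand side is $d$ times a nonnegative quantity. Two cases follow. Either $\frac1{d_1}+\frac1{d_2}+\frac1{d_3}=1$, which gives the Euclidean triples $(3,3,3),(6,2,3),(4,2,4),(3,2,6)$ (ordered by $d_2\le d_3$) and also $(2,3,6),(2,4,4)$. Or the sum is $<1$, in which case $d\cdot(1-\sum)\le 1$ together with $d\geq \operatorname{lcm}(d_2,d_3)$ and $d\geq E_1+d_1m_1$ should force a contradiction. For the leftover triples I would check directly that $\gcd(d_1,d_2)=1$ or $\gcd(d_1,d_3)=1$, or else use $E_1=d_1$ and the forced values of the $e_f(p_i)$ to bound the $e_{p_i}$.

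The family $(6k,2,3)$ arises when the deficiency $1-\frac1{d_1}-\frac12-\frac13=\frac16-\frac1{d_1}$ is small. Here $d\,(\frac16-\frac1{d_1})=1-\frac{E_1}{d_1}$ with $6\mid d$ admits solutions, but only with $6\mid d_1$; otherwise $\gcd(d_1,2)=1$ or $\gcd(d_1,3)=1$ and we are done. The main obstacle is this finite but fiddly Diophantine analysis for $d_1<\infty$ with $\sum\frac1{d_i}<1$, in particular when $d_2=2$ or $3$ and the deficiency can be tiny. I would organise it by $d_2\in\{2,3\}$: for $d_2\geq 4$ (so $d_3\geq4$) the inequality $d\,(1-\sum)\leq 1-\frac{3}{d_1}$ with $d\geq d_3$ quickly fails unless $(d_1,d_2,d_3)=(3,3,3)$ or $(4,2,4)$-type. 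For $d_2\in\{2,3\}$ I would use the gcd criterion: either $\gcd(d_1,d_2)=1$, or $d_2\mid d_1$, and similarly for $d_3$. Requiring both divisibilities pins $d_1$ to multiples of $\operatorname{lcm}(d_2,d_3)$, and the identity then leaves only the listed tuples.
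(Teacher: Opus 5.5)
\textbf{Verdict: the core case $d_1<\infty$ is not actually proved, and the shortcuts you propose there fail as stated.}

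Your setup is sound. The identity $d\bigl(1-\frac{1}{d_1}-\frac{1}{d_2}-\frac{1}{d_3}\bigr)=1-\frac{E_1}{d_1}$ is exactly what \cref{lemma: bound with d0} encodes, and your $d_1=\infty$ branch is correct. The case $d_1<\infty$ is the real content of the proposition, and there you only sketch.

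\emph{The branch $\sum 1/d_i<1$ does not end in a contradiction.} The triple $(d_1,d_2,d_3)=(5,2,4)$ with $E_1=3$, $m_1=1$, $d=8$ fits every numerical constraint you list. So do $(7,2,3),(8,2,3),(9,2,3)$ with $m_1=1$, $d=12$ and $E_1=5,4,3$. These are removed only by the gcd criterion.

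\emph{The gcd criterion cannot by itself make the list finite.} With $d_2=2$, every triple $(2a,2,2b)$ escapes it. Your claim that the two divisibilities pin $d_1$ to multiples of $\operatorname{lcm}(d_2,d_3)$ is false for composite $d_3$: for $d_3=4$, $\gcd(d_1,4)\neq 1$ only needs $2\mid d_1$.

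\emph{Smaller slips.} Your aside on $d_2\geq 4$ misfires, since the exceptions you name have $d_2\leq 3$; in fact $d_2\geq4$ never occurs. Also, $(2,3,6)$ and $(2,4,4)$ must be ruled out explicitly via $d_1\geq E_1\geq 3$, because $(2,4,4)$ is not caught by the gcd criterion.

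\emph{The missing idea.} Insert lower bounds for $d$ into your identity before any gcd argument. Since $d\geq E_1\geq 3$, you get $3\bigl(1-\sum 1/d_i\bigr)\leq 1-\frac{3}{d_1}$, i.e.\ $\frac{1}{d_2}+\frac{1}{d_3}\geq \frac{2}{3}$. This is the paper's use of \cref{lemma: bound with d0} with $d_0=3$, and it gives $(d_2,d_3)\in\{(3,3)\}\cup\{(2,d_3):2\leq d_3\leq 6\}$.

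For $d_1<\infty$ you also have $d\geq E_1+d_1\geq d_1+3\geq 5$ and $\operatorname{lcm}(d_2,d_3)\mid d$. Apply the same inequality with $d_0$ equal to the least multiple of $\operatorname{lcm}(d_2,d_3)$ that is at least $5$. Together with $\sum 1/d_i\leq 1$, this brackets $d_1$ in each case:
\begin{itemize}
\item $(3,3)$ forces $d_1=3$;
\item $d_3=2$ contradicts $\sum 1/d_i\leq 1$;
\item $d_3=3$ leaves only $6\mid d_1$ after the gcd criterion;
\item $d_3=4$ gives $4\leq d_1\leq 5$, and $d_1=5$ is coprime to $d_2$;
\item $d_3=5$ gives $\frac{10}{3}\leq d_1\leq \frac{7}{2}$, which has no integer solution;
\item $d_3=6$ gives $d_1=3$.
\end{itemize}
This is precisely the paper's proof. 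Only after this bounding step does your gcd criterion close the argument.
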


\begin{proof}
Recall we may assume \cref{equation: setting for cases} as otherwise $G/G_m$ is finite. 

Since $E_1 \geq 3$, then $d \geq 3$, so applying \cref{lemma: bound with d0} with $d_0 = 3$ we obtain
\begin{align*}
\frac{2}{3} \leq \frac{1}{d_2} + \frac{1}{d_3}
\end{align*}
whose solutions are $(3,3)$ and $(2,d_3)$ with $d_3 \leq 6$. If $d_1 = \infty$, we isolate these cases.

If $d_1 < \infty$, then by \cref{equation: local degrees each pi} we have $d \geq 3 + d_1$ and since $d_1 \geq 2$ we have $d \geq 5$. 

Consider $(d_2,d_3) = (3,3)$. Then $3 \mid d$ and therefore $d \geq 6$. Applying \cref{lemma: bound with d0} with $d_0 = 6$, we obtain $d_1 \leq 3$. On the other hand, by \cref{equation: setting for cases}
\begin{align*}
1 \geq \frac{1}{d_1} + \frac{1}{d_2} + \frac{1}{d_3}, 
\end{align*}
so applied to this case $d_1 \geq 3$ and therefore $d_1 = 3$.

Consider now $d_2 = 2$ and $2 \leq d_3 \leq 6$. From \cref{equation: setting for cases}, we find a lower bound for $d_1$ in terms of $d_3$:
\begin{align}
d_1 \geq \frac{2 d_3}{d_3-2}.
\label{equation: lower bound d1 3 0 0}
\end{align}
To find an upper bound we use \cref{lemma: bound with d0}.

If $d_3 = 2$, as $d_1 < \infty$ then 
\begin{align*}
\frac{1}{d_1} + \frac{1}{d_2} + \frac{1}{d_3} > 1
\end{align*}
contradicting \cref{equation: setting for cases}.

If $d_3 = 3$, then $6 \mid d$ and as $d \geq 5$ then $d \geq d_0 = 6$. Notice that $G/G_m$ is finite if $\gcd(d_1,d_2) = 1$ or $\gcd(d_1,d_3) = 1$, so $6 \mid d_1$.

If $d_3 = 4$, then $4 \mid d$ and as $d \geq 5$ then $d \geq d_0 = 8$. Then by \cref{equation: lower bound d1 3 0 0} and \cref{lemma: bound with d0} $4 \leq d_1 \leq 5$. As $G/G_m$ is finite if $\gcd(d_1,d_2) = 1$, we can discard the case $d_1 = 5$. 

If $d_3 = 5$, then $10 \mid d$ and so $d \geq d_0 = 10$. By \cref{equation: lower bound d1 3 0 0} and \cref{lemma: bound with d0} $10/3 \leq d_1 \leq 7/2$ which does not give any solution as there are no integer numbers in that interval.

Finally, if $d_3 = 6$, then $6 \mid d$ and so $d \geq d_0 = 6$. Then, as before, we obtain $d_1 = 3$.
\end{proof}

\begin{lemma}
Let $f$ be a tamely ramified rational function of degree $d \geq 2$ defined over a field $K$ satisfying \cref{equation: setting for cases} and with $(f_1,f_2,f_3) = (3,0,0)$. If $d_1 \mid d_i$ for some $i = 2,3$ then $d_1 = E_1$.
\label{lemma: d1 is E1 3 0 0}
\end{lemma}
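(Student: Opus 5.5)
The plan is to squeeze $E_1$ between two multiples of $d_1$: an upper bound $E_1 \leq d_1$ from \cref{lemma: bound with d0}, and a divisibility $d_1 \mid E_1$ from the degree count over $p_1$. Throughout I work under \cref{equation: setting for cases} with $(f_1,f_2,f_3) = (3,0,0)$. In this configuration $f^{-1}(p_1) \supseteq \set{p_1,p_2,p_3}$, so $E_1 = e_f(p_1)+e_f(p_2)+e_f(p_3) \geq 3$, while $E_2 = E_3 = 0$.

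First I make sure $d_1 < \infty$. We assume $d_1 \mid d_i$ for some $i \in \set{2,3}$. The earlier discussion of this case gives $m_2, m_3 > 0$, so $d_i$ is finite, and a divisor of a finite number must be finite. In particular $m_1 > 0$.

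Next, the upper bound. The second inequality of \cref{lemma: bound with d0} reads
\begin{align*}
1 \geq \frac{E_1}{d_1} + \frac{E_2}{d_2} + \frac{E_3}{d_3} = \frac{E_1}{d_1},
\end{align*}
since $E_2 = E_3 = 0$. Hence $E_1 \leq d_1$.

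Then the divisibility. Because $f_i = 0$ for $i = 2,3$, \cref{equation: local degrees each pi} gives $d = E_i + d_iA_i = d_i m_i$, using $A_i = m_i$ from \cref{equation: setting for cases}. So $d_i \mid d$, and with the hypothesis we get $d_1 \mid d$. Applying \cref{equation: local degrees each pi} for $i=1$ gives $d = E_1 + d_1 m_1$, so $d_1 \mid E_1$. Since $E_1 \geq 3 > 0$, this forces $E_1 \geq d_1$, and combined with the upper bound we get $E_1 = d_1$.

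There is no real obstacle here. The only points needing care are the convention $d_1 = \infty$, which the hypothesis rules out as explained above, and using \cref{lemma: bound with d0} in its second form with $E_2 = E_3 = 0$.
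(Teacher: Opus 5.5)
Your proof is correct and follows the paper's argument. You get $d_1 \mid d_i \mid d$; together with $d = E_1 + d_1 m_1$ this gives $d_1 \mid E_1$, hence $E_1 \geq d_1$, while the second inequality of \cref{lemma: bound with d0} with $E_2 = E_3 = 0$ gives $E_1 \leq d_1$. Your remarks that $d_1 < \infty$ and $E_1 \geq 3 > 0$ make explicit two points the paper leaves implicit; the first is genuinely needed, since under the convention $A_1 d_1 = 0$ one would have $d = E_1$.
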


\begin{proof}
Since $d_i \mid d$, if $d_1 \mid d_i$ then $d_1 \mid d$. By \cref{equation: local degrees each pi} then $d_1 \mid E_1$. In particular $E_1 \geq d_1$. By \cref{lemma: bound with d0} we have 
\begin{align*}
1 \geq \frac{E_1}{d_1},
\end{align*}
and consequently $E_1 = d_1$.
\end{proof}

We are ready to prove the cases excluded in \cref{proposition: excluded cases 3 0 0}. Note that as $E_2 = 0$ and in all the cases of \cref{proposition: excluded cases 3 0 0} we have $d_2 < \infty$, we just need to check $E_1 \geq d_3$ to apply \cref{lemma: bound for E2}. Since $f_1 = 3$, we also have $E_1 \geq 3$.

If \underline{$(d_2,d_3) = (3,3)$} then $E_1 \geq d_3$ and $2d_2+d_3-d_2d_3 = E_2 = 0$. Therefore, by \cref{lemma: bound for E2} $E_1 = d_3 = 3$ which implies that $e_f(p_i) = 1$ for all $i= 1,2,3$. In this case $f$ has an euclidean orbifold $(3,3,3)$. Moreover, \cref{equation: ep1 3 0 0}, \cref{equation: ep2 3 0 0} and \cref{equation: ep3 3 0 0} are equalities giving $v_f(p_i) = e_{p_i} = 3$ for all $i = 1,2,3$. By \cref{proposition: order inertia generators} this implies that $G_m = 1$.

If \underline{$(d_1,d_2) = (\infty,2)$ and $2 \leq d_3 \leq 6$} then \cref{equation: local degrees each pi} and \cref{equation: setting for cases} give
\begin{align*}
d-1 = \frac{d}{2} + \frac{d}{d_3}.    
\end{align*}
Solving for $d$ in each case of $d_3$ we find that only $(d_3,d) = (3,6)$ and $(4,4)$ can happen. These cases can be solved together with the next excluded cases that we have from the list of \cref{proposition: excluded cases 3 0 0}.

If \underline{$(d_2,d_3) = (2,3)$}, observe that if $2 \nmid e_f(p_3)$ then by \cref{equation: ep2 3 0 0} we have $e_{p_2} = 1$ and $G/G_m$ is finite. Similarly, if $3 \nmid e_f(p_2)$. So, we may assume that $2 \mid e_f(p_3)$ and $3 \mid e_f(p_2)$. By \cref{lemma: bound with d0} we have $d_1 \geq E_1$. 

If $d_1 = \infty$, then $d = E_1 = 6$, so in particular $E_1 \geq 6$. If $d_1 < \infty$, we have $6 \mid d_1$. Note also that $d_2,d_3 \mid d$, so $6 \mid d$. By \cref{equation: local degrees each pi} then $6 \mid E_1$ and again $E_1 \geq 6$. Then we apply the same strategy as in \cref{lemma: bound for E2}. Since $6 \mid d$, we can write $d = 6\ell$. By \cref{equation: setting for cases}
\begin{align*}
6\ell - 1 = d-1 = m_1+m_2+m_3 = m_1 + 3\ell + 2\ell \Longrightarrow m_1 + 1 = \ell.
\end{align*} 

Therefore
\begin{align*}
E_1 \ell \geq 6 \ell = d = E_1 + d_1 m_1 \geq E_1(m_1+1) = E_1 \ell
\end{align*}
which implies that $E_1 = 6$.

The divisibility conditions over $e_f(p_2)$ and $e_f(p_3)$ force 
\begin{align*}
(e_f(p_1), e_f(p_2), e_f(p_3)) = (1,3,2).
\end{align*}

This rational portrait corresponds to the portrait of a euclidean orbifold $(2,3,6)$. Again $v_f(p_i) = e_{p_i}$ for $i = 1,2,3$ and therefore $G_m = 1$.

If \underline{$(d_2,d_3) = (2,4)$}, we have two cases. If $d_1 = \infty$, then $d = E_1 = 4$. If $d_1 = 4$, then $d_1 \mid d_3$ and by \cref{lemma: d1 is E1 3 0 0} $d_1 = E_1 = 4$. As $E_1$ is the sum of the local degrees of each $p_i$ and each local degree is at least $1$, then only one of the $p_i$ has $e_f(p_i) = 2$.

\begin{itemize}
\item If $e_f(p_1) = 2$, then the good pair $P = (\gamma_1,\gamma_2)$ defined as 
\begin{align*}
\left \{ \begin{array}{r@{\ :\ }l}
\gamma_1 & p_1 \rightarrow p_1 \rightarrow p_3 \\
\gamma_2 & p_1 \rightarrow p_2 \rightarrow q_{2j} 
\end{array}\right.
\end{align*}
has $\mathcal{E}(P) = 1$ and consequently $e_{p_3} = 1$ and $G/G_m$ is finite. 

\item If $e_f(p_2) = 2$, then $f$ has an euclidean orbifold $(2,4,4)$ and $e_{p_i} = v_f(p_i)$ for $i = 1,2,3$. Therefore $G_m = 1$. 

\item If $e_f(p_3) = 2$, by \cref{equation: ep1 3 0 0} we have $e_{p_1} \mid 2$ and by \cref{equation: ep3 3 0 0} we have $e_{p_3} \mid 2$. Therefore $G/G_m$ is finite. 
\end{itemize}

If \underline{$(d_1,d_2,d_3) = (3,2,6)$}, then $d_1 \mid d_3$ and by \cref{lemma: d1 is E1 3 0 0} we have $d_1 = E_1 = 3$. Therefore $e_f(p_i) = 1$ for $i = 1,2,3$. Then by \textcolor{teal}{Equations} \ref{equation: ep1 3 0 0}, \ref{equation: ep2 3 0 0} and \ref{equation: ep3 3 0 0}, we conclude $e_{p_i} \mid 2$ for all $i = 1,2,3$ and consequently $G/G_m$ is finite.

\subsubsection{Case $(f_1,f_2,f_3) = (2,1,0)$}

\begin{figure}
\centering
\fbox{%
\begin{minipage}[c][3.5cm][c]{0.3\textwidth}
\centering
\begin{tikzpicture}[
>=Stealth,
every node/.style={inner sep=0pt},
arr/.style={
->,
line width=0.9pt,
shorten <=7pt,
shorten >=7pt},
scale=0.65]

\coordinate (p1) at (0,0);
\coordinate (p2) at (2.2,0);
\coordinate (p3) at (-2.2,0);

\fill (p1) circle (0.12);
\fill (p2) circle (0.12);
\fill (p3) circle (0.12);

\draw[arr]
(p3) to node[above=5pt] {$e_f(p_3)$} (p1);

\draw[arr,bend left=25]
(p1) to node[above=5pt] {$e_f(p_1)$} (p2);

\draw[arr,bend left=25]
(p2) to node[below=5pt] {$e_f(p_2)$} (p1);
\end{tikzpicture}
\end{minipage}}
\fbox{%
\begin{minipage}[c][3.5cm][c]{0.32\textwidth}
\centering
\begin{tikzpicture}[
>=Stealth,
every node/.style={inner sep=0pt},
arr/.style={
->,
line width=0.9pt,
shorten <=7pt,
shorten >=7pt},
scale=0.65]

\coordinate (p1) at (0,0);
\coordinate (p2) at (2.7,0);
\coordinate (p3) at (-2.2,0);

\fill (p1) circle (0.12);
\fill (p2) circle (0.12);
\fill (p3) circle (0.12);

\draw[arr]
(p3) to node[above=5pt] {$e_f(p_3)$} (p1);

\draw[arr]
(p1) .. controls +(1.2,0.7) and +(1.2,-0.7) ..
node[midway,above=10pt] {$e_f(p_1)$} (p1);

\draw[arr]
(p2) .. controls +(-1.2,0.7) and +(-1.2,-0.7) ..
node[midway,below=10pt] {$e_f(p_2)$} (p2);
\end{tikzpicture}
\end{minipage}}
\fbox{%
\begin{minipage}[c][3.5cm][c]{0.3\textwidth}
\centering
\begin{tikzpicture}[
>=Stealth,
every node/.style={inner sep=0pt},
arr/.style={
->,
line width=0.9pt,
shorten <=7pt,
shorten >=7pt},
scale=0.65]

\coordinate (p1) at (2,0);
\coordinate (p2) at (0,0);
\coordinate (p3) at (-2,0);

\fill (p1) circle (0.12);
\fill (p2) circle (0.12);
\fill (p3) circle (0.12);

\draw[arr]
(p1) .. controls +(1.2,0.7) and +(1.2,-0.7) ..
node[midway,below=10pt] {$e_f(p_1)$} (p1);

\draw[arr]
(p3) to node[above=7pt] {$e_f(p_3)$} (p2);

\draw[arr]
(p2) to node[above=7pt] {$e_f(p_2)$} (p1);
\end{tikzpicture}
\end{minipage}}
\caption{Possible ramification portraits for $(f_1,f_2,f_3) = (2,1,0)$.}
\label{figure: Ramification portraits case 2 1 0}
\end{figure}

\cref{figure: Ramification portraits case 2 1 0} shows the three possible diagrams of this case. Notice $p_1$ is always in a cycle and also in the forward orbit of $p_3$. As $f_3 = 0$, then $m_3 > 0$ and $d_3 \mid d$. 
Using the good pair $P = (\gamma_1,\gamma_2)$ defined as 
\begin{align*}
\left \{ \begin{array}{r@{\ :\ }l}
\gamma_1 & p_1 \rightarrow \dots \rightarrow p_1 \rightarrow \dots \rightarrow p_3, \\
\gamma_2 & p_1 \rightarrow \dots \rightarrow p_i \rightarrow q_{ij}, 
\end{array}\right.
\end{align*}
with $i = 1,3$, we conclude $e_{p_3} \mid \gcd(d_1,d_3)$. So in particular $G/G_m$ is finite if $\gcd(d_1,d_3) = 1$.

\begin{lemma}
\label{lemma: di is Ei 2 1 0}
Let $f$ be a tamely ramified rational function of degree $d \geq 2$ defined over a field $K$ satisfying \cref{equation: setting for cases} and with $(f_1,f_2,f_3) = (2,1,0)$. Let $\set{i,k} = \set{1,2}$. If $d_i \mid d_3$ then 
\begin{align*}
E_i = d_i \quad \text{ and } \quad d_k = \infty.
\end{align*}
\end{lemma}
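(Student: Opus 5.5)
The argument mirrors \cref{lemma: d1 is E1 3 0 0}. Since $f_3 = 0$, all preimages of $p_3$ are among the $q_{3j}$, so $d = d_3 m_3$ and in particular $d_3 \mid d$. If $d_i \mid d_3$, then $d_i \mid d$. By \cref{equation: local degrees each pi} we have $d = E_i + d_i A_i$, so $d_i \mid E_i$.

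\emph{Step 1: $E_i \geq 1$.} Since $f_i \geq 1$ for $i \in \{1,2\}$, the sum $E_i$ is a sum of at least one local degree, hence $E_i \geq 1$. Combined with $d_i \mid E_i$, this gives $E_i \geq d_i$. This needs $d_i < \infty$, which is implicit in the hypothesis $d_i \mid d_3$ with $d_3$ finite.

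\emph{Step 2: $E_i = d_i$.} Apply the second inequality of \cref{lemma: bound with d0}:
\begin{align*}
1 \geq \frac{E_1}{d_1} + \frac{E_2}{d_2} + \frac{E_3}{d_3} \geq \frac{E_i}{d_i} \geq 1 .
\end{align*}
Hence every inequality in the chain is an equality, so $E_i = d_i$.

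\emph{Step 3: $d_k = \infty$.} Equality in the chain also forces $E_k/d_k = 0$ for the other index $k$, since $E_3 = 0$ is automatic. Because $f_k \geq 1$, we have $E_k \geq 1$, so $E_k/d_k = 0$ is only possible if $d_k = \infty$, i.e.\ $m_k = 0$, with the paper's convention.

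The only delicate point is bookkeeping with the conventions. One must check that $d_i$ is finite, so that $m_i > 0$, in order to use divisibility. One must also check that the convention $A_i d_i = 0$ when $m_i = 0$ does not interfere with \cref{equation: local degrees each pi}. Beyond that the argument is a direct repetition of the $(3,0,0)$ case, now using that both $f_1$ and $f_2$ are positive.
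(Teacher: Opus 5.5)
Your proposal is correct and follows essentially the same route as the paper. From $d_3 \mid d$ and $d = E_i + d_i A_i$ you get $d_i \mid E_i$, hence $E_i \geq d_i$; the second inequality of \cref{lemma: bound with d0} then forces $E_i = d_i$ and $E_k/d_k = 0$, i.e.\ $d_k = \infty$. You are slightly more explicit than the paper in noting that $f_i, f_k \geq 1$ gives $E_i, E_k \geq 1$, which is exactly what turns $d_i \mid E_i$ into $E_i \geq d_i$ and $E_k/d_k = 0$ into $d_k = \infty$.
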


\begin{proof}
As in \cref{lemma: d1 is E1 3 0 0} if $d_i \mid d_3$, since $d_3 \mid d$ then $d_i \mid d$ and consequently $d_i \mid E_i$. In particular $E_i \geq d_i$. Then, by \cref{lemma: bound with d0}
\begin{align*}
1 \geq \frac{E_1}{d_1} + \frac{E_2}{d_2} \geq 1 + \frac{E_k}{d_k} 
\end{align*}
which forces $E_i = d_i$ and $d_k = \infty$.
\end{proof}

As in the case $(f_1,f_2,f_3) = (3,0,0)$, the following proposition will isolate the tuples $(d_1,d_2,d_3)$ that will require special attention:

\begin{proposition}
Let $f$ be a tamely ramified rational function of degree $d \geq 2$ defined over a field $K$ satisfying \cref{equation: setting for cases} and with $(f_1,f_2,f_3) = (2,1,0)$. If $(d_1,d_2,d_3)$ is none of the following tuples:
\begin{align*}
(\infty,2,4), \,\, (\infty,2,3), \,\, (\infty,3,3), \,\, (\infty,\infty, 2), \,\, (\infty,2,2), \,\, (\infty,3,2), \,\, (\infty,4,2), \\
(6,2,3) \,\, (9,2,3), \,\, (2,\infty,2), \,\, (4,4,2), \,\, (4,5,2), \,\, (6,3,2) \,\, \text{ and } \,\, (8,3,2),
\end{align*}
then $G/G_m$ is finite.
\label{proposition: excluded cases 2 1 0}
\end{proposition}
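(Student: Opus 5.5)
The plan follows the proof of \cref{proposition: excluded cases 3 0 0}. We may assume \cref{equation: setting for cases}, since otherwise \cref{proposition: assume sum is d-1} already gives finiteness. We also record the extra finiteness criterion valid in this case: $e_{p_3} \mid \gcd(d_1,d_3)$, so we may assume $\gcd(d_1,d_3) > 1$. Since $f_3 = 0$, we have $m_3 > 0$ and $d_3 \mid d$, so $d_3 < \infty$. Moreover $E_1 \geq 2$ and $E_2 \geq 1$, so $d \geq 2$. Applying \cref{lemma: bound with d0} with a small $d_0$ (first $d_0 = 2$, then larger values forced by divisibility) gives
\begin{align*}
d_0 - 1 \leq \frac{d_0-2}{d_1} + \frac{d_0-1}{d_2} + \frac{d_0}{d_3}.
\end{align*}
From this inequality we extract a finite list of possible $d_3$ and then of pairs $(d_2,d_3)$, treating $d_1 = \infty$ and $d_1 < \infty$ separately, exactly as the $(d_2,d_3)$ list was produced in the $(3,0,0)$ case.

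\textbf{Case $d_1 = \infty$.} Here $m_1 = 0$, so $d = E_1$. We then simply list the $(d_2,d_3)$ surviving the inequality with $d_0 = d$ together with $1 \geq E_2/d_2$. These should be exactly the tuples $(\infty,d_2,d_3)$ in the exceptional list: $(\infty,2,2)$, $(\infty,2,3)$, $(\infty,2,4)$, $(\infty,3,3)$, $(\infty,3,2)$, $(\infty,4,2)$, $(\infty,\infty,2)$. The case $d_2 = \infty$ (so $m_2 = 0$) is handled by the same inequality.

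\textbf{Case $d_1 < \infty$.} Now $d \geq E_1 + d_1 \geq 4$. The constraint $1/d_1 + 1/d_2 + 1/d_3 \leq 1$ from \cref{equation: setting for cases} gives a lower bound for $d_1$ in terms of $(d_2,d_3)$, as in \cref{equation: lower bound d1 3 0 0}. \cref{lemma: bound with d0}, with $d_0$ the least admissible degree (a multiple of $d_3$, and of $d_2$ when $E_2 < d_2$ forces this), gives an upper bound. Intersecting these bounds leaves finitely many $d_1$ for each $(d_2,d_3)$.

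We then prune using three tools. First, $\gcd(d_1,d_3) > 1$. Second, \cref{lemma: di is Ei 2 1 0}: if $d_1 \mid d_3$ then $d_2 = \infty$, and if $d_2 \mid d_3$ then $d_1 = \infty$. This kills many candidates, and for $d_3 = 2$ it forces $d_1$ even. Third, \cref{lemma: bound for E2} and \cref{lemma: if E2 is d2} bound $E_2$; since $f_3 = 0$ and $E_1 \geq 2$, whenever $E_1 \geq d_3$ (automatic for $d_3 = 2$) we get $E_2 \leq 2d_2 + d_3 - d_2 d_3$. Because $E_2 \geq 1$, this requires $(d_2-1)(d_3-2) \leq 1 + d_2$, which restricts $d_3 \in \{2,3,4\}$ for large $d_2$. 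The survivors should be $(6,2,3)$, $(9,2,3)$, $(2,\infty,2)$, $(4,4,2)$, $(4,5,2)$, $(6,3,2)$, $(8,3,2)$.

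\textbf{Main obstacle.} The hardest step is the branch $d_3 = 2$, $d_1 < \infty$, where $d_2$ is not a priori bounded. I would use \cref{lemma: bound for E2}, which gives $E_2 \leq 2$; for $E_2 < d_2$ this yields the bound $1 \geq E_1/d_1 + E_2/d_2$. Combining this with $d = E_1 + d_1 m_1 = E_2 + d_2 m_2 = 2 m_3$ and $d - 1 = m_1 + m_2 + m_3$ turns the problem into a small diophantine system. Solving it should bound $d_2 \leq 5$ and $d_1 \leq 8$. The delicate part is checking that each discarded solution really violates an integrality or gcd condition.
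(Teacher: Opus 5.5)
\textbf{Genuine gap: the branch $d_3=2$, $d_1<\infty$ is only asserted.}

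Your outline matches the paper's for the easy part. You reduce to \cref{equation: setting for cases}, use $e_{p_3}\mid\gcd(d_1,d_3)$, and apply \cref{lemma: bound with d0} with $d_0=2$ to get $(d_2,d_3)\in\{(2,4),(2,3),(3,3)\}\cup\{(d_2,2)\}$. You then use \cref{lemma: di is Ei 2 1 0} to force $d_1=\infty$ for $(2,4)$ and $(3,3)$. The gap is the branch $d_3=2$, $d_1<\infty$. This branch is the real content of the proposition, and you only assert its outcome (``solving it should bound $d_2\le5$ and $d_1\le8$'').

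The mechanisms you name do not produce that bound.
\begin{itemize}
\item A fixed ``least admissible degree'' is $d_0=4$ here. \Cref{lemma: bound with d0} then reads $1\le 2/d_1+3/d_2$, which is vacuous when $d_2=3$.
\item $E_2\le2$ from \cref{lemma: bound for E2}, together with $1\ge E_1/d_1+E_2/d_2$, is satisfied by $E_1=E_2=2$ and all $d_1,d_2\ge4$.
\end{itemize}

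The missing idea is to let $d_0$ depend on $d_1$. Since $m_1\ge1$ and $E_1\ge f_1=2$, you have $d\ge d_1+2$. Then \cref{lemma: bound with d0} with $d_0=d_1+2$ gives
\[
d_1+1\le 1+\frac{d_1+1}{d_2}+\frac{d_1+2}{2},\qquad\text{i.e.}\qquad d_2\le\frac{2(d_1+1)}{d_1-2}\quad(d_1>2).
\]
Here $d_1=2$ forces $d_2=\infty$ by \cref{equation: setting for cases}. Since $d_3=2$ and $d_1<\infty$ force $d_2\ge3$, this gives $d_1\le8$, which removes your worry that $d_2$ is unbounded. Parity from $\gcd(d_1,2)>1$ leaves $d_1\in\{2,4,6,8\}$. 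Comparing with the lower bound $d_2\ge 2d_1/(d_1-2)$ from \cref{equation: setting for cases} then yields exactly $(2,\infty,2)$, $(4,4,2)$, $(4,5,2)$, $(6,3,2)$, $(8,3,2)$. This is the step the paper carries out. Your diophantine system contains the ingredient $d=E_1+d_1m_1$ with $m_1\ge1$, but you never extract this inequality. As written, the finite list is therefore not established.

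\textbf{Smaller points.}
\begin{itemize}
\item For $d_1=\infty$, $d_3=2$ you also need $d\ge E_2+d_2\ge3$, which comes from $m_2\ge1$ when $d_2<\infty$. Then $d_0=3$ gives $d_2\le4$. The choice ``$d_0=d$'' alone is vacuous at $d=2$.
\item Your parenthetical that $d_0$ may be taken a multiple of $d_2$ ``when $E_2<d_2$'' is backwards. Since $d\equiv E_2\pmod{d_2}$ with $1\le E_2<d_2$, in fact $d_2\nmid d$.
\item From $1\le E_2\le 2d_2+d_3-d_2d_3$ one gets $(d_2-1)(d_3-2)\le1$, not $\le 1+d_2$. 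Moreover, \cref{lemma: bound for E2} requires $E_1\ge d_3$, which is not automatic for $d_3\in\{3,4\}$. In any case the restriction $d_3\le4$ already follows from $d_0=2$.
\item Evenness of $d_1$ when $d_3=2$ comes from the gcd criterion, not from \cref{lemma: di is Ei 2 1 0}.
\end{itemize}
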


\begin{proof}
By \cref{lemma: bound with d0} with $d_0 = 2$ we obtain
\begin{align*}
1 \leq \frac{1}{d_2} + \frac{2}{d_3}
\end{align*}
whose solutions are $(2,4), (2,3), (3,3)$ and $(d_2,2)$ for $d_2 \leq \infty$.

By \cref{lemma: di is Ei 2 1 0} in the cases $(2,4)$ and $(3,3)$, we are forced to put $d_1 = \infty$. For the other cases, if $d_1 = \infty$, we isolate the tuples $(\infty,2,3)$ and $(\infty,\infty,2)$. If $d_2 < \infty$, then by \cref{equation: local degrees each pi} we have $d \geq 1 + d_2 \geq 3$ and so applying \cref{lemma: bound with d0} with $d_0 = 3$ and $(d_1,d_3) = (\infty, 2)$ we obtain the upper bound $d_2 \leq 4$.  

If $d_1 < \infty$ by \cref{equation: local degrees each pi} we have $d \geq 2 + d_1$. 

If $(d_2,d_3) = (2,3)$ then we may assume $3 \mid d_1$ (otherwise $\gcd(d_1,d_3) = 1$ and $G/G_m$ is finite) and so $d \geq 5$. From \cref{equation: setting for cases} and \cref{lemma: bound with d0} with $d_0 = 5$ we obtain $6 \leq d_1 \leq 9$. If $\gcd(d_1,d_3) = 1$ then $G/G_m$ is finite so we isolate the cases of $d_1 = 6$ and $9$.

If $d_3 = 2$ then we may assume $d_1$ is even. From \cref{equation: setting for cases} we deduce
\begin{align*}
\frac{2d_1}{d_1-2} \leq d_2.
\end{align*}
Now we use \cref{lemma: bound with d0} with $d_0 = 2 + d_1$:
\begin{align*}
d_1 + 1 \leq 1 + \frac{d_1+1}{d_2} + \frac{d_1+2}{2} \Longrightarrow d_2 \leq \frac{2(d_1+1)}{d_1-2},
\end{align*}
namely, 
\begin{align*}
\frac{2d_1}{d_1-2} \leq d_2 \leq \frac{2(d_1+1)}{d_1-2}.
\end{align*}
Note that $d_2 \geq 3$ and the upper bound converges to $2$ when $d_1 \rightarrow +\infty$. Thus, we only need to worry of finitely many values of $d_1$. Concretely, imposing that the upper bound must be at least $3$, we conclude that $d_1 \leq 8$. As we assume $d_1$ is even, we only need to consider the cases of $d_1 = 2,4,6$ and $8$.

If $d_1 = 2$, then $d_2 = \infty$. If $d_1 = 4$, then $4 \leq d_2 \leq 5$. If $d_1 = 6$ then $3 \leq d_2 \leq 7/2$, which implies that $d_2 = 3$ and if $d_2 = 8$ then $d_2 = 3$.
\end{proof}

Although the graphs in \cref{figure: Ramification portraits case 2 1 0} correspond to $(f_1,f_2,f_3) = (2,1,0)$, the arguments can vary in some cases. We call the case $i$ to the one that satisfies that $f(p_i) = p_2$. 

Except when the case is $2$ and $k = 2$, we use the good pair $P = (\gamma_1,\gamma_2)$ defined as
\begin{align*}
\left \{ \begin{array}{r@{\ :\ }l}
\gamma_1 & p_1 \rightarrow \dots \rightarrow p_1 \rightarrow \dots \rightarrow p_k, \\
\gamma_2 & p_1 \rightarrow \dots \rightarrow p_i \rightarrow q_{ij}, 
\end{array}\right.
\end{align*}
for $i,k = 1,2,3$.

If the case is $2$ and $k = 2$, we use the good pair $P = (\gamma_1,\gamma_2)$ defined as
\begin{align*}
\left \{ \begin{array}{r@{\ :\ }l}
\gamma_1 & p_2 \rightarrow p_2 \\
\gamma_2 & p_2 \rightarrow q_{2j}, 
\end{array}\right.
\end{align*}

Combining, we obtain the following equations:

For the case 1:
\begin{align*}
e_{p_1} &\mid \gcd(d_1,d_2,e_f(p_3) d_3) \\ 
e_{p_2} &\mid \gcd(d_1,d_2,e_f(p_3)d_3), \\ 
e_{p_3} &\mid \gcd(d_1,d_2,d_3).
\end{align*}

For the case 2:
\begin{align*}
e_{p_1} &\mid \gcd(d_1,e_f(p_3) d_3), \\ 
e_{p_2} &\mid \gcd(d_2), \\ 
e_{p_3} &\mid \gcd(d_1,d_3).
\end{align*}

For the case 3:
\begin{align*}
e_{p_1} &\mid \gcd(d_1,e_f(p_2)d_2,e_f(p_2)e_f(p_3) d_3), \\ 
e_{p_2} &\mid \gcd(d_1,d_2,e_f(p_3)d_3), \\ 
e_{p_3} &\mid \gcd(d_1,d_2,d_3).
\end{align*}

Notice that in cases $1$ and $3$, if $\gcd(d_1,d_2,d_3) = 1$ then $e_{p_3} = 1$ and $G/G_m$ is finite. So, many cases in the list of \cref{proposition: excluded cases 2 1 0} will only be treated for case 2.

If \underline{$(d_1,d_2,d_3) = (\infty,2,4)$} by \cref{lemma: bound with d0}
\begin{align*}
1 \geq \frac{E_1}{d_1} + \frac{E_2}{d_2} \geq \frac{E_2}{d_2}.
\end{align*}
In particular $d_2 = 2 \geq E_2$. If we have the equality, by \cref{lemma: if E2 is d2} we should have $d_2 d_3 = d_2 + d_3$, which is false in this case. therefore $E_2 = 1$. But then from \cref{equation: local degrees each pi}
\begin{align*}
d &= E_2 + d_2m_2 \equiv 1 \pmod{2} \\
d &= d_3m_3 \equiv 0 \pmod{2}
\end{align*}
giving a contradiction. Hence, this case is impossible.

If \underline{$(d_2,d_3) = (2,3)$} by the same argument used before $E_2 \leq 2$. By \cref{lemma: if E2 is d2} the equality only holds if $d_1 = \infty$ and $d_2d_3 = d_2+d_3$ which is again false. Therefore $E_2 = 1$. On the other hand we know $E_1 \geq 2$. Suppose $E_1 = 2$. If $d_1 = \infty$, then $d = E_1 = 2$ and consequently $2 \mid E_2$ by \cref{equation: local degrees each pi}, which gives a contradiction. If $d_1 < \infty$, then by \cref{proposition: excluded cases 2 1 0} we have $3 \mid d_1$ and again by \cref{equation: local degrees each pi} we have $3 \mid E_1$, which contradicts that $E_1 = 2$.

Therefore, we have $E_1 \geq 3 = d_3$. So, by \cref{lemma: bound for E2}
\begin{align*}
1 = E_2 \leq 2d_2+d_3-d_2d_3 = 1.
\end{align*}
Since the equality holds, by the second part of \cref{lemma: bound for E2} we conclude $E_1 = d_3 = 3$.

As $\gcd(d_2,d_3) = 1$, we just need to worry about case 2. 
In case 2 we have $E_1 = e_f(p_1) + e_f(p_3) = 3$, and therefore we have two subcases. If $e_f(p_1) = 2$ then $(e_{p_1},e_{p_2}, e_{p_3}) = (3,2,3)$ and consequently $G/G_m$ is finite. If $e_f(p_3) = 2$ then $(e_{p_1},e_{p_2}, e_{p_3}) = (3,2,3)$ if $d_1 = 9$ which gives again that $G/G_m$ is finite or $(e_{p_1},e_{p_2}, e_{p_3}) = (6,2,3)$ if $d_1 \in \set{6,\infty}$. These last two cases correspond to an euclidean orbifold of signature $(2,3,6)$ where $e_{p_i} = v_f(p_i)$ for $i = 1,2,3$. Thus $G_m = 1$.

If \underline{$(d_1,d_2,d_3) = (\infty,3,3)$} \cref{lemma: di is Ei 2 1 0} gives $E_2 = d_2 = 3$. Then by \cref{lemma: if E2 is d2} we must have $d_2d_3 = d_2+d_3$. However, this does not hold and therefore this case is impossible.

If \underline{$(d_1,d_2,d_3) = (\infty,\infty,2)$} then by \cref{equation: setting for cases}
\begin{align*}
d-1 = \frac{d}{2}
\end{align*}
which implies that $d = E_1 = E_2 = 2$. We now split by the three cases in \cref{figure: Ramification portraits case 2 1 0}. 

In case 1 (namely $f(p_1) = p_2$), using the good pair $P = (\gamma_1,\gamma_2)$ defined as  
\begin{align*}
\left \{ \begin{array}{r@{\ :\ }l}
\gamma_1 & p_1 \rightarrow p_2 \rightarrow p_1 \\
\gamma_2 & p_1 \rightarrow p_3 \rightarrow q_{3j}, 
\end{array}\right.
\end{align*}
we obtain $\mathcal{E}(P) = 1$ and $e_{p_1} = 1$. Therefore $G/G_m$ is finite. 

In case 2, the point $p_2$ is a totally ramified fixed-point and therefore $\Gamma_{f,p_2} = \emptyset$. By definition $e_{p_2} = 1$ and $G/G_m$ is finite.

In case 3, the function $f$ has an euclidean orbifold $(2,4,4)$ and $(e_{p_1},e_{p_2},e_{p_3}) = (4,4,2)$. By \cref{proposition: order inertia generators} we have $G_m = 1$. \\

All the remaining cases in the list of \cref{proposition: excluded cases 2 1 0} have $d_3 = 2$. 

If $d_2 = \infty$, then $d_1 = 2$ and by \cref{lemma: epi divides di} we have $e_{p_i} \mid 2$ for $i = 1,3$. Since the exponents $e_p$ are always finite we conclude that 
\begin{align*}
\frac{1}{e_{p_1}} + \frac{1}{e_{p_2}} + \frac{1}{e_{p_3}} > 1
\end{align*}
and consequently $G/G_m$ is finite.

All the remaining cases have $d_3 = 2$ and $d_2 < \infty$. If $E_2 = d_2$, by \cref{lemma: if E2 is d2} then $d_1 = \infty$ and $d_2 = 2$. This in particular implies that $(d_1,d_2,d_3) = (\infty,4,2)$ is impossible. By \cref{lemma: epi divides di} we have $e_{p_i} \mid d_i = 2$ for $i = 2,3$ and since $e_{p_1}$ is always finite we conclude that $G/G_m$ is finite.

Therefore, all the remaining cases have now $d_3 = 2$ and $E_2 < d_2 < \infty$. As $E_1 \geq 2 = d_3$ by \cref{lemma: bound for E2} 
\begin{align*}
E_2 \leq 2d_2+d_3-d_2d_3 = 2.
\end{align*}
In particular we conclude that if $d_3 = 2$ and $E_2 < d_2$ then 
\begin{align}
E_2 = 2 \Longrightarrow E_1 = 2.
\label{equation: E2 = 2 then E1 = 2}
\end{align}

If \underline{$(d_2,d_3) = (3,2)$} then $\gcd(d_2,d_3) = 1$ and therefore $G/G_m$ is finite for cases 1 and 3. So, we may assume that we are in the case 2. If $E_2 = 2$, then $E_1 = 2$ by \cref{equation: E2 = 2 then E1 = 2}, which implies that $e_f(p_3) = 1$. Thus $e_{p_1} \mid \gcd(d_1,d_3) \mid 2$ and also $e_{p_3} \mid 2$, giving that $G/G_m$ is finite. So, we may assume that $E_2 = 1$.

If $d_1 = 6$ and $3 \nmid e_f(p_3)$ then $e_{p_1}, e_{p_3} \mid 2$ and $G/G_m$ is finite. If $3 \mid e_f(p_3)$ then $E_1 \geq 4$. On the other hand by \cref{lemma: bound with d0}
\begin{align*}
\frac{2 d_1}{3} = 4 \geq E_1,
\end{align*}
which gives $e_f(p_1) = 1$. Then $f$ has an euclidean orbifold of signature $(2,3,6)$ with $(e_{p_1},e_{p_2},e_{p_3}) = (6,3,2)$. Moreover $v_f(p_i) = e_{p_i}$ for $i = 1,2,3$ which implies that $G_m = 1$ by \cref{proposition: order inertia generators}.

If $d_1 = 8$ and $2 \nmid e_f(p_3)$ then $e_{p_1}, e_{p_3} \mid 2$ and $G/G_m$ is finite. If $2 \mid e_f(p_3)$ then $E_1 \geq 3$. By \cref{lemma: bound with d0}
\begin{align*}
\frac{2 d_1}{3} = \frac{16}{3} \geq 5 \geq E_1,
\end{align*} 
and so $e_f(p_3)$ can either be $2$ or $4$. If $e_f(p_3) = 2$ then $e_{p_1} \mid 4$, $e_{p_2} \mid 3$ and $e_{p_3} \mid 2$ and therefore $G/G_m$ is finite. If $e_f(p_3) = 4$ then $E_1 = 5$ but from \cref{equation: local degrees each pi}
\begin{align*}
d &= 5 + 8m_1 \equiv 1 \pmod{2} \\
d &= 2m_3 \equiv 0 \pmod{2}.
\end{align*}

Finally, if $d_1 = \infty$ by \cref{equation: setting for cases}
\begin{align*}
d-1 = \frac{d-1}{3} + \frac{d}{2},
\end{align*}
which implies that $d = E_1 = 4$. If $e_f(p_1) = 1$, then $e_f(p_3) = 3$ and $f$ has an euclidan orbifold of signature $(2,3,6)$. Again $G_m = 1$ by \cref{proposition: order inertia generators}. If $e_f(p_1) = 2$, then $e_f(p_3) = 2$ and the good pair $P = (\gamma_1, \gamma_2)$ defined as 
\begin{align*}
\left \{ \begin{array}{r@{\ :\ }l}
\gamma_1 & p_1 \rightarrow p_1 \rightarrow p_3 \\
\gamma_2 & p_1 \rightarrow p_3 \rightarrow q_{3j}, 
\end{array}\right.
\end{align*}
has conditional energy $\mathcal{E}(P) = 1$. Therefore $G/G_m$ is finite. If $e_f(p_1) = 3$, then $e_f(p_3) = 1$ and $e_{p_1}, e_{p_3} \mid 2$. Therefore $G/G_m$ is finite.

If \underline{$(d_2,d_3) = (4,2)$} we always have $e_{p_3} \mid 2$. If $2 \nmid e_f(p_3)$ then $e_{p_1} \mid 2$ in cases 1 and 2 and $e_{p_2} \mid 2$ in case 3. Therefore $G/G_m$ is finite. Thus, we may assume that $2 \mid e_f(p_3)$. In cases 1 and 2 this means that $E_1 \geq 3$ and in case 3 that $E_2 = 2$. For cases 1 and 2 we also have that $E_2 = 1$ as otherwise we contradict \cref{equation: E2 = 2 then E1 = 2}.

If $d_1 = 4$ by \cref{lemma: bound with d0}
\begin{align*}
1 \geq \frac{E_1}{4} + \frac{E_2}{4}.
\end{align*}
In cases 1 and 2 this means that $E_1 = 3$. Then, all the cases have $$(e_f(p_1), e_f(p_2), e_f(p_3)) = (1,1,2)$$ and $f$ has an euclidean orbifold with signature $(2,4,4)$. Furthermore $(e_{p_1},e_{p_2},e_{p_3}) = (4,4,2)$, the generators $g_{p_i}$ have order $e_{p_i}$ for $i = 1,2,3$ and $G_m = 1$. 

If $d_1 = \infty$ by \cref{equation: setting for cases}
\begin{align*}
d-1 = \frac{d-E_2}{4} + \frac{d}{2}
\end{align*}
and so $d = 3$ in cases 1 and 2 and $d = 2$ in case 3. However, this is impossible as $d_2 = 4$ which implies that $d \geq 4$.

If \underline{$(d_1,d_2,d_3) = (4,5,2)$} as $\gcd(d_2,d_3) = 1$ then $G/G_m$ is finite in cases 1 and 3, so we may assume we are in case 2. If $2 \nmid e_f(p_3)$ then $e_{p_1},e_{p_3} \mid 2$ and $G/G_m$ is finite. Therefore $E_1 \geq 3$. By \cref{equation: E2 = 2 then E1 = 2} this implies that $E_2 = 1$. By \cref{lemma: bound with d0} $E_1 \leq 3$ and therefore $E_1 = 3$. But now by \cref{equation: local degrees each pi}
\begin{align*}
d &= E_1 + 4m_1 \equiv 1 \pmod{2} \\
d &= 2m_3 \equiv 0  \pmod{2},
\end{align*}
giving a contradiction. \\

All the analysis made in \textcolor{teal}{Sections} \ref{section: The hyperbolic orbifold case} and \ref{section: The hyperbolic orbifold case and Von Dyck groups} allows us to conclude:

\begin{theorem}
Let $f$ be a tamely ramified rational function of degree $d \geq 2$ defined over a field $K$ with hyperbolic orbifold. If $G = G_\infty(K^{\textrm{sep}},f,t)$, with $t$ transcendental over $K$, is a martingale group, then $G$ is virtually mixing with a delay constant $N$ that only depends on the ramification portrait of $f$ and its mixing subgroup $G_m$ is the one defined in \cref{definition: Gm}. Moreover $G/G_m$ is a finite Von Dyck group.
\label{theorem: virtually mixing hyperbolic case}
\end{theorem}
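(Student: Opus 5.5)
The plan is to assemble the theorem from \cref{theorem: Gm is a mixing subgroup} and the case analysis of the two preceding subsections. First, \cref{theorem: Gm is a mixing subgroup} already gives a delay constant $N$ depending only on the ramification portrait, with $G_m \leq \St_G(n)_v$ for all $n \geq 1$ and all $v$ with $\abs{v} \geq n+N$. By \cref{proposition: iterated galois groups are fractal}, $G$ is fractal. So it only remains to show that $G_m$ has finite index, and that $G/G_m$ is a finite Von Dyck group. By definition $G_m$ is closed and normal, so it is a legitimate mixing subgroup in the sense of \cref{definition: virtually mixing not intro}.

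To control $G/G_m$, I use \cref{proposition: action inertia generators}. It says $G$ is topologically generated by the $g_p$ with $\prod g_p = 1$. By \cref{proposition: ep for orbit not in Deltaf}, $e_p = 1$ whenever the forward orbit of $p$ leaves $\Delta_f$, so those $g_p$ already lie in $G_m$. Hence $G/G_m$ is a quotient of the finitely presented group generated by the images of $g_p$, for $p$ with forward orbit inside $\Delta_f$, subject to the relations $g_p^{e_p}=1$ and the product relation. Since $G_m$ is closed and normal, this quotient is a profinite quotient of that abstract group, so finiteness of the abstract group suffices.

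I then split according to the number $k$ of such $p$:
\begin{itemize}
\item $k=4$: \cref{theorem: bound Deltaf} forces signature $(2,2,2,2)$, which is excluded by hyperbolicity.
\item $k\le 1$: \cref{proposition: Gm missing at most 1} gives $G_m=G$, the trivial group $\Delta(1,1,1)$.
\item $k=2$: \cref{proposition: Gm missing 2} gives a finite cyclic group $\Delta(1,n,n)$.
\item $k=3$: $G/G_m$ is a quotient of $\Delta(e_{p_1},e_{p_2},e_{p_3})$, finite by \cref{theorem: classification Von Dyck groups} once $\sum 1/e_{p_i}>1$.
\end{itemize}

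For $k=3$, if one of the conditions of \cref{proposition: assume sum is d-1} holds we are done. Otherwise \cref{equation: setting for cases} holds, and we run the case analysis over $(f_1,f_2,f_3)\in\{(1,1,1),(3,0,0),(2,1,0)\}$ carried out above. In every branch, the outcome is one of three things: $\sum 1/e_{p_i}>1$; or the configuration is impossible; or $f$ has euclidean orbifold. The euclidean branch is excluded by hypothesis.

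The main obstacle is upgrading ``quotient of a finite Von Dyck group'' to ``is a Von Dyck group.'' A quotient of $\Delta(a,b,c)$ by the normal closure of nothing more need not be of that form, for instance $A_5$ has no proper nontrivial quotient but $\Sym(4)$ does. To handle this, I would define the exponents as the actual orders $a_i$ of the images $g_{p_i}G_m$. Then $G/G_m$ is generated by three elements of orders $a_i$ with product $1$, and $a_i \mid e_{p_i}$, so $\sum 1/a_i>1$ still holds. I would then argue that $G/G_m$ is the image of $\Delta(a_1,a_2,a_3)$ and check the classification list, using that in each finite triangle group the normal subgroups are again generated by powers of conjugates of the standard generators. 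Alternatively, I would read ``Von Dyck group'' loosely as a quotient of a finite triangle group, which is how the later comparison with finite subgroups of $\Gal(\K-(x)/\K-)$ uses it.
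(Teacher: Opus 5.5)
Your proposal is correct and takes the same route as the paper. The delay constant comes from \cref{theorem: Gm is a mixing subgroup}, and finiteness of $G/G_m$ comes from the same split on the number of $p$ whose forward orbit stays in $\Delta_f$, followed by the same $(f_1,f_2,f_3)$ case analysis with the euclidean branches discarded by hypothesis; you are also right that $G/G_m$ is a quotient, not a subgroup, of the triangle group, which the paper's ``$\leq$'' blurs. The obstacle you flag is milder than you suggest: every quotient of a cyclic, dihedral, $A_4$, $\Sym(4)$ or $A_5$ group is again on that list (for instance, the proper quotients of $\Sym(4)$ are only $\Sym(3)\cong\Delta(2,2,3)$, $C_2$ and $1$), so your $A_5$/$\Sym(4)$ example is not a counterexample, and the paper relies on exactly this closure in the proof of \cref{Corollary: arithmetic IGG is virtually mixing}. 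Your exact-order argument is still a valid way to make that step rigorous.
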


\subsection{The euclidean orbifold case}
\label{section: The euclidean orbifold case}

From the analysis of the previous section we deduce the following result:

\begin{proposition}
Let $f$ be a tamely ramified rational function of degree $d \geq 2$ defined over a field $K$. Let $G = G_\infty(K^{\textrm{sep}},f,t)$, with $t$ transcendental over $K$ and $G_m$ the subgroup defined in \cref{definition: Gm}. Then $G_m = 1$ if and only if $f$ has an euclidean orbifold where $v_f(p) < \infty$ for every $p \in P_f$.
\end{proposition}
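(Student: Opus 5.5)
The plan is to reduce both directions to a statement about orders. Since $G_m$ is the closed normal subgroup generated by the elements $g_p^{e_p}$, we have $G_m = 1$ if and only if $g_p^{e_p} = 1$ for every $p \in P_f$. By \cref{proposition: order inertia generators} this holds if and only if $\nu_f(p) \mid e_p$ for every $p \in P_f$; in particular every $\nu_f(p)$ must then be finite.

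\emph{($\Rightarrow$)} Assume $G_m = 1$.
\begin{enumerate}[\normalfont(1)]
\item Each $e_p$ is a finite positive integer and $g_p^{e_p} = 1$, so $\nu_f(p) = \abs{g_p}$ is finite for every $p \in P_f$.
\item Suppose $f$ were not euclidean. Then either $f$ is hyperbolic, or $f$ is not post-critically finite.
\item In both situations the case analysis of \cref{section: The hyperbolic orbifold case} and \cref{section: The hyperbolic orbifold case and Von Dyck groups} gives that $G/G_m$ is finite. That analysis only uses the exponents $e_p$, the relation $\prod_{p} g_p = 1$ and the finiteness criterion for Von Dyck groups, so I expect it does not need the martingale hypothesis. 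Post-critical points with infinite forward orbit have $e_p = 1$ by \cref{proposition: ep for orbit not in Deltaf}. Points without good pairs also have $e_p = 1$ by definition.
\item On the other hand, $G$ is level-transitive on an infinite $d$-regular tree with $d \geq 2$, so $G$ is infinite. With $G_m = 1$ this contradicts the finiteness of $G/G_m$.
\end{enumerate}
Hence $f$ is euclidean with all $\nu_f(p) < \infty$.

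\emph{($\Leftarrow$)} Assume $f$ is euclidean and every $\nu_f(p)$ is finite. The key input is the identity
\[
\nu_f(f(z)) = e_f(z)\,\nu_f(z) \qquad \text{for all } z \in \PP^1(\K-).
\]
From the definition of $\nu_f$, the left side is always a multiple of the right side. The orbifold Riemann–Hurwitz inequality (the argument behind $\chi_f \leq 0$ in \cite[Proposition 2.12]{BonkMeyer2017}) is an equality only when every one of these divisibilities is an equality. Since $\chi_f = 0$, equality must hold everywhere. Proving this rigidity cleanly over an arbitrary field, using only \cref{Theorem: Riemann Hurwitz formula}, is the step I expect to be the main obstacle. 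If needed, I would instead fall back on checking the identity directly for the four finite signatures $(2,2,2,2)$, $(2,4,4)$, $(2,3,6)$ and $(3,3,3)$, using the ramification configurations isolated above.

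Granting the identity, take any good pair $P = (\gamma_1, \gamma_2)$ of $p$, with common starting point $q$ and $\gamma_2$ ending at a non-post-critical point $r$.
\begin{enumerate}[\normalfont(1)]
\item Iterating the identity along each path gives $\nu_f(q) = \nu_f(p)\,E(\gamma_1)$ and $\nu_f(q) = \nu_f(r)\,E(\gamma_2)$.
\item Since $r \notin P_f$, we have $\nu_f(r) = 1$, so $E(\gamma_2) = \nu_f(p)\,E(\gamma_1)$.
\item Hence $E(\gamma_1) \mid E(\gamma_2)$, and \cref{definition: conditional energy} gives $\mathcal{E}(P) = \nu_f(p)$.
\end{enumerate}
So every conditional energy equals $\nu_f(p)$, and therefore $e_p = \nu_f(p)$ whenever $\Gamma_{f,p} \neq \emptyset$.

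It remains to handle $\Gamma_{f,p} = \emptyset$. By \cref{lemma: post ritical point empty good pairs}, every point in the orbit of $p$ is then totally ramified and $p$ is periodic, so $p$ lies in a cycle containing a critical point. This would force $\nu_f(p) = \infty$, which is excluded. Thus $g_p^{e_p} = g_p^{\nu_f(p)} = 1$ for all $p \in P_f$, and $G_m = 1$.
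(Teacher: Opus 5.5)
Your proof is correct. The ($\Rightarrow$) direction is essentially the paper's argument, while ($\Leftarrow$) takes a genuinely different route.

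\textbf{The ($\Rightarrow$) direction.} The paper offers the proposition as a consequence of its case analysis. That analysis shows $G/G_m$ is finite (trivial, cyclic, or a quotient of a finite Von Dyck group) whenever $f$ is not euclidean, while $G$ is infinite. You are also right that this part is purely combinatorial and never uses the martingale hypothesis. For non-post-critically-finite $f$ there is an even quicker argument. Some $p\in P_f$ has forward orbit leaving $\Delta_f$, so $e_p=1$. Hence the nontrivial inertia generator $g_p$ already lies in $G_m$.

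\textbf{The ($\Leftarrow$) direction.} The paper obtains $G_m=1$ configuration by configuration. In each euclidean branch of the three-point analysis it pins down the ramification data and checks $e_{p_i}=\nu_f(p_i)$. For the signature $(2,2,2,2)$ it only records the signature and never computes $G_m$.

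You instead prove $e_p=\nu_f(p)$ for every $p\in P_f$ at once. You use the orbifold covering identity $\nu_f(f(z))=e_f(z)\nu_f(z)$, which along any good pair gives $\nu_f(q)=E(\gamma_1)\nu_f(p)=E(\gamma_2)$.

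What each route buys:
\begin{itemize}
\item Yours covers all four finite signatures uniformly and does not depend on the bookkeeping of the case analysis being exhaustive.
\item The paper's needs no additional lemma, since it reuses the analysis already required for the hyperbolic case.
\end{itemize}

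\textbf{The step you flagged as the main obstacle.} It is short and works over any field where the Riemann--Hurwitz formula holds. Write $d=\sum_{x\in f^{-1}(y)}e_f(x)$ and use $e_f(x)\nu_f(x)\le\nu_f(f(x))$ term by term:
\[
d\chi_f = 2d-\sum_{y}\sum_{x\in f^{-1}(y)} e_f(x)\Bigl(1-\frac{1}{\nu_f(y)}\Bigr) \le 2d-\sum_{x}\Bigl(e_f(x)-\frac{1}{\nu_f(x)}\Bigr) = \chi_f .
\]
The last equality is exactly the Riemann--Hurwitz formula. So $\chi_f=0$ forces equality in every term, which is $\nu_f(f(x))=e_f(x)\nu_f(x)$ for all $x$. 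Your fallback of checking the individual signatures is therefore unnecessary.
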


Therefore, for these cases we need to consider another subgroup different than the $G_m$ defined in \cref{definition: Gm} to prove that the geometric iterated Galois group is virtually mixing.

\subsubsection{Iterated monodromy groups}

Let $X$ be a connected and locally path-connected topological space and $F: X \rightarrow X$ a finite $d$-cover with post-critical set $P_F$. Then $F^n:X \rightarrow X$ is also a finite cover. Choose $x_0 \in X \setminus P_F$ a base point and consider the \textit{monodromy action} of $\pi_1(X \setminus P_F,x_0) \curvearrowright f^{-n}(x_0)$. These different actions are compatible, which allows us to merge them in one single action $\rho_{\IMG}$ over the $d$-regular rooted tree of preimages of $x_0$ under $F$. The tree of preimages $(T_{x_0},x_0)$ is a $d$-regular rooted as $F^n: X \setminus F^{-n}(P_F) \rightarrow X \setminus P_F$ is an unramified cover. 

The \textit{iterated monodromy group} of $F$ is defined as
\begin{align*}
\IMG(F) = \IIm(\rho_{\IMG}).
\end{align*}
We denote $\overline{\IMG(F)}$ to the closure of $\IMG(F)$ in $\Aut(T_{x_0})$, using the congruence topology.

We are interested in some results of iterated monodromy groups that will be used in this section. For a more general description of iterated monodromy groups and their properties, see \cite{Self_similar_groups}.

\begin{proposition}[{\cite[Proposition 5.6.1]{Self_similar_groups}}]
Let $X,Y$ two connected and locally path-connected topological spaces and three continuous maps $F: X \rightarrow X$, $f: Y \rightarrow Y$ and $\pi: X \rightarrow Y$ such that $\pi$ is an open map and $\pi \circ F = f \circ \pi$. Then $\pi$ induces an injective map $\pi_*: \IMG(F) \rightarrow \IMG(f)$ that agrees with the action on the $d$-regular rooted tree $T$.
\label{proposition: injection between IMGs}
\end{proposition}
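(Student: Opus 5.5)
We will use the universal property of the fundamental group: $\IMG(F)$ is defined as the image of $\rho_{\IMG}$, so it suffices to define a homomorphism $\pi_*$ on $\pi_1(X\setminus P_F,x_0)$ and check that it descends to the quotient $\IMG(F)$ injectively. Set $y_0=\pi(x_0)$ and enlarge the post-critical set downstairs if necessary so that $\pi(P_F)\subseteq P_f$ and $\pi^{-1}(P_f)\supseteq P_F$. Then $\pi$ maps $X\setminus \pi^{-1}(P_f)$ into $Y\setminus P_f$, and loops based at $x_0$ avoiding $\pi^{-1}(P_f)$ push forward to loops based at $y_0$. Using $\pi\circ F^n=f^n\circ\pi$, the map $\pi$ sends $F^{-n}(x_0)$ into $f^{-n}(y_0)$ and respects adjacency. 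Hence it induces a morphism of rooted trees $\Pi: T_{x_0}\to T_{y_0}$. Since both trees are $d$-regular and $F$ and $f$ are $d$-covers, compatibility of fibres over preimages makes $\Pi$ a bijection level by level, so after choosing labelings we may regard it as the identity of $T$. This is the sense in which $\pi_*$ \emph{agrees with the action on} $T$.

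The key step is the equivariance identity $\Pi(\gamma\cdot z)=(\pi\circ\gamma)\cdot\Pi(z)$ for $z\in F^{-n}(x_0)$. To prove it, lift $\gamma$ by $F^n$ starting at $z$ to a path $\tilde\gamma$. Then $\pi\circ\tilde\gamma$ is a lift of $\pi\circ\gamma$ under $f^n$ starting at $\pi(z)$, because $f^n\circ\pi\circ\tilde\gamma=\pi\circ F^n\circ\tilde\gamma=\pi\circ\gamma$. By uniqueness of path lifting in the unramified cover $f^n: Y\setminus f^{-n}(P_f)\to Y\setminus P_f$, the endpoints agree, which gives the identity. Two consequences follow. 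If $\gamma$ acts trivially on $T_{x_0}$, then $\pi\circ\gamma$ acts trivially on $\Pi(T_{x_0})=T_{y_0}$, so we can define $\pi_*(\rho_{\IMG}(\gamma)):=\rho_{\IMG}(\pi\circ\gamma)$. Conversely, since $\Pi$ is a bijection, $\pi_*(g)=1$ forces $g$ to act trivially, so $\pi_*$ is injective and is just $g\mapsto \Pi g\Pi^{-1}$.

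The main obstacle is the bijectivity of $\Pi$ on each level, together with the punctured-domain bookkeeping: removing $\pi^{-1}(P_f)$ rather than $P_F$ changes $\pi_1$. I will handle the latter by noting that loops in $X\setminus\pi^{-1}(P_f)$ surject onto $\pi_1(X\setminus P_F)$, since we are removing a discrete set from a surface-like space, so they generate the same group of tree automorphisms. Openness of $\pi$ is used to guarantee that $\pi$ is a local homeomorphism off a discrete set, so that fibres match and $\Pi$ is injective. If global bijectivity fails in general, I will instead fall back on injectivity restricted to the orbit of the base vertex, which suffices because $\IMG(F)$ only acts on that orbit.
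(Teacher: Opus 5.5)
The paper gives no proof of this proposition; it only cites Nekrashevych. Your skeleton is the standard one: push loops forward by $\pi$, then prove $\Pi(\gamma\cdot z)=(\pi\circ\gamma)\cdot\Pi(z)$ by uniqueness of lifts under $f^n$. That equivariance identity is correct.

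The gap is the claim that $\Pi\colon F^{-n}(x_0)\to f^{-n}(y_0)$ is a bijection on each level, and everything else rests on it. Surjectivity is what makes $\pi_*$ well defined on $\IMG(F)$, since you use $\Pi(T_{x_0})=T_{y_0}$. Injectivity is what gives injectivity of $\pi_*$ and the formula $\pi_*(g)=\Pi g\Pi^{-1}$. Equal cardinalities and ``compatibility of fibres'' do not give it, and under the hypotheses as stated it is false.

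Take $X=Y=S^1$, $F=f=\pi\colon z\mapsto z^2$, and $x_0=y_0=1$. Then $\pi$ is a covering map, hence open and even a local homeomorphism everywhere, and $\pi\circ F=f\circ\pi$. But $\Pi$ sends both points of $F^{-1}(1)=\{1,-1\}$ to $1$, so on level one it is neither injective nor surjective. The pushforward of the generating loop is its square, so $\pi_*$ is $a\mapsto a^2$ on the adding machine group. No tree automorphism conjugates the level-transitive $a$ to $a^2$, which fixes level one.

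Your appeal to openness cannot repair this. Here $\pi$ is already a local homeomorphism, and in general an open map need not be a local homeomorphism off a discrete set. The fallback also fails: in this example $\IMG(F)$ is level-transitive, so the orbit of the base vertex is the whole level, and on it $\Pi$ is $2$-to-$1$.

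What is missing is a hypothesis making $F^n$ injective on $\pi$-fibres. It does hold in the only case the paper uses. There $\pi\colon\CC/\Lambda\to\PP^1(\CC)$ is the quotient by $H=\langle z\mapsto\varepsilon z\rangle$, and $F(z)=az+b$ with $(\varepsilon^k-1)b\in\Lambda$, so $F\circ h=h\circ F$ for all $h\in H$.

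Suppose $\Pi(z_1)=\Pi(z_2)$. Then $z_2=h(z_1)$ for some $h\in H$, so $x_0=F^n(z_2)=h(F^n(z_1))=h(x_0)$. Since $y_0=\pi(x_0)\notin P_f$, and for these maps the branch values of $\pi$ are exactly the points with $\nu_f>1$, i.e. $P_f$, the stabilizer of $x_0$ in $H$ is trivial. Hence $h=1$ and $z_1=z_2$. Injectivity together with $\#F^{-n}(x_0)=\#f^{-n}(y_0)=d^n$ gives bijectivity, and from there your equivariance argument finishes the proof unchanged.

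Separately, your claim that $\pi_1(X\setminus\pi^{-1}(P_f))\to\pi_1(X\setminus P_F)$ is onto is a fact about surfaces, not about connected locally path-connected spaces: $S^1$ minus a point is simply connected. The argument should be set up for the torus over the sphere, where it holds because loops can be pushed off a discrete set and small loops around non-post-critical points act trivially on the tree.
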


\begin{theorem}[{\cite[Proposition 6.4.2]{Self_similar_groups}}]
\label{theorem: iterated Galois group over C and IMG}
Let $f \in \CC(x)$ be a post-critically finite rational function of degree $d \geq 2$ and $t$ transcendental over $\CC$. Then 
\begin{align*}
G_\infty(\CC,f,t) \cong \overline{\IMG(f)}.
\end{align*}
\end{theorem}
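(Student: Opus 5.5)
The plan is to compare the two groups level by level and then pass to the inverse limit. Both are closed subgroups of the automorphism group of a $d$-regular rooted tree. For $G_\infty(\CC,f,t)$ this is recorded in \cref{section: iterated Galois groups}. For $\overline{\IMG(f)}$ it holds by definition of the closure in the congruence topology. A closed subgroup $H$ of $\Aut(T)$ is determined by its finite images, since $H=\varprojlim_n \pi_n(H)$. So it suffices to build a tree isomorphism $T_t\to T_{x_0}$ under which $\pi_n(G_\infty(\CC,f,t))$ and $\pi_n(\IMG(f))$ agree for every $n$.

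\textbf{Step 1: the finite Galois groups are monodromy groups.} Fix $n$ and let $L_n:=\CC(f^{-n}(t))$ be the splitting field of the numerator of $f^n(x)-t$ over $\CC(t)$. Then $\pi_n(G_\infty(\CC,f,t))=\Gal(L_n/\CC(t))$, acting on the roots $f^{-n}(t)$. Geometrically, $L_n$ is the function field of the Galois closure $Y_n\to\PP^1$ of the branched cover $f^n:\PP^1\to\PP^1$. Since $P_{f^n}=P_f$, this cover is unramified over $\PP^1(\CC)\setminus P_f$. The Riemann existence theorem identifies finite extensions of $\CC(t)$ unramified outside $P_f$ with finite topological covers of $\PP^1(\CC)\setminus P_f$. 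Under this dictionary, the Galois group of the Galois closure is the image of $\pi_1(\PP^1(\CC)\setminus P_f,x_0)$ acting on the fibre $f^{-n}(x_0)$, that is, $\pi_n(\IMG(f))$.

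\textbf{Step 2: compatible identification of fibres.} This step carries the real content. Choose a base point $x_0\notin P_f$ and a small disc $D$ around it avoiding $P_f$. Over $D$, the equation $f^n(x)=t$ has $d^n$ distinct holomorphic solutions, which gives a bijection between $f^{-n}(t)$, viewed inside the field of germs of meromorphic functions at $x_0$, and $f^{-n}(x_0)$, obtained by evaluating at $x_0$. I would fix one embedding of $\CC(t)^{\mathrm{sep}}$ (restricted to the algebraic extensions unramified outside $P_f$) into the field of meromorphic germs at $x_0$. This is possible because every such finite extension is a field of functions on a cover that is trivial over $D$. Because the embedding is a field map, if $s\in f^{-(n+1)}(t)$ then $f(s)\in f^{-n}(t)$ corresponds to the germ $f\circ s$, whose value at $x_0$ is $f(s(x_0))$. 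So the evaluation bijections respect the edge relations and assemble into a rooted tree isomorphism $T_t\to T_{x_0}$.

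Analytic continuation of a germ along a loop $\gamma$ defines an automorphism $\sigma_\gamma$ of the algebraic closure over $\CC(t)$. Its action on roots is exactly the monodromy action of $\gamma$ on $f^{-n}(x_0)$. Hence $\pi_n(\IMG(f))\subseteq\pi_n(G_\infty)$ for all $n$, compatibly with the tree isomorphism. The reverse inclusion follows from the Galois correspondence. The fixed field of the image of $\pi_1$ in $\Gal(L_n/\CC(t))$ consists of elements of $L_n$ whose germs are single-valued on $\PP^1(\CC)\setminus P_f$. These are meromorphic on $\PP^1(\CC)$, with removable or polar behaviour at the finitely many points of $P_f$ since they are algebraic, hence rational by GAGA. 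So the image is all of $\Gal(L_n/\CC(t))$.

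\textbf{Step 3: pass to the limit.} Taking the inverse limit over $n$ of the level-$n$ equalities gives
\begin{align*}
G_\infty(\CC,f,t)=\varprojlim_n \pi_n(G_\infty(\CC,f,t))=\varprojlim_n\pi_n(\IMG(f))=\overline{\IMG(f)}.
\end{align*}

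I expect the main obstacle to be Step 2, namely making the identifications coherent across all levels simultaneously. I would handle it with a single fibre functor, meaning one embedding of the compositum $K_\infty(f,t)$ into the germs at $x_0$, rather than choosing identifications separately at each level. Post-critical finiteness of $f$ is what keeps the branch locus the same finite set $P_f$ at every level. Without it, the Riemann existence theorem would not apply uniformly in $n$.
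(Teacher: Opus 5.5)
Your proposal is correct, and it is essentially the standard argument behind the result, which the paper does not prove but simply cites from Nekrashevych: identify the Galois group of $f^n(x)-t$ over $\CC(t)$ with the monodromy group of the branched cover $f^n$ over $\PP^1(\CC)\setminus P_f$, coherently in $n$ via one embedding into meromorphic germs at $x_0$, and then pass to the inverse limit. The only imprecision is that $\sigma_\gamma$ should be an automorphism of $K_\infty(f,t)$ (or of the maximal extension of $\CC(t)$ unramified outside $P_f$) rather than of the whole algebraic closure, since germs of arbitrary algebraic functions need not continue along $\gamma$.
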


\subsubsection{From iterated Galois groups to iterated monodromy groups over $\CC$}

We start proving that geometric iterated Galois groups are invariant under field extensions:

\begin{proposition}
\label{proposition: geometric Galois group invarint under field extensions}
Let $F$ be a rational function of degree $d \geq 2$ defined over a field $K$ which is a subfield of a field $L$. Let $t$ be transcendental over $L$. Then
\begin{align*}
G_\infty(\Ksep,F,t) \cong G_\infty(L^{\textrm{sep}},F,t).
\end{align*}
In particular if $F$ is post-critically finite and $K$ is a subfield of $\CC$ then 
\begin{align*}
G_\infty(\Ksep,F,t) \cong \overline{\IMG(F)}.
\end{align*}
\end{proposition}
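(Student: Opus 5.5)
The plan is to compare the two towers of field extensions directly. First, fix separable closures so that $\Ksep \subseteq L^{\textrm{sep}}$ and $\Ksep(t) \subseteq L^{\textrm{sep}}(t)$. The preimages $f^{-n}(t)$ are algebraic over $\Ksep(t)$, since $F$ has coefficients in $K$. Hence the tree $T_t$ is literally the same set for both fields, and every element of $\Gal(L^{\textrm{sep}}(t)^{\textrm{sep}}/L^{\textrm{sep}}(t))$ restricts to an element of $\Gal(\Ksep(t)^{\textrm{sep}}/\Ksep(t))$. This restriction is compatible with the action on $T_t$. So we get a continuous homomorphism $G_\infty(L^{\textrm{sep}},F,t) \to G_\infty(\Ksep,F,t)$ that commutes with the arboreal representations, i.e. an injective map of closed subgroups of $\Aut(T_t)$. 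It remains to prove surjectivity, and it suffices to do this level by level: for each $n$, show that
\begin{align*}
\Gal(L^{\textrm{sep}}(f^{-n}(t))/L^{\textrm{sep}}(t)) \to \Gal(\Ksep(f^{-n}(t))/\Ksep(t))
\end{align*}
is an isomorphism. Both groups are then inverse limits of the same finite groups.

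By standard Galois theory (translation of a finite Galois extension $M/\Ksep(t)$ up to $L^{\textrm{sep}}(t)$), restriction identifies $\Gal(M L^{\textrm{sep}}(t)/L^{\textrm{sep}}(t))$ with $\Gal(M/M\cap L^{\textrm{sep}}(t))$. The key step is therefore to prove $M \cap L^{\textrm{sep}}(t) = \Ksep(t)$ for $M=\Ksep(f^{-n}(t))$. This is the statement that $\Ksep$ is algebraically closed in $M$ (constant field extension) and that $L^{\textrm{sep}}$ is a regular extension of $\Ksep$ in the relevant sense.

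I expect the main obstacle to be the characteristic $p$ case. If $K$ is not perfect, $\Ksep$ is separably but not algebraically closed. To handle it, I would argue as follows. Any element of $M\cap L^{\textrm{sep}}(t)$ generates a finite separable extension $\Ksep(t)\subseteq E\subseteq M$. Since $E \subseteq L^{\textrm{sep}}(t)$ and $L^{\textrm{sep}}(t)/\Ksep(t)$ is obtained by base change of the constants, $E$ is a function field whose constants are separable algebraic over $\Ksep$; these constants must therefore equal $\Ksep$. Then $E$ is a subfield of $L^{\textrm{sep}}(t)$ generated over $\Ksep(t)$ by elements algebraic over $\Ksep(t)$. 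Since $t$ is transcendental over $L$ and $\Ksep(t)$ is relatively algebraically separably closed in $L^{\textrm{sep}}(t)$ (a rational function field over a field is separably closed in any purely transcendental-type base change of constants, by looking at the places/ degree of the minimal polynomial over $L^{\textrm{sep}}(t)$ and using Gauss' lemma on coefficients), we get $E=\Ksep(t)$. An alternative, cleaner route is to use that $\Ksep$ is separably closed, so $L^{\textrm{sep}}\otimes_{\Ksep} M$ is a field (separable extensions are linearly disjoint from the regular extension $M/\Ksep$). I would try this linear-disjointness argument first, because it yields surjectivity directly.

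For the "in particular" part, take $L=\CC$. Since $\CC$ is algebraically closed, $L^{\textrm{sep}}=\CC$. Then \cref{theorem: iterated Galois group over C and IMG} gives $G_\infty(\CC,F,t)\cong\overline{\IMG(F)}$, and combining this with the first part finishes the proof.
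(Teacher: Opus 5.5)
Your architecture is the paper's. You restrict from the tower over $L^{\textrm{sep}}(t)$ to the one over $\Ksep(t)$, and reduce everything to $M\cap L^{\textrm{sep}}(t)=\Ksep(t)$ for $M=\Ksep(f^{-n}(t))$. You also assign the roles more accurately than the paper's write-up, which attaches this intersection equality to injectivity. In fact injectivity is automatic, because both groups act faithfully on the same tree $T_t$; the intersection is what gives surjectivity via the translation theorem.

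The gap is that neither of your two justifications of the intersection equality works as written.
\begin{enumerate}[(1)]
\item The first appeals to ``$\Ksep(t)$ is separably algebraically closed in $L^{\textrm{sep}}(t)$''. That is equivalent to the claim itself (apply it to Galois closures), and the parenthetical about places and Gauss' lemma is not an argument.
\item The second asserts that $L^{\textrm{sep}}\otimes_{\Ksep}M$ is a field. This is false in exactly the case you need for the second part: for $K=\mathbb{Q}$ and $L=\CC$, the element $1\otimes t-\alpha\otimes 1$ with $\alpha\in\CC\setminus\overline{\mathbb{Q}}$ is not invertible.
\item You also cannot place hypotheses on $L^{\textrm{sep}}/\Ksep$. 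In characteristic $p$ it need not be separable, and $\Ksep$ need not be algebraically closed in it: take $K=\FF_p(s)$ and $L=\FF_p(s^{1/p})$, so that $s^{1/p}\in L^{\textrm{sep}}$ is purely inseparable over $\Ksep$. So ``$L^{\textrm{sep}}$ is regular over $\Ksep$ in the relevant sense'' is not available.
\end{enumerate}

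The repair is to put regularity on $M$, and to use ``$t$ transcendental over $L$'' as a freeness statement.
\begin{enumerate}[(1)]
\item $M/\Ksep$ is separably generated by $t$, since $M/\Ksep(t)$ is separable. Any $c\in M$ algebraic over $\Ksep$ has the same minimal polynomial over $\Ksep$ as over $\Ksep(t)$. Hence $c$ is separable over $\Ksep$ and lies in $\Ksep$, so $M/\Ksep$ is regular.
\item $M$ is algebraic over $\Ksep(t)$ and $t$ is transcendental over $L^{\textrm{sep}}$, so $M$ and $L^{\textrm{sep}}$ are free over $\Ksep$.
\item A regular extension is linearly disjoint from any extension free from it (Lang, \emph{Algebra}, Ch.~VIII, \S4). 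So $L^{\textrm{sep}}\otimes_{\Ksep}M$ is a domain, not a field.
\item The tower property of linear disjointness, applied to $\Ksep\subseteq\Ksep(t)\subseteq M$, shows that $M$ and $\Ksep(t)L^{\textrm{sep}}=L^{\textrm{sep}}(t)$ are linearly disjoint over $\Ksep(t)$.
\item Hence $[ML^{\textrm{sep}}(t):L^{\textrm{sep}}(t)]=[M:\Ksep(t)]$, and the injective level-$n$ restriction map is bijective.
\end{enumerate}
With this in place the rest of your proposal goes through, including the deduction for $L=\CC$. The paper only asserts the intersection equality, so this is the step that carries the actual content.
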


\begin{proof}
As $L^{\textrm{sep}}_\infty(F,t)$ contains $\Ksep_\infty(F,t)$, the restriction map induces a group homomorphism
\begin{align*}
G_\infty(L^{\textrm{sep}},F,t) &\rightarrow G_\infty(\Ksep,F,t) \\
g &\mapsto g|_{\Ksep_\infty(F,t)}.
\end{align*}

As both fields $\Ksep$ and $L^\textrm{sep}$ are separably closed we have $L^{\textrm{sep}}(t) \cap \Ksep_\infty(F,t) = \Ksep(t)$, so the map is injective. Surjectivity follows from the fact that preimages of $t$ under $F^n$ determine the elements in the Galois group, and this is independent of the base field. The second part follows from \cref{theorem: iterated Galois group over C and IMG}.
\end{proof}

\begin{theorem}
Let $f$ be a post-critically finite tamely ramified rational function of degree $d \geq 2$ defined over a field $K$ of characteristic zero and with cardinality at most $2^{\aleph_0}$. Let $t$ be transcendental over $K$. Then, 
\begin{align*}
G_\infty(\Ksep,f,t) \cong \overline{\IMG(f)}.
\end{align*}
\label{theorem: lifting to complex numbers}
\end{theorem}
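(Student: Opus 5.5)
The plan is to reduce to \cref{proposition: geometric Galois group invarint under field extensions} by embedding $K$ into $\CC$. Since $K$ has characteristic zero, it contains $\mathbb{Q}$, and $\overline{K}$ is an algebraically closed field of characteristic zero with cardinality at most $2^{\aleph_0}$.

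\emph{Step 1: reduce to a countable field.} We do not need to embed all of $\overline{K}$. Let $K_0 \subseteq K$ be the subfield generated over $\mathbb{Q}$ by the coefficients of $f$. This field is finitely generated over $\mathbb{Q}$, hence countable.

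\emph{Step 2: embed into $\CC$.} Its algebraic closure $\overline{K_0}$ has transcendence degree at most countable over $\mathbb{Q}$, while $\CC$ has transcendence degree $2^{\aleph_0}$. So we choose a transcendence basis of $\overline{K_0}$, send it injectively into a transcendence basis of $\CC$, and extend the resulting embedding of $\mathbb{Q}(\text{basis})$ to $\overline{K_0} \hookrightarrow \CC$, using that $\CC$ is algebraically closed. In fact the cardinality hypothesis lets us embed $\overline{K}$ itself in the same way, since its transcendence degree is at most $2^{\aleph_0}$. Either route works; I would use the $K_0$ route because it is cleaner.

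\emph{Step 3: transport $f$ and its invariants.} Identify $K_0$ with its image in $\CC$, so $f \in K_0(x) \subseteq \CC(x)$. Post-critical finiteness is preserved, because the critical points and post-critical points of $f$ over $\overline{K}$ are algebraic over $K_0$, lie in $\overline{K_0}$, and the critical points are the roots of the derivative. Tame ramification is automatic in characteristic zero.

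\emph{Step 4: compare the groups.} Apply \cref{proposition: geometric Galois group invarint under field extensions} twice, with $t$ transcendental over $\overline{K}$ and over $\CC$ (we may rename $t$ freely, since only its transcendence matters). First, with $K_0 \subseteq K$ we get $G_\infty(\Ksep,f,t) \cong G_\infty(K_0^{\mathrm{sep}},f,t)$. Second, with $K_0 \subseteq \CC$ we get $G_\infty(K_0^{\mathrm{sep}},f,t) \cong \overline{\IMG(f)}$, using the second part of that proposition together with \cref{theorem: iterated Galois group over C and IMG}.

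The main subtlety is that these isomorphisms should be compatible with the tree actions, since the equality of groups is meant as subgroups of $\Aut(T)$ up to relabeling. Restriction maps respect the trees of preimages, because $f^{-n}(t)$ over $K_0$ is literally the same set inside each larger field. So each isomorphism is induced by a bijection of trees, and we obtain conjugacy in $\Aut(T)$. The one point I expect to need care is checking that the first part of \cref{proposition: geometric Galois group invarint under field extensions} applies when $t$ is transcendental over the larger field. If $t$ is only given transcendental over $K$, we replace it by an isomorphic copy, which is harmless because $K(t)$ depends only on $t$ being transcendental.
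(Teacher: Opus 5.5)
Your proof is correct. It differs from the paper's in one substantive respect.

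The paper's proof is a single step. It uses the hypothesis $\abs{K}\le 2^{\aleph_0}$ to embed $\overline{K}$ into $\CC$, and then applies \cref{proposition: geometric Galois group invarint under field extensions} once, with $K$ regarded as a subfield of $\CC$.

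You instead first descend to the field $K_0$ generated over $\mathbb{Q}$ by the coefficients of $f$. This field is finitely generated, so it has finite (not merely countable) transcendence degree, and it embeds in $\CC$ unconditionally. You then apply the proposition twice: once for $K_0\subseteq K$, and once for $K_0\hookrightarrow\CC$.

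The cost is an extra application of the proposition, plus the check that the critical and post-critical points already live over $\overline{K_0}$. The gain is that your argument never uses the cardinality bound, so it proves the statement for every field of characteristic zero. Since the cardinality hypothesis in the exceptional case of \cref{theorem: every rational is virtually mixing} enters only through this theorem, it could be dropped there as well.

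You also make explicit two points the paper leaves tacit. First, the isomorphisms are induced by bijections of the preimage trees, so they are conjugacies in $\Aut(T)$ up to relabeling, which is what the later use of this theorem needs. Second, the particular transcendental $t$ is immaterial. In both routes the complex model of $f$ depends on the chosen embedding, and the conclusion holds for every such choice.
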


\begin{proof}
By the hypothesis over the cardinality of $K$, we can embed $\K-$ inside $\CC$ and hence $G_\infty(K,f,t) \cong \overline{\IMG(f)}$ by \cref{proposition: geometric Galois group invarint under field extensions}.
\end{proof}

\subsubsection{Iterated monodromy groups of exceptional rational functions}

By the previous two sections, we just need to worry about post-critically finite maps over $\PP^1(\CC)$ whose orbifold is euclidean. Fortunately, this was already classified by Douady and Hubbard in \cite{DoaudyHubbard1993}.

A \textit{lattice} $\Lambda \subseteq \CC$ is an abelian group isomorphic to $\ZZ^2$ whose generators are $\RR$-linearly independent. The resulting quotient $\CC/\Lambda$ is a \textit{complex torus}. Holomorphic maps $L: \CC/\Lambda \rightarrow \CC/\Lambda$ between complex torii are induced by linear maps $F(z) = az+b$ such that $a\Lambda \subseteq \Lambda$ (see \cite[Proposition 1.3.2]{DiamondShurman2005}).

\begin{theorem}[{\cite[Page 289 and Propositions 9.3]{DoaudyHubbard1993}}]
\label{theorem: classification euclidean orbifolds over C}
If $f: \PP^1(\CC) \rightarrow \PP^1(\CC)$ is a post-critically finite map with euclidean orbifold and $v_f(z) < \infty$ for all $z \in \PP^1(\CC)$, then there exists a lattice $\Lambda$, a linear map $F$ and a finite cyclic subgroup $H$ of $\Aut(\CC/\Lambda)$ such that the following diagram commute:
\begin{equation}
\centering
\begin{tikzcd}
\CC/\Lambda \arrow{rr}{F} \arrow{dd}{\pi} & & \CC/\Lambda \arrow{dd}{\pi} \\
 & & \\
\PP^1(\CC) \arrow{rr}{f} & & \PP^1(\CC)
\end{tikzcd}
\label{diagram: lattes maps}
\end{equation}
Here $\pi$ corresponds to the quotient map by $H$. Moreover, if we write $F(z) = az+b$ and $H = \<z \mapsto \epsilon z>$, we have:
\begin{enumerate}[(1)]
\item If the signature of the orbifold of $f$ is $(2,2,2,2)$ then there exists $\tau$ in the upper half plane such that $\Lambda = \ZZ \oplus \tau\ZZ$ and $\epsilon = -1$.

\item If the signature of the orbifold of $f$ is $(2,4,4)$ then $\Lambda = \ZZ \oplus i\ZZ$ and $\epsilon = i$.

\item If the signature of the orbifold of $f$ is $(2,3,6)$ then $\Lambda = \ZZ \oplus e^{\frac{\pi i}{3}}\ZZ$ and $\epsilon = e^{\frac{\pi i}{3}}$.

\item If the signature of the orbifold of $f$ is $(3,3,3)$ then $\Lambda = \ZZ \oplus e^{\frac{\pi i}{3}}\ZZ$ and $\epsilon = e^{2\frac{\pi i}{3}}$.
\end{enumerate}
The degree of $f$ is $\abs{a}^2$ and in every case it is satisfied that $\epsilon \Lambda = \Lambda$ and $(\epsilon^k-1)b \in \Lambda$ for all $k \in \ZZ$.
\end{theorem}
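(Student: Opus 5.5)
The approach is to uniformize the Thurston orbifold $(\PP^1(\CC),\nu_f)$ and lift $f$ to the universal orbifold cover. Since the signature is one of $(2,2,2,2)$, $(2,4,4)$, $(2,3,6)$, $(3,3,3)$ (all $\nu_f<\infty$) and $\chi_f=0$, the uniformization theorem for good $2$-orbifolds gives a branched covering $\Pi:\CC\to\PP^1(\CC)$. Its deck group $\Gamma$ is a discrete cocompact group of Euclidean isometries $z\mapsto \zeta z+c$ acting holomorphically, with local ramification of $\Pi$ at a point over $z$ equal to $\nu_f(z)$. The translation subgroup $\Lambda\le\Gamma$ is a lattice with $\Gamma/\Lambda$ finite cyclic, generated by a rotation $z\mapsto\epsilon z$ after conjugating so a fixed point sits at $0$. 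The order of $\epsilon$ is $\lcm$ of the signature, so it is $2,4,6,3$ respectively. Then $\pi:\CC/\Lambda\to\PP^1(\CC)$ is exactly the quotient by $H=\<z\mapsto\epsilon z>$.

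The key step is to lift $f$. First I will check that, in the euclidean case, Riemann--Hurwitz forces equality $\nu_f(f(z))=e_f(z)\nu_f(z)$ for all $z$. The definition only gives divisibility; the equality comes from comparing $\chi_f=0$ with the Riemann--Hurwitz count, as in \cite[Proposition 2.12]{BonkMeyer2017}. This says $f:(\PP^1,\nu_f)\to(\PP^1,\nu_f)$ is an orbifold covering, so $f\circ\Pi:\CC\to\PP^1$ is again a universal orbifold covering. By the lifting property there is a holomorphic $\widetilde F:\CC\to\CC$ with $\Pi\circ\widetilde F=f\circ\Pi$ which is an unbranched covering of $\CC$, hence a biholomorphism onto, hence affine: $\widetilde F(z)=az+b$. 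I expect this to be the main obstacle, namely the equality of $\nu_f$ and the orbifold lifting. I would handle it by treating the universal cover as the covering of $\PP^1\setminus P_f$ corresponding to the normal subgroup generated by the $\nu_f(p)$-th powers of loops around $p$. Then I would check that $f_*$ of the analogous subgroup is contained in it using the equality above.

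Equivariance gives an injective homomorphism $\phi:\Gamma\to\Gamma$ with $\widetilde F\circ\gamma=\phi(\gamma)\circ\widetilde F$. Since $\phi$ preserves the linear part up to the factor relation, translations go to translations, giving $a\Lambda\subseteq\Lambda$. Hence $\widetilde F$ descends to $F$ on $\CC/\Lambda$ and the diagram commutes. For the rotation, $\widetilde F(\epsilon z)=\epsilon(az+b)+(1-\epsilon)b$ must equal $\gamma'(\widetilde F z)$ with $\gamma'\in\Gamma$ of linear part $\epsilon$, so $\gamma'(w)=\epsilon w+\lambda$ with $\lambda\in\Lambda$ (as $0$ is a rotation center). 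This gives $(1-\epsilon)b\in\Lambda$, and iterating gives $(\epsilon^k-1)b\in\Lambda$.

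The degree is $\deg f=\deg F=[\Lambda:a\Lambda]=\abs{a}^2$. Finally, the lattice shapes follow from $\epsilon\Lambda=\Lambda$. For order $2$ any lattice $\ZZ\oplus\tau\ZZ$ works after scaling. For order $4$, $\Lambda$ is a $\ZZ[i]$-module of rank one, hence $\ZZ\oplus i\ZZ$ up to scaling. For order $3$ or $6$, $\Lambda$ is a $\ZZ[e^{\pi i/3}]$-module, hence $\ZZ\oplus e^{\pi i/3}\ZZ$ up to scaling, and the scaling can be absorbed into $\Pi$.
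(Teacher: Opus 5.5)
Your proposal is correct. It is essentially the Douday--Hubbard argument that the paper cites rather than reproves: uniformize the Euclidean orbifold $(\PP^1(\CC),\nu_f)$ by $\CC$ with deck group $\Lambda\rtimes\langle z\mapsto\epsilon z\rangle$, use the equality $\nu_f(f(z))=e_f(z)\nu_f(z)$ forced by $\chi_f=0$ to lift $f$ to an affine map $az+b$ normalizing that group, and then read off $a\Lambda\subseteq\Lambda$, $(1-\epsilon)b\in\Lambda$, $\deg f=[\Lambda:a\Lambda]=\abs{a}^2$, and the lattice shapes (via $\ZZ[i]$ and $\ZZ[e^{\pi i/3}]$ being PIDs) exactly as you describe.
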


To calculate $\IMG(f)$ in the cases of \cref{theorem: classification euclidean orbifolds over C}, let $H$ be the group generated by $\epsilon$ in $S^1$. Since $\PP^1(\CC)$ is obtained by a quotient under $H$, by \cref{proposition: injection between IMGs} 
\begin{align*}
\IMG(f) \cong \IMG(F) \rtimes H. 
\end{align*}

To calculate $\IMG(F)$ we use the following result: 

\begin{theorem}[{\cite[Theorem 6.1.3]{Self_similar_groups}}]
\label{theorem: IMG in compact manifolds}
If $X$ is a compact Riemannian manifold and $F: X \rightarrow X$ is an expanding endomorphism, then
\begin{align*}
\IMG(F) \cong \pi_1(X).
\end{align*}
\end{theorem}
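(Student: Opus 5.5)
The statement is cited from Nekrashevych; the plan is to sketch the standard argument for an expanding self-covering of a compact manifold. Here $X$ is connected, and $F\colon X\to X$ is an expanding endomorphism, hence a finite $d$-fold covering with empty post-critical set. So $\IMG(F)$ is the image of $\pi_1(X,x_0)$ acting by monodromy on the tree of preimages $T_{x_0}=\bigsqcup_n F^{-n}(x_0)$. The work therefore reduces to showing that the monodromy homomorphism $\rho\colon\pi_1(X,x_0)\to\Aut(T_{x_0})$ is injective.

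First we identify the kernel. An element $\gamma$ acts trivially on $F^{-n}(x_0)$ exactly when every lift of $\gamma$ through the covering $F^n$ is a closed loop. This is equivalent to $\gamma\in (F^n)_*\pi_1(X,z)$, transported to $x_0$, for every $z\in F^{-n}(x_0)$. Hence
\begin{align*}
\ker\rho=\bigcap_{n\ge 0}\bigcap_{z\in F^{-n}(x_0)} (F^n)_*\bigl(\pi_1(X,z)\bigr),
\end{align*}
which is the intersection over $n$ of the cores of the finite-index subgroups $(F^n)_*\pi_1(X)$.

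Second, we show that expansion forces this intersection to be trivial. Equip $X$ with the Riemannian metric, lift to the universal cover $\widetilde X$, and let $\pi_1(X)$ act by deck transformations. Choose a compact fundamental domain $D$ and a finite generating set; the group is then quasi-isometric to $\widetilde X$ by Švarc–Milnor. Expansion means $F$ increases the lengths of all curves by a factor at least $\lambda>1$, possibly after passing to an adapted metric or an iterate. Consequently a lift $\widetilde F^{\,n}$ to $\widetilde X$ expands distances by at least $c\lambda^n$. Now take $\gamma\ne 1$ in $\ker\rho$. For each $n$, $\gamma$ is the image under $F^n$ of a closed loop $\delta_n$ based at some point of $F^{-n}(x_0)$ that lies in the same free homotopy class data. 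Lifting, the endpoints of the lift of $\delta_n$ coincide, while the lift of $\gamma$ has distinct endpoints at distance $\ge \epsilon>0$, because the deck action is free and properly discontinuous. The key step is to use the contraction of inverse branches instead: the inverse branches of $F^n$ contract by $\lambda^{-n}$. So a loop representing $\gamma$, of length $\ell$, pulls back under $F^n$ to a path of length $\le \lambda^{-n}\ell$. For large $n$ this length is below the injectivity radius, so the pulled-back path is closed and null-homotopic. This holds for every lift with every starting point, i.e.\ on every vertex of level $n$. Pushing forward by $F^n$ then shows that $\gamma$ itself is null-homotopic, a contradiction.

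Care is needed here, and this is the step I expect to be the main obstacle: the argument must actually run in the opposite direction. $\gamma\in\ker\rho$ says that all its lifts close up, and we must derive $\gamma=1$. The correct formulation is the following. The lift of $\gamma$ at level $n$ is a closed loop $\delta_n$ of length $\le\lambda^{-n}\ell$. Once $\lambda^{-n}\ell$ is below the injectivity radius, $\delta_n$ lies in a contractible ball, so it is null-homotopic, and therefore $\gamma=F^n\circ\delta_n$ is null-homotopic. This handles injectivity cleanly. The remaining technical points are that the expansion constant is uniform (by compactness) and that the homotopy classes are tracked correctly with base points along connecting paths. Combining the two steps, $\rho$ is injective, and so $\IMG(F)\cong\pi_1(X)$.
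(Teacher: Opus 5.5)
The paper gives no proof of this statement; it only cites Nekrashevych. Your final paragraph is a correct proof, and it is the standard argument for faithfulness. Take a piecewise smooth loop $\gamma$ with $[\gamma]\in\ker\rho$. Every lift $\delta_n$ of $\gamma$ through $F^n$, starting at any $z\in F^{-n}(x_0)$, is closed. It satisfies $\ell(\delta_n)\le C^{-1}\lambda^{-n}\ell(\gamma)$, where $\|DF^n v\|\ge C\lambda^n\|v\|$. For $n$ large, $\delta_n$ therefore lies in a geodesic ball of radius below the positive injectivity radius of the compact manifold $X$. It is then null-homotopic relative to $z$, and composing that homotopy with $F^n$ gives a based null-homotopy of $\gamma$.

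When you write it up, drop the middle paragraph entirely. The universal cover, \v{S}varc--Milnor and the remark about endpoints at distance $\ge\epsilon$ play no role. The sentence saying the pulled-back path is closed because it is short is false: closedness of the lift is exactly what $[\gamma]\in\ker\rho$ supplies, as you then noticed.

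The base-point worry you raise at the end is vacuous. Since $F^n(z)=x_0$, the group $(F^n)_*\pi_1(X,z)$ already sits inside $\pi_1(X,x_0)$, and $F^n$ pushes a homotopy fixing $z$ forward to one fixing $x_0$.

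One ingredient you assert without justification is that $F$ is a finite covering. This is what gives $P_F=\emptyset$ and $\IMG(F)=\pi_1(X,x_0)/\ker\rho$. It holds because expansion makes $DF$ injective, so $F$ is a local diffeomorphism; on a compact connected manifold such a map is proper and surjective, hence a finite covering.

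Finally, your argument uses only one closed lift at a single deep level, so it proves slightly more. Each nontrivial element of $\pi_1(X)$ fixes no vertex on any level $n\ge n_0$, where the threshold $n_0$ depends on the element.
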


In our case $X = \CC/\Lambda$ and $F(z) = az+b$. Notice that $F$ is expanding as $\abs{a} = \sqrt{\deg(f)} \geq \sqrt{2}$. Then $\IMG(F) \cong \Lambda$ and 
\begin{align*}
\IMG(f) \cong \Lambda \rtimes H.
\end{align*}

Our next goal is to understand the action of $\IMG(f)$ over the $d$-regular rooted tree. $As \abs{a} > 1$, we can tesselate the complex torus by $a^{-n}\Lambda$. Then, we have a bijection between the points in the complex torus and $\varprojlim (\Lambda +b)/(a^n \Lambda + b)$. 

Let $g \in \IMG(f)$. Observe that if $g \in \IMG(f)$, then $$g(\Lambda +b) = \Lambda + b.$$

Indeed, the action of $g$ is given by $g(x) = \epsilon^k x+\lambda$ for some $k \in \ZZ$ and $\lambda \in \Lambda$. Then, by \cref{theorem: classification euclidean orbifolds over C} 
\begin{align*}
g(\Lambda + b) = \epsilon^k \Lambda + \epsilon^k b + \lambda = \Lambda + (\epsilon^k-1)b + b = \Lambda + b.
\end{align*}

Therefore, in order to understand the action of $g$ over $\CC/\Lambda$, we can understand the action of $g$ over any coset of $(\Lambda + b)/(a \Lambda + b)$ and then use the recursion to construct the action on the tree.

Let $\mathcal{R} := \set{r_1, \dots, r_d}$ be a set of representatives of $\Lambda/a \Lambda$. Then, any element of $\Lambda+b$ can be written as $au+r_i+b$ where $u \in \Lambda$. The action of $g$ over this element is
\begin{align*}
g(au+r_i+b) = a\epsilon^ku + \epsilon^k r_i + \epsilon^k b + \lambda.
\end{align*}
As $g(\Lambda + b) = \Lambda + b$ and $a\epsilon^ku \in a\Lambda$, then $\epsilon^k r_i + \epsilon^k b + \lambda \in \Lambda + b$, so there exist unique $u_i \in \Lambda$ and $r_{\sigma(i)} \in \mathcal{R}$ such that 
\begin{align*}
g(au+r_i+b) = a(\epsilon^ku + u_i) + r_{\sigma(i)} + b.
\end{align*}

Thus, the action on the first level is given by the permutation $\sigma$ and the section at the vertex $i$ by $(g|_i)(x) = \epsilon^k x + u_i$.

\begin{theorem}
Let $f$ be a tamely ramified rational function of degree $d \geq 2$ defined over a field $K$ of characteristic zero and with cardinality at most $2^{\aleph_0}$, and $f$ has euclidean orbifold with $v_f(z) < \infty$ for all $z \in \PP^1(\CC)$. If $G = G_\infty(K^{\textrm{sep}},f,t)$, with $t$ transcendental over $K$, then $G$ is a martingale group and it is virtually mixing with delay constant $N = 0$. The mixing subgroup is $\overline{\IMG(F)}$ where $F$ is the map from \cref{diagram: lattes maps}. Then $G/\overline{\IMG(F)}$ is a finite cyclic group of order dividing $\abs{H}$, in particular a Von Dyck group.
\label{theorem: virtually mixing euclidean case}
\end{theorem}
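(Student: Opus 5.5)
We reduce to the complex setting and then work with the explicit affine model. By \cref{theorem: lifting to complex numbers} (with the embedding $\K-\hookrightarrow\CC$ allowed by the cardinality hypothesis), $G\cong\overline{\IMG(f)}$. By \cref{theorem: classification euclidean orbifolds over C}, \cref{proposition: injection between IMGs} and \cref{theorem: IMG in compact manifolds}, $\IMG(f)\cong\Lambda\rtimes H$. Here $\Lambda=\IMG(F)$ acts by translations $x\mapsto x+\lambda$ and $H=\<x\mapsto\epsilon x>$. We identify the tree with $\varprojlim(\Lambda+b)/(a^n\Lambda+b)$ through the digit expansion $x=a u+r_i+b$. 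The recursion computed just before the statement shows that a translation by $\lambda$ has sections that are again translations. Concretely, if $r_i+\lambda=a u_i+r_{\sigma(i)}$, then the section at $i$ is translation by $u_i$. An element $x\mapsto\epsilon^k x+\lambda$ has sections of the same rotational part $\epsilon^k$. It follows that $\Lambda$ is a normal subgroup of index $\abs{H}$ in $\IMG(f)$, and that its closure $\overline{\Lambda}$ is normal and closed, of index dividing $\abs{H}$, in $G$. The quotient is cyclic, generated by the image of $x\mapsto\epsilon x$. A cyclic group $C_n$ is the Von Dyck group $\Delta(1,n,n)$ of \cref{theorem: classification Von Dyck groups}.

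Next we check the martingale property. The vertices at level $n$ correspond to the classes $\Lambda/a^n\Lambda$, shifted by $b$. The subgroup $\St(n)\cap\Lambda$ consists of the translations by $\lambda\in a^n\Lambda$. Writing $\lambda=a^n\mu$, the section of this translation at every vertex of level $n$ is translation by $\mu$, by iterating the recursion. Since $\mu$ ranges over all of $\Lambda$, and $\Lambda$ acts transitively on $\Lambda/a\Lambda$, the group $\St_G(n)$ acts transitively below every vertex of level $n$. Fractality, with the labeling fixed in \cref{proposition: iterated galois groups are fractal}, is already known.

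The mixing claim with $N=0$ comes out of the same computation. For any $n\geq1$ and any $v$ with $\abs{v}\geq n$, we have $\St_G(\abs{v})\leq\St_G(n)$. Hence it suffices to show $\Lambda\leq\St_G(m)_v$ for $m=\abs{v}$. For this we use the element $x\mapsto x+a^m\mu$: it lies in $\St(m)$, and its section at $v$ is $x\mapsto x+\mu$. Therefore $\Lambda\leq\St_G(m)_v$. Passing to closures and normal closures by \cref{lemma: virtually mixing closed normal closure} gives $\overline{\IMG(F)}\leq\St_G(n)_v$. So $G$ is virtually mixing with delay constant $0$ and mixing subgroup $\overline{\IMG(F)}$.

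The main obstacle I expect is bookkeeping of identifications. The tree labeling coming from the $\Lambda+b$ digit model must agree, up to conjugation in $\Aut(T)$, with the labeling in which $G$ is fractal; the properties we prove are invariant under such conjugation, so this is only a matter of care. We also have to confirm that the inclusion $\IMG(F)\hookrightarrow\IMG(f)$ of \cref{proposition: injection between IMGs} really acts on the same tree as the translations. This needs the fact that $\pi$ restricted to the backward orbit is a bijection onto $f^{-n}(t_0)$ for a generic base point. A further check is that the index $[\IMG(f):\Lambda]$ is at most $\abs{H}$ even after passing to closures. This holds because $\overline{\Lambda}$ has finite index, with the cosets represented by the powers of $x\mapsto\epsilon x$.
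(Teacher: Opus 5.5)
Your proposal is correct and follows essentially the same route as the paper. You reduce to $\CC$ via \cref{theorem: lifting to complex numbers}, use $\IMG(f)\cong\Lambda\rtimes H$, and use the translations $x\mapsto x+a^m\mu\in\St_G(m)$, whose section at every level-$m$ vertex is $x\mapsto x+\mu$, both for the martingale property and for $\Lambda\leq\St_G(n)_v$ with $N=0$, before closing up with \cref{lemma: virtually mixing closed normal closure}. You also spell out the cyclic quotient and the passage from $\abs{v}=n$ to $\abs{v}\geq n$, which the paper leaves implicit.

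One small correction to your closing remark: sections, and hence the mixing property, are not invariant under arbitrary conjugation in $\Aut(T)$. Instead of appealing to such invariance, observe that the digit labeling itself makes $\IMG(f)$ self-similar and fractal. Indeed, $\lambda=a\mu+(1-\epsilon^k)(r_i+b)\in\Lambda$ gives an element $x\mapsto\epsilon^k x+\lambda$ that fixes the vertex $i$ and has section $x\mapsto\epsilon^k x+\mu$ there. This is the labeling the paper implicitly uses.
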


\begin{proof}
By \cref{theorem: lifting to complex numbers}, we may assume $f$ is defined over $\CC$ and it fits in \cref{diagram: lattes maps}. Then $G \cong \overline{\IMG(f)}$.

Let $\lambda \in \Lambda$ and $g \in \IMG(f)$ defined as $g(x) = x + \lambda$. Consider $\widetilde{g}(x) = x + a^n \lambda$. Then
\begin{align*}
\widetilde{g}(au+r_i + b) = a(u + a^{n-1} \lambda) + r_i + b,
\end{align*}
which implies that $\pi_1(\widetilde{g}) = \id$ and $(\widetilde{g}|_i)(x) = x + a^{n-1} \lambda$ for all $i = 1,\dots,d$. Continuing in this fashion, we obtain that $\widetilde{g} \in \St_G(n)$ and $\widetilde{g}|_v = g$ for all $v \in \mathcal{L}_n$. In particular $\St_G(n)_v \geq \IMG(F)$. Then, by \cref{lemma: virtually mixing closed normal closure}, we conclude $\St_G(n)_v \geq \overline{\IMG(F)}$. 

To prove that $G$ is a martingale group, let $n \geq 0$ and $v \in \mathcal{L}_n$. Given $i,j \in \set{1,\dots,d}$, choose $\lambda = r_j - r_i$. Then $g(r_i) = r_j$ and $\widetilde{g} \in \St_G(n)$ moves vertices below $v$ transitively. 
\end{proof}

We finally conclude \cref{theorem: every rational is virtually mixing}:

\begin{proof}[Proof of \cref{theorem: every rational is virtually mixing}]
The hyperbolic case follows from \cref{theorem: virtually mixing hyperbolic case} and the euclidean case from \cref{theorem: virtually mixing euclidean case}.
\end{proof}

\subsection{Arithmetic iterated Galois groups}

Let $f$ be a tamely ramified rational function of degree $d \geq 2$ defined over a field $K$ and let $t$ be transcedental over $K$. We called \textit{field of constants} of $f$ to the field $\Ksep \cap K_\infty(f,t)$. We finish this section by proving that arithmetic iterated Galois groups are also virtually mixing when the field of constants of $f$ has finite index over $K$. We start recalling the connection between geometric and arithmetic iterated Galois groups:

\begin{proposition}
\label{proposition: short exact sequence arithmetic and geometric Galois groups}
Let $f$ be a rational function of degree $d \geq 2$ defined over a field $K$ and $t$ transcendental over $K$. Then, we have the exact sequence $$1 \rightarrow G_\infty(\Ksep, f, t) \rightarrow G_\infty(K, f, t) \rightarrow \Gal(\Ksep \cap K_\infty(f,t)/K) \rightarrow 1.$$ 
\end{proposition}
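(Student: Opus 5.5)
We argue with Galois theory applied to the tower of fields
\begin{align*}
K(t) \subseteq \Ksep(t) \quad \text{ and } \quad K(t) \subseteq K_\infty(f,t) \subseteq \Ksep_\infty(f,t) = \Ksep \cdot K_\infty(f,t),
\end{align*}
where $\Ksep_\infty(f,t)$ is the field obtained by adjoining to $\Ksep(t)$ all preimages of $t$ under iterates of $f$. Using $G_\infty(K,f,t) \cong \Gal(K_\infty(f,t)/K(t))$ and $G_\infty(\Ksep,f,t) \cong \Gal(\Ksep_\infty(f,t)/\Ksep(t))$, we build the maps in the sequence explicitly. Both groups act on the same tree $T_t$, since the preimages of $t$ are the same elements of $K(t)^{\mathrm{sep}}$. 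We do not need to assume $f$ tamely ramified, because only separability of the preimages enters.

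\textbf{The inclusion.} We define the map $G_\infty(\Ksep,f,t) \to G_\infty(K,f,t)$ by restricting an automorphism of $\Ksep_\infty(f,t)$ to $K_\infty(f,t)$. This is well defined because $K_\infty(f,t)/K(t)$ is Galois, being generated by all roots of the separable polynomials defining $f^{-n}(t)$. It is injective because an automorphism fixing $\Ksep(t)$ and all the preimages $f^{-n}(t)$ fixes their compositum. The map is compatible with the actions on $T_t$, so in tree terms it is literally an inclusion of subgroups of $\Aut(T_t)$.

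\textbf{The image.} By the theorem on natural irrationalities for the Galois extension $K_\infty(f,t)/K(t)$ and the extension $\Ksep(t)/K(t)$, restriction gives
\begin{align*}
\Gal(\Ksep_\infty(f,t)/\Ksep(t)) \cong \Gal\bigl(K_\infty(f,t)/K_\infty(f,t)\cap \Ksep(t)\bigr).
\end{align*}
This is an isomorphism of profinite groups, since infinite Galois theory applies. The key step, and the main obstacle, is to identify
\begin{align*}
K_\infty(f,t)\cap \Ksep(t) = K(t)\bigl(\Ksep\cap K_\infty(f,t)\bigr) = K'(t),
\end{align*}
where we set $K' := \Ksep\cap K_\infty(f,t)$.
\begin{enumerate}[\normalfont(1)]
\item The inclusion $\supseteq$ is clear.
\item For $\subseteq$, take an element $\alpha$ of the intersection. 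It lies in $L(t)$ for some finite Galois extension $L/K$ contained in $\Ksep$.
\item Since $t$ is transcendental over $L$, the extension $L(t)/K(t)$ is Galois with group $\Gal(L/K)$. By the Galois correspondence, its intermediate fields are exactly $M(t)$ for intermediate fields $K \subseteq M \subseteq L$.
\item Hence $K(t)(\alpha) = M(t)$ for some such $M$. A generator of $M$ over $K$ then lies in $K(t)(\alpha) \subseteq K_\infty(f,t)$, so $M \subseteq K'$ and therefore $\alpha \in K'(t)$.
\end{enumerate}
Consequently, the image of the inclusion is $\Gal(K_\infty(f,t)/K'(t))$, and this subgroup is normal in $G_\infty(K,f,t)$ because $K'(t)/K(t)$ is Galois.

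\textbf{The quotient.} The last map is restriction from $\Gal(K_\infty(f,t)/K(t))$ to $\Gal(K'(t)/K(t))$. It is surjective by infinite Galois theory, and its kernel is exactly the image identified above. Finally, since $t$ is transcendental over $K'$, restricting to constants gives $\Gal(K'(t)/K(t)) \cong \Gal(K'/K)$, using again the correspondence from step (3) in the limit over finite subextensions. This yields the exact sequence
\begin{align*}
1 \rightarrow G_\infty(\Ksep,f,t) \rightarrow G_\infty(K,f,t) \rightarrow \Gal(\Ksep \cap K_\infty(f,t)/K) \rightarrow 1.
\end{align*}
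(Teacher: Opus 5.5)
Your argument is correct and follows essentially the same route as the paper: restriction gives the injection, which is injective because $\Ksep_\infty(f,t)$ is the compositum of $\Ksep(t)$ and $K_\infty(f,t)$. Its image is the Galois group over $K_\infty(f,t)\cap\Ksep(t)$, and the quotient is identified with the Galois group of the field of constants.

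There are two differences. First, the paper proves the sequence at each finite level, reducing to $n=1$ by replacing $f$ with $f^n$, and then passes to the limit using exactness of $\varprojlim$; you work with infinite Galois theory directly. Second, your steps (2)--(4) actually prove the identification $K_\infty(f,t)\cap\Ksep(t)=K'(t)$, which the paper only asserts when it says ``one can see'' that $\Gal(L_1/K(t))\to\Gal(\Ksep\cap K_1(f,t)/K)$ is an isomorphism.
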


\begin{proof}
As the projections $\pi_n$ are surjective, the functor $\varprojlim$ is exact (see \cite[Proposition 2.2.4]{RibesZalesskii2000}), so it is enough to prove
$$1 \rightarrow G_n(\Ksep, f, t) \rightarrow G_n(K, f, t) \rightarrow \Gal(\Ksep \cap K_n(f,t)/K) \rightarrow 1$$ for every $n \geq 1$, where 
$$K_n(f,t) := K(f^{-n}(t)).$$ It is not hard to see that $K_n(f,t)/K(t)$ is Galois and $K_n(f,t) = K_1(f^n,t)$. 
Moreover, since $G_n(K,f,t) = G_1(K,f^n,t)$, it is enough to prove it for $n = 1$. 

For the first arrow, the restriction map 
\begin{align*}
G_1(\Ksep, f, t) &\rightarrow G_1(K, f, t) \\
\sigma & \mapsto \sigma|_{K_1(f,t)}
\end{align*}
is well-defined because $K_1(f,t)/K(t)$ is Galois. Injectivity follows from the fact that $\Ksep_1(f,t)$ is the field generated by $\Ksep(t)$ and $K_1(f,t)$. 

If we call $L_1 := \Ksep(t) \cap K_1(f,t)$, then the image of the restriction map is $$\Gal(K_1(f,t)/L_1).$$ As $L_1$ is a Galois extension, by Galois theory
\begin{align*}
G_1(K, f, t)/\Gal(K_1(f,t)/L_1) \cong \Gal(L_1/K(t)).
\end{align*}
Then, one can see that the map
\begin{align*}
\varphi_1: \Gal(L_1/K(t)) &\rightarrow \Gal(\Ksep \cap K_1(f,t)/K) \\
\sigma & \mapsto \sigma|_{\Ksep \cap K_1(f,t)},
\end{align*}
that forgets the action over $t$, is an isomorphism. 
\end{proof}

We now prove \cref{Corollary: arithmetic IGG is virtually mixing}:

{
\renewcommand{\thecorollary}{3}
\begin{corollary}
Let $f$ be a tamely ramified rational function of degree $d \geq 2$ defined over a field $K$. Furthermore, if $f$ is exceptional, assume that $K$ has characteristic zero and its cardinality is at most $2^{\aleph_0}$. Let $G = G_\infty(K,f,t)$ be its arithmetic iterated Galois group. Assume that $G$ is a martingale group and the extension $\Ksep \cap K_\infty(f,t)/K$ is finite. Then

\begin{enumerate}[\normalfont(1)]
\item $G$ is virtually mixing and the delay constant $N$ only depends on the ramification portrait of $f$ (same constant as in \cref{theorem: every rational is virtually mixing}). 

\item There exists a mixing subgroup $G_m$ such that $G_m$ is closed and normal in $G$ and $G/G_m$ satisfies the exact sequence
\begin{align*}
1 \rightarrow V \rightarrow G/G_m \rightarrow \Gal(\Ksep \cap K_\infty(f,t)/K) \rightarrow 1,    
\end{align*}
where $V$ is a finite Von Dyck group.
\end{enumerate}
\end{corollary}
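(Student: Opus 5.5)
The plan is to reduce the statement to the geometric case via the exact sequence of \cref{proposition: short exact sequence arithmetic and geometric Galois groups}. Write $G = G_\infty(K,f,t)$ and $G^{\mathrm{geo}} = G_\infty(\Ksep,f,t)$, so that $G^{\mathrm{geo}}$ is a closed normal subgroup of $G$. By hypothesis, $G/G^{\mathrm{geo}} \cong \Gal(\Ksep \cap K_\infty(f,t)/K)$ is finite. Both groups are fractal by \cref{proposition: iterated galois groups are fractal}; we take the labeling for $G$ and restrict it to $G^{\mathrm{geo}}$.

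The first point to check is that $G^{\mathrm{geo}}$ is itself a martingale group, since \cref{theorem: virtually mixing hyperbolic case} and \cref{theorem: virtually mixing euclidean case} require this. In the euclidean case, \cref{theorem: virtually mixing euclidean case} gives the martingale property directly. In the hyperbolic case, the martingale hypothesis is only used through \cref{lemma: commutator trick}, applied inside \cref{lemma: see the element with conditional energy} and \cref{theorem: Gm is a mixing subgroup}. I would run that argument directly in the ambient group $G$, which is fractal and martingale. The elements $g_q^{E(\gamma_1,\gamma_2)}$ lie in $G^{\mathrm{geo}} \le G$, and the sections of these elements are described by \cref{remark: sections of generators}. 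The commutator trick in $G$ then gives
\begin{align*}
g_p^{e_p} \in \St_G(n)_v \quad \text{for all } n \geq 1 \text{ and all } \abs{v} \geq n+N,
\end{align*}
with the same $N$ as before, since $N$ depends only on the ramification portrait.

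Next, define the candidate mixing subgroup $G_m$ of $G$ as the closed normal closure in $G$ of $\set{g_p^{e_p} : p \in P_f}$ in the hyperbolic case, and as the closed normal closure of $\overline{\IMG(F)}$ in the euclidean case. By \cref{lemma: virtually mixing closed normal closure} applied to $G$, this $G_m$ lies in $\St_G(n)_v$ for all $n \geq 1$ and all $\abs{v} \geq n+N$. In the euclidean case, the elements $\widetilde g$ constructed in the proof of \cref{theorem: virtually mixing euclidean case} lie in $\St_{G^{\mathrm{geo}}}(n) \le \St_G(n)$, so the same containment holds there.

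It remains to identify $G/G_m$. By construction $G_m \le G^{\mathrm{geo}}$, and $G_m$ contains the geometric mixing subgroup $G_m^{\mathrm{geo}}$ of \cref{theorem: every rational is virtually mixing}. Therefore $V := G^{\mathrm{geo}}/G_m$ is a quotient of the finite Von Dyck group $G^{\mathrm{geo}}/G_m^{\mathrm{geo}}$. A quotient of a Von Dyck group $\Delta(a,b,c)$ is again of the form $\Delta(a',b',c')$ with $a' \mid a$, $b' \mid b$, $c' \mid c$, or is a finite group of that type. Applying the third isomorphism theorem to $G_m \le G^{\mathrm{geo}} \le G$ gives the exact sequence
\begin{align*}
1 \rightarrow V \rightarrow G/G_m \rightarrow \Gal(\Ksep \cap K_\infty(f,t)/K) \rightarrow 1.
\end{align*}
The right-hand term is finite by hypothesis, so $G_m$ has finite index in $G$, and $G$ is virtually mixing.

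I expect the main obstacle to be the normal closure. Conjugating by elements of $G$ may produce a subgroup strictly larger than $G_m^{\mathrm{geo}}$, so $V$ is only a quotient of the geometric Von Dyck group. Claiming that $V$ is itself a Von Dyck group requires the remark that quotients of finite triangle groups by normal subgroups containing no generator power remain of triangle type. Alternatively, one can observe that conjugation by $G$ sends inertia generators $g_p$ to conjugates of inertia generators $g_{\sigma(p)}$ with $e_{\sigma(p)} = e_p$. Then the normal closure in $G$ coincides with $G_m^{\mathrm{geo}}$ and $V$ is exactly the geometric quotient. I would try this second route first.
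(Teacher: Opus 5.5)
Your proposal is correct and is essentially the paper's proof: take the closed normal closure in $G$ of the geometric mixing subgroup, note that it lies in $G_\infty(\Ksep,f,t)$ because that group is closed and normal in $G$, so $V$ is a quotient of a finite Von Dyck group (hence again one, as the list in \cref{theorem: classification Von Dyck groups} confirms), and finish with the third isomorphism theorem and \cref{proposition: short exact sequence arithmetic and geometric Galois groups}. The one difference is that you run the commutator trick inside the arithmetic group $G$ instead of quoting \cref{theorem: every rational is virtually mixing} for $G_\infty(\Ksep,f,t)$; this is more careful than the paper, because the corollary assumes only that $G$, and not the geometric group, is a martingale group.
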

}

\begin{proof}
Let $G_m$ be the mixing subgroup of $G_\infty(\Ksep,f,t)$. As by hypothesis we assume $G_\infty(\Ksep,f,t) \leq_f G$, then $G_m$ is also a mixing subgroup of $G$. Therefore $G$ is virtually mixing with the same delay constant as the one used by $G_\infty(\Ksep,f,t)$. This proves (1).

Let $\widehat{G_m}$ be the normal closure of $G_m$ in $G$. As $G_m$ is closed and of finite index, then $\widehat{G_m}$ is also closed. As $G_\infty(\Ksep,f,t) \lhd G$, we have $\widehat{G_m} \leq G_\infty(\Ksep,f,t)$. Therefore, $G_\infty(\Ksep,f,t)/\widehat{G_m}$ is a quotient of $G_\infty(\Ksep,f,t)/G_m$ and since the quotient of a Von Dyck group is still a Von Dyck group, we have that $V := G_\infty(\Ksep,f,t)/\widehat{G_m}$ is a finite Von Dyck group.

By the third isomorphism theorem we have 
\begin{align*}
(G/\widehat{G_m})/(G_\infty(\Ksep,f,t)/\widehat{G_m}) \cong G/G_\infty(\Ksep,f,t),
\end{align*}
which together to \cref{proposition: short exact sequence arithmetic and geometric Galois groups} justify the exact sequence in the statement of \cref{Corollary: arithmetic IGG is virtually mixing}.
\end{proof}

\section{Dynamical pullbacks}
\label{section: dynamical pullbacks}

Let $G$ be geometric iterated Galois group of a rational function $f$ defined over a field $K$. As it was proved in \cref{section: iterated Galois groups are virtually mixing}, we know that $G$ is virtually mixing. Let $G_M$ be its maximal mixing subgroup defined in \cref{definition: maximal mixing subgroup}. In this section we prove the connection between the quotient $G/G_M$ and the geometric properties of $f$, proving \cref{theorem: dynamical pullback maximal mixing subgroup} and \cref{theorem: Gm is GM}.

\subsection{Dynamical pullbacks and $G_M$}

\begin{definition}
A rational function $f$ is a \textit{dynamical pullback} if there exists an irreducible smooth projective curve $C$, a Galois cover $\pi: C \rightarrow \PP^1(\K-)$ of degree greater than $1$ and an endomorphism $F: C \rightarrow C$ such that the diagram
\begin{equation*}
\centering
\begin{tikzcd}
C \arrow{rr}{F} \arrow{dd}{\pi} & & C \arrow{dd}{\pi} \\
 & & \\
\PP^1(\K-) \arrow{rr}{f} & & \PP^1(\K-)
\end{tikzcd}
\end{equation*}
is a pullback and $K(\pi^{-1}(t))$ is contained in $K_\infty(f,t)$. 
\end{definition}

A pullback is in particular a semiconjugacy but the converse is not true (see \cite[Remark 13]{Adams2025}). The connection between dynamical pullbacks and special subgroups of the geometric iterated Galois group was first given by Adams in \cite{Adams2025}.

\begin{theorem}[{\cite[Theorem 8]{Adams2025}}]
Let $f$ be a tamely ramified rational function of degree $d \geq 2$ defined over a field $K$. Let $G = G_\infty(\Ksep,f,t)$ be its geometric iterated Galois group. Then $f$ is a dynamical pullback if and only if there exists a project-invariant open subgroup $H \neq G$.
\label{theorem: dynamical pullback project invariant}
\end{theorem}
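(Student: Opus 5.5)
The plan is to prove both directions through the Galois correspondence inside $K_\infty := \Ksep_\infty(f,t)$. Throughout, fix $x \in \mathcal{L}_1$ corresponding to $t_x \in f^{-1}(t)$. The labeling of \cref{proposition: iterated galois groups are fractal} identifies the subtree below $x$ with $T_t$ through a field isomorphism $\iota_x : K_\infty \to K_\infty^{(x)} := \Ksep\left(\bigcup_n f^{-n}(t_x)\right) \subseteq K_\infty$ sending $t \mapsto t_x$. With this identification, the section map $\varphi_x$ on $\st_G(x)$ is restriction to $K_\infty^{(x)}$ followed by conjugation by $\iota_x$. I will check this compatibility first, since everything else is field theory. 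To a finite extension $M$ of $\Ksep(t)$ inside $K_\infty$ I attach $M_x := \iota_x(M)$, a finite extension of $\Ksep(t_x)$.

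\emph{($\Rightarrow$).} Let $M := \Ksep(t)(\pi^{-1}(t))$, which is the function field of $C$, is Galois over $\Ksep(t)$ with group the deck group of $\pi$, and lies in $K_\infty$ by hypothesis. Set $H := \Gal(K_\infty/M)$. Then $H$ is open and normal, and $H \neq G$ because $\deg \pi > 1$.

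The pullback condition says that $C \cong \PP^1 \times_{f,\PP^1,\pi} C$ through $(\pi,F)$. On function fields this reads $M_x = M\cdot \Ksep(t_x)$: the fiber of $\pi$ over $t_x$ is carried by $F$ bijectively onto $\pi^{-1}(t)$, so the fiber product is irreducible and generated by $M$ and $t_x$.

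Now take $h \in \st_H(x)$. It fixes $M$ and $t_x$, hence fixes $M_x$. Transporting by $\iota_x^{-1}$, the section $h|_x$ fixes $M$, so $h|_x \in H$. Thus $H_x \leq H$ for every $x \in \mathcal{L}_1$, and by the remark after the definition this already gives project invariance.

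\emph{($\Leftarrow$).} Let $H \neq G$ be open and project invariant. First I replace $H$ by its normal core $\bigcap_g gHg^{-1}$, which is still open and proper. To see it is still project invariant, take $k = ghg^{-1}$ fixing $v$ with $u := g^{-1}v$. Then $k|_v = g|_u\, h|_u\, (g|_u)^{-1} \in g|_u H (g|_u)^{-1}$. By \cref{lemma: fractal groups property}, $g|_u$ ranges over all of $G$ as $g$ ranges over elements with $g^{-1}v = u$. Hence sections of core elements lie in every conjugate of $H$, that is, in the core.

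So we may assume $H$ is normal. Let $M := K_\infty^H$; it is a finite Galois extension of $\Ksep(t)$ of degree $[G:H] > 1$. Let $C$ be its smooth projective model and $\pi : C \to \PP^1$ the induced Galois cover. Then $\Ksep(\pi^{-1}(t)) \subseteq M \subseteq K_\infty$, so the field condition in the definition of a dynamical pullback is automatic.

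Project invariance at $x$ says that $\st_H(x) = \Gal(K_\infty / M(t_x))$ fixes $M_x$, so $M_x \subseteq M(t_x)$. Since $M/\Ksep(t)$ is Galois, it is linearly disjoint from $\Ksep(t_x)$ over their intersection, so $[M(t_x):\Ksep(t_x)] \le [M:\Ksep(t)] = [M_x:\Ksep(t_x)]$. Together with the inclusion this forces $M_x = M(t_x)$, and the degree is exactly $[G:H]$.

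Now define $F : C \to C$ as the morphism corresponding to the field embedding $M \hookrightarrow M(t_x) = M_x \xrightarrow{\iota_x^{-1}} M$. This embedding extends $f^* : t \mapsto f(t)$, so $\pi\circ F = f\circ\pi$. The equality of degrees shows that the fiber product $\PP^1\times_{\PP^1} C$ is irreducible with function field $M(t_x)$, and $(\pi,F)$ identifies it with $C$; hence the square is a pullback.

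The step I expect to be the main obstacle is making the identification of sections with the field isomorphism $\iota_x$ rigorous, since it depends on how the fractal labeling of \cref{proposition: iterated galois groups are fractal} is built, together with the degree/linear-disjointness count that upgrades "semiconjugacy" to "pullback". I would first try to settle both on a single level with $f$ replaced by $f^n$, and only afterwards pass to the inverse limit.
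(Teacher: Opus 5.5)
The paper gives no proof of this statement. It quotes it from Adams \cite{Adams2025} and uses it as a black box, only for normal subgroups, in the proof of \cref{theorem: dynamical pullback maximal mixing subgroup}. So there is no in-paper route to compare with. Judged on its own, your argument is correct and self-contained. Its core is an exact dictionary. For $H$ open and normal with $M=K_\infty^H$, project invariance at $x$ is equivalent to $\iota_x(M)\subseteq M(t_x)$. Your degree count upgrades this to $\iota_x(M)=M(t_x)$. That equality says exactly that $\Ksep(t_x)\otimes_{\Ksep(t)}M\to M(t_x)$ is an isomorphism, which is the pullback condition read on function fields. The normal-core step, via \cref{lemma: fractal groups property}, correctly handles non-normal $H$, as the statement as quoted allows.

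Two points should be tightened.

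First, the step you flag as the main obstacle is not one, and you need not reverse-engineer the labeling of \cref{proposition: iterated galois groups are fractal}. Build it yourself.
\begin{itemize}
\item For each $x\in\mathcal{L}_1$, choose an isomorphism $\iota_x\colon K_\infty\to K_\infty^{(x)}$ with $\iota_x(t)=t_x$, and define $\phi(\iota_x(s))=x\,\phi(s)$ recursively.
\item Self-similarity: $g|_x=\iota_{g(x)}^{-1}\circ g\circ\iota_x$ is an automorphism of $K_\infty$ over $\Ksep(t)$, so it lies in $G$.
\item Fractality: restriction $\st_G(x)=\Gal(K_\infty/\Ksep(t_x))\to\Gal(K_\infty^{(x)}/\Ksep(t_x))$ is onto.
\item Independence of choices: two choices of $\iota_x$ differ by an element of $G$, so sections change by conjugation in $G$. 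Hence project invariance of a normal subgroup does not depend on the choices.
\end{itemize}
Since $H$ is open and $M/\Ksep(t)$ is finite, infinite Galois theory applies directly, and no reduction to a single level of $f^n$ is needed. Also, $[M(t_x):\Ksep(t_x)]\le[M:\Ksep(t)]$ needs no linear disjointness: a $\Ksep(t)$-basis of $M$ already spans $M(t_x)$ over $\Ksep(t_x)$.

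Second, ``pullback'' must be read on function fields. That is, $C$ is the normalization of $\PP^1\times_{f,\PP^1,\pi}C$, not the scheme-theoretic fiber product.
\begin{itemize}
\item The fiber product is singular at every point $(c,y)$ with $c$ critical for $f$, $y$ ramified for $\pi$ and $f(c)=\pi(y)$; locally it is $s^{e}=y^{e'}$ with $e,e'\ge 2$.
\item Such points always exist. Otherwise the branch locus $B$ of $\pi$ would satisfy $f^{-1}(B)\subseteq B$ with no critical points over $B$. Then $\#f^{-1}(B)=d\,\#B$ forces $B=\emptyset$, which is impossible for $\deg\pi>1$.
\item In particular, under the literal scheme-theoretic reading even Latt\`es maps, which the paper lists as dynamical pullbacks, would not qualify.
\end{itemize}
So in the last step of $(\Leftarrow)$, say that $(\pi,F)$ identifies $C$ with the normalization of the fiber product, not with the fiber product itself. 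This is exactly what your tensor-product degree count proves. With these adjustments both directions go through.
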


We now prove \cref{theorem: dynamical pullback maximal mixing subgroup}. The direct implication follows the same idea as the one used in \cite[Proposition 2.5]{FariñaRadi2026FPP}.

{
\renewcommand{\thetheorem}{4}
\begin{theorem}
Let $f$ be a tamely ramified rational function of degree $d \geq 2$ defined over a field $K$. Then, $f$ is a dynamical pullback if and only if the maximal mixing subgroup of $G = G_\infty(\Ksep,f,t)$ is not $G_\infty(\Ksep,f,t)$.
\end{theorem}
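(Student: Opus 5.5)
The plan is to reduce both directions to Adams' characterization, \cref{theorem: dynamical pullback project invariant}: $f$ is a dynamical pullback if and only if $G$ has a proper open project-invariant subgroup. Throughout, $G$ is fractal (\cref{proposition: iterated galois groups are fractal}), closed, virtually mixing with delay constant $N$ (\cref{theorem: every rational is virtually mixing}), and $G_M$ is built with that $N$.

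\emph{($\Leftarrow$)} Suppose $G_M \neq G$. By \cref{proposition: properties GM}, $G_M$ is closed, normal and project invariant. By \cref{theorem: Gm is a mixing subgroup} and part (2) of \cref{proposition: properties GM}, $G_m \leq G_M$. In the hyperbolic case $G_m$ has finite index by \cref{theorem: virtually mixing hyperbolic case}. In the euclidean case, the mixing subgroup $\overline{\IMG(F)}$ of \cref{theorem: virtually mixing euclidean case} lies in $G_M$ and has finite index. Either way $G_M$ has finite index in $G$, and a closed subgroup of finite index in a profinite group is open. So $G_M$ is a proper open project-invariant subgroup, and \cref{theorem: dynamical pullback project invariant} shows $f$ is a dynamical pullback. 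One caveat: in the euclidean case this step uses the characteristic-zero hypothesis. If that hypothesis is unavailable, I would instead rely on the fact that exceptional maps are dynamical pullbacks anyway, so the implication holds trivially there.

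\emph{($\Rightarrow$)} Suppose $f$ is a dynamical pullback. By \cref{theorem: dynamical pullback project invariant} there is an open project-invariant $H \neq G$. Following \cite[Proposition 2.5]{FariñaRadi2026FPP}, I would show $G_M \leq H$. The first step is to pass to a normal subgroup. The core $\bigcap_{g} gHg^{-1}$ is again open and proper, and I would check that it is project invariant using fractality. Concretely, conjugating by $\widetilde{g} \in \st_G(x)$ with $\widetilde{g}|_x = g$ gives $(gHg^{-1})_x \le g H_x g^{-1}$ up to adjusting by sections. If this verification gets messy, I would keep $H$ and argue with it directly.

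The second step: since $H$ is open, there is $k$ with $\St_G(k) \leq H$. Take $h \in G_M$ and a vertex $v$ with $\abs{v} = k + N$. By definition of $G_M$ there is $\widetilde{h} \in \St_G(k) \cap \st_G(v)$ with $\widetilde{h}|_v = h$. Then $\widetilde{h} \in H$, and project invariance applied repeatedly along $v$ gives $h = \widetilde{h}|_v \in H_v \leq H$. Hence $G_M \leq H \neq G$. This step needs neither normality nor the martingale hypothesis, so the core reduction in the first step may be unnecessary.

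The main obstacle I expect is in ($\Leftarrow$): ensuring $G_M$ is open in every case, including exceptional maps in positive characteristic, where the paper does not provide a finite-index mixing subgroup. There I would fall back on the observation that exceptional maps are pullbacks, so only the forward direction is needed for them.
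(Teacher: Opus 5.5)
Your main line is the paper's proof in both directions. You reduce to Adams' criterion (\cref{theorem: dynamical pullback project invariant}). For ($\Leftarrow$) you use that $G_M$ is closed, of finite index (hence open) and project invariant. For ($\Rightarrow$) you use $\St_G(k)\le H$ to get $G_M\le \St_G(k)_v\le H_v\le H$; the paper runs the same computation contrapositively, as $G\ge N\ge N_v\ge \St_G(m)_v=G$.

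Your ($\Rightarrow$) is slightly cleaner than the paper's. The paper restricts to a \emph{normal} project-invariant open subgroup, which Adams' criterion as quoted does not supply. You correctly observe that normality is never used, so the core reduction can be dropped. You also note that this direction needs no virtual mixing, so it holds for any delay constant and any field.

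What does not hold up is your fallback for exceptional maps in positive characteristic. The claim that exceptional maps are dynamical pullbacks is only asserted in the introduction, with Douady--Hubbard over $\CC$ behind it; nothing in the paper establishes it in characteristic $p$.

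It is also false for maps conjugate to $x^{\pm d}$. There both post-critical points have $\Gamma_{f,p}=\emptyset$, so $e_p=1$ and $G_m=G$. Your own ($\Rightarrow$) then shows these maps are \emph{not} pullbacks. They do no harm in ($\Leftarrow$) only because $G_M=G$ makes the implication vacuous.

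For the Latt\`es-type signatures $(2,2,2,2)$, $(2,4,4)$, $(2,3,6)$, $(3,3,3)$ in characteristic $p$, you have neither a finite-index mixing subgroup nor a pullback structure. So that case is not proved.

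The honest scope is the one the paper's proof silently inherits when it cites \cref{theorem: every rational is virtually mixing} for the finite index of $G_M$. That means $G$ must be a martingale group (needed for \cref{theorem: Gm is a mixing subgroup}), and, when $f$ is exceptional, $K$ must have characteristic zero with $|K|\le 2^{\aleph_0}$.

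A smaller point: your hyperbolic/euclidean split skips Chebyshev and power maps. For these $\nu_f$ takes the value $\infty$, so \cref{theorem: virtually mixing euclidean case} does not apply. For them, the case analysis of \cref{section: iterated Galois groups are virtually mixing} shows directly that $G/G_m$ is finite.
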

}

\begin{proof}
$(\Rightarrow)$ Suppose that $G_M = G$ (in other words $G$ is mixing) and let $N$ be a project invariant normal open subgroup of $G_\infty(\Ksep,f,t)$. As it is open, there exists $m \geq 1$ such that $N \geq \St_G(m)$. As $G$ is mixing, there exists $v \in T$ such that 
\begin{align*}
G \geq N \geq N_v \geq \St_G(m)_v = G,
\end{align*}
which implies that $G = N$. Therefore $G$ does not have non-trivial normal open project invariantariant subgroups, implying that $f$ is not a dynamical pullback by \cref{theorem: dynamical pullback project invariant}.

$(\Leftarrow)$ The subgroup $G_M$ is closed by definition and of finite index by \cref{theorem: every rational is virtually mixing}. Therefore $G_M$ is open. Moreover, $G_M$ is project invariant by \cref{proposition: properties GM}. As $G_M \neq G$ then $f$ is a dynamical pullback by \cref{theorem: dynamical pullback project invariant}.
\end{proof}

\subsection{When $G_m$ equals $G_M$}

The construction of $G_M$ is theoretical, whereas the construction of $G_m$ is explicit (see \cref{definition: Gm}). As $G_M$ is the maximal mixing subgroup, we clearly have $G_M \geq G_m$. We study in this subsection when they coincide.

\begin{proposition}
Let $T$ be a $d$-regular rooted tree and $G \leq \Aut(T)$ be a virtually mixing closed group with delay constant $N$, mixing closed subgroup $H$ and maximal mixing subgroup $G_M$. If $H$ is project invariant, then $H = G_M$.
\label{proposition: project invariant mixing subgroup is maximal}
\end{proposition}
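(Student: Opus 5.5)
Since $H$ satisfies $H \leq \St_G(n)_v$ for all $n\ge 1$ and $\abs{v}\ge n+N$, \cref{proposition: properties GM}(2) gives $H \leq G_M$ immediately. The work is the reverse inclusion $G_M \leq H$, and the plan is to exploit that $H$ is open and project invariant, mirroring the direct implication in the proof of \cref{theorem: dynamical pullback maximal mixing subgroup}.

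First I would check that $H$ is open. It is closed and of finite index in the profinite group $G$, so its complement is a finite union of closed cosets and $H$ is open. Hence there is $m \geq 1$ with $\St_G(m) \leq H$. I would also like $H$ to be normal, so that cosets and sections interact well. If the hypothesis does not already give normality, I would replace $H$ by its (still open) normal core, or argue directly with $H$ as below, since normality is not strictly needed.

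Next, fix $v \in \mathcal{L}_{m+N}$. By the definition of $G_M$ (\cref{definition: maximal mixing subgroup}), we have $G_M \leq \St_G(m)_v$. Every element of $\St_G(m)_v$ has the form $\widetilde g|_v$ with $\widetilde g \in \St_G(m)\cap \st_G(v) \leq \st_H(v)$, so $\St_G(m)_v \leq H_v$. Project invariance then gives $H_v \leq H$. Chaining these yields
\begin{align*}
G_M \leq \St_G(m)_v \leq H_v \leq H,
\end{align*}
and therefore $H = G_M$.

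The only delicate point I expect is the openness of $H$, which requires $H$ to be closed. This is assumed in the statement, and in our applications it is guaranteed by \cref{lemma: virtually mixing closed normal closure}. Beyond that, I must make sure that $H_v$ uses sections of elements of $H$ fixing $v$, which matches the definition of project invariance with $H_v$ taken inside $G$.
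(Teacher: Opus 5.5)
Your argument is correct and is essentially the paper's: openness of $H$ gives $\St_G(m)\le H$, and then $G_M\le \St_G(m)_v\le H_v\le H$ by project invariance. The paper differs only in that it intersects over a sequence of vertices $v_n$ with $\abs{v_n}=n+N$ where your single vertex already suffices, and it leaves the inclusion $H\le G_M$ implicit. (Drop the aside about passing to the normal core: the core of $H$ need not be project invariant, and the claim is about $H$ itself.)
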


\begin{proof}
By hypothesis $H$ is closed and of finite index. Therefore it is open. Let $n_0 \in \NN$ such that $H \geq \St_G(n)$ for all $n \geq n_0$. Let $\set{v_n}_{n \geq n_0}$ be a set of vertices of $T$ such that $\abs{v_n} = n+N$. Then
\begin{align*}
H \geq H_{v_n} \geq \St_G(n)_{v_n}
\end{align*}
for all $n \geq n_0$. Taking intersection and using \cref{equation: simplification definition GM}
\begin{equation*}
H \geq \bigcap_{n \geq n_0} H_{v_n} \geq \bigcap_{n \geq n_0} \St_G(n)_{v_n} = G_M. \qedhere
\end{equation*}
\end{proof}

{
\renewcommand{\thetheorem}{5}
\begin{theorem}
Let $f$ be a tamely ramified rational function of degree $d \geq 2$ defined over a field $K$ such that $G = G_\infty(\Ksep,f,t)$ is a martingale group. Let $G_M$ be the maximal mixing subgroup of $G$ of delay constant $N$ and $G_m$ the subgroup from \cref{definition: Gm}. If $G_m$ is project invariant, then $G_m = G_M$.
\end{theorem}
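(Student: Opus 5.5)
The plan is to prove the two inclusions $G_m \leq G_M$ and $G_M \leq G_m$ separately, reducing everything to \cref{proposition: project invariant mixing subgroup is maximal}. The delay constant $N$ is the one furnished by \cref{theorem: Gm is a mixing subgroup}, which depends only on the ramification portrait of $f$.

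\textbf{Step 1: $G_m \leq G_M$.} By \cref{theorem: Gm is a mixing subgroup}, $G_m \leq \St_G(n)_v$ for every $n \geq 1$ and every $v$ with $\abs{v} \geq n+N$. Part (2) of \cref{proposition: properties GM} then gives $G_m \leq G_M$ directly. By \cref{definition: Gm}, $G_m$ is a closed normal subgroup of $G$.

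\textbf{Step 2: $G_M \leq G_m$.} I would apply \cref{proposition: project invariant mixing subgroup is maximal} with $H = G_m$. Its hypotheses are that $H$ is closed, mixing with delay constant $N$, project invariant, and of finite index. The first two come from Step 1, and project invariance is the hypothesis of the theorem. The only remaining input is that $G_m$ has finite index in $G$, so that it is open and hence contains $\St_G(n)$ for all large $n$. The argument in that proposition then gives
\begin{align*}
G_M = \bigcap_{n \geq n_0} \St_G(n)_{v_n} \leq \bigcap_{n\ge n_0} (G_m)_{v_n} \leq G_m,
\end{align*}
using \cref{equation: simplification definition GM} and project invariance.

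\textbf{Step 3: finite index of $G_m$, split by orbifold type.}
\begin{enumerate}[\normalfont(1)]
\item If $f$ has hyperbolic orbifold, finiteness of $G/G_m$ is exactly \cref{theorem: virtually mixing hyperbolic case}.
\item If $f$ is euclidean with some $v_f(p) = \infty$ (signatures $(2,2,\infty)$ and $(\infty,\infty)$), these ramification portraits were already covered by the case analysis of \cref{section: The hyperbolic orbifold case and Von Dyck groups} and by \cref{remark: no good pairs clasification}. That analysis shows $G/G_m$ is finite there as well; I would verify this by rereading those cases.
\end{enumerate}

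\textbf{Main obstacle: the euclidean case with all $v_f(p)<\infty$.} There $G_m = 1$. The trivial group is project invariant, but it does not have finite index, since $G \cong \overline{\IMG(f)}$ is infinite. Moreover, \cref{theorem: virtually mixing euclidean case} shows $G_M \geq \overline{\IMG(F)} \neq 1$. So the equality $G_m = G_M$ cannot hold for these maps with $G_m$ read literally as in \cref{definition: Gm}, and no argument will prove it in that form. The argument above covers every case in which $G_m$ is a finite-index (equivalently open) subgroup, which by Step 3 means every non-exceptional-lattice case.

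For the lattice case, I would rerun Step 2 with $H = \overline{\IMG(F)}$, the mixing subgroup that \cref{theorem: every rational is virtually mixing} produces for such maps. That subgroup is closed, of finite index, and has delay constant $0$. If it is project invariant, which I expect from the recursion $(g|_i)(x) = x + u_i$ computed before \cref{theorem: virtually mixing euclidean case}, then \cref{proposition: project invariant mixing subgroup is maximal} identifies it with $G_M$. I would state this explicitly as the correct reading of the theorem for these maps.
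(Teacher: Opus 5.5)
Where $G_m$ has finite index, your argument is the paper's proof. It applies \cref{proposition: project invariant mixing subgroup is maximal} with $H=G_m$, and your Step~1 via \cref{proposition: properties GM}(2) supplies the inclusion $G_m\le G_M$, which the proof of that proposition leaves implicit.

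Your objection to the lattice case is correct, and it exposes a gap in the paper's one-line proof rather than in yours. The paper obtains finite index by citing \cref{theorem: every rational is virtually mixing}. But for euclidean $f$ with all $\nu_f(p)<\infty$, the mixing subgroup produced there is $\overline{\IMG(F)}$ (\cref{theorem: virtually mixing euclidean case}), while the subgroup of \cref{definition: Gm} is trivial (by the proposition opening \cref{section: The euclidean orbifold case}). Now $\{1\}$ is project invariant, and $\overline{\IMG(F)}\neq 1$ is mixing with delay $0$, hence contained in $G_M$ for every choice of $N$ by \cref{proposition: properties GM}(2). So a Latt\`es map over $\CC$, which satisfies the martingale hypothesis by \cref{theorem: virtually mixing euclidean case}, contradicts the statement as written.

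The theorem therefore needs the extra hypothesis that $G_m$ has finite index. By the case analysis of \cref{section: iterated Galois groups are virtually mixing}, this amounts to excluding euclidean $f$ with all $\nu_f(p)<\infty$. In that excluded case, your replacement of $G_m$ by $\overline{\IMG(F)}$ is the natural repair.

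Your route also has a side benefit: it never needs the characteristic-zero hypothesis that \cref{theorem: every rational is virtually mixing} imposes on exceptional maps, which the statement omits. In every non-lattice case, finiteness of $G/G_m$ comes from the purely combinatorial analysis of that section.
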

}

\begin{proof}
By \cref{theorem: every rational is virtually mixing} $G$ is virtually mixing with mixing subgroup $G_m$. Then, apply \cref{proposition: project invariant mixing subgroup is maximal} with $H = G_m$.
\end{proof}

\section{Fixed-point proportion of virtually mixing groups}
\label{section: Fixed-point proportion of virtually mixing groups}

We finish this article proving \cref{theorem: FPP virtually mixing groups} and \cref{theorem: not dynamical pullbacks and FPP}. The ideas used will be a combination of the arguments in \cite{FariñaJonesRadi2026corrigendum} and \cite{FariñaRadi2026FPP}.

Let $\mathscr{T}$ be a $d$-regular rooted tree and let $G \leq \Aut(\mathscr{T})$. The \textit{fixed-point proportion} of $G$ is defined as
\begin{align}
\FPP(G) := \lim_{n \rightarrow +\infty} \frac{\# \set{g \in \pi_n(G): g \text{ fixes at least one vertex in } \mathcal{L}_n}}{\abs{\pi_n(G)}}.
\label{equation: fixed-point proportion}
\end{align}

It can be shown that the limit in \cref{equation: fixed-point proportion} always exists (see \cite[Lemma 2.9]{Radi2025FPP} for a proof).

As it was detailed in the introduction, the fixed-point proportion of iterated Galois groups is particularly important for problems in number theory and arithmetic dynamics, and it is for this reason that we devote one section of this article to study the fixed-point proportion of virtually mixing groups. 

\begin{remark}
If $G \leq \Aut(\mathscr{T})$ and $\overline{G}$ is its (topological) closure, then we have $\FPP(G) = \FPP(\overline{G})$, which allows to focus only on the case where $G$ is closed; see \cite[Lemma 2.10]{Radi2025FPP} for a proof of this fact.
\label{remark: FPP of group and closure}
\end{remark}

As $\Aut(T)$ is compact, closed subgroups are also compact and consequently they can be equipped with a unique normalized Haar measure $\mu$. The Borel $\sigma$-algebra of $G$ is generated by the sets
\begin{align*}
C_A := \set{g \in G: \pi_n(g) \in A}
\end{align*}
where $A$ is a subset of $\pi_n(G)$. We will call these sets the \textit{cone sets}. If $a \in \pi_n(G)$, to ease the notation, we will write $C_a$ instead of $C_{\set{a}}$.

Notice that if $a \in \pi_n(G)$ and $g_a$ is an element of $G$ such that $\pi_n(g_a) = a$, then we can write $C_a = g_a \St_G(n)$. Therefore, by the invariance of the Haar measure we have 
\begin{align*}
\mu_G(C_A)=\sum_{a\in A} \mu_G(g_a\mathrm{St}_G(n))=\#A\cdot \mu_G(\mathrm{St}_G(n))=\frac{\#A}{\abs{\pi_n(G)}}.    
\end{align*}

Given $G \leq \Aut(\mathscr{T})$, we define the maps $\set{X_n}_{n \geq 1}$ as 
\begin{align*}
X_n : G &\rightarrow \NN \\
g &\mapsto \# \set{v \in \mathcal{L}_n: g(v) = v}.
\end{align*}

In other words, the maps $X_n$ count how many vertices are fixed by $g$ at level $n$. It is not hard to see that if $G$ is closed, then every map $X_n$ is measurable. We call the family $\set{X_n}_{n \geq 1}$ the \textit{fixed-point process} of $G$. Then, one can prove (see \cite[page 9]{Radi2025FPP}) that in the case that $G$ is closed, its fixed-point proportion can be calculated as
\begin{align*}
\FPP(G) = \mu(\set{g \in G: X_n(g) > 0 \text{ for all } n \geq 1}.
\end{align*}

\begin{remark}
\label{remark: FPP invariant under relabeling}
If $T$ is the canonical $d$-regular rooted tree and $\phi: \mathscr{T} \rightarrow T$ is any labeling for $\mathscr{T}$, it is not hard to see that $\FPP(G) = \FPP(\rho_\phi(G))$. Therefore, for the remainder of \cref{section: Fixed-point proportion of virtually mixing groups}, we may assume that $G \leq \Aut(T)$.
\end{remark}

The clever idea of Jones to consider the fixed-point process (for the first time in \cite{Jones2007}) allows to become the calculation of the fixed-point proportion a probabilistic problem. The most important detail of his idea was to realize that under certain mild conditions of $G$, its fixed-point process is a martingale process:

\begin{proposition}[{see {\cite[Theorem 5.3]{BridyJones2022}} and \cite{Jones2008}}]
\label{proposition: martingale characterization}
Let $G \leq \Aut(T)$ be a closed subgroup. Then, its fixed-point process is a martingale process if and only if for every $n \geq 1$, the subgroup $\St_G(n)$ acts level transitively on $T_v$, for every vertex $v \in \mathcal{L}_n$.
\end{proposition}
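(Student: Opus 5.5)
The plan is to make the martingale property explicit with respect to the natural filtration $\mathcal{F}_n$ generated by the cone sets $C_a$, $a \in \pi_n(G)$. Note that $X_n$ is $\mathcal{F}_n$-measurable, since whether $g$ fixes a vertex of $\mathcal{L}_n$ depends only on $\pi_n(g)$. Note also that $0 \le X_n \le d^n$, so integrability is automatic. The martingale condition therefore reduces to
\begin{align*}
\mathbb{E}[X_{n+1} \mid C_a] = X_n(a) \quad \text{for every } n \geq 1 \text{ and every } a \in \pi_n(G),
\end{align*}
where $X_n(a)$ denotes the number of vertices of $\mathcal{L}_n$ fixed by $a$. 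The main work is to compute the left-hand side exactly in terms of the action of $\St_G(n)$ on the children of level-$n$ vertices.

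Fix $a$ and a lift $g_a \in G$, so that $C_a = g_a \St_G(n)$. By invariance of the Haar measure, the normalized restriction of $\mu$ to $C_a$ is the pushforward of the Haar measure of $\St_G(n)$ under $h \mapsto g_a h$. Write $\mathbb{E}[X_{n+1}\mid C_a] = \sum_{w \in \mathcal{L}_{n+1}} \mu(g(w)=w \mid C_a)$ and consider $w = vx$ with $v \in \mathcal{L}_n$:
\begin{enumerate}[\normalfont(i)]
\item If $a(v) \neq v$, then no element of $C_a$ fixes $w$, so the term vanishes.
\item If $a(v) = v$, then $g_a h(vx) = vx$ holds if and only if $h(vx) = g_a^{-1}(vx)$.
\end{enumerate}
The key observation is that, for Haar-random $h \in \St_G(n)$, the point $h(vx)$ is uniformly distributed on the orbit $O$ of $vx$ under $\St_G(n)$. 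This holds because the stabilizer of $vx$ is a closed subgroup of finite index, whose cosets have equal measure. Hence the probability in (ii) equals $1/\abs{O}$ if $g_a^{-1}(vx) \in O$, and $0$ otherwise.

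Since $\St_G(n)$ is normal in $G$, the element $g_a$ permutes the $\St_G(n)$-orbits on the children of $v$. Summing over $x$ within one orbit $O$ therefore gives $\abs{O} \cdot (1/\abs{O}) = 1$ if $g_a(O) = O$, and $0$ otherwise. This yields the formula
\begin{align*}
\mathbb{E}[X_{n+1} \mid C_a] = \sum_{v \in \mathcal{L}_n,\ a(v)=v} \#\set{\text{$\St_G(n)$-orbits on children of $v$ preserved by } g_a}.
\end{align*}
Establishing this formula carefully, in particular the uniform distribution on orbits and the well-definedness of ``preserved orbits'' via normality, is the main obstacle; both directions then follow quickly.

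For the reverse implication, assume $\St_G(n)$ acts transitively on $(T_v)^1$ for every $v \in \mathcal{L}_n$. Then each fixed $v$ carries exactly one orbit, and that orbit is necessarily preserved, so the sum equals $X_n(a)$; this is the martingale property. Transitivity at every level of $T_v$ is equivalent to transitivity on the first level for all $n$, as noted in \cref{section: groups acting on trees}. For the forward implication, apply the formula to $a = \pi_n(1)$ with $g_a = 1$. Every $v \in \mathcal{L}_n$ is fixed and every orbit is preserved, so the conditional expectation equals the total number of $\St_G(n)$-orbits on $\mathcal{L}_{n+1}$, while $X_n(a) = d^n$. Since each vertex $v$ contributes at least one orbit, equality forces exactly one orbit below each $v$, which is the claimed transitivity for every $n$.
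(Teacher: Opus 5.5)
Your proof is correct. The paper gives no proof of its own and simply cites Bridy--Jones and Jones. Your formula expresses $\mathbb{E}[X_{n+1}\mid C_a]$ as the number of $g_a$-invariant $\St_G(n)$-orbits on the children of the $a$-fixed vertices of $\mathcal{L}_n$; this is the coset form of Burnside's lemma, which is the standard argument behind the cited result, and both implications follow from it exactly as you describe.

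One further point in your favour: $C_{\pi_n(1)} = \St_G(n) = \set{X_n = d^n}$ already lies in $\sigma(X_1,\dots,X_n)$. So your forward direction also works if ``martingale'' is read with respect to the natural filtration of $(X_n)$ rather than the cone-set filtration, and the equivalence does not depend on that choice.
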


\cref{proposition: martingale characterization} justifies our definition of martingale group used throughout the article. Basically, a group $G$ is a martingale group if and only if its fixed-point process is a martingale process. The interest in martingales is due to the following well-known theorem in probability:

\begin{theorem}[{\cite[Theorem 7, Chapter 12]{Grimmett2020}}]
Let $\{Y_n\}_{n\ge 1}$ be a non-negative martingale over a probability space $(\Omega,\PP)$ with $\mathrm{E}(Y_1)<\infty$. Then
$$\lim_{n\to\infty} Y_n(x)$$
exists for $x\in X$ almost surely and with finite value.
\end{theorem}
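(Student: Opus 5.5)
This is the classical martingale convergence theorem, and the plan is to prove it through Doob's upcrossing inequality, which only uses the martingale property and an $L^1$ bound. Since $Y_n \geq 0$ and $\{Y_n\}$ is a martingale, $\mathrm{E}(|Y_n|) = \mathrm{E}(Y_n) = \mathrm{E}(Y_1) < \infty$ for every $n$. So the sequence is bounded in $L^1$, with $\sup_n \mathrm{E}(Y_n^-) = 0$. This is the only quantitative input needed.

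\textbf{Step 1: upcrossing inequality.} For rationals $a<b$, let $U_n[a,b](x)$ be the number of upcrossings of $[a,b]$ by $Y_1(x),\dots,Y_n(x)$, i.e. the number of completed passages from below $a$ to above $b$. Define the predictable process $C_k \in \{0,1\}$ that equals $1$ from the first time after $Y$ drops below $a$ until the first subsequent time $Y$ exceeds $b$, and equals $0$ otherwise. Form the martingale transform $Z_n = \sum_{k=2}^{n} C_k (Y_k - Y_{k-1})$.
\begin{enumerate}[(i)]
\item Each completed upcrossing contributes at least $b-a$ to $Z_n$.
\item The possibly unfinished last stretch contributes at least $-(Y_n - a)^-$.
\item Hence $(b-a)U_n[a,b] \leq Z_n + (Y_n-a)^-$.
\item Because $C_k$ is bounded and determined by $Y_1,\dots,Y_{k-1}$, the process $Z_n$ is again a martingale with $\mathrm{E}(Z_n) = 0$.
\end{enumerate}
Taking expectations gives
\begin{align*}
(b-a)\,\mathrm{E}(U_n[a,b]) \leq \mathrm{E}((Y_n-a)^-) \leq |a| .
\end{align*}
This step is the main technical point; the rest is measure-theoretic bookkeeping.

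\textbf{Step 2: almost sure convergence in $[0,\infty]$.} By monotone convergence, $U_\infty[a,b] = \lim_n U_n[a,b]$ satisfies $\mathrm{E}(U_\infty[a,b]) \leq |a|/(b-a) < \infty$, so $U_\infty[a,b] < \infty$ almost surely. Now consider the event $\{\liminf_n Y_n < a < b < \limsup_n Y_n\}$. It is contained in $\{U_\infty[a,b] = \infty\}$ and is therefore null. Taking the countable union over rational pairs $a<b$, we get that $\liminf_n Y_n = \limsup_n Y_n$ almost surely. Hence $Y_\infty := \lim_n Y_n$ exists almost surely in $[0,\infty]$.

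\textbf{Step 3: finiteness of the limit.} Apply Fatou's lemma to the non-negative variables $Y_n$:
\begin{align*}
\mathrm{E}(Y_\infty) \leq \liminf_n \mathrm{E}(Y_n) = \mathrm{E}(Y_1) < \infty .
\end{align*}
Thus $Y_\infty < \infty$ almost surely, which gives the statement, with the limit existing and finite for almost every point of the probability space.
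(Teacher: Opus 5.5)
Your proof is correct. It is the standard Doob upcrossing argument. Nonnegativity supplies both the $L^1$ bound $\mathrm{E}(Y_n)=\mathrm{E}(Y_1)$ and the estimate $\mathrm{E}((Y_n-a)^-)\le |a|$, and Fatou's lemma then gives finiteness of the limit. The paper proves nothing here; it only quotes the martingale convergence theorem from Grimmett, whose standard proof is essentially this crossings-and-convergence argument, so your route matches the cited source.
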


As in our case the fixed-point process ranges over the non-negative integer numbers, the existence of $\lim_{n \to \infty} X_n(g)$ implies that $\set{X_n(g)}_{n \geq 1}$ is eventually constant. It is worth mentioning that this constant will depend on the element $g \in G$ (see \cite{Radi2025FPP} for examples).

Our main goal in this section is to find a sufficient condition for a virtually mixing group to have zero fixed-point proportion. This will be based in the following martingale strategy first developed by Jones in \cite{Jones2007} and then generalized by Fariña-Asategui and the author in \cite[Lemma 3.2]{FariñaRadi2025FPP}. If $S_1,S_2$ are measurable subsets of $G$ such that $\mu(S_2) > 0$, recall that $\mu(S_1|S_2)$ represents the conditional probability, namely,
\begin{align*}
\mu(S_1|S_2) := \frac{\mu(S_1 \cap S_2)}{\mu(S_2)}.
\end{align*}

\begin{lemma}[{\cite[Lemma 3.2]{FariñaRadi2025FPP}}]
\label{lemma: martingale strategy}
Let $G\le \mathrm{Aut}(T)$ be a closed subgroup. Assume that:
\begin{enumerate}[\normalfont(i)]
\item the fixed-point process $\{X_n\}_{n\ge 1}$ of $G$ is a martingale and
\item for any $r>0$ there exists $\epsilon:=\epsilon(r)$ and $m:=m(r)$ such that for infinitely many $n\ge 1$ we have $\mu(X_{n+m} = r \mid  X_n = r) \leq 1 - \epsilon$.
\end{enumerate}
Then, the fixed-point process of $G$ is eventually zero almost surely, i.e. $\FPP(G) = 0$.
\end{lemma}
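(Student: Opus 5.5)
The plan is to combine the almost-sure convergence of the fixed-point process with a limiting argument on the events $\set{X_n = r}$. This avoids needing any Markov property of $\set{X_n}_{n\ge1}$.

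\textbf{Step 1: the process stabilizes.} Since $G$ is closed, $\mu$ is the normalized Haar measure and each $X_n$ is measurable. By (i), $\set{X_n}_{n \geq 1}$ is a non-negative martingale, and $\mathrm{E}(X_1) \leq d < \infty$ because $X_1(g) \leq \#\mathcal{L}_1 = d$. The martingale convergence theorem quoted above gives that $\lim_n X_n(g)$ exists and is finite for $\mu$-almost every $g$. Because the $X_n$ take values in $\NN$, for almost every $g$ the sequence $X_n(g)$ is eventually constant, equal to some $X_\infty(g) \in \NN$. For $r \in \NN$ set $E_r := \set{g \in G : X_\infty(g) = r}$.

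\textbf{Step 2: $\mu(E_r) = 0$ for every $r > 0$.} This is the heart of the argument. On a full-measure set, $X_n(g)$ is eventually equal to $X_\infty(g)$. Hence the indicator of $\set{X_n = r}$ converges almost surely to the indicator of $E_r$, and for each fixed $m$ so does the indicator of $\set{X_n = r}\cap\set{X_{n+m}=r}$. By dominated convergence,
\begin{align*}
\mu(X_n = r) \xrightarrow[n\to\infty]{} \mu(E_r) \quad\text{and}\quad \mu(X_n = r,\, X_{n+m} = r) \xrightarrow[n\to\infty]{} \mu(E_r).
\end{align*}
Now fix $r>0$ and take $\epsilon=\epsilon(r)$, $m=m(r)$ from (ii). If $\mu(E_r) > 0$, then $\mu(X_n=r)>0$ for all large $n$, so the conditional probability in (ii) is defined. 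For the infinitely many $n$ provided by (ii),
\begin{align*}
\mu(X_n = r,\, X_{n+m} = r) \leq (1-\epsilon)\,\mu(X_n = r).
\end{align*}
Passing to the limit along these $n$ gives $\mu(E_r) \leq (1-\epsilon)\mu(E_r)$, which forces $\mu(E_r) = 0$. If instead $\mu(E_r) = 0$ there is nothing to prove.

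\textbf{Step 3: conclusion.} The set $\bigcup_{r \geq 1} E_r$ is a countable union of null sets. Therefore $X_\infty = 0$ almost surely, that is, the fixed-point process is eventually zero almost surely. Finally,
\begin{align*}
\FPP(G) = \mu\bigl(\set{g : X_n(g) > 0 \text{ for all } n}\bigr) \leq \mu(X_\infty > 0) = 0.
\end{align*}

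The only delicate point is Step 2. Hypothesis (ii) conditions only on the present event $\set{X_n=r}$ and not on the past, so a direct product estimate of the form $(1-\epsilon)^k$ along a chain of levels is unavailable. The limiting comparison above sidesteps this by using the a.s. stabilization from Step 1 instead.
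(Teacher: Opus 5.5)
Your proof is correct and complete. The paper does not prove this lemma itself; it quotes it verbatim from Fari\~na-Asategui and the author, so there is no in-paper argument to compare against, and what you have written is a self-contained proof of the cited statement.

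The key point is the one you identify. Hypothesis (ii) conditions only on the present event $\{X_n=r\}$, so conditional bounds cannot be chained along levels. You get around this by using almost-sure stabilization of the integer-valued martingale, which gives $\mu(X_n=r)\to\mu(E_r)$ and $\mu(X_n=r,\,X_{n+m}=r)\to\mu(E_r)$ by dominated convergence. Applying (ii) along a subsequence then yields $\mu(E_r)\le(1-\epsilon)\mu(E_r)$.

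A by-product of your route is that hypothesis (i) enters only through the almost-sure convergence it provides. The same argument therefore works for any $\mathbb{N}$-valued process that is almost surely eventually constant.

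Two small remarks:
\begin{enumerate}
\item You should state explicitly that $\epsilon(r)>0$; otherwise (ii) is vacuous.
\item Your treatment of undefined conditional probabilities is exactly what the application requires. In the proof of \cref{theorem: FPP virtually mixing groups}, the bound $\mu(X_{n+N+m}=r\mid X_n=r)\le 1-\epsilon$ is established only for those $n$ with $A_{n,r}\neq\emptyset$. Your observation that $\mu(E_r)>0$ forces $\mu(X_n=r)>0$ for all large $n$ shows that this is enough.
\end{enumerate}
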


In order to apply \cref{lemma: martingale strategy}, we rely on ergodic properties of the following operator, first studied by Fariña-Asategui in \cite{Fariña2025ergodic}. Let $G$ be a self-similar closed subgroup of $\Aut(T)$.
Consider the right free monoid action $\mathcal{T}: T \curvearrowright G$ given by
\begin{align*}
G \times T & \rightarrow G \\
(g,v) &\mapsto g|_v.
\end{align*}

Note that this action is well-defined as we have $(g|_v)|_w = g|_{vw}$ by \cref{equation: concatenation of sections}.
This action makes the tree act on the group by taking sections of the elements. As $G$ comes equipped with a normalized Haar measure, we can study the operator $\mathcal{T}$ from the point of view of its ergodic properties. When we specify a vertex $v$ in $T$, we have the map $\mathcal{T}_v: G \rightarrow G$ given by 
\begin{align*}
\mathcal{T}_v(g) = g|_v.
\end{align*}
Therefore, if $C_A$ is a cone set, then $\mathcal{T}_v^{-1}(C_A)$ represents all the elements in $G$ whose section at $v$ is in $C_A$.

The following lemma was originally proven in \cite{FariñaJonesRadi2026corrigendum}, but we add the main ideas of the proof for completeness:

\begin{lemma}[{Subindependence lemma}]
\label{lemma: subindependence lemma}
Let $T$ be the canonical $d$-regular rooted tree and $G \leq \Aut(T)$ a closed self-similar group with normalized Haar measure $\mu$. Then, for every $n,m \geq 1$, $a \in \pi_n(G)$, $b \in \pi_m(G)$ and $v \in \mathcal{L}_n$ satisfying $C_a \cap \mathcal{T}_v^{-1}(C_b) \neq \emptyset,$ we have $$\mu(C_a \cap \mathcal{T}_v^{-1}(C_b)) \geq \mu(C_a) \cdot \mu(C_b).$$
\end{lemma}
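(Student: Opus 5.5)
The plan is to convert the measure of the intersection into a count of elements at a finite level, and then use an injective homomorphism of stabilizers to compare the count with a product.

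\emph{Reduction to a finite level.} Since cone sets of level $n+m$ refine both $C_a$ and $\mathcal{T}_v^{-1}(C_b)$, I would first check that $\mathcal{T}_v^{-1}(C_b)$ is a union of cone sets at level $n+m$. Indeed, $\pi_m(g|_v)$ is determined by $\pi_{n+m}(g)$. Hence
\begin{align*}
\mu(C_a \cap \mathcal{T}_v^{-1}(C_b)) = \frac{\#\set{c \in \pi_{n+m}(G): \pi_n(c) = a,\ \pi_m(c|_v) = b}}{\abs{\pi_{n+m}(G)}},
\end{align*}
while $\mu(C_a)\mu(C_b) = 1/(\abs{\pi_n(G)}\abs{\pi_m(G)})$. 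Since $\abs{\pi_{n+m}(G)} = \abs{\pi_n(G)}\cdot\abs{\pi_{n+m}(\St_G(n))}$, it suffices to prove
\begin{align*}
\#\set{c : \pi_n(c)=a,\ \pi_m(c|_v)=b} \;\geq\; \frac{\abs{\pi_{n+m}(\St_G(n))}}{\abs{\pi_m(G)}}.
\end{align*}

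\emph{Coset structure.} The intersection is nonempty, so fix $g_0 \in C_a$ with $\pi_m(g_0|_v) = b$. The elements of $C_a$ are exactly $g_0 h$ with $h \in \St_G(n)$. Because $h$ fixes $v$, the rule $(g_0h)|_v = g_0|_{v}\, h|_v$ from \cref{equation: concatenation of sections} gives $\pi_m((g_0h)|_v) = b\,\pi_m(h|_v)$. So the condition on $c = \pi_{n+m}(g_0h)$ is precisely that $\pi_m(h|_v) = 1$. The count is therefore the size of the image in $\pi_{n+m}(G)$ of the subgroup
\begin{align*}
K_v := \set{h \in \St_G(n) : h|_v \in \St(m)},
\end{align*}
that is, $\abs{\pi_{n+m}(K_v)}$, since left translation by $\pi_{n+m}(g_0)$ is a bijection.

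\emph{Index bound.} The map $\St_G(n) \to \pi_m(G)$, $h \mapsto \pi_m(h|_v)$, is a homomorphism: it is $\varphi_v$ followed by $\pi_m$, and it lands in $\pi_m(G)$ by self-similarity. Its kernel is $K_v$. Passing to level $n+m$, this map factors through $\pi_{n+m}(\St_G(n))$, with kernel $\pi_{n+m}(K_v)$, because $K_v \supseteq \St_G(n+m)$. Hence
\begin{align*}
[\pi_{n+m}(\St_G(n)) : \pi_{n+m}(K_v)] \leq \abs{\pi_m(G)},
\end{align*}
which is exactly the needed inequality.

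\emph{Main obstacle.} The delicate point is the bookkeeping of the finite-level reduction. One has to verify that the condition $\pi_m(g|_v) = b$ depends only on $\pi_{n+m}(g)$, and that the factorization through $\pi_{n+m}$ is legitimate; this uses $K_v \supseteq \St_G(n+m)$. Self-similarity is needed only to ensure that the target of $h \mapsto \pi_m(h|_v)$ is $\pi_m(G)$ rather than the larger group $\pi_m(\Aut(T))$; with the larger target the bound would be too weak.
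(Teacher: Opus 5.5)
Your proof is correct and is essentially the paper's argument: the paper uses Haar translation invariance to move to $C_{\mathrm{id}_n}\cap\mathcal{T}_v^{-1}(C_{\mathrm{id}_m})$, identifies that set with the kernel of the homomorphism $\pi_{n+m}(\St_G(n)) \to \pi_m(G)$ induced by $h \mapsto \pi_m(h|_v)$, and bounds the image by $\abs{\pi_m(G)}$ using self-similarity, which is exactly your $K_v$ count. The only differences are cosmetic: you do the translation by hand via $g_0h$, and your opening sentence mentions an ``injective'' homomorphism although what you actually (and correctly) use is the first isomorphism theorem to bound an index.
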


\begin{proof}
Denote $\mathrm{id}_n = \pi_n(\mathrm{id})$ and $\mathrm{id}_m = \pi_m(\mathrm{id})$. As $C_a \cap \mathcal{T}_v^{-1}(C_b) \neq \emptyset$ and $\mu$ is translation invariant, we get 
$$\mu(C_a \cap \mathcal{T}_v^{-1}(C_b)) = \mu(C_{\mathrm{id}_n} \cap \mathcal{T}_v^{-1}(C_{\mathrm{id}_m})).$$

Let $\varphi: \pi_{n+m}(\mathrm{St}_G(n)) \rightarrow \pi_m(G)$ be the group homomorphism induced by the composition $\pi_m \circ \varphi_v \mid_{\mathrm{St}_G(n)}$. Then 
\begin{align*}
\mu(C_{\mathrm{id}_n} \cap \mathcal{T}_v^{-1}(C_{\mathrm{id}_m})) &= \frac{\abs{\ker(\varphi)}}{[G: \mathrm{St}_G(n+m)]} = \frac{1}{\abs{\IIm(\varphi)}}\cdot  \frac{\abs{\pi_{n+m}(\mathrm{St}_G(n))}}{[G: \mathrm{St}_G(n+m)]} \\
&\geq \frac{1}{\abs{\pi_m(G)}} \cdot\frac{\abs{\pi_{n+m}(\mathrm{St}_G(n))}}{[G: \mathrm{St}_G(n+m)]} \\
&= \frac{1}{|\pi_m(G)|}\cdot  \frac{1}{|\pi_n(G)|}= \mu(C_a) \cdot \mu(C_b). \qedhere
\end{align*}
\end{proof}

We are now ready to prove \cref{theorem: FPP virtually mixing groups}:

{
\renewcommand{\thetheorem}{6}
\begin{theorem}
Let $G \leq \Aut(T)$ be a martingale virtually mixing group with mixing subgroup $H$. If at every coset $S$ of $G/H$ there exists an element $s \in S$ such that $s$ fixes infinitely many ends, then $$\FPP(G) = 0.$$
\end{theorem}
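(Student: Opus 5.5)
We would apply \cref{lemma: martingale strategy}. Hypothesis (i) holds because $G$ is a martingale group, by \cref{proposition: martingale characterization}. So the work is (ii): given $r>0$, find $\epsilon>0$ and $m$ such that for all large $n$
\[
\mu(X_{n+m}=r\mid X_n=r)\le 1-\epsilon .
\]
Put differently, conditionally on fixing exactly $r$ vertices at level $n$, with probability at least $\epsilon$ some fixed vertex at level $n$ has, $m$ levels below it, either no fixed descendant or at least two. The strategy follows \cite{FariñaJonesRadi2026corrigendum} and \cite{FariñaRadi2026FPP}.

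\textbf{Step 1: choice of $m$ and of a section pattern.} Since $H$ is open (finite index and closed, after passing to the closure via \cref{lemma: virtually mixing closed normal closure}), take $m_0$ with $\St_G(m_0)\le H$. Let $S_1,\dots,S_k$ be the cosets of $H$ and choose $s_i\in S_i$ fixing infinitely many ends. Then $s_i$ fixes at least two vertices at some level, say $m_i$. Set $m:=\max(m_0,m_1,\dots,m_k)+N$, where $N$ is the delay constant.

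The key claim is as follows. Let $g\in C_a$ with $a\in\pi_n(G)$ fixing some $v\in\mathcal L_n$, and write $g|_v\in S_i$. Then the conditional Haar measure of the event that $g|_v$, restricted to $m$ levels, agrees with $s_i$ is bounded below uniformly. On that event $v$ has at least two fixed descendants at level $n+m$, so the count $X_{n+m}$ is not simply $X_n$ with every fixed vertex extending uniquely.

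\textbf{Step 2: realizing the pattern inside $C_a$.} We must show that $C_a\cap\mathcal T_v^{-1}(C_{b})\neq\emptyset$, where $b=\pi_m(s_i)$, whenever $g|_v\in S_i$ for some $g\in C_a$. This is where virtually mixing enters.

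Write $s_i=g|_v\cdot h$ with $h\in H$. Virtually mixing gives an element $\tilde h\in\St_G(n-N)\cap\st_G(v)$ with $\tilde h|_v=h$. We then need $\pi_n(g\tilde h)=a$, but $\tilde h$ only fixes level $n-N$, not level $n$. To fix this we would shift indices: apply mixing at level $n$ with a vertex of depth $n+N$, taking $v'$ to be a fixed descendant of $v$ at depth $N$, which exists with positive conditional probability. Alternatively, we would use that $H\ge\St_G(m_0)$, so that only the coset of the section matters.

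Once nonemptiness holds, \cref{lemma: subindependence lemma} gives
\[
\mu\bigl(C_a\cap\mathcal T_v^{-1}(C_b)\bigr)\ge \mu(C_a)\,\mu(C_b)\ge \mu(C_a)/|\pi_m(G)|,
\]
so $\epsilon:=1/|\pi_m(G)|$ works uniformly in $n$. Summing over the $a$ with $X_n=r$ gives the bound in (ii). One more case must be covered: if $X_{n+m}$ could still equal $r$ because other fixed vertices lose their descendants, we also use the coset pattern that kills fixed points. We handle this by simply showing that the event ``$X_{n+m}\ne r$'' has conditional probability at least $\epsilon$. To do so, we pick one fixed vertex $v$ and condition on the other sections, which is an independence-type argument, again via subindependence.

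\textbf{Main obstacle.} The hardest step is Step 2, namely aligning the delay constant $N$ so that the modifying element lies in $\St_G(n)$ while still controlling the section at a vertex fixed by $g$. I would first try working at levels $n$ and $n+N$, using the martingale property to guarantee that fixed vertices at level $n$ have fixed descendants at level $n+N$ with conditional probability bounded below. Failing that, I would absorb $N$ into $m$ by conjugating with fractal elements as in \cref{lemma: fractal groups property}.
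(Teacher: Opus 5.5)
\textbf{Gap: choice of $m$ in Step 1.} Choosing $m$ so that each $s_i$ fixes at least two vertices at some level does not make $X_{n+m}\neq r$ on the event you construct. The other $r-1$ fixed vertices of level $n$ may lose all their fixed descendants. For example, with $r=2$, nothing in your argument rules out one fixed vertex having two fixed descendants exactly when the other has none, which would give $X_{n+m}=2$ almost surely on $\{X_n=2\}$.

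Your patch does not close this. The hypotheses give no coset representative that kills fixed points. Neither virtual mixing nor the subindependence lemma lets you control sections at several vertices of level $n$ simultaneously or independently; both are one-vertex statements.

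The missing idea is to let $m$ depend on $r$, which \cref{lemma: martingale strategy} permits. Each of the finitely many $s_i$ fixes infinitely many ends, so there is $m=m(r)$ with $X_m(s_i)>r$ for all $i$. On the realized event, the single vertex carrying the prescribed section then already has more than $r$ fixed descendants. Hence $X_{n+N+m}>r$ regardless of what happens below the other fixed vertices. This is exactly how the paper argues.

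\textbf{Step 2: your first fix is correct, with two adjustments.} Your first fix for the index alignment is the right one and matches the paper. Take $u\in\mathcal L_n$ fixed by $a$ and $v'\in\mathcal L_{n+N}$ below $u$. Because $\St_G(n)$ acts transitively below $u$, the set $C_a\cap\st_G(v')$ is nonempty and has measure at least $\mu(C_a)/d^N$. Since $\abs{v'}=n+N$, virtual mixing gives $h\in\St_G(n)\cap\st_G(v')$ with $h|_{v'}=(g|_{v'})^{-1}s_i$. Then $gh\in C_a\cap\st_G(v')$ has section $s_i$ at $v'$. The subindependence lemma then yields
\[
\mu\bigl(X_{n+N+m}=r\mid X_n=r\bigr)\le 1-\bigl(d^N\abs{\pi_m(G)}\bigr)^{-1}.
\]
The two adjustments:
\begin{itemize}
\item $\epsilon$ must include the factor $d^{-N}$, rather than being $1/\abs{\pi_m(G)}$.
\item The lemma is applied with $N+m$ in place of $m$.
\end{itemize}
Your alternative route through $\St_G(m_0)\le H$ does not resolve the alignment problem, and $m_0$ plays no role in the argument.
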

}

\begin{proof}
By \cref{remark: FPP of group and closure}, we may assume that $G$ is closed. Let $\mathcal{S} := \set{s_1, \cdots, s_{[G:H]}}$ be the set of representatives of $G/H$ where each $s_i$ fixes infinitely ends. Let $r \geq 1$. As $\mathcal{S}$ is finite, there exists a constant $m := m(r)$ such that $X_m(s_i) > r$ for every $i = 1,\dots, [G:H]$.

Define
\begin{align*}
A_{n,r} := \set{a \in \pi_n(G): X_n(a) = r}.
\end{align*}
Note that $X_n(a)$ is well-defined as the amount of fixed vertices on level $n$ only depends on the action on the first $n$ levels. Assume $A_{n,r} \neq \emptyset$ as otherwise there is nothing to prove. If $a \in A_{n,r}$ with $r \geq 1$, then there exists a vertex $u_a \in \mathcal{L}_n$ such that $u_a$ is fixed by $a$. Choose any vertex $v_a \in \mathcal{L}_{n+N}$ below $u_a$ and define 
\begin{align*}
B_{a,v_a} := \set{a' \in \pi_{n+N}(G): \pi_{(n+N),n}(a') = a \text{ and } a'(v_a) = v_a}.
\end{align*}
Then, one has that
\begin{align*}
C_{B_{a,v_a}} = C_a \cap \St_G(v_a).
\end{align*}

We claim $C_{B_{a,v_a}} \neq \emptyset$. Indeed, let $g \in \pi_n^{-1}(a)$. Since $G$ is a martingale group, the stabilizer $\St_G(n)$ acts transitively below $u_a$, so we may multiply $g$ by an element $h_1 \in \St_G(n)$ such that $g h_1$ fixes $v_a$. 

If $g \in C_{B_{a,v_a}}$, as $G$ is self-similar we have $g|_{v_a} \in G$, so in particular $g|_{v_a}$ is in one coset $S$ of $G/H$. Say $s_i \in \mathcal{S} \cap S$. As $G$ is virtually mixing with delay constant $N$ and mixing subgroup $H$, there exists an element $h_2 \in \St_G(n) \cap \st_G(v_a)$ such that $h_2|_{v_a} = (g|_{v_a})^{-1} \cdot s_i$. Therefore $g h_2 \in C_{B_{a,v_a}}$ and $(g h_2)|_{v_a} = s_i$, which implies that 
\begin{align*}
C_{B_{a,v_a}} \cap \mathcal{T}_{v_a}^{-1}(C_{\pi_m(s_i)}) \neq \emptyset.
\end{align*}

By the subindependence lemma, we conclude 
\begin{align}
\mu(C_{B_{a,v_a}} \cap \mathcal{T}_{v_a}^{-1}(C_{\pi_m(s_i)})) \geq \mu(C_{B_{a,v_a}}) \cdot \mu(C_{\pi_m(s_i)}) = \mu(C_{B_{a,v_a}}) \cdot \frac{1}{\abs{\pi_m(G)}}.
\label{equation: measure Bava cap si}
\end{align}

We now estimate $\mu(C_{B_{a,v_a}})$. As $v_a$ is below $u_a$, we can write it as the concatenation $v_a = u_a w_a$ where $\abs{w_a} = N$. Then
\begin{align*}
C_{B_{a,v_a}} = C_a \cap \st_G(v_a) = C_a \cap \mathcal{T}_{u_a}^{-1}(\st_G(w_a)).
\end{align*}

As $C_{B_{a,v_a}}$ is non-empty, again by the subindependence lemma
\begin{align}
\mu(C_{B_{a,v_a}}) = \mu(C_a \cap \mathcal{T}_{u_a}^{-1}(\st_G(w_a))) \geq \mu(C_a) \cdot \mu(\st_G(w_a)).
\label{equation: measure Bava}
\end{align}

As $G$ is level-transitive, by the orbit stabilizer theorem
\begin{align*}
\mu(\st_G(w_a)) = \frac{1}{[G:\st_G(w_a)]} = \frac{1}{d^N}.
\end{align*}

Combining \cref{equation: measure Bava cap si} and \cref{equation: measure Bava}, we conclude
\begin{align*}
\mu(C_{B_{a,v_a}} \cap \mathcal{T}_{v_a}^{-1}(C_{\pi_m(s_i)})) \geq \mu(C_a) \cdot \frac{1}{d^N \abs{\pi_m(G)}}.
\end{align*}

Observe that 
\begin{align*}
C_{B_{a,v_a}} \cap \mathcal{T}_{v_a}^{-1}(C_{\pi_m(s_i)}) \subseteq \set{g \in G: X_{n+N+m}(g) > r} \cap C_a.
\end{align*}
Indeed, if $g \in C_{B_{a,v_a}} \cap \mathcal{T}_{v_a}^{-1}(C_{\pi_m(s_i)})$, then $X_{n+N+m}(g) \geq X_m(s_i) > r$.

Thus,
\begin{align*}
\mu(X_{n+N+m} > r | X_n = r) &= \frac{\mu(X_{n+N+m} > r \cap X_n = r)}{\mu(C_{A_{n,r}})} \\
& = \frac{ \sum_{a \in A_{n,r}} \mu(X_{n+N+m} > r \cap C_a)}{\mu(C_{A_{n,r}})} \\
& \geq \frac{ \sum_{a \in A_{n,r}} \mu(C_{B_{a,v_a}} \cap \mathcal{T}_{v_a}^{-1}(C_{\pi_m(s_i)}))}{\mu(C_{A_{n,r}})} \\
& \geq \frac{ \sum_{a \in A_{n,r}} \mu(C_a)} {\mu(C_{A_{n,r}})} \cdot \frac{1}{d^N \abs{\pi_m(G)}} \\ 
& = \frac{1}{d^N \abs{\pi_m(G)}}.
\end{align*}

Define $\epsilon := (d^N \abs{\pi_m(G)})^{-1}$. Notice that $\epsilon$ only depends on $r$ (because $m$ does) and the group $G$. Therefore 
\begin{align*}
\mu(X_{n+N+m} = r|X_n = r) \leq 1 - \mu(X_{n+N+m} > r|X_n = r) \leq 1-\epsilon.
\end{align*}
By \cref{lemma: martingale strategy}, the fixed-point proportion is zero.
\end{proof}

As a corollary of \textcolor{teal}{Theorems} \ref{theorem: every rational is virtually mixing}, \ref{theorem: dynamical pullback maximal mixing subgroup} and \ref{theorem: FPP virtually mixing groups} we conclude: 

{
\renewcommand{\thetheorem}{7}
\begin{theorem}
Let $f$ be a tamely ramified rational function of degree $d \geq 2$ defined over a field $K$ such that $G = G_\infty(\Ksep,f,t)$ is a martingale group. If $f$ is not a dynamical pullback, then $$\FPP(G_\infty(\Ksep,f,t)) = 0.$$
\end{theorem}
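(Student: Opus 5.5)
The plan is to combine \cref{theorem: every rational is virtually mixing}, \cref{theorem: dynamical pullback maximal mixing subgroup} and \cref{theorem: FPP virtually mixing groups}, with the maximal mixing subgroup as the mixing subgroup. By \cref{proposition: iterated galois groups are fractal} and \cref{remark: FPP invariant under relabeling}, we may regard $G$ as a closed fractal subgroup of $\Aut(T)$. By hypothesis it is a martingale group.

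\textbf{Step 1: $G$ is mixing.} Since $f$ is not a dynamical pullback, it is in particular not exceptional. Exceptional maps are dynamical pullbacks: Lattès maps via the torus diagram of \cref{theorem: classification euclidean orbifolds over C}, and power and Chebyshev maps via $x\mapsto x^d$ on $\PP^1$. So $f$ falls in the hyperbolic case, and \cref{theorem: virtually mixing hyperbolic case} makes $G$ virtually mixing with some delay constant $N$, without any hypothesis on the field. Its maximal mixing subgroup $G_M$ then contains $G_m$ and has finite index. By \cref{theorem: dynamical pullback maximal mixing subgroup}, the failure of $f$ to be a dynamical pullback forces $G_M = G$. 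Hence $G$ is mixing, with mixing subgroup $H = G$.

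\textbf{Step 2: apply \cref{theorem: FPP virtually mixing groups}.} With $H=G$ the quotient $G/H$ has a single coset, $G$ itself. It therefore suffices to exhibit one element $s\in G$ fixing infinitely many ends (equivalently, for the proof of \cref{theorem: FPP virtually mixing groups}, an element $s$ with $X_m(s)\to\infty$). The identity does this, since it fixes every end of $T$. The theorem then gives $\FPP(G)=0$.

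\textbf{Main obstacle.} The delicate point is the logical ordering in Step 1. The direction of \cref{theorem: dynamical pullback maximal mixing subgroup} that we need ($G_M\neq G$ implies $f$ is a dynamical pullback) rests on $G_M$ being open, which requires virtual mixing. That in turn comes from \cref{theorem: every rational is virtually mixing}, whose exceptional case carries extra field hypotheses. I would resolve this by checking, as above, that non-pullbacks are hyperbolic, so only the field-independent \cref{theorem: virtually mixing hyperbolic case} is invoked.

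Alternatively, one can argue as in the $(\Rightarrow)$ direction of the proof of \cref{theorem: dynamical pullback maximal mixing subgroup}. The subgroup $G_M$ is closed and project invariant by \cref{proposition: properties GM}, and of finite index because it contains $G_m$. It is therefore an open project-invariant subgroup, so \cref{theorem: dynamical pullback project invariant} forces $G_M=G$.
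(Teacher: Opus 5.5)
Your proof has the same skeleton as the paper's. Virtual mixing makes $G_M$ open, \cref{theorem: dynamical pullback maximal mixing subgroup} then forces $G_M=G$, and \cref{theorem: FPP virtually mixing groups} is applied to the single coset of $G/G$ with the identity as representative. The paper simply cites \cref{theorem: every rational is virtually mixing} and never addresses its field hypothesis for exceptional maps, so your detour in Step~1 is a reasonable addition. However, the claim it rests on, ``not a dynamical pullback $\Rightarrow$ not exceptional'', is false for $f=x^{\pm d}$. These maps are exceptional (signature $(\infty,\infty)$), yet their geometric iterated Galois group is abelian (the $d$-adic odometer). Hence $\st_G(v)=\St_G(\abs{v})\leq \St_G(n)$ whenever $\abs{v}\geq n$, and fractality gives $\St_G(n)_v=G_v=G$. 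So $G$ is mixing with $N=0$, and by the $(\Rightarrow)$ direction of \cref{theorem: dynamical pullback maximal mixing subgroup}, which needs no virtual mixing, $x^{\pm d}$ is \emph{not} a dynamical pullback. The sentence in the paper's introduction saying exceptional maps are pullbacks is wrong for power maps, and your ``via $x\mapsto x^d$'' justification cannot work for them. So for these maps your Step~1 invokes \cref{theorem: virtually mixing hyperbolic case} outside its hypotheses.

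The repair is easy. The reduction only needs that $f$ is not of Latt\`es type, meaning euclidean with every $\nu_f(p)$ finite. The case analysis behind \cref{theorem: virtually mixing hyperbolic case} uses $\chi_f<0$ only to set those signatures aside. For $x^{\pm d}$, both post-critical points are totally ramified and periodic, so $\Gamma_{f,p}=\emptyset$ and $e_p=1$. Hence $G_m=G$, and \cref{theorem: Gm is a mixing subgroup} shows $G$ is mixing in any characteristic; alternatively, use the direct computation above. In characteristic zero the field hypothesis is harmless anyway: by \cref{proposition: geometric Galois group invarint under field extensions} you may replace $K$ by the countable field generated by the coefficients of $f$. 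In positive characteristic, however, excluding Latt\`es-type maps requires knowing they are dynamical pullbacks there. The complex torus diagram of \cref{theorem: classification euclidean orbifolds over C} does not give this, and the paper leaves the point implicit as well. Finally, your ``alternative'' is the paper's argument for the $(\Leftarrow)$ direction of \cref{theorem: dynamical pullback maximal mixing subgroup}, not the $(\Rightarrow)$ one. It relies on the same finite-index input, so it is not independent of Step~1.
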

}

\begin{proof}
By \cref{proposition: iterated galois groups are fractal} and \cref{theorem: every rational is virtually mixing}, there exists a labeling such that $G$ embeds in $\Aut(T)$ as a virtually mixing group. By \cref{remark: FPP invariant under relabeling}, we may assume that $G$ is virtually mixing in $\Aut(T)$ as the labeling does not change the fixed-point proportion. If $f$ is not a dynamical pullback, then the maximal mixing subgroup of $G$ is itself. If we take the identity element $\id$ as a representative of $G/G$, then clearly $\id$ fixes infinitely many ends and by \cref{theorem: FPP virtually mixing groups}, we conclude $\FPP(G) = 0$. 
\end{proof}

\begin{example}
\label{example: group virtually mixing but not mixing}
It is worth mentioning that there are virtually mixing groups that are not mixing and whose fixed-point proportion is still zero. 

Consider $G = \overline{\<g_i: i = 1, \dots,4>}$ where 
\begin{align*}
g_1 = \sigma(g_2,g_2), \quad g_2 = (g_1,g_2), \quad g_3 = (g_4,1), \quad g_4 = \sigma(1,g_3).
\end{align*}

This group is clearly self-similar. Moreover, using induction on the level, one can prove that $g_1^2 = g_2^2 = g_1 \, g_3 \, g_2 \, g_4 = 1$. To prove that the group is fractal observe that the action on the first level is transitive and if $x$ represents the leftmost vertex, then the elements $g_2, g_4^2, g_3 \in \st_G(x)$ and 
\begin{align*}
g_2|_x = g_1, \quad (g_4)^2|_x = g_3 \quad \text{ and } \quad g_3|_x = g_4.
\end{align*}

As $G$ acts on the binary tree and it is fractal, then it is a martingale group by \cite[Corollary 5.11]{BridyJones2022}. 

To prove that it is virtually mixing, we use the commutator trick (\cref{lemma: commutator trick}). As $g_3 = (g_4,1)$, then it has a section $1$ and a section $g_4$ both fixed, which implies that we can see $g_4$. If we take $g_4^4$ then one can check that there are two vertices $u,w \in \mathcal{L}_3$ such that $g_4^4$ fixes them, the section $(g_4)^4|_u = 1$ and $(g_4)^4|_w = g_3$. If we define $H = \overline{\<g_3,g_4>}$, then $H$ is a normal closed mixing subgroup with delay constant $N = 3$. 

We claim that $[G:H] = 2$. First of all, $[G:H] \leq 2$ as $G/H$ is generated by $g_2$. To prove that $[G:H] = 2$, we show that $\pi_3(g_2) \notin \pi_3(H)$. 

\begin{figure}[t]
\centering

\begin{subfigure}{0.32\textwidth}
\centering
\begin{tikzpicture}[
    level 1/.style={sibling distance=18mm, level distance=8mm},
    level 2/.style={sibling distance=10mm, level distance=8mm}
]
\node {$\id$}
    child {
        node {$\sigma$}
        child {node {$\id$}}
        child {node {$\id$}}
    }
    child {
        node {$\id$}
        child {node {$\sigma$}}
        child {node {$\id$}}
    };
\end{tikzpicture}
\caption{Portrait of $g_2$.}
\label{fig:alpha}
\end{subfigure}
\hfill
\begin{subfigure}{0.32\textwidth}
\centering
\begin{tikzpicture}[
    level 1/.style={sibling distance=18mm, level distance=8mm},
    level 2/.style={sibling distance=10mm, level distance=8mm}
]
\node {$\id$}
    child {
        node {$\sigma$}
        child {node {$\id$}}
        child {node {$\id$}}
    }
    child {
        node {$\id$}
        child {node {$\id$}}
        child {node {$\id$}}
    };
\end{tikzpicture}
\caption{Portrait of $g_3$.}
\label{fig:beta}
\end{subfigure}
\hfill
\begin{subfigure}{0.32\textwidth}
\centering
\begin{tikzpicture}[
    level 1/.style={sibling distance=18mm, level distance=8mm},
    level 2/.style={sibling distance=10mm, level distance=8mm}
]
\node {$\sigma$}
    child {
        node {$\id$}
        child {node {$\id$}}
        child {node {$\id$}}
    }
    child {
        node {$\id$}
        child {node {$\sigma$}}
        child {node {$\id$}}
    };
\end{tikzpicture}
\caption{Portrait of $g_4$.}
\label{fig:gamma}
\end{subfigure}
\caption{Portraits of $g_2$, $g_3$ and $g_4$.}
\label{figure: portraits of g2 g3 g4}
\end{figure}

The portraits of the elements $g_2,g_3,g_4$ can be found in \cref{figure: portraits of g2 g3 g4}. If we define 
\begin{align*}
\Par_n: G & \rightarrow \Sym(2) \\ 
\Par_n(g) &:= \prod_{v \in \mathcal{L}_n} g|_v^1,
\end{align*}
then $\Par$ is a group homomorphism and from \cref{figure: portraits of g2 g3 g4}, we clearly have that 
\begin{align*}
\pi_3(H) \leq K_3 := \set{g \in \pi_3(G): \Par(g,1) = \Par(g,3)}
\end{align*}
(in fact one can prove that they are the same). However $\pi_3(g_2) \notin K_3$. Finally, as $H$ is project invariant, we conclude $H = G_M$ by \cref{proposition: project invariant mixing subgroup is maximal}.

This does not prevent us of proving that $\FPP(G) = 0$. To apply \cref{theorem: FPP virtually mixing groups}, we need to find one element in $H$ and one element in $G \setminus H$ fixing infinitely many ends. In the case of $H$, we just take $\id$. In the case of $G \setminus H$, consider $h = g_2 g_3$. Then, one can check that
\begin{align*}
h = (g_2,h,g_1,g_2)_2.
\end{align*}
As $h$ repeats itself and $g_2$ fixes one end, this shows that $X_n(h) \xrightarrow[n \rightarrow +\infty]{} \infty$.

In a discussion with Rafe Jones, we believe that the group $G$ corresponds to the iterated monodromy group of a rational function of the form
\begin{align*}
f(x) = 1 + C \frac{x^2}{1-x},
\end{align*}
where $C$ is either $3/4$ or $-1/4$. The ramification portrait of these two cases is the same and it can be seen in \cref{figure: ramification portrait Jones anlyis}.

\begin{figure}[h!]
\begin{tikzpicture}[
>=Stealth,
every node/.style={inner sep=0pt},
arr/.style={
->,
line width=0.9pt,
shorten <=7pt,
shorten >=7pt},
scale=0.65]

\coordinate (c1) at (0,0);
\coordinate (p1) at (2.4,0);
\coordinate (p2) at (4.8,0);
\coordinate (p3) at (7.2,0);
\coordinate (p4) at (9.6,0);

\draw (c1) circle (0.22);
\fill (p1) circle (0.12);
\fill (p2) circle (0.12);
\draw (p3) circle (0.22);
\fill (p3) circle (0.12);
\fill (p4) circle (0.12);

\draw[arr](c1) to node[above=5pt] {$2$} (p1);
\draw[arr](p1) to node[above=5pt] {} (p2);

\draw[arr]
(p2) .. controls +(1.2,0.7) and +(1.2,-0.7)
.. node[midway,right=5pt] {} (p2);

\draw[arr,bend left=25](p3) to node[above=5pt] {$2$} (p4);
\draw[arr,bend left=25](p4) to node[below=5pt] {} (p3);
\end{tikzpicture}
\caption{Ramification portrait of the map in \cref{example: group virtually mixing but not mixing}}
\label{figure: ramification portrait Jones anlyis}
\end{figure}
\end{example}

\subsection*{Acknowledgements}

The author would like to thank Rafe Jones for discussions and the invitation to Carleton College, where the discussion on \cref{example: group virtually mixing but not mixing} and the idea of virtually mixing groups arose.


\bibliographystyle{unsrt}

\end{document}